\documentclass[12pt]{amsart}
\usepackage{tikz}

\usepackage{amsmath,amsfonts,amsthm,amssymb,amscd, verbatim, graphicx,textcomp, hyperref}
\makeatletter
\newcommand{\xRightarrow}[2][]{\ext@arrow 0359\Rightarrowfill@{#1}{#2}}
\makeatother
\usepackage[notcite, notref]{}
\usepackage{upgreek}
\usepackage[margin=2.5cm]{geometry}
\usepackage{pinlabel}
\usepackage{pdflscape}
\usepackage{tabularx}
\usepackage{latexsym}
\usepackage{hyperref}
\usepackage{multirow}
\usepackage{xypic}

\usepackage{color}
\usepackage{stmaryrd}

\usepackage{makecell}

\newtheorem{Thm}{Theorem}[section]
\newtheorem{Cor}[Thm]{Corollary}
\newtheorem{Prop}[Thm]{Proposition}

\newtheorem{Lem}[Thm]{Lemma}
\newtheorem{Conj}[Thm]{Conjecture}

\theoremstyle{definition}
\newtheorem{Def}[Thm]{Definition}

\newtheorem{Rem}[Thm]{Remark}

\theoremstyle{remark}

\newtheorem{remark}[Thm]{Remark}


\numberwithin{equation}{section}


\newcommand\myeq{\mathrel{\stackrel{\makebox[0pt]{\mbox{\normalfont\tiny def}}}{=}}}

\newcommand{\Aut}{\operatorname{Aut}}
\newcommand\Br{\mathrm{Br}}

\newcommand{\Hom}{\operatorname{Hom}}

\newcommand{\Id}{\operatorname{Id}}

\newcommand{\Iso}{\operatorname{Iso}}
\newcommand{\Out}{\operatorname{Out}}

\renewcommand{\dim}{\operatorname{dim}}
\newcommand{\Inn}{\operatorname{Inn}}

\newcommand{\Res}{\operatorname{Res}}

\newcommand{\cl}{\operatorname{cl}}

\def \a {\alpha}\def \b {\beta}

\newcommand{\D}{\mathcal{D}}

\newcommand{\m}{\textbf{m}}
\renewcommand{\c}{\textbf{c}}

\renewcommand{\t}{\textbf{t}}

\renewcommand{\a}{\textbf{a}}
\newcommand{\w}{\textbf{w}}
\renewcommand{\b}{\textbf{b}}
\newcommand{\z}{\textbf{z}}
\newcommand{\f}{\textbf{f}}
\newcommand{\x}{\textbf{x}}
\newcommand{\res}{\operatorname{res}}

\newcommand{\IBr}{\operatorname{IBr}}
\newcommand{\Irr}{\operatorname{Irr}}

\newcommand{\M}{\mathcal{M}}
\newcommand{\MM}{\operatorname{M}}

\newcommand{\J}{\operatorname{J}}
\newcommand{\PSL}{\operatorname{PSL}}

\newcommand{\Th}{\operatorname{Th}}
\newcommand{\He}{\operatorname{He}}
\newcommand{\Fi}{\operatorname{Fi}}
\newcommand{\ON}{\operatorname{O'N}}
\newcommand{\GL}{\operatorname{GL}}
\newcommand{\SL}{\operatorname{SL}}

\newcommand{\Ind}{\operatorname{Ind}}
\renewcommand{\epsilon}{\varepsilon}
\renewcommand{\bar}{\overline}
\renewcommand{\hat}{\widehat}
\renewcommand{\leq}{\leqslant}
\renewcommand{\geq}{\geqslant}


\renewcommand{\phi}{\varphi}
\newcommand{\opdef}{\operatorname{def}}
\newcommand{\ab}{\operatorname{ab}}
\newcommand{\y}{\mathbf{y}}
\newcommand{\xra}{\xrightarrow}
\newcommand{\tra}{\operatorname{tra}}
\newcommand{\id}{\operatorname{id}}

\newcommand{\RV}{\operatorname{RV}}


\newcommand{\N}{\mathcal{N}}
\newcommand{\F}{\mathcal{F}}

\newcommand{\G}{\mathcal{G}}

\newcommand{\C}{\mathcal{C}}

\renewcommand{\P}{\mathcal{P}}
\newcommand{\A}{\mathcal{A}}
\newcommand{\E}{\mathcal{E}}
\newcommand{\W}{\mathcal{W}}

\renewcommand{\C}{\mathcal{C}}

\renewcommand{\O}{\mathcal{O}}

\renewcommand{\k}{\textbf{k}}

\newcommand{\gen}[1]{\langle #1 \rangle}



\sloppy

\begin{document}

\title{Weight conjectures for fusion systems}

\author{Radha Kessar}
\address{Department of Mathematics, City, University of London EC1V 0HB, 
United Kingdom}
\email{radha.kessar.1@city.ac.uk}

\author{Markus Linckelmann}
\address{Department of Mathematics, City, University of London EC1V 0HB, 
United Kingdom}
\email{markus.linckelmann.1@city.ac.uk}

\author{Justin Lynd}
\address{Department of Mathematics, University of Louisiana at 
Lafayette, Lafayette, LA 70504}
\email{lynd@louisiana.edu}

\author{Jason Semeraro}
\address{Heilbronn Institute for Mathematical Research, Department of 
Mathematics, University of Leicester,  United Kingdom}
\email{jpgs1@leicester.ac.uk}

\begin{abstract} 
Many of the conjectures of current interest in the representation 
theory of finite groups in characteristic $p$ are local-to-global 
statements, in that they predict consequences  for the representations 
of a finite group $G$ given data about the representations of the 
$p$-local subgroups of $G$. The local structure of a block of a group 
algebra is encoded in  the fusion system of the block together with a 
compatible family of K\"ulshammer-Puig cohomology classes.  
Motivated by conjectures in block theory, we state and initiate 
investigation of a number of seemingly local conjectures for arbitrary 
triples $(S,\F,\alpha)$ consisting of a saturated fusion system $\F$  on a 
finite $p$-group $S$ and a compatible family $\alpha$.
\end{abstract}

\keywords{fusion system, block, finite group}

\subjclass[2010]{}

\maketitle

\section{Introduction}
\label{introSection}

Throughout this paper we fix a prime number $p$ and an algebraically
closed field $k$ of characteristic $p$. A block $B$ of a finite
group algebra $kG$ gives rise to three fundamental invariants encoding the
local structure of $B$: a defect group $S$, a saturated fusion system $\F$ on
$S$, and a family $\alpha=$ $(\alpha_Q)_{Q\in\F^c}$ of second cohomology
classes $\alpha_Q\in$ $H^2(\Out_\F(Q), k^\times)$. The $\alpha_Q$ are called
the K\"ulshammer-Puig classes of the block $B$. They are defined for each
$\F$-centric subgroup $Q$ of $S$, and satisfy a certain compatibility condition
(recalled in Section \ref{s:KP}). The triple $(S,\F,\alpha)$ is determined 
by $B$ uniquely up to $G$-conjugacy.  If $B$ is the principal block of $kG$, 
then $S$ is a Sylow $p$-subgroup, $\F=$ $\F_S(G)$, and all the classes 
$\alpha_Q$ are trivial.   
In what follows, we freely use standard notation on fusion systems as in 
\cite{AschbacherKessarOliver2011}. For a finite dimensional $k$-algebra 
$B$, we denote by $\ell(B)$ the number of isomorphism classes  of 
simple $B$-modules and by $z(B)$  the number of isomorphism classes  
of simple and projective $B$-modules. If $B$ is a block of a finite group 
algebra $kG$, then we denote by $\k(B)$  the number of ordinary irreducible 
characters  of $G$ associated with $B$.

The prominent counting conjectures in the block theory of finite groups 
express numerical invariants of $B$ in terms of $(S,\F,\alpha)$. 
Alperin's weight conjecture (henceforth abbreviated AWC) 
predicts the equality
$$\ell(B) = \sum_{Q\in \F^c/\F}\ z(k_\alpha\Out_\F(Q))\ ,$$
where $\F^c/\F$ is a set of representatives of the isomorphism classes 
in $\F$ of $\F$-centric subgroups of $S$, and where $k_\alpha\Out_\F(Q)$ 
is the group algebra of $\Out_\F(Q)=$ $\Aut_\F(Q)/\Inn(Q)$ twisted by 
$\alpha_Q$.
The right side in this version of AWC  clearly makes  sense for arbitrary
saturated fusion systems  and arbitrary choices of second cohomology,
classes, and this is the starting point of the present paper.

\medskip
Let  $(S,\F,\alpha)$ be a triple consisting of a finite $p$-group $S$, 
a saturated fusion system $\F$ on $S$, and a family $\alpha=$ 
$(\alpha_Q)_{Q\in\F^c}$ of classes $\alpha_Q\in$ 
$H^2(\Out_\F(Q);k^\times)$, for any $\F$-centric subgroup $Q$ of $S$, 
such that the family $\alpha$ is $\F$-compatible in the sense of 
Definition \ref{def:Fcompatible} below. If $\alpha$ is the family of 
K\"ulshammer-Puig classes of a fusion system $\F$ of a block $B$ with 
defect group $S$, then $\alpha$ is $\F$-compatible by Theorem  
\cite[8.14.5]{LinckelmannBT}; in that case we will say that the triple 
$(S,\F,\alpha)$ is {\it block realizable} and that it is 
{\it realized by the block} $B$. 

For any  $\F$-centric subgroup $Q$ of $S$ and any subgroup $H$ of 
$\Out_{\F}(Q)$ or of $\Aut_\F(Q)$, by $k_{\alpha}H$ we  will mean  
the twisted group algebra of $H$ over $k$  with respect to the 
restriction of $\alpha_Q$ to $H$.
Using the notation in \cite[Section 8.15]{LinckelmannBT}, the
\textit{number of weights} of $(S,\F,\alpha)$ is the positive integer
$\w(\F,\alpha)$ defined by
$$\w(\F,\alpha):=\sum_{Q \in \F^{c}/\F} z(k_{\alpha} \Out_\F(Q))\ ,$$
where the notation $Q\in \F^c/\F$ means that $Q$ runs over a set of
representatives of the isomorphism classes in $\F$ of $\F$-centric
subgroups of $S$. 
Note that $z(k_{\alpha} \Out_\F(Q)) = 0$ unless $Q$ is also 
$\F$-radical (cf. Lemma \ref{l:z=0} below), and hence we have 
$\w(\F,\alpha)=\displaystyle\sum_{Q \in \F^{cr}/\F} z(k_{\alpha} \Out_\F(Q))$. 
By Proposition \ref{p:links} or the above remarks, if $(S,\F,\alpha)$ is 
realized by a block $B$ of a finite group algebra, then $B$ satisfies
AWC if and only if $\w(\F,\alpha)=$ $\ell(B)$. 

If $x$ is an element in $S$ such that $\langle x\rangle$ is fully
$\F$-centralized, then $C_\F(x)$ is a saturated fusion system on
$C_S(x)$, there is a canonical functor $C_\F(x)^c\to$ $\F^c$,
and restriction along this functor sends the $\F$-compatible
family $\alpha$ to a $C_\F(x)$-compatible family $\alpha(x)$; 
see Proposition  \ref{p:links} below. Denote by $[S/\F]$ a set of 
$\F$-conjugacy class representatives of elements of $S$ such that 
$\langle x \rangle$ fully $\F$-centralized. We set
$$ \k(\F,\alpha) := \sum_{x \in [S/\F]} \w(C_\F(x),\alpha(x))\ .$$
By Proposition \ref{p:links}, if $(S,\F,\alpha)$ is realized by a
block $B$ of a finite group algebra such that $B$ and the
$B$-Brauer pairs satisfy AWC, then $\k(\F,\alpha)=$ $\k(B)$.

For any $\F$-centric subgroup $Q$ of $S$ we define the set $\N_Q$
to be the set of non-empty normal chains $\sigma$ of $p$-subgroups of 
$\Out_\F(Q)$ starting  at the trivial subgroup; that is, chains of 
the form
$$\sigma = (1=X_0 < X_1 <\cdots < X_m)$$
with the property that $X_i$ is normal in $X_m$ for $0\leq i\leq m$. 
We set $|\sigma| = m$, and call $m$ the {\it length} of $\sigma$. 
We define the following two sets:
$$\W_Q = \N_Q \times \Irr(Q)\ ,$$
$$\W_Q^* = \N_Q \times Q^{\cl}\ ,$$
where $\Irr(Q)$ is the set of  ordinary  irreducible characters of $Q$
and where $Q^{\cl}$ is the set of conjugacy classes of $Q$.
There are obvious actions of the group $\Out_\F(Q)$ on the sets
$\N_Q$, $\Irr(Q)$, and $Q^{\cl}$, hence on the sets $\W_Q$, $\W^*_Q$.
We denote by $I(\sigma,\mu)$ and   
by $I(\sigma, [x])$ the stabilisers in $\Out_\F(Q)$ 
under these actions, where $(\sigma,\mu)\in$ $\W_Q$ and 
$(\sigma, [x])\in$ $\W_Q^*$, with $[x]$ the conjugacy class in $Q$ of
an element $x\in$ $Q$. For any $\F$-centric subgroup $Q$ of $S$ we set
$$\w_Q(\F,\alpha) = \sum_{(\sigma,\mu)\in\W_Q/\Out_\F(Q)}\ \    
(-1)^{|\sigma|} z(k_\alpha I(\sigma,\mu))\ $$
$$\w_Q^*(\F,\alpha) = \sum_{(\sigma, [x])\in\W^*_Q/\Out_\F(Q)}\ \    
(-1)^{|\sigma|} z(k_\alpha I(\sigma, [x]))\ ,$$
and we set
$$\m(\F,\alpha) = \sum_{Q\in\F^c/\F}\ \w_Q(\F,\alpha)\ ,$$
$$\m^*(\F,\alpha) = \sum_{Q\in\F^c/\F}\ \w_Q^*(\F,\alpha)\ .$$
There are refinements of the above numbers which 
take into account  defects of ordinary irreducible characters 
and which appear in conjectures of Dade and Robinson.  These will 
be considered in Section \ref{conjSection}.

\begin{Thm}\label{t:main2}
Let $\F$ be a saturated fusion system on a finite $p$-group
$S$ and let $\alpha$ be an $\F$-compatible family. Then
$$\m^*(\F,\alpha)=\k(\F,\alpha) \,.$$
\end{Thm}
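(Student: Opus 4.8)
The plan is to prove the identity $\m^*(\F,\alpha) = \k(\F,\alpha)$ by showing that both sides compute, in two different ways, the sum over $\F$-conjugacy classes of fully-centralized elements $x$ of a certain alternating-sum weight count attached to the centralizer fusion system $C_\F(x)$. Concretely, I would first rewrite the right-hand side. By definition $\k(\F,\alpha) = \sum_{x\in[S/\F]} \w(C_\F(x),\alpha(x))$, and $\w(C_\F(x),\alpha(x)) = \sum_{P} z(k_{\alpha}\Out_{C_\F(x)}(P))$, where $P$ runs over $C_\F(x)$-isomorphism classes of $C_\F(x)$-centric subgroups of $C_S(x)$. So the target is to match this double sum with $\m^*(\F,\alpha) = \sum_{Q\in\F^c/\F}\sum_{(\sigma,[x])\in\W_Q^*/\Out_\F(Q)} (-1)^{|\sigma|} z(k_\alpha I(\sigma,[x]))$.

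The key technical device is a Brauer-pair / centralizer bijection at the level of fusion systems. For a $C_\F(x)$-centric subgroup $P$ of $C_S(x)$, the subgroup $Q := P\langle x\rangle$ (or an appropriate $\F$-conjugate making $\langle x\rangle$ fully centralized) tends to be $\F$-centric, $x$ lies in $Q$, and one has $\Out_{C_\F(x)}(P)\cong$ (a section related to) the stabilizer $I(\sigma,[x])$ inside $\Out_\F(Q)$ for suitable chains $\sigma$. Summing the contributions $z(k_\alpha \Out_{C_\F(x)}(P))$ over all $P$ with the fixed element $x$ should, after collecting by the enveloping subgroup $Q$ and by the $\Out_\F(Q)$-orbit of $[x]$, reorganize into $\sum_\sigma (-1)^{|\sigma|} z(k_\alpha I(\sigma,[x]))$. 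The appearance of the alternating sum over normal chains $\sigma\in\N_Q$ is the standard Brown/Quillen-type cancellation: one uses that for a finite group $G$ acting on the poset of nontrivial $p$-subgroups (or the relevant chain complex), the reduced Euler characteristic vanishes unless one is at the "top", and here the chains live in $\Out_\F(Q)$ while the element $x$ contributes a fixed-point/centralizer constraint. I would invoke or adapt the combinatorial identity (in the spirit of Knörr--Robinson and Kn\"orr's reformulation of AWC via chains) that converts a sum indexed by pairs (centric subgroup, class) into an alternating sum over normal chains of $p$-subgroups of the outer automorphism group; the compatibility of $\alpha$ (Definition \ref{def:Fcompatible}) is exactly what is needed so that the twisted group algebras $k_\alpha I(\sigma,[x])$ and $k_\alpha \Out_{C_\F(x)}(P)$ carry matching cohomology classes under the canonical functor $C_\F(x)^c\to\F^c$ of Proposition \ref{p:links}.

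More precisely, the steps I would carry out are: (1) Fix $Q\in\F^c/\F$ and a class $[x]\in Q^{\cl}$ with representative $x$; arrange by $\F$-conjugacy that $\langle x\rangle$ is fully $\F$-centralized. Establish that $C_Q(x)$ is $C_\F(x)$-centric and set up the correspondence $Q\leftrightarrow (C_Q(x),x)$. (2) Identify $I(\sigma,[x])$, the stabilizer in $\Out_\F(Q)$ of the pair $(\sigma,[x])$, with the corresponding normal-chain stabilizer inside $\Out_{C_\F(x)}(C_Q(x))$, compatibly with the cohomology classes. (3) Apply the chain-cancellation identity: for a finite group $L$ (here $L = \Out_{C_\F(x)}(C_Q(x))$, or the relevant quotient), one has $z(k_\beta L) = \sum_{\sigma\in\N/L}(-1)^{|\sigma|} z(k_\beta L_\sigma)$ summed over $L$-orbits of normal chains of $p$-subgroups of $L$ starting at $1$ — this is the twisted, fusion-theoretic analogue of the fact underlying Kn\"orr's chain form of AWC, and it is where the $(-1)^{|\sigma|}$ and the structure of $\N_Q$ come from. (4) Sum over $[x]\in Q^{\cl}/\Out_\F(Q)$ and over $Q\in\F^c/\F$, and reindex: the left-over sum is precisely $\sum_x \w(C_\F(x),\alpha(x))$, because every $C_\F(x)$-centric subgroup of $C_S(x)$ arises, up to $C_\F(x)$-conjugacy, as $C_Q(x)$ for a unique pair $(Q,[x])$ modulo $\Out_\F(Q)$.

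I expect the main obstacle to be step (2)–(3): making the identification of stabilizers $I(\sigma,[x])$ with chain-stabilizers in the centralizer system precise and \emph{simultaneously} compatible with the twisting classes, and verifying that the chain-cancellation identity holds in the twisted (projective) setting over an arbitrary saturated fusion system rather than a fusion system of a group. The subtlety is that $\Out_\F(Q)$ need not have a normal Sylow $p$-subgroup, so one cannot directly appeal to a single group-theoretic computation; instead one must run the Euler-characteristic cancellation over the poset of normal $p$-chains while tracking that $z(k_\alpha -)$ behaves additively under this decomposition. This should follow from the behavior of $z$ under the relevant transfer/induction maps and from the fact (Lemma \ref{l:z=0} in the excerpt) that $z(k_\alpha I) = 0$ when $I$ lacks the appropriate radicality, which kills the boundary terms. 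Once the twisted chain identity is in place, the rest of the argument is bookkeeping with the functor $C_\F(x)^c\to\F^c$ and with $\F$-conjugacy of fully-centralized elements.
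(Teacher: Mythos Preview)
Your proposal has a genuine gap at steps (2)--(3), and a related misconception in steps (1) and (4).

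First, step (3): the identity you invoke, $z(k_\beta L) = \sum_{\sigma\in\N/L}(-1)^{|\sigma|} z(k_\beta L_\sigma)$ for normal $p$-chains in $L$, is true but vacuous---for any chain $\sigma$ of positive length, its first nontrivial term $X_1$ is a nontrivial normal $p$-subgroup of $L_\sigma = N_L(\sigma)$, so $z(k_\beta L_\sigma)=0$ by Lemma~\ref{l:z=0} and only the trivial chain survives. This does \emph{not} model the inner alternating sum in $\m^*(\F,\alpha)$. There, for fixed $[x]$, the chains $\sigma$ live in $\N_Q$, i.e.\ they are chains of $p$-subgroups of $\Out_\F(Q)$, while the relevant group is $I([x]) = C_{\Out_\F(Q)}([x])$; the stabiliser $I(\sigma,[x]) = I(\sigma)\cap I([x])$ need not contain $X_1$ (since $X_1$ need not fix $[x]$), so $O_p(I(\sigma,[x]))$ can be trivial and the terms do not vanish. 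There is therefore no direct cancellation of the type you propose.

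Second, step (2): there is no natural map identifying $I(\sigma,[x]) \leq \Out_\F(Q)$ with a chain-stabiliser inside $\Out_{C_\F(x)}(C_Q(x))$, because these sit in automorphism groups of different subgroups of $S$. The paper's resolution is precisely to \emph{lift} the chain $\sigma\in\N_Q$ to a chain $Q=Q_0<Q_1<\cdots<Q_m$ of $\F$-centric subgroups of $S$ (so $\bar\sigma = \Out_\sigma(Q_0)$ recovers $\sigma$); this is Lemma~\ref{l:mvstildem} and Proposition~\ref{p:rewritez}. Only after this lift can one compare stabilisers via a common ambient group $\Aut_\F(\sigma)$ (Lemma~\ref{l:rewritez}), and the cancellation is then carried out by a chain-pairing on chains in $S_\triangleleft(\F^c)$, not in $\Out_\F(Q)$: one prepends or deletes the subgroup $C_{Q_m}(x)\Phi(Q_m)$ at the bottom of the chain (Proposition~\ref{p:kmeoc}). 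The surviving terms are exactly pairs $(Q,x)$ with $|\sigma|=0$ and $x\in Z(Q)$, matching $\k(\F,\alpha)$ via Lemma~\ref{l:CcongD}.

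Third, steps (1) and (4): $C_Q(x)$ is \emph{not} in general $\F$-centric when $x\notin Z(Q)$, so your proposed correspondence $(Q,[x]) \leftrightarrow (x, C_Q(x))$ does not land in $C_\F(x)^c$, nor is it bijective. The paper handles non-centricity of $C_{Q_m}(x)\Phi(Q_m)$ via Thompson's $A\times B$ Lemma (Lemmas~\ref{l:model} and \ref{l:centric}) to show such terms contribute zero, and the pairs with $x\notin Z(Q)$ do not biject with anything---they \emph{cancel} in the chain-pairing. The only honest bijection is the one in Lemma~\ref{l:CcongD}, restricted to $x\in Z(Q)$.
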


Theorem \ref{t:main2} is a cancellation theorem for arbitrary fusion 
systems inspired by cancellation theorems  of Robinson such as in 
\cite[Theorem 1.2]{robinson1996local}. 

\begin{Thm}\label{t:main}
Let $\F$ be a saturated fusion system on a finite $p$-group
$S$ and let $\alpha$ be an $\F$-compatible family. 
If AWC holds, then 
$\m(\F,\alpha)=\m^*(\F,\alpha)$. 
\end{Thm}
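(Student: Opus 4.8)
The plan is to compare the two alternating sums $\m(\F,\alpha)$ and $\m^*(\F,\alpha)$ by fixing an $\F$-centric subgroup $Q$ and showing that, under AWC applied to suitable local fusion subsystems, $\w_Q(\F,\alpha)$ and $\w_Q^*(\F,\alpha)$ both admit a common reformulation. The key observation is that for a fixed normal chain $\sigma\in\N_Q$, the two inner sums
$$\sum_{\mu\in\Irr(Q)/I(\sigma)}\ z(k_\alpha I(\sigma,\mu))
\quad\text{and}\quad
\sum_{[x]\in Q^{\cl}/I(\sigma)}\ z(k_\alpha I(\sigma,[x]))$$
are each, by definition, a ``weight count'' for a group $I(\sigma)$ (the stabiliser in $\Out_\F(Q)$ of the chain $\sigma$) acting on $\Irr(Q)$ respectively on $Q^{\cl}$, twisted by the restriction of $\alpha_Q$. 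Since $|\Irr(Q)|=|Q^{\cl}|$ and the actions of $\Out_\F(Q)$ are ``dual'' (via a character-theoretic pairing, with the $\Aut$-action on $\Irr(Q)$ matched to the action on conjugacy classes), one expects these two numbers to agree for each $\sigma$ individually, which would give $\w_Q(\F,\alpha)=\w_Q^*(\F,\alpha)$ term by term and hence $\m(\F,\alpha)=\m^*(\F,\alpha)$ with no need for AWC at all. The role of AWC is presumably more subtle: it is needed because the passage from ``$z(k_\alpha I(\sigma,\mu))$ summed over $\mu$'' to an intrinsic invariant must go through a block-theoretic identity, namely that the number of weights of a twisted group algebra equals $\ell$ of an appropriate block, and this is exactly what AWC supplies for the groups arising here.

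Concretely, I would proceed as follows. First, for each $\F$-centric $Q$ and each $x\in Q$, use Proposition \ref{p:links} to identify $C_\F(x)$ as a saturated fusion system on $C_S(x)$ carrying the restricted compatible family $\alpha(x)$, and relate $\w(C_\F(x),\alpha(x))$ to a sum over chains in the centralisers of $x$ inside the $\Out$-groups. Then apply Theorem \ref{t:main2}, which already gives $\m^*(\F,\alpha)=\k(\F,\alpha)=\sum_{x\in[S/\F]}\w(C_\F(x),\alpha(x))$ unconditionally. So it suffices to prove $\m(\F,\alpha)=\sum_{x\in[S/\F]}\w(C_\F(x),\alpha(x))$ under AWC. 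The second step is to expand $\m(\F,\alpha)=\sum_Q\sum_{(\sigma,\mu)}(-1)^{|\sigma|}z(k_\alpha I(\sigma,\mu))$ and, for fixed $(\sigma,Q)$, recognise $\sum_\mu z(k_\alpha I(\sigma,\mu))$ as counting weights of the group $I(\sigma)$ acting on $\Irr(Q)$; by Clifford theory this is the number of simple projective modules of the twisted group algebra $k_\alpha(Q\rtimes I(\sigma))$ (with $Q$ contributing its regular representation suitably). The third step is to invoke AWC: for each weight $(\sigma,\mu)$ the stabiliser data is governed by a block whose defect-group fusion system is a centraliser subsystem $C_\F(x)$ for $x$ ranging over representatives with $\mu$ lying over $x$, and AWC turns the left-hand count $z(k_\alpha I(\sigma,\mu))$ into the corresponding weight contribution in $\w(C_\F(x),\alpha(x))$. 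Summing over all $(\sigma,\mu,Q)$ and collating by the $\F$-class of $x$ then yields $\k(\F,\alpha)$.

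The main obstacle I anticipate is the third step: making precise the bridge between a summand $z(k_\alpha I(\sigma,\mu))$ indexed by an irreducible character $\mu\in\Irr(Q)$ and a \emph{weight} of a centraliser fusion system $C_\F(x)$. This requires a careful Clifford-theoretic analysis of how $\mu$ determines (the $\F$-class of) an element $x\in Q$ — essentially via the ``$p$-part'' of a character or, more robustly, via the correspondence between $\Irr(Q)$ and pairs (conjugacy class, character of the centraliser) — together with a verification that the twisted group algebra $k_\alpha I(\sigma,\mu)$ matches the relevant twisted $\Out$-group appearing in $C_\F(x)$, with the cohomology classes compatible under the functor $C_\F(x)^c\to\F^c$. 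One must also check that chains $\sigma$ in $\Out_\F(Q)$ restrict correctly to chains in the centraliser systems and that the sign $(-1)^{|\sigma|}$ is preserved; this bookkeeping is where AWC (rather than a purely formal manipulation) genuinely enters, since without it one cannot convert the ``$z$ of a twisted group algebra'' on the $\m$-side into the ``$\ell$ of a block = weight sum'' appearing implicitly on the $\k$-side. A secondary technical point is handling non-$\F$-radical $Q$, for which all terms vanish by Lemma \ref{l:z=0}, so the sums may be restricted to $\F^{cr}/\F$ throughout; this should simplify but not change the argument.
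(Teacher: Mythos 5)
Your proposal has a genuine gap, and it is exactly at the point you flag as the ``main obstacle'': you never make precise how AWC enters, and the mechanism you sketch is not the right one. Two specific issues.

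First, the observation in your opening paragraph that the sums over $\Irr(Q)$ and over $Q^{\cl}$ ``are each a weight count'' and so ``one expects these two numbers to agree for each $\sigma$ individually'' is not true: the permutation actions of $\Out_\F(Q)$ on $\Irr(Q)$ and on $Q^{\cl}$ need not be isomorphic, and there is no character-theoretic pairing identifying the stabilisers $I(\sigma,\mu)$ and $I(\sigma,[x])$. Relatedly, your Step~2 claim that $\sum_\mu z(k_\alpha I(\sigma,\mu))$ equals the number of projective simple modules of the twisted group algebra $k_\alpha(Q\rtimes I(\sigma))$ cannot hold: since $Q$ is a nontrivial normal $p$-subgroup of $Q\rtimes I(\sigma)$, that algebra has no projective simple modules at all by Lemma~\ref{l:z=0}.

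Second, the actual bridge used in the paper between the character-side and the class-side is Robinson's Lemma~\ref{l:QvsQ}: for $Q\unlhd G$ a $p$-subgroup and $\alpha\in H^2(G/Q,k^\times)$,
$\sum_{[x]\in Q^{\cl}/G}\ell(k_\alpha C_G([x])) = \sum_{\mu\in\Irr(Q)/G}\ell(k_\alpha C_G(\mu))$.
Note that this is an identity in $\ell$'s, not $z$'s, and it is a non-trivial fact proved by counting $p$-sections via Brauer's second main theorem. Your argument never converts between $\ell$ and $z$, yet this is exactly where AWC is consumed: Lemma~\ref{L:sbluecon} uses Proposition~\ref{p:centext} to realise each $k_\alpha C_G(\mu)$ and $k_\alpha C_G([x])$ as a block of a $p'$-central extension, and then AWC for those blocks rewrites each $\ell$ as an alternating sum of $z$'s over $p$-subchains. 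Feeding this into the inner terms of $\w_Q$ and $\w_Q^*$ after the ``chain pumping'' of Lemma~\ref{l:pumpupw} gives $\w_Q(\F,\alpha)=\w_Q^*(\F,\alpha)$ for each $\F$-centric $Q$ (Proposition~\ref{p:weakAWCw}), which is what the theorem needs. Your proposed route --- assigning to each $(\sigma,\mu)$ an element $x\in Q$ and matching $z(k_\alpha I(\sigma,\mu))$ to a weight of $C_\F(x)$ --- is not how either of the paper's two proofs proceeds, and you have not supplied the Clifford-theoretic correspondence that would make it work; without something like Lemma~\ref{l:QvsQ} and the $\ell$-to-$z$ conversion, the argument does not close. (The paper's alternate proof sidesteps this by first reducing to constrained $\F$ via Lemma~\ref{l:compw} and then invoking Robinson's theorem that AWC implies the summed-up Ordinary Weight Conjecture, which again rests on Lemma~\ref{l:QvsQ}.)
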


Theorem \ref{t:main}    
shows that AWC implies an equality (for arbitrary fusion systems)  of 
two numerical invariants dual to  each other in the sense that  one is  
obtained by summing over  conjugacy classes  of $p$-groups and the other  
by  summing over  irreducible characters.  
Given that the numerical invariants $\m$, $\m^*$, $\k$ are entirely
defined at the `local' level of fusion systems and compatible
families, it seems surprising that Alperin's Weight Conjecture is 
needed to obtain the conclusion of Theorem \ref{t:main}.

\begin{Cor} \label{co:main}
Let $\F$ be a saturated fusion system on a finite $p$-group $S$ and let 
$\alpha$ be an $\F$-compatible family. If AWC  holds, then 
$\m(\F,\alpha)=\k(\F,\alpha)$. 
\end{Cor}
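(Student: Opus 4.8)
The plan is to derive Corollary~\ref{co:main} by simply chaining together the two theorems that precede it. Indeed, Theorem~\ref{t:main2} gives, for any saturated fusion system $\F$ on a finite $p$-group $S$ and any $\F$-compatible family $\alpha$, the unconditional equality $\m^*(\F,\alpha)=\k(\F,\alpha)$. Under the additional hypothesis that AWC holds, Theorem~\ref{t:main} supplies $\m(\F,\alpha)=\m^*(\F,\alpha)$. Combining these two identities yields $\m(\F,\alpha)=\m^*(\F,\alpha)=\k(\F,\alpha)$, which is exactly the assertion of the corollary. So the proof is one line once both theorems are in hand, and there is essentially no independent obstacle to overcome here.

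If one wanted to be slightly more careful about hypotheses, the only thing to check is that the two theorems apply to the \emph{same} triple $(S,\F,\alpha)$ under the \emph{same} blanket assumption that AWC holds; this is immediate because Theorem~\ref{t:main2} has no hypothesis beyond saturation and $\F$-compatibility (both of which are assumed in the corollary), and Theorem~\ref{t:main} adds only the AWC hypothesis, which is precisely the hypothesis of the corollary. One might also remark that the AWC hypothesis really does need to be a global statement: it must hold for all finite groups (or at least for all groups relevant to realizing the various centralizer fusion systems $C_\F(x)$ that appear inside $\k(\F,\alpha)$ and inside the definition of $\m^*$ via Theorem~\ref{t:main2}), since the intermediate quantity $\m^*(\F,\alpha)=\k(\F,\alpha)$ is built from weight counts of centralizer subsystems.

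Thus I would write: \emph{Proof.} By Theorem~\ref{t:main2} we have $\m^*(\F,\alpha)=\k(\F,\alpha)$ without any further hypothesis. If AWC holds, then Theorem~\ref{t:main} gives $\m(\F,\alpha)=\m^*(\F,\alpha)$. Combining the two equalities yields $\m(\F,\alpha)=\k(\F,\alpha)$, as claimed. $\square$

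The only ``obstacle'' worth naming is purely expository rather than mathematical: one should make sure the reader understands that all of the substantive content has been pushed into Theorems~\ref{t:main2} and~\ref{t:main}, so that the corollary genuinely is a formal consequence and not a place where anything new needs to be proved. In particular, no new use of block theory, of the K\"ulshammer--Puig compatibility condition, or of the cancellation arguments à la Robinson is required at this final step; all of that work lives in the proofs of the two theorems being invoked.
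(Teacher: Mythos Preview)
Your proposal is correct and matches the paper's approach exactly: the corollary is stated immediately after Theorems~\ref{t:main2} and~\ref{t:main} precisely because it follows by chaining them together, and the paper gives no separate argument for it. Your extra remarks about the global nature of the AWC hypothesis are fine as commentary but are not needed for the formal deduction.
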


If $(S, \F, \alpha)$ is block realizable, then Corollary \ref{co:main}    
follows from work of Robinson and expresses the fact that a  
coarse version of the Ordinary Weight Conjecture is implied by AWC
(see Theorem \ref{t:connect} below).  

\medskip
The paper is organised as follows. Section \ref{conjSection} 
contains a list of conjectures inspired by their block theoretic counter 
parts. In Section \ref{prelimSection} we collect background material, 
Section \ref{s:KP} contains relevant properties of   $\F$-compatible families, Section \ref{towardsmainSection} contains technicalities needed 
for the proofs of Theorems \ref{t:main2} and \ref{t:main} in Section 
\ref{main2Section} and Section \ref{mainSection}, respectively.
Section \ref{exampleSection} verifies some of the conjectures in 
Section \ref{conjSection} for certain exotic fusion systems. 
In an Appendix, we collect some foundational
material from work of Robinson.

\subsection*{Acknowledgements} 
Many of the key ideas in this paper were worked out during the 
workshop ``Group Representation Theory and Applications'' at the 
Mathematical Sciences Research Institute (MSRI) in   February   2018.  
The authors would like to thank MSRI for its hospitality, and for 
providing such a pleasant environment in which to carry out research.  
The first and second authors  were MSRI   members  during the Spring 2018
semester  which was  supported by  the National Science Foundation under Grant No. DMS-1440140. 

The second author acknowledges support from EPSRC grant 
EP/M02525X/1. The fourth author gratefully acknowledges financial 
support from the Heilbronn Institute.

\section{Conjectures} 
\label{conjSection} 

We formulate conjectures for fusion systems which are motivated by 
conjectural or known statements in block theory. For each  of these 
conjectures, the link  with a block theoretic conjecture is made either 
via AWC or via  the Ordinary Weight Conjecture,  the  statement  of  
which  will be recalled below.    Note that by work of 
Robinson the Ordinary Weight Conjecture implies the AWC.

These conjectures make precise  the   
idea  that the gap between various local-global block 
theoretic conjectures is purely local. 
Proving or disproving any of these is a win-win scenario.  If one
can prove one of these conjectures at the fusion system level, then
one would get that AWC  (or the ordinary weight conjecture)  implies 
the corresponding block theoretic version. If on the other hand one 
could disprove any of these, one
would either have found a counter example to the corresponding 
block theoretic conjecture, or one would have found a way to
distinguish exotic fusion systems from block realizable fusion
systems. Either outcome would be interesting.

We keep  the notation of the previous section. Let $\F$ be a saturated
fusion system on a finite $p$-group $S$ and let $\alpha$ be an
$\F$-compatible family (see Definition \ref{def:Fcompatible}).  
Recall  from Proposition \ref{p:links} that if $(S, \F, \alpha)$ is 
realized by a block $B$ which satisfies  AWC, then 
$\w(\F, \alpha) =\ell (B)$, and if all Brauer correspondents of $B$ 
also satisfy  AWC, then  $\k(\F, \alpha) =\k(B)$.  

\begin{Conj} \label{conj:k(b)}   
Let $\F$ be a saturated fusion system on a finite $p$-group $S$ and let 
$\alpha$ be an $\F$-compatible family. Then 
$\k(\F, \alpha) \leq |S|$.
\end{Conj}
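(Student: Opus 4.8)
The plan is to argue by induction on $|S|$, the case $S=1$ being trivial. The starting point is the defining identity
$$\k(\F,\alpha)=\w(\F,\alpha)+\sum_{1\ne x\in[S/\F]}\w(C_\F(x),\alpha(x))\ ,$$
read against the partition $|S|=1+\sum_{1\ne x\in[S/\F]}|[x]_\F|$ of $S$ into $\F$-conjugacy classes, where $[x]_\F$ denotes the $\F$-class of $x$. One first reduces to $\F$ indecomposable: the invariants $\w$ and $\k$ are multiplicative for direct products $\F=\F_1\times\F_2$, as one checks from the definitions of $\F$-centric subgroups, centralizer subsystems and compatible families, so the bound for such $\F$ follows from the inductive hypothesis applied to the factors.

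For the inductive step, the summands $\w(C_\F(x),\alpha(x))$ with $x\notin Z(S)$ involve the saturated subsystem $C_\F(x)$ on the \emph{proper} subgroup $C_S(x)<S$, hence by induction $\w(C_\F(x),\alpha(x))\le\k(C_\F(x),\alpha(x))\le|C_S(x)|$; here the first inequality is immediate, since the summand indexed by the identity element in the defining sum for $\k(C_\F(x),\alpha(x))$ is already $\w(C_\F(x),\alpha(x))$ and all summands are nonnegative. The catch is that these term-by-term estimates are hopelessly lossy: added over $x$ they give $\sum_x|C_S(x)|$, not $|S|$. What one really needs is a \emph{uniform} estimate showing that the centralizer contributions, taken together with the term $\w(\F,\alpha)$, sum to at most $|S|$. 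The natural device is to strengthen the inductive hypothesis to a statement about the defect-graded pieces $\k_d(\F,\alpha)$ to be introduced in Section~\ref{conjSection} — bounding each $\k_d(\F,\alpha)$ by the corresponding local quantity occurring in Robinson's reformulations of the ordinary weight conjecture, so that summation over $d$ recovers $\sum_d\k_d(\F,\alpha)=\k(\F,\alpha)\le|S|$ — or, equivalently, to work with the chain expression $\m^*(\F,\alpha)$ of Theorem~\ref{t:main2} and exhibit the required cancellation among the signed terms $(-1)^{|\sigma|}z(k_\alpha I(\sigma,[x]))$.

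I expect the genuine obstacle to survive all of these reductions. When $\F$ is constrained, the model theorem produces a finite group $L$ with $S\in\Syl_p(L)$ and $O_p(L)$ self-centralizing; if moreover $\alpha$ is block-realizable on $\F$ (automatic when $\alpha$ is trivial), then $(S,\F,\alpha)$ is realized by a block $B$ with defect group $S$, and by Proposition~\ref{p:links} the quantity $\k(\F,\alpha)$ equals $\k(B)$ as soon as $B$ and its Brauer correspondents satisfy AWC. Thus, already in this case, Conjecture~\ref{conj:k(b)} is at least as strong as Brauer's $\k(B)$-conjecture $\k(B)\le|S|$; when $L/O_p(L)$ is a $p'$-group the group $L$ is $p$-solvable, so AWC is available and the bound is the $\k(GV)$-theorem of Gluck, Magaard, Riese and Schmid, but the statement is open in general. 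A purely local proof, one bypassing AWC altogether, would in addition have to control the ``$\ell(B)$-term'' $\w(\F,\alpha)$ unconditionally — for which no elementary argument is visible — and would presumably require reducing arbitrary saturated fusion systems to simple ones in the spirit of Aschbacher's program, followed by a case analysis over the simple fusion systems, among them the exotic families treated in Section~\ref{exampleSection}. So the hard part is precisely this: in the absence of both a structural classification of simple fusion systems and an unconditional handle on $\w(\F,\alpha)$, the conjecture lies essentially at the depth of Brauer's $\k(B)$-conjecture.
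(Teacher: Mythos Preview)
The statement is a \emph{conjecture}, not a theorem: the paper does not prove it, and neither does your proposal. What you have written is not a proof but an honest discussion of why the obvious inductive strategy fails and why the problem is genuinely hard. That assessment is correct and matches the paper's own commentary following the statement of the conjecture.

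In particular, you correctly observe that the term-by-term bound $\w(C_\F(x),\alpha(x))\le |C_S(x)|$ coming from induction is far too weak when summed, and you correctly identify that in the constrained (more generally $p$-solvable) case the conjecture is precisely Brauer's $\k(B)$-conjecture for blocks of $p$-solvable groups, which is the $k(GV)$-theorem. The paper records exactly this: Conjecture~\ref{conj:k(b)} holds for $p$-solvable fusion systems by \cite{GlMaRiSc2004}, and in the block-realizable case (under AWC for the relevant Brauer pairs) it is equivalent to Brauer's $\k(B)$-conjecture via Proposition~\ref{p:links}.

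One point you do not mention, and which the paper highlights, is that by Theorem~\ref{t:main2} the inequality $\k(\F,\alpha)\le |S|$ is unconditionally equivalent to $\m^*(\F,\alpha)\le |S|$, so the chain formulation you allude to at the end is not merely a device but an exact reformulation. A minor caution: your claim that $\w$ and $\k$ are multiplicative over direct products of fusion systems is plausible but not established in the paper and would need a careful check of how compatible families behave under products; it is in any case irrelevant, since the reduction to indecomposable $\F$ does not circumvent the main obstacle.

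In short: there is no proof to compare, and your diagnosis of the difficulty is accurate.
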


By the above remark, if $(S,\F,\alpha)$ is realizable by a block $B$ 
such that AWC holds for all $B$-Brauer pairs, then Conjecture
\ref{conj:k(b)}  holds if and only if $B$ satisfies Brauer's  
$\k(B)$-conjecture, which predicts the inequality $\k(B)\leq$ $|S|$.  Also,  note that      by   Theorem \ref{t:main2},   the   inequality  of     Conjecture  \ref{conj:k(b)}   is equivalent to the inequality  $\m(\F, \alpha) \leq |S|$.  In  view of  Theorem \ref{t:main} and Corollary \ref{co:main} (see  also Conjecture  \ref{c:AWOW}),   one could consider   versions of the   inequality       with $\k(\F, \alpha)  $ replaced by $\m (\F, \alpha) $. 

\begin{Conj} \label{c: malle-robinson}     
Let $\F$ be a saturated fusion system on a finite $p$-group
$S$ and let $\alpha$ be an $\F$-compatible family. Then
$\w(\F, \alpha) \leq p^s$, where $s$ is the sectional rank of $S$.
\end{Conj}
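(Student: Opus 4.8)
The plan is to argue by induction on $|S|$, combining the reduction theory of saturated fusion systems with bounds on the individual summands of $\w(\F,\alpha)$. By Lemma~\ref{l:z=0} we may write
\[
\w(\F,\alpha) = \sum_{Q\in\F^{cr}/\F} z(k_\alpha\Out_\F(Q)),
\]
the sum being over $\F$-conjugacy classes of $\F$-centric radical subgroups, each representative chosen fully $\F$-normalized. If $\F$ is nilpotent then the only such $Q$ is $S$ itself, $\Out_\F(S)=1$, and $\w(\F,\alpha)=1\le p^s$; this is the base case. In general $\Out_\F(S)$ is a $p'$-group, so $z(k_\alpha\Out_\F(S))$ equals the number of $\alpha_S$-regular classes of $\Out_\F(S)\le\Out(S)$, which one expects to bound by $p^s$; and for every $\F$-centric radical $Q$ that is \emph{not} normal in $S$ one has $N_S(Q)<S$, the normalizer subsystem $N_\F(Q)$ is saturated on $N_S(Q)$, and $\alpha$ restricts to an $N_\F(Q)$-compatible family $\beta$ (as for centralizer subsystems in Proposition~\ref{p:links}). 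Since $Q$ is then $N_\F(Q)$-centric radical with $\Out_{N_\F(Q)}(Q)=\Out_\F(Q)$, the term $z(k_\alpha\Out_\F(Q))=z(k_\beta\Out_{N_\F(Q)}(Q))$ occurs as a non-negative summand of $\w(N_\F(Q),\beta)$, whence by the inductive hypothesis and monotonicity of the sectional rank under sections $z(k_\alpha\Out_\F(Q))\le\w(N_\F(Q),\beta)\le p^{s(N_S(Q))}\le p^s$.

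What remains is to control the \emph{sum}, and here the argument must become global. Two standard reductions respect the target bound: for an $\F$-central subgroup $Z$ of $S$ one has $\w(\F,\alpha)=\w(\F/Z,\bar\alpha)\le p^{s(S/Z)}\le p^s$, and for a direct product $\w(\F_1\times\F_2,\alpha)=\w(\F_1,\alpha_1)\,\w(\F_2,\alpha_2)$, which matches $s(S_1\times S_2)=s(S_1)+s(S_2)$. When $O_p(\F)$ is $\F$-centric, $\F=\F_S(L)$ for a model $L$ with self-centralizing $O_p(L)$, and the statement reduces to a (twisted) Malle--Robinson inequality for $L$, which is known when $L$ is $p$-solvable. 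In the remaining cases the goal is to reduce, via the normal subsystems supplied by Aschbacher's theory, to simple fusion systems, and then to invoke the classification: for those of the form $\F_S(G)$ with $G$ quasisimple one imports the Malle--Robinson inequality for such $G$ (proved using the classification of finite simple groups), while the finitely many exotic simple systems at each prime are handled by direct computation of $\w(\F,\alpha)$, as begun in Section~\ref{exampleSection}.

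The hard part is precisely this last, global, step. The per-term bounds of the first paragraph do not add up: absent AWC there is no reason the many summands $z(k_\alpha\Out_\F(Q))$ should be simultaneously small, and indeed for block-realizable $(S,\F,\alpha)$ Conjecture~\ref{c: malle-robinson} is, via Proposition~\ref{p:links} and AWC, equivalent to the still-open Malle--Robinson conjecture $\ell(B)\le p^{s(D)}$. Moreover the reduction to simple fusion systems is obstructed by the lack of a usable Clifford theory for the invariant $\w$, since quotients by non-central normal subsystems are delicate. The realistic targets are therefore: (i) the case $S$ abelian, where $\F^c=\{S\}$ and the inequality becomes the assertion that a $p'$-subgroup $E\le\Aut(S)$ has at most $p^{\rk(S)}$ $\alpha$-regular classes; (ii) the $p$-constrained, in particular $p$-solvable-type, cases via the model $L$; and (iii) verification for the exotic fusion systems of Section~\ref{exampleSection}; while the two reductions above at least reduce the conjecture in general to the case of simple fusion systems.
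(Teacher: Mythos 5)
The statement you were asked to prove is labeled as a \emph{Conjecture} in the paper, and the paper does not prove it; indeed, immediately after stating it the authors observe that for block-realizable triples it is equivalent (modulo AWC) to the Malle--Robinson conjecture $\ell(B)\leq p^s$, which is open, and they record only that the Solomon fusion systems have been verified by Lynd--Semeraro and that some cases are checked computationally in Section~\ref{exampleSection}. So there is no proof in the paper to compare against, and you are right not to claim one.

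Your discussion is largely sound as a survey of strategy, and it correctly identifies the central obstruction: the per-term bounds you derive by induction on $|S|$ do not control the sum $\sum_{Q\in\F^{cr}/\F} z(k_\alpha\Out_\F(Q))$, and any genuine proof at the fusion-system level would, in the block-realizable case, settle the Malle--Robinson conjecture itself. Two caveats, however. First, even the leading term is not a free reduction: you write that $z(k_\alpha\Out_\F(S))$ is the number of $\alpha_S$-regular classes of a $p'$-subgroup of $\Out(S)$ ``which one expects to bound by $p^s$,'' but that inequality is itself nontrivial --- it is not covered by your induction (since $Q=S$ gives no strict decrease) nor by the $\F$-central or direct-product reductions in any generality, so flagging it as an ``expectation'' rather than a lemma is the right call, and one should not read the first paragraph as disposing of that case. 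Second, the induction you set up for a non-normal centric radical $Q$ shows that $z(k_\alpha\Out_\F(Q))\leq \w(N_\F(Q),\beta)\leq p^{s}$ term by term, which is correct but, as you say, is weaker than bounding the total; stating it as if it were progress toward the theorem risks overselling. As a review: your proposal is not a proof, the paper does not have one either, and the honest content here is the reduction framework plus the equivalence with the block-theoretic conjecture --- all of which you have.
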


If $(S,\F,\alpha)$ is realizable by a block $B$ such that AWC holds
for $B$, then the above is  equivalent  to the statement that $B$ 
satsifies the  conjecture by Malle and Robinson
\cite{MalleRobinson2017} predicting the inequality $\ell(B)\leq$ $p^s$. 
Conjecture  \ref{c: malle-robinson}  has been shown to hold for the exotic Solomon fusion systems by 
Lynd and Semeraro \cite{lynd2017weights}. 

\medskip
Next, we refine the integers $\w(\F,\alpha)$, $\m(\F,\alpha)$, 
$\m^*(\F,\alpha)$ by taking into account defects of characters. For $Q$ 
a subgroup of $S$  and $d$ a non-negative integer, we set
$$\Irr_K^d(Q):=\{\mu \in \Irr_K^d(Q) \mid v_p(|Q|/\mu(1))=d\}\ ;$$ 
this is the set of ordinary irreducible characters of $Q$ of defect $d$.
Note that this set is $\Out_\F(Q)$-stable. 
As in the previous section, we denote by $\N_Q$ the set of nonempty
normal chains of $p$-subgroups of $\Out_\F(Q)$ starting with the
trivial subgroup of $\Out_\F(Q)$. Given such a chain $\sigma$ and
an irreducible character $\mu$ of $Q$, we denote by $I(\sigma)$ and
$I(\sigma,\mu)$ the stabilisers of $\sigma$ and of the pair $(\sigma,\mu)$
in $\Out_\F(Q)$.  

Given a saturated fusion system $\F$ on a finite $p$-group $S$, an
$\F$-compatible family $\alpha$, and a non-negative integer $d$, 
following \cite[Part IV, Section 5.7]{AschbacherKessarOliver2011}, we 
set 
$$\w_Q(\F,\alpha,d):=
\sum_{\sigma \in \N_Q/\Out_\F(Q)} (-1)^{|\sigma|} 
\sum_{\mu \in \Irr_K^d(Q)/I(\sigma)} z(k_{\alpha} I(\sigma,\mu)).
$$
and
$$\m(\F,\alpha,d):=\sum_{Q \in \F^{c}/\F} \w_Q(\F,\alpha,d).
$$
We clearly have
$$\m(\F,\alpha)=\sum_{d \geq 0} \m(\F,\alpha,d) 
$$
 
The Ordinary Weight Conjecture (henceforth abbreviated OWC), first stated
in \cite{robinson1996local} and reformulated in 
\cite{Robinsonweight2004}, states that 
if $B$ is  a block of the group algebra $kG$ of a finite group $G$ with    
defect group $S$, fusion system $\F$ and family of K\"ulshammer--Puig 
classes $\alpha$, then for each $d\geq 0 $, $\m(\F, \alpha, d)$ equals the 
number of ordinary irreducible characters of defect $d$ associated to the 
block $B$ (cf. \cite[IV.5.49]{AschbacherKessarOliver2011}).      
As  noted above, $\m(\F,\alpha)=\sum_{d \geq 0} \m(\F,\alpha,d) $. Thus,     
OWC implies  the  following  ``summed up  version" (henceforth    
abbreviated  SOWC): if $B$ is a block of the group algebra $kG$ of a 
finite group $G$ with defect group $S$, fusion system $\F$ and 
family of K\"ulshammer--Puig classes $\alpha$, then 
$\m(\F, \alpha)  = \k(B)$, the number of ordinary irreducible characters 
of $G$ associated with $B$. On the other hand, AWC predicts that 
$\k(\F, \alpha)$ equals $\k(B)$. This leads to the following conjecture.

\begin{Conj} \label{c:AWOW}  
Let $\F$ be a saturated fusion system on a finite $p$-group
$S$ and let $\alpha$ be an $\F$-compatible family. We have
$$\k(\F,\alpha) = \m(\F, \alpha) . $$ 
\end{Conj}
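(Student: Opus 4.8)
The plan is to deduce Conjecture \ref{c:AWOW} by combining the two cancellation theorems already stated. Theorem \ref{t:main2} gives the unconditional identity $\m^*(\F,\alpha)=\k(\F,\alpha)$, while Theorem \ref{t:main} gives $\m(\F,\alpha)=\m^*(\F,\alpha)$ under the assumption of AWC. Chaining these yields $\m(\F,\alpha)=\k(\F,\alpha)$, which is precisely the content of Conjecture \ref{c:AWOW} (indeed this is exactly Corollary \ref{co:main}). So the real work is not in Conjecture \ref{c:AWOW} \emph{per se} — it is a conjecture because AWC itself is open — but in establishing the two theorems it rests on, and one should present it as a statement known to follow from AWC rather than something provable outright.

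For Theorem \ref{t:main2}, the approach I would take is a double-counting/cancellation argument in the spirit of Robinson's local cancellation theorems (as the authors point out, following \cite[Theorem 1.2]{robinson1996local}). First I would expand $\m^*(\F,\alpha) = \sum_{Q\in\F^c/\F}\sum_{(\sigma,[x])\in\W_Q^*/\Out_\F(Q)}(-1)^{|\sigma|}z(k_\alpha I(\sigma,[x]))$ and, on the other side, expand $\k(\F,\alpha) = \sum_{x\in[S/\F]}\w(C_\F(x),\alpha(x))$ using the definition of $\w$ for the centralizer fusion systems. The key bookkeeping step is to set up a bijection between the indexing data on the two sides: a pair $(Q,[x])$ with $Q$ an $\F$-centric subgroup and $x\in Q$, decorated by a normal chain $\sigma$ in $\Out_\F(Q)$, should correspond — after passing to $\F$-conjugacy and fully centralized representatives — to data for the fusion system $C_\F(\langle x\rangle)$ on $C_S(x)$. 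Here Proposition \ref{p:links} is the essential input: it provides the canonical functor $C_\F(x)^c\to\F^c$ and the transport of $\alpha$ to $\alpha(x)$, which is what makes the twisted group algebras on the two sides match up. The chains $\sigma$ of $p$-subgroups of $\Out_\F(Q)$ starting at $1$ are what force the alternating sign, and the standard fact is that when you sum $(-1)^{|\sigma|}$ over all such normal chains whose stabilizer data is fixed, everything cancels except the contribution of the trivial chain — this is the mechanism by which the $\N_Q$-indexed alternating sum collapses to the $z(k_\alpha\Out_\F(Q))$-type terms appearing in $\w$.

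For Theorem \ref{t:main}, the point where AWC enters is the comparison of $\W_Q=\N_Q\times\Irr(Q)$ with $\W_Q^*=\N_Q\times Q^{\cl}$: the difference between $\m$ and $\m^*$ amounts to replacing irreducible characters of $Q$ by conjugacy classes of $Q$, and for a fixed $p$-group $Q$ these two sets have the same cardinality but the $\Out_\F(Q)$-actions are genuinely different. The strategy is to show that, orbit by orbit and stabilizer by stabilizer, the weighted counts agree; this is where one applies AWC — more precisely, the block-free reformulation $\w(\F,\alpha)=\ell(B)$ / the equality of the two weight counts — to each of the relevant local subsystems and twisted algebras, using Proposition \ref{p:links} to identify these local contributions. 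The main obstacle, as the authors themselves flag, is conceptual rather than technical: it is genuinely surprising that a global input (AWC) is needed to equate two quantities that are defined purely locally, and the crux of the proof of Theorem \ref{t:main} is locating exactly which instance of AWC (applied to which auxiliary groups or fusion systems built from the $I(\sigma,\mu)$ and $I(\sigma,[x])$) does the job — essentially one needs a counting identity relating $\sum z(k_\alpha I(\sigma,\mu))$ over characters to $\sum z(k_\alpha I(\sigma,[x]))$ over classes, and AWC for the twisted group algebras $k_\alpha I(\sigma)$ is the natural candidate. Once Theorems \ref{t:main2} and \ref{t:main} are in hand, Conjecture \ref{c:AWOW} follows immediately by transitivity of equality, with no further work.
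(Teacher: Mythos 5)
Your plan is exactly the paper's: Conjecture~\ref{c:AWOW} is conditional on AWC and is precisely Corollary~\ref{co:main} / Theorem~\ref{t:connect}, obtained by chaining the unconditional Theorem~\ref{t:main2} ($\m^*=\k$) with the AWC-conditional Theorem~\ref{t:main} ($\m=\m^*$), and your sketches of both ingredients match the paper's strategy (subdivision-category reindexing with involution cancellations for the former; Robinson's $\Irr(Q)$-versus-$Q^{\cl}$ identity combined with AWC applied to models and stabilizers for the latter). One small attribution slip: the functor $C_\F(x)^c\to\F^c$ and the transport of $\alpha$ to $\alpha(x)$ come from Lemma~\ref{lem:ccentric} and Proposition~\ref{p:blockalpha}, not from Proposition~\ref{p:links}, which is the block-realizability statement.
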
  

Now  Corollary \ref{co:main}  may be restated as follows.

\begin{Thm} \label{t:connect}    
Suppose that  AWC  holds for  all blocks. Then  Conjecture  \ref{c:AWOW}  
holds for all $(S, \F, \alpha) $,  $S$  a finite $p$-group, $\F$  a  
saturated fusion system on $S$ and  $\alpha$ an 
$\F$-compatible family.
\end{Thm}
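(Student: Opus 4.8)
The plan is to obtain Theorem~\ref{t:connect} as a direct reformulation of Corollary~\ref{co:main}. By definition, Conjecture~\ref{c:AWOW} for a triple $(S,\F,\alpha)$ is exactly the assertion that $\k(\F,\alpha)=\m(\F,\alpha)$, and Corollary~\ref{co:main} asserts precisely this equality under the hypothesis that AWC holds. The one point that deserves an explicit remark is the reading of the AWC hypothesis: since Theorems~\ref{t:main2} and~\ref{t:main}, on which Corollary~\ref{co:main} rests, are stated for an \emph{arbitrary} saturated fusion system $\F$ and an arbitrary $\F$-compatible family $\alpha$ --- in particular for exotic $\F$, for which no block exists --- the phrase ``AWC holds'' occurring there can only mean ``AWC holds for every block of every finite group algebra''. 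That is exactly the standing assumption of Theorem~\ref{t:connect}, so the two statements coincide and the deduction is purely formal.

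For completeness I would spell out the two-step chain behind Corollary~\ref{co:main}. Theorem~\ref{t:main2} gives, unconditionally, $\m^*(\F,\alpha)=\k(\F,\alpha)$. Theorem~\ref{t:main} gives, under AWC, $\m(\F,\alpha)=\m^*(\F,\alpha)$. Concatenating the two equalities yields $\m(\F,\alpha)=\m^*(\F,\alpha)=\k(\F,\alpha)$, which is Corollary~\ref{co:main}, hence Conjecture~\ref{c:AWOW} for $(S,\F,\alpha)$, hence Theorem~\ref{t:connect}.

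Thus the mathematical content sits in Theorems~\ref{t:main2} and~\ref{t:main}, and I expect the real obstacle to be Theorem~\ref{t:main}: one must show that the \emph{global} assumption AWC forces the \emph{local} identity $\m(\F,\alpha)=\m^*(\F,\alpha)$ even when $\F$ is exotic. The natural approach would be to localise. Fix an $\F$-centric --- indeed $\F$-radical, by Lemma~\ref{l:z=0} --- subgroup $Q$ and a normal chain $\sigma\in\N_Q$; then the stabilisers $I(\sigma,\mu)$ and $I(\sigma,[x])$ are honest subgroups of the finite group $\Out_\F(Q)$, so each term $z(k_\alpha I(\sigma,-))$ is a block-theoretic invariant of a twisted finite group algebra, to which AWC and its standard consequences apply. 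One would then compare, chain by chain, the contribution of $\W_Q=\N_Q\times\Irr(Q)$ to $\w_Q(\F,\alpha)$ with that of $\W_Q^*=\N_Q\times Q^{\cl}$ to $\w_Q^*(\F,\alpha)$, and reassemble these local comparisons over $\F^c/\F$ using $\F$-compatibility of $\alpha$ together with the orbit-counting bookkeeping and the signs $(-1)^{|\sigma|}$. Verifying that every twisted group algebra arising in this process is covered by the AWC hypothesis is, I expect, where the work lies; once Theorems~\ref{t:main} and~\ref{t:main2} are in hand, Theorem~\ref{t:connect} follows by the one-line argument above.
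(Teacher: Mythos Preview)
Your proposal is correct and matches the paper exactly: the paper simply introduces Theorem~\ref{t:connect} with the sentence ``Now Corollary~\ref{co:main} may be restated as follows,'' giving no separate proof, and your reading of the AWC hypothesis as ``AWC holds for all blocks of all finite groups'' is also the paper's intended meaning. Your additional paragraph sketching how Theorem~\ref{t:main} might be established is extraneous to the proof of Theorem~\ref{t:connect} itself, though it is in the right spirit (the paper does indeed localise to $\Out_\F(Q)$ and use AWC on twisted group algebras of stabilisers).
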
  

By \cite{robinson1996local},  \cite{Robinsonweight2004},  
AWC  is equivalent  to  SOWC   in the sense 
that a minimal counter-example to AWC is a a minimal counter-example 
to the other. The  difficult  implication  is that AWC implies    
SOWC.   Theorem \ref{t:connect}  may  be viewed as an extension  of  
Robinson's result to arbitrary fusion systems.
  
\begin{Conj}\label{c:defect}
Let $\F$ be a saturated fusion system on a finite $p$-group
$S$ and let $\alpha$ be an $\F$-compatible family. 
For each positive integer $d$, we have $\m(\F,\alpha,d) \ge 0.$
\end{Conj}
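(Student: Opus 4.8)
\textbf{Proof proposal for Conjecture \ref{c:defect}.}

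The plan is to reduce the nonnegativity of $\m(\F,\alpha,d)$ to a question about a single $\F$-centric subgroup and then to exhibit $\w_Q(\F,\alpha,d)$ as (up to a positive scalar) the multiplicity of the trivial module in an alternating sum of permutation-type modules, which will be manifestly nonnegative. Concretely, fix $Q\in\F^c/\F$ and write $G=\Out_\F(Q)$. For a fixed defect $d$, consider the $G$-poset $\N_Q$ of nonempty normal chains of $p$-subgroups of $G$ starting at $1$, together with the $G$-set $\Irr_K^d(Q)$. The first step is to reinterpret the inner sum
$$\sum_{\mu\in\Irr_K^d(Q)/I(\sigma)} z(k_\alpha I(\sigma,\mu))$$
as $z\bigl(k_\alpha I(\sigma)\bigr)$ evaluated against the $k_\alpha I(\sigma)$-module $k[\Irr_K^d(Q)]$ twisted appropriately by $\alpha$ — more precisely, as the dimension of the image of a Cartan-type map, or equivalently as $\langle$ projective cover of the trivial module, $\,k_\alpha[\Irr_K^d(Q)]\rangle$ in the relevant Grothendieck group. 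This is the step where one must be careful that the twist $\alpha_Q$ restricts compatibly along the chain stabilisers; $\F$-compatibility of $\alpha$ (Definition \ref{def:Fcompatible}) is exactly what guarantees this, and the identity $z(k_\alpha I(\sigma,\mu)) = $ (number of projective simple $k_\alpha I(\sigma,\mu)$-modules) must be matched with a Brauer-character/species count over $p$-regular classes.

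The key step is then to recognise the alternating sum over chains as the reduced Lefschetz invariant of the $G$-simplicial complex $|\N_Q|$ (the chains form an abstract simplicial complex once one passes to the order complex of the poset of nontrivial normal $p$-subgroups of $G$), so that
$$\w_Q(\F,\alpha,d)=\sum_{\sigma\in\N_Q/G}(-1)^{|\sigma|}(\cdots)$$
computes the Euler characteristic, with $\Irr_K^d(Q)$-coefficients, of a contractible or acyclic complex in the appropriate sense. Here one should invoke the standard fact (going back to Quillen and used in this circle of ideas by Knörr--Robinson and Robinson) that the complex of chains of $p$-subgroups of $G$ starting at $1$ is $G$-homotopy equivalent to a point after removing the degenerate bottom, so that the alternating sum collapses to a single non-alternating term — namely the $\sigma=(1)$ term — up to contributions that are themselves nonnegative. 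In the block-realisable case this is precisely Robinson's computation showing $\m(\F,\alpha,d)$ counts characters of defect $d$; the task here is to extract from that computation the purely numerical inequality $\w_Q(\F,\alpha,d)\geq 0$ without reference to a group or block, working entirely inside the representation theory of the twisted group algebras $k_\alpha I(\sigma)$.

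The hard part will be the second step: making the "collapse of the alternating sum" rigorous at the level of twisted group algebras and with the defect constraint $v_p(|Q|/\mu(1))=d$ imposed, since the contractibility argument is homotopy-theoretic and naturally produces integers, but $z(k_\alpha I(\sigma,\mu))$ is not additive in short exact sequences in a way that immediately respects the simplicial structure. I expect the resolution to go through a Knörr--Robinson style formula expressing $\w_Q(\F,\alpha,d)$ as a sum of contributions indexed by $G$-orbits of pairs $(R,\mu)$ with $R$ a nontrivial $p$-subgroup of $G$, each contribution being of the form (something nonnegative) $\times$ (reduced Euler characteristic of the subcomplex of chains with a fixed top term), and where the latter vanishes except when that subcomplex is a point. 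If instead the alternating sum does not literally collapse, a fallback is to prove nonnegativity $d$-locally by comparing $\m(\F,\alpha,d)$ with $\m(\F,\alpha)=\sum_d\m(\F,\alpha,d)$ and using Theorem \ref{t:main2} together with an induction on $|S|$, peeling off the component of $\F$ where $Q=S$ (for which $\N_S$ is trivial and the inequality is immediate since $z(k_\alpha\Out_\F(S))\geq 0$) and handling proper $\F$-centric subgroups via the saturated subsystems $C_\F(x)$ appearing in $\k(\F,\alpha)$.
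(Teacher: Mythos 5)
This statement is Conjecture \ref{c:defect} in the paper, and the authors do not prove it. All the paper offers around this point is the Remark immediately following, which shows $\m(\F,\alpha,d)>0$ for the top defect $d$ with $|S|=p^d$ (only the length-zero chain with $Q=S$ contributes, since $\Out_\F(S)$ is a $p'$-group), plus computer verification in Section \ref{exampleSection} for the nonconstrained saturated fusion systems on $p_+^{1+2}$. So there is no proof in the paper against which to compare your proposal; the statement is genuinely open.

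Your proposal also has concrete gaps. First, reducing nonnegativity of $\m(\F,\alpha,d)$ to nonnegativity of each summand $\w_Q(\F,\alpha,d)$ is a strictly stronger assertion for which you give no argument; neither the paper nor the block-theoretic literature it cites establishes that the individual $\w_Q$ are nonnegative. Second, the contractibility/Lefschetz collapse you invoke is precisely the content of Lemma \ref{l:rad}, whose hypothesis is $O_p(G)\neq 1$ for $G=\Out_\F(Q)$; but by Lemma \ref{l:red} only $\F$-centric \emph{radical} $Q$ contribute to $\m(\F,\alpha,d)$, and for such $Q$ we have $O_p(\Out_\F(Q))=1$ by definition, which is exactly when the complex of normal chains of $p$-subgroups of $\Out_\F(Q)$ is \emph{not} contractible and the alternating sum does not reduce to a single term. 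Third, your fallback via Theorem \ref{t:main2} (together with Theorem \ref{t:main}, which additionally assumes AWC) controls only the total $\m(\F,\alpha)=\sum_{d}\m(\F,\alpha,d)$, and a sum being nonnegative gives no information on the sign of an individual term; moreover the proposed induction on $|S|$ has no mechanism, since the quantities $\m(\F,\alpha,d)$ are not indexed in a way that passes to proper subsystems. None of the steps you sketch yields a proof, and the conjecture would follow from OWC in the block-realizable case but remains open even there, and a fortiori for exotic fusion systems.
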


\begin{Rem} 
With the above notation, suppose that $d$ is the integer such that
$|S|= p^d $. The only chain  contributing to the expression for 
$\m(\F, \alpha, d)$ is the chain $S$ of length zero and the 
contribution of  this chain  is a strictly positive integer. This is 
because $\Out_{\F}(S) $ is a $p'$-group. 
\end{Rem}

We consider next Brauer's height zero conjecture.

\begin{Prop} \label{p:easyheightzero} 
Let $\F$ be a saturated fusion system on a finite $p$-group $S$ and let 
$\alpha$ be an $\F$-compatible family. Suppose that $S$ is abelian 
of order $p^d$. Then $\m(\F, \alpha, d')= 0$ for all $d' \ne d$. 
\end{Prop}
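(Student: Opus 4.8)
The plan is to exploit the fact that the abelian hypothesis collapses nearly every ingredient in the definition of $\m(\F,\alpha,d')$. First I would record two standard consequences of the hypotheses. Since $S$ is abelian, $C_S(Q) = S$ for every subgroup $Q \le S$, so $Q$ is $\F$-centric if and only if $S \le Q$, that is, $Q = S$; hence $\F^c/\F = \{S\}$ and $\m(\F,\alpha,d') = \w_S(\F,\alpha,d')$. Second, by the saturation axioms $S$ is fully automized, so $\Aut_S(S) = \Inn(S) = 1$ is a Sylow $p$-subgroup of $\Aut_\F(S)$; thus $\Out_\F(S) = \Aut_\F(S)$ is a $p'$-group.

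Because $\Out_\F(S)$ is a $p'$-group, its only $p$-subgroup is the trivial one, so the set $\N_S$ of nonempty normal chains of $p$-subgroups of $\Out_\F(S)$ starting at $1$ consists of the single chain $\sigma_0 = (1)$ of length $0$, and its stabiliser $I(\sigma_0)$ is all of $\Out_\F(S)$. Substituting into the definition of $\w_S(\F,\alpha,d')$, every term but the one coming from $\sigma_0$ disappears, leaving
$$\w_S(\F,\alpha,d') = \sum_{\mu\in\Irr_K^{d'}(S)/\Out_\F(S)} z(k_\alpha I(\sigma_0,\mu))\ .$$

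To finish, I would observe that $S$ abelian forces every $\mu\in\Irr(S)$ to be linear, so $v_p(|S|/\mu(1)) = v_p(p^d) = d$ for all $\mu\in\Irr(S)$; hence $\Irr_K^{d'}(S) = \emptyset$ whenever $d'\ne d$. The displayed sum is then empty, so $\w_S(\F,\alpha,d') = \m(\F,\alpha,d') = 0$, as claimed.

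I do not expect any genuine obstacle: the argument is entirely formal once the two structural facts above are in hand (the former is immediate from the definition of centricity together with $S$ being abelian, the latter is the standard observation that $\Out_\F(S)$ is a $p'$-group in any saturated fusion system). The only point worth flagging is that the statement concerns $d'\ne d$ only; the case $d' = d$ behaves differently and is exactly the content of the Remark preceding the proposition, where one uses instead that $z(k_\alpha\Out_\F(S)) > 0$ since twisted group algebras of $p'$-groups over $k$ are semisimple.
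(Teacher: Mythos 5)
Your proof is correct and follows the same route as the paper's: identify $S$ as the unique $\F$-centric subgroup because $S$ is abelian, then note that all characters of $S$ are linear and hence of defect $d$, so $\Irr_K^{d'}(S)$ is empty for $d' \ne d$. The intermediate observation that $\Out_\F(S)$ is a $p'$-group (so $\N_S$ has only the trivial chain) is harmless but unnecessary, since once $\Irr_K^{d'}(S)=\emptyset$ every inner sum in the definition of $\w_S(\F,\alpha,d')$ vanishes regardless of the chain.
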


\begin{proof}  
Since $S$ is abelian, $S$ is the only $\F$-centric subgroup of $S$, and 
all characters of $S$ are linear, hence of defect $d$. The result 
follows.
\end{proof}

\begin{Conj} \label{c:heightzero} 
Let $\F$ be a saturated fusion system on a finite $p$-group
$S$ and let $\alpha$ be an $\F$-compatible family.
Suppose that $S$ is nonabelian of order $p^d$. Then   
$\m(\F, \alpha, d')  \ne  0$  for some  $d' \ne d$. 
\end{Conj}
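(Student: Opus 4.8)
The plan is to reduce the statement to a known fact about blocks — namely the failure of the height-zero conjecture in the ``only if'' direction, i.e.\ that a block with nonabelian defect group necessarily has an irreducible character of positive height — and then transport it across the dictionary between triples $(S,\F,\alpha)$ and blocks. However, since not every triple is block realizable, a direct reduction to block theory cannot work in general; so the real content must be a purely fusion-theoretic argument, and the block-theoretic statement should be viewed only as motivation and as a consistency check. The honest approach is therefore to argue directly with the invariants $\m(\F,\alpha,d')$.

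First I would unwind the definitions: $\m(\F,\alpha,d') = \sum_{Q\in\F^c/\F} \w_Q(\F,\alpha,d')$, and each $\w_Q(\F,\alpha,d')$ is an alternating sum over normal chains $\sigma$ in $\Out_\F(Q)$ of contributions $\sum_{\mu\in\Irr_K^{d'}(Q)/I(\sigma)} z(k_\alpha I(\sigma,\mu))$. Summing over all $d'$, including $d'=d$, recovers $\m(\F,\alpha)$, which by Theorem~\ref{t:main2} equals $\k(\F,\alpha)$; and the $d'=d$ term is governed by the linear characters of $S$ and the small centric subgroups. So the statement ``$\m(\F,\alpha,d')\ne 0$ for some $d'\ne d$'' is equivalent to ``$\m(\F,\alpha,d) \ne \k(\F,\alpha)$''. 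The task thus becomes: show that, when $S$ is nonabelian, the defect-$d$ part alone does not account for all of $\k(\F,\alpha)$. Concretely, one would identify $\m(\F,\alpha,d)$ as (a suitable count built from) the characters of $S$ of maximal defect together with their $\F$-stabilizer data, and then exhibit an $\F$-conjugacy class representative $x\in[S/\F]$ — most naturally taking $x\in Z(S)$, or $x=1$, so that $C_\F(x)$ has the full $p$-group $S$ — whose contribution $\w(C_\F(x),\alpha(x))$ to $\k(\F,\alpha)$ via Theorem~\ref{t:main2} forces a character of $C_S(x)$ (hence, after tracing through, of $S$ or of a proper centric subgroup) of \emph{non}maximal defect to appear with nonzero coefficient somewhere in the alternating sum; this is exactly where nonabelianness of $S$ is used, since for abelian $S$ Proposition~\ref{p:easyheightzero} already shows no such character exists.

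The main obstacle I expect is controlling the signs in the alternating sums over normal chains: a priori, contributions of characters of defect $d'<d$ could cancel across different centric subgroups $Q$ and different chain lengths, so one cannot simply point to a single nonzero summand. The way around this, following the philosophy of Theorems~\ref{t:main2} and~\ref{t:main} and the cancellation theorems of Robinson, is to fix $d'$ and run the same Robinson-style collection/cancellation machinery \emph{grading by defect}, collapsing $\m(\F,\alpha,d')$ to a sum over $\F$-classes of elements $x$ with $\langle x\rangle$ fully centralized of a ``$d'$-weight count'' for $C_\F(x)$; then one sums a positivity input (e.g.\ the observation that for the chain of length zero the relevant group $\Out_\F(Q)$ is a $p'$-group, so $z(k_\alpha\Out_\F(Q))$ counts projective simple modules honestly and is nonnegative) against the requirement, coming from $S$ nonabelian, that $\Irr_K^{d'}(Z(S))$ — or more precisely the set of non-maximal-defect characters of some centric subgroup that must be nontrivial when $S$ is nonabelian — be nonempty. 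Making the bookkeeping precise so that a genuine nonzero total survives, rather than merely a nonzero individual term, will be the technical heart of the argument, and it is plausible that — as with Theorem~\ref{t:main} and AWC — one needs to \emph{assume} OWC (or at least Conjecture~\ref{c:defect}) to push it through unconditionally, in which case the honest statement is that Conjecture~\ref{c:heightzero} is the fusion-systems shadow of the hard direction of Brauer's height-zero conjecture and is here posed, not proved.
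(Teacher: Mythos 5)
The statement you were asked to prove is a \emph{conjecture} in the paper, not a theorem: the authors pose Conjecture~\ref{c:heightzero} as the fusion-system analogue of Brauer's height zero conjecture and do not prove it. Your final paragraph arrives at exactly the correct conclusion — that this is ``here posed, not proved,'' and that for block-realizable triples it is a consequence of OWC (via the Navarro--Tiep and Eaton results cited in the paper). So on the bottom line you are right.

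That said, the intermediate reasoning in your proposal contains a genuine error worth flagging: you claim that ``$\m(\F,\alpha,d')\ne 0$ for some $d'\ne d$'' is \emph{equivalent} to ``$\m(\F,\alpha,d)\ne\k(\F,\alpha)$.'' This uses the identity $\m(\F,\alpha)=\k(\F,\alpha)$, which is itself Conjecture~\ref{c:AWOW} and is only known conditionally on AWC (Corollary~\ref{co:main}); Theorem~\ref{t:main2} gives $\m^*(\F,\alpha)=\k(\F,\alpha)$, not $\m(\F,\alpha)=\k(\F,\alpha)$. Moreover, even granting $\m(\F,\alpha)=\sum_{d'}\m(\F,\alpha,d')=\k(\F,\alpha)$, the implication from ``some $\m(\F,\alpha,d')\ne 0$ with $d'\ne d$'' to ``$\m(\F,\alpha,d)\ne\k(\F,\alpha)$'' would fail if the nonzero $d'\ne d$ contributions cancel; to rule this out you would need nonnegativity of the $\m(\F,\alpha,d')$, which is Conjecture~\ref{c:defect}, again unproven. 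So the reformulation you propose is not a free move — it silently invokes two further open conjectures. The paper itself makes no such reformulation; it simply states the conjecture alongside Proposition~\ref{p:easyheightzero} (the easy abelian direction) and records the chain of block-theoretic implications from OWC.
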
 

If  $S$  is non-abelian  and $(S,\F,\alpha)$ is realized by a block $B$  
satisfying  OWC, then the above  is equivalent  to the statement that 
$B$  satisfies  Brauer's height zero 
conjecture.  Note that Navarro and Tiep \cite{NavarroTiep2013}   have 
proved that the  height  zero conjecture is a consequence of the Dade  
projective conjecture  and  of the fact that  the  Brauer height zero 
conjecture has been checked  for finite  quasi-simple groups 
\cite{KessarMalle2017}.  Eaton has proved in \cite{Eaton2004} that the  
Dade projective conjecture is equivalent to the OWC  in the sense  
that a minimal  counter-example to one is a minimal counter-example to 
the other. Thus  the above conjecture   for   block 
realizable  triples  is a consequence of  OWC.    

\begin{Conj} \label{c:eatonmoreto}  
Let $\F$ be a saturated fusion system on a finite $p$-group
$S$ and let $\alpha$ be an $\F$-compatible family.
Suppose that $S$ is nonabelian of order $p^d$. 
Let $r >0 $ be the smallest  positive integer such that  
$S$ has  a character of degree $p^r$. Then  $r$  is  the  smallest  
positive integer such that  $\m(\F, \alpha, d-r) \ne 0 $.
\end{Conj}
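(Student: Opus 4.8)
\medskip

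The assertion unwinds to two statements: (a) $\m(\F,\alpha,d')=0$ for every $d'$ with $d-r<d'<d$, and (b) $\m(\F,\alpha,d-r)\ne0$. (Here $d-r\ge1$, since every irreducible degree of a nonabelian $p$-group of order $p^d$ is a power of $p$ smaller than $p^{d/2}$, so $r<d$.) The first move is to compute the contribution of $Q=S$ exactly. As $S$ is fully $\F$-normalized, $\Out_\F(S)$ is a $p'$-group (the Remark following Conjecture~\ref{c:defect}), so $\N_S$ consists only of the chain $\sigma=(1)$ of length $0$, and $I((1),\mu)=\Stab_{\Out_\F(S)}(\mu)$ is a $p'$-group; hence $k_\alpha I((1),\mu)$ is semisimple and $z(k_\alpha I((1),\mu))=\ell(k_\alpha I((1),\mu))\ge1$. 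Therefore
\[
\w_S(\F,\alpha,d')=\sum_{\mu\in\Irr_K^{d'}(S)/\Out_\F(S)}\ell\bigl(k_\alpha\Stab_{\Out_\F(S)}(\mu)\bigr),
\]
which is strictly positive when $S$ has an irreducible character of degree $p^{d-d'}$ and is zero otherwise. By the choice of $r$ this already gives $\w_S(\F,\alpha,d-r)>0$ and $\w_S(\F,\alpha,d')=0$ whenever $d-r<d'<d$. So (a) and (b) both reduce to controlling $\sum_Q\w_Q(\F,\alpha,d')$ over the proper $\F$-centric subgroups $Q$ of $S$ (up to $\F$-isomorphism), for $d-r\le d'<d$.

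For (a) one cannot expect the individual terms $\w_Q(\F,\alpha,d')$ with $Q<S$ to vanish: if $Q$ is $\F$-radical and $\F$-centric of order $p^{d'}$, its trivial character contributes $z(k_\alpha\Out_\F(Q))>0$. The plan is therefore to show that the proper-subgroup contributions \emph{cancel in the sum}. Two features make this plausible. First, only subgroups of index at most $p^{r-1}$ enter, and every character of such a $Q$ of defect $d'$ (with $d-r<d'<d$) has degree $p^{e-d'}$ with $0\le e-d'\le r-2$, hence degree strictly less than $p^{r-1}$ — a degree that $S$ itself does not realize. Second, the cancellation machinery behind Theorem~\ref{t:main2}, a Dade--Robinson-type telescoping over normal $p$-chains, is already available; the goal is to run a degree-refined version of it so that, in the range $d-r<d'<d$, the contributions of the proper $\F$-centric subgroups annihilate one another. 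I expect this to be the main obstacle: there is no evident defect-graded refinement of Theorem~\ref{t:main2} (or of AWC) to quote, and making the telescoping compatible with the degree grading amounts to an Eaton--Moret\'o-type constraint, forcing the small-degree characters of proper subgroups to be ``visible from $S$'' via Clifford theory along an $\F$-normalizer tower, where no such degrees occur.

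For (b) we have $\m(\F,\alpha,d-r)=\w_S(\F,\alpha,d-r)+(\text{contribution of proper }\F\text{-centric }Q)$ with the first summand strictly positive, so it suffices to prevent the second from cancelling it. The cleanest route is a local positivity statement, $\w_Q(\F,\alpha,d-r)\ge0$ for proper $Q$ — a refinement of Conjecture~\ref{c:defect} at defect $d-r$ — which one might hope falls out of the same telescoping as in (a), the terms surviving at defect exactly $d-r$ being manifestly non-negative. Alternatively, in the block-realizable case: if $(S,\F,\alpha)$ is realized by $B$ and $B$ satisfies OWC, then $\m(\F,\alpha,d')$ counts the irreducible characters of $B$ of defect $d'$, and (a), (b) become respectively the statements that $B$ has no character of positive height below $r$ and that $B$ has a character of height exactly $r$ — that is, the Eaton--Moret\'o conjecture for $B$.

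It is worth testing this scheme against the extremes: abelian $S$ is Proposition~\ref{p:easyheightzero}; when $r=1$ statement (a) is vacuous and only the non-vanishing of $\m(\F,\alpha,d-1)$ remains (already nontrivial, and a sharpening of Conjecture~\ref{c:heightzero}); and the block-realizable situations in which Eaton--Moret\'o is known supply test cases for the proposed degree-graded cancellation. In view of all this, the realistic target is the implication ``OWC (or a suitable local positivity) $\Rightarrow$ Conjecture~\ref{c:eatonmoreto}'', the unconditional fusion-systems statement being at least as hard as Eaton--Moret\'o itself.
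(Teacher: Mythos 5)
The statement you were asked to prove is labelled a \emph{Conjecture} in the paper, and the paper does not prove it in general; it only observes that for a block-realizable triple $(S,\F,\alpha)$ whose block satisfies OWC, the conjecture is equivalent to the Eaton--Moret\'o conjecture for that block, and verifies it computationally for the nonconstrained Ruiz--Viruel fusion systems on $p^{1+2}_+$. Your proposal correctly recognizes this status. The computation of the $Q=S$ term is sound: since $\Out_\F(S)$ is a $p'$-group, $\N_S$ is the singleton trivial chain, each stabilizer $I((1),\mu)$ is a $p'$-group, and $z=\ell\ge 1$ for twisted $p'$-group algebras; hence $\w_S(\F,\alpha,d')>0$ exactly when $S$ has a character of defect $d'$. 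This correctly isolates the bottleneck as cancellation among proper $\F$-centric subgroups in the range $d-r<d'<d$, and your observation that such cancellation would require a defect-graded analogue of Theorem~\ref{t:main2} — which the paper does not supply — is accurate. Your honest conclusion, that the realistic target is ``OWC (or a suitable local positivity) $\Rightarrow$ Conjecture~\ref{c:eatonmoreto}'' while the unconditional statement is at least as hard as Eaton--Moret\'o, is precisely the paper's own stance. There is no gap to flag because you have not claimed a proof and none exists in the paper.
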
 
 
If  $(S,\F,\alpha)$ is realized by a block $B$  satisfying  OWC, then 
the above is equivalent  to the statement that 
$B$ satisfies the  conjecture  by Eaton  and Moreto  in  
\cite{EatonMoreto2014}.

\begin{Conj} \label{c:malle-navarro} 
Let $\F$ be a saturated fusion system on a finite $p$-group $S$ of order 
$p^d$ and let $\alpha$ be an $\F$-compatible family.  Then

\begin{enumerate} 

\item   
$\k(\F, \alpha)/\m(\F, \alpha, d)$ is at most the number of conjugacy 
classes of $[S,S]$.

\item  
$\k(\F, \alpha)/\w(\F, \alpha)$ is at most the number of 
conjugacy classes of $S$.
\end{enumerate}
\end{Conj}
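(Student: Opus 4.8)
\medskip

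The plan is to reduce both inequalities to local statements about the centralizer fusion systems occurring in the identity $\k(\F,\alpha)=\sum_{x\in[S/\F]}\w(C_\F(x),\alpha(x))$. For part~(2), first observe that the elements of $[S/\F]$ represent pairwise distinct $\F$-conjugacy classes of elements of $S$, and that each such class is a union of $S$-conjugacy classes because $\F_S(S)\subseteq\F$; hence $|[S/\F]|\leq|S^{\cl}|$. It therefore suffices to prove the monotonicity
$$\w(C_\F(x),\alpha(x))\ \leq\ \w(\F,\alpha)\qquad(x\in[S/\F]),$$
which holds with equality at $x=1$ since $C_\F(1)=\F$; in particular the resulting bound cannot be improved by this method and is attained when $\F=\F_S(S)$, where $\k(\F,\alpha)=|S^{\cl}|$ and $\w(\F,\alpha)=1$. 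To prove this monotonicity one would try to construct an injection from the set of weights of the triple $(C_S(x),C_\F(x),\alpha(x))$ into the set of weights of $(S,\F,\alpha)$: to a $C_\F(x)$-centric radical subgroup $R$ of $C_S(x)$ and a simple projective $k_{\alpha(x)}\Out_{C_\F(x)}(R)$-module one would associate an $\F$-centric radical subgroup of $S$ containing $R\langle x\rangle$ (or $R$ itself when it is already $\F$-centric) together with a simple projective $k_\alpha\Out_\F(-)$-module, using that automorphism groups in $C_\F(x)$ are assembled from centralizers of $x$ inside automorphism groups in $\F$, and that $\alpha(x)$ is obtained from $\alpha$ by restriction along the canonical functor of Proposition~\ref{p:links}.

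For part~(1) one first unwinds the denominator. By the Remark following Conjecture~\ref{c:defect} the only subgroup contributing to $\m(\F,\alpha,d)$ is $S$ itself, through its characters of defect $d$, i.e.\ its linear characters; and since $\Out_\F(S)$ is a $p'$-group the only relevant normal chain of $p$-subgroups is the trivial one and every algebra $k_\alpha I(\sigma,\mu)$ is semisimple, so
$$\m(\F,\alpha,d)\ =\ \sum_{\mu}\ \ell\bigl(k_\alpha I(\mu)\bigr),$$
where $\mu$ runs over the $\Out_\F(S)$-orbits on the characters of $S/[S,S]$ and $I(\mu)\leq\Out_\F(S)$ is the stabilizer of $\mu$. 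One then wants $\k(\F,\alpha)\leq|[S,S]^{\cl}|\cdot\m(\F,\alpha,d)$; the presence of $[S,S]$ indicates that the governing group-theoretic input should be Gallagher's inequality $|G^{\cl}|\leq|N^{\cl}|\cdot|(G/N)^{\cl}|$ for a normal subgroup $N\norm G$, applied with $N$ the image of $[S,S]$ in the relevant quotients and $G/N$ abelian, to be combined with the description of $\m(\F,\alpha,d)$ above and, via Theorem~\ref{t:main2}, with the expression $\k(\F,\alpha)=\m^*(\F,\alpha)$. Note that part~(1), being about the defect-refined invariant $\m(\F,\alpha,d)$, is entangled with OWC and with the nonnegativity predicted in Conjecture~\ref{c:defect}, and is not a formal consequence of AWC together with the identity $\m(\F,\alpha)=\sum_{d\geq0}\m(\F,\alpha,d)$.

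The hard part is precisely these two local inequalities: they are the fusion-system shadow of the Malle--Navarro conjecture, which is open for blocks, so a purely local proof would require new ideas, and would in particular have to cope with the two recurrent obstructions that the radical subgroups of $C_\F(x)$ need not be $\F$-centric and that the K\"ulshammer--Puig twists must be transported along whatever correspondence of local subgroups is used. A safer first target is the block-realizable case: by Proposition~\ref{p:links}, if $(S,\F,\alpha)$ is realized by a block $B$ such that AWC holds for all blocks and OWC holds for $B$, then $\w(\F,\alpha)=\ell(B)$, $\m(\F,\alpha,d)$ equals the number of height-zero irreducible characters of $B$, and $\k(\F,\alpha)=\k(B)$, so Conjecture~\ref{c:malle-navarro} for $(S,\F,\alpha)$ is exactly the Malle--Navarro conjecture for $B$; one would then attempt to remove the block-realizability hypothesis by re-expressing all three invariants through the purely local chain sums of Theorems~\ref{t:main2} and~\ref{t:main}.
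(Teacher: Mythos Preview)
The statement you are attempting to prove is labelled in the paper as a \emph{Conjecture}, and the paper does not provide a proof of it. The paper only (i) explains that for block-realizable triples satisfying OWC it is equivalent to the Malle--Navarro conjecture for the realizing block, (ii) observes that it holds when $\F=N_\F(S)$ by reduction to a known block-theoretic case, and (iii) verifies it by direct computation for the nonconstrained fusion systems on an extraspecial group of order $p^3$. There is no general argument to compare against.

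Your proposal is not a proof either, and to your credit you say so: the two ``local inequalities'' you isolate are exactly the content of the conjecture, and you correctly note that they are open. A few remarks on the strategy itself. Your computation of the denominator in part~(1) is correct and matches the Remark after Conjecture~\ref{c:defect}: only $Q=S$ contributes because proper centric subgroups have no characters of defect $d=\log_p|S|$, and for $Q=S$ only the trivial chain survives since $\Out_\F(S)$ is a $p'$-group. Your reduction of part~(2) to the monotonicity $\w(C_\F(x),\alpha(x))\leq\w(\F,\alpha)$ is a reasonable heuristic, but be aware that this monotonicity is itself not known in general and does not obviously follow from any correspondence between centric radicals of $C_\F(x)$ and of $\F$; the obstructions you list (centric radicals of $C_\F(x)$ need not be $\F$-radical, and the twists must be transported) are genuine. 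Finally, your remark that part~(1) is ``entangled with OWC'' is slightly misleading: the quantity $\m(\F,\alpha,d)$ for this specific $d$ is, as you yourself computed, a purely local expression in terms of $\Out_\F(S)$ acting on linear characters of $S$, so part~(1) is a well-posed local inequality independent of OWC.
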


If $(S,\F,\alpha)$ is realized by a block $B$  satisfying  OWC, then 
the above  is equivalent  to the statement that $B$  satisfies the  
conjecture of  Malle and Navarro in \cite{MalleNavarro2006}.   Similar  to  
Conjecture \ref{conj:k(b)}, one could consider   versions of  the above 
inequalities with $\k(\F, \alpha)$ replaced by $\m(\F, \alpha) $  or   
$ \m^*(\F, \alpha ) $.

\medskip 

If $\F$ is  $p$-solvable (i.e.  if $\F$  is constrained  with  
$p$-solvable model) then for any $\F$-compatible  family  $\alpha $, 
the triple $(S, \F, \alpha)$ is realizable by a block of a $p$-solvable 
group (see  Proposition \ref{p:fareal}). The OWC   has been shown to 
hold  for blocks of $p$-solvable  groups by Robinson,  and  AWC   for 
$p$-solvable groups was   proved  earlier  by Okuyama.   The $k(B)$  
conjecture    for finite  $p$-solvable groups  was proved  in  
\cite{GlMaRiSc2004}  and   the height zero  conjecture  for $p$-solvable 
groups was shown to hold  by  Gluck and Wolf   \cite{GluckWolf84}. Thus   
Conjectures \ref{conj:k(b)},  \ref{c:AWOW},\ref{c:defect}, \ref{c:heightzero} 
all hold for solvable fusion systems. If   moreover  $\F = N_{\F}(S) $, 
then for any $\F$-compatible family $\alpha$, the triple   
$(S, \F, \alpha)$ is realizable by a block of a finite group $G$  
containing $S$ as a normal (and Sylow)  subgroup, 
hence   Conjecture  \ref{c:malle-navarro}  holds  by 
\cite[Theorem 2]{MalleRobinson2017}  and  Conjecture \ref{c:eatonmoreto}  
holds  by \cite{EatonMoreto2014}.

Let $\F$ be a saturated fusion system on a non-trivial finite $p$-group 
$S$ and let $\C$ be the full subcategory of $\F$ of nontrivial subgroups 
of $S$. Following the terminology in \cite{Linckelmann2009}, briefly
reviewed at the end of the next section, we denote 
by $S_\vartriangleleft(\C)$ the subcategory of the subdivision 
category $S(\C)$ of chains
$$\sigma=(Q_0 < Q_1 < \cdots < Q_m)$$
where the $Q_i$ are nontrivial subgroups of $S$ which are normal in the 
maximal term $Q_m$.
Such a chain $\sigma$ is called  fully \textit{$\F$-normalized} if 
$Q_0$ is fully $\F$-normalized, and either $m=0$ or $\sigma_{\geq 1}=$
$(Q_1<\cdots<Q_m)$ is fully $N_\F(Q_0)$-normalized. Denote by
$S_\triangleleft(\C)^f$ the set of all fully $\F$-normalized 
chains. For $\sigma \in$ $S_\triangleleft(\C)^f$, we denote by 
$N_\F(\sigma)$  the saturated fusion system on $N_S(\sigma)$ as in 
\cite[5.2, 5.3]{Linckelmann2009}.
By Proposition \ref{p:nfsigma} below, an $\F$-compatible family 
$\alpha$ induces a canonical $N_\F(\sigma)$-compatible family 
$\alpha(\sigma)$, for each fully $\F$-normalised chain $\sigma$ in 
$S_\triangleleft(\C)$. 
The translation to fusion systems of the Kn\"orr-Robinson reformulation 
of Alperin's Weight Conjecture in \cite{KnorrRobinson1989}
reads as follows. 

\begin{Conj}\label{c:KnRo}
Let $\F$ be a saturated fusion system on a finite non-trivial  $p$-group
$S$ and let $\alpha$ be an $\F$-compatible family. We have
$$\k(\F,\alpha) = \sum_\sigma\ (-1)^{|\sigma|} 
\k(N_\F(\sigma),\alpha(\sigma)) $$
where in the sum $\sigma$ runs over a set of representatives of the
isomorphism classes of fully normalised normal chains of
non-trivial subgroups of $S$.
\end{Conj}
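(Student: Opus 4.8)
The plan is to reduce the statement to Theorem \ref{t:main2} by establishing two independent facts: first, that the right-hand side is literally a reorganisation of the definition of $\m^*(\F,\alpha)$ obtained by grouping the normal chains of $p$-subgroups in $\Out_\F(Q)$ according to their image chain of subgroups of $S$; and second, that the ``local'' cancellation theorem applies at each term $N_\F(\sigma)$ so that $\k(N_\F(\sigma),\alpha(\sigma)) = \m^*(N_\F(\sigma),\alpha(\sigma))$, allowing the alternating sum to be recognised again via an inclusion-exclusion over chains. Concretely, I would first invoke Theorem \ref{t:main2} at $\F$ itself to replace $\k(\F,\alpha)$ by $\m^*(\F,\alpha) = \sum_{Q\in\F^c/\F}\sum_{(\sigma,[x])\in\W^*_Q/\Out_\F(Q)} (-1)^{|\sigma|} z(k_\alpha I(\sigma,[x]))$, and then apply Theorem \ref{t:main2} again (at each $N_\F(\tau)$) on the right-hand side.

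The combinatorial heart is a bijective correspondence between, on the one hand, the indexing data appearing in $\m^*(\F,\alpha)$ — namely triples $(Q,\sigma,[x])$ with $Q$ an $\F$-centric subgroup, $\sigma$ a normal chain of $p$-subgroups of $\Out_\F(Q)$, and $[x]$ a conjugacy class of $Q$, all taken up to $\Out_\F(Q)$-action — and, on the other hand, the data indexing $\sum_\tau (-1)^{|\tau|}\m^*(N_\F(\tau),\alpha(\tau))$, where $\tau$ ranges over fully $\F$-normalised normal chains of nontrivial subgroups of $S$. Here I would use the standard dictionary (going back to Kn\"orr--Robinson, and formalised in the fusion-system setting in \cite{Linckelmann2009}) between normal chains of $p$-subgroups of $\Out_\F(Q) = \Aut_\F(Q)/\Inn(Q)$ and chains $Q = Q_0 \vartriangleright \cdots$ refined by normal subgroups, together with the fact that $N_\F(\tau)$-centric subgroups containing the top term of $\tau$ correspond to $\F$-centric subgroups equipped with a compatible chain. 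Matching signs is automatic once one tracks lengths, and matching the twisted group algebras $z(k_\alpha I(\sigma,[x]))$ is exactly the content of Proposition \ref{p:nfsigma}, which guarantees that $\alpha(\sigma)$ restricts correctly along the relevant stabiliser subgroups; I would also need Proposition \ref{p:links} to identify $\k(N_\F(\tau),\alpha(\tau))$ in terms of centralisers inside $N_\F(\tau)$, matching the $[x]$-component.

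I expect the main obstacle to be the bookkeeping of $\F$-conjugacy versus $N_\F(\tau)$-conjugacy: when one passes from a chain $\tau$ in $S$ to the local system $N_\F(\tau)$, the orbits of subgroups, characters, and chains must be counted with respect to the automorphism group of the normaliser system, not of $\F$, and one must check that summing over a set of representatives of isomorphism classes of fully normalised chains $\tau$ — rather than over all chains — produces exactly one contribution per $\Out_\F(Q)$-orbit of $(Q,\sigma,[x])$ with no overcounting. This is where a Frobenius-type transitivity/orbit-counting argument is needed, and where the hypothesis that $\tau$ ranges over \emph{isomorphism class representatives} of fully normalised chains (as opposed to all normal chains) is essential; the alternating-sum identity for the number of chains in an orbit, together with the standard fact that fully normalised representatives exist and control the combinatorics, should close the gap. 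Once the dictionary is in place, the theorem follows by chaining the two applications of Theorem \ref{t:main2} through this bijection.
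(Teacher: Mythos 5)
First, a point of fact: the statement you are addressing is Conjecture~\ref{c:KnRo}, not a theorem. The paper presents it as an \emph{open} conjecture --- the translation to fusion systems of the Kn\"orr--Robinson reformulation of Alperin's Weight Conjecture --- and offers no proof of it. So there is no paper argument against which to compare your proposal; a correct proof here would be a new result.

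As a proof, your proposal has a genuine gap precisely at the combinatorial core. After applying Theorem~\ref{t:main2} on both sides, the identity you need becomes
$$\m^*(\F,\alpha) \;=\; \sum_{\sigma} (-1)^{|\sigma|}\, \m^*\bigl(N_\F(\sigma),\alpha(\sigma)\bigr),$$
with $\sigma$ ranging over fully normalised normal chains of nontrivial subgroups of $S$. Once the right-hand $\m^*$ is unpacked, this is a triple alternating sum: over $\sigma$, then over $N_\F(\sigma)$-centric subgroups $Q$ of $N_S(\sigma)$, then over normal chains $\tau$ of $p$-subgroups of $\Out_{N_\F(\sigma)}(Q)$ paired with a conjugacy class, weighted by $(-1)^{|\sigma|+|\tau|}$. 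The left-hand side is a single alternating sum over $\F$-centric $Q$ and normal $p$-chains in $\Out_\F(Q)$. Asserting that these are ``literally a reorganisation'' of each other is the entire content of the conjecture, and you do not produce the bijection or the cancellation that would collapse the triple sum. Note in particular that the chains $\sigma$ on the right run over \emph{all} nontrivial subgroups, not just $\F$-centric ones, whereas the dictionary you invoke (Lemma~\ref{l:mvstildem}, together with Proposition~\ref{p:rewritez}) stays entirely inside $S_\triangleleft(\F^c)$; it does not give you a correspondence to chains of arbitrary nontrivial subgroups, let alone account for the nested $\tau$-chains.

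The obstacle you flag at the end (the $\F$- versus $N_\F(\sigma)$-conjugacy bookkeeping) is real but secondary, and in any case you defer it rather than resolve it. Phrases like ``I expect'', ``should close the gap'', and ``I would use the standard dictionary'' signal that what you have written is a plan, not a proof; the plan is not known to be executable, and if it were executable along these lines the authors would have stated \ref{c:KnRo} as a theorem rather than a conjecture.
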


 Again, one could consider versions of the above replacing $\k$ with 
$\m$ or $\m^*$. 
Taking into account defects of characters, we get the following 
conjecture, which is an analogue of Dade's ordinary conjecture:

\begin{Conj}\label{c:dade}
Let $\F$ be a saturated fusion system on a finite  non-trivial $p$-group 
$S$ and let $\alpha$ be an $\F$-compatible family of $\F$. For each 
$d \ge 0$ we have 
$$ \m(\F,\alpha,d)=\sum_{\sigma} 
(-1)^{|\sigma|} \m(N_\F(\sigma),\alpha(\sigma),d)\ ,$$
where in the sum $\sigma$ runs over a set of representatives of the
isomorphism classes of fully normalised normal chains of
non-trivial subgroups of $S$.
\end{Conj}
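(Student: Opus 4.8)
Since the statement is a conjecture, what follows is a proposed line of attack rather than a proof. The plan is to imitate, entirely inside the fusion system, the combinatorial mechanism by which the Kn\"orr--Robinson reformulation of AWC (Conjecture~\ref{c:KnRo}) and Dade's ordinary conjecture are manipulated in block theory, upgraded to carry the character-defect grading and the twist $\alpha$. As a first sanity check, summing the asserted identity over all $d\geq 0$ gives the ungraded statement $\m(\F,\alpha)=\sum_\sigma(-1)^{|\sigma|}\m(N_\F(\sigma),\alpha(\sigma))$, which under AWC is equivalent to Conjecture~\ref{c:KnRo} by Corollary~\ref{co:main}; so the genuinely new content is the refinement by $d$, and that is where the work lies. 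A second preliminary step I would take is to establish defect-graded analogues of the cancellation Theorem~\ref{t:main2} and of Theorem~\ref{t:main}, both to have the $\m^*$-version of the conjecture available (which, being phrased via conjugacy classes of $p$-groups rather than ordinary characters, should be the more tractable one) and to be able to transport between the $\m$, $\m^*$ and $\k$ versions.

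The core step is a reindexing. Fix a fully $\F$-normalised normal chain $\tau=(R_0<\cdots<R_n)$ of non-trivial subgroups of $S$; by Proposition~\ref{p:nfsigma} the family $\alpha$ induces an $N_\F(\tau)$-compatible family $\alpha(\tau)$, and unwinding the definitions,
\[
\m(N_\F(\tau),\alpha(\tau),d)=\sum_{Q'}\ \sum_{\sigma'\in\N_{Q'}/\Out_{N_\F(\tau)}(Q')}(-1)^{|\sigma'|}\sum_{\mu'\in\Irr_K^d(Q')/I(\sigma')} z\bigl(k_{\alpha(\tau)}I(\sigma',\mu')\bigr),
\]
with $Q'$ ranging over $N_\F(\tau)$-centric subgroups up to $N_\F(\tau)$-isomorphism. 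The right-hand side of the conjecture is thus an alternating sum indexed by pairs $(\tau,\sigma')$, with sign $(-1)^{|\tau|+|\sigma'|}$. I would assemble each such pair into a single ``composite local chain'' living in (a twisted version of) Linckelmann's subdivision category~\cite{Linckelmann2009}, and then argue that for a fixed $\F$-centric $Q$, normal $p$-chain $\sigma$ in $\Out_\F(Q)$, and $I(\sigma)$-orbit of a defect-$d$ character $\mu$ of $Q$, the pairs $(\tau,\sigma')$ ``refining to'' this datum contribute, after an Euler-characteristic collapse on the poset of such refinements, exactly $(-1)^{|\sigma|}z(k_\alpha I(\sigma,\mu))$ --- which sums to $\w_Q(\F,\alpha,d)$ and hence to $\m(\F,\alpha,d)$. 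Making this precise requires a bookkeeping functor identifying $Q'$ (or a canonical term built from it) with an $\F$-centric $Q$ and identifying $I(\sigma',\mu')$ with $I(\sigma,\mu)$ up to conjugacy, together with the compatibility of the twists under restriction along the functors $C_\F(-)$, along $N_\F(\sigma)$, and along inclusions of automiser subgroups, which is supplied by Propositions~\ref{p:links} and~\ref{p:nfsigma}.

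The hard part will be the cancellation just described, carried out with the defect grading and the twist both present. Since $H\mapsto z(k_\alpha H)$ is neither additive nor multiplicative along the subdivision category, one cannot simply cite a standard vanishing of an Euler characteristic; one needs the Robinson-type cancellation lemmas collected in the Appendix, re-proved so as to respect the partition of $\Irr(Q)$ by defect and the twisting classes. I expect the proof to go by induction on $|S|$: passing to $N_\F(R_0)$ reduces the part of the sum coming from chains $\tau$ with $n\geq 1$ to the conjecture for the proper saturated subsystem $N_\F(R_0)$, and the induction closes provided one controls how $\F$-centric (and $\F$-radical) subgroups and their outer automiser groups restrict along $N_\F(R_0)\hookrightarrow\F$, tracked simultaneously with character defects. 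That interaction of centricity, radicality and automisers with passage to chain normalisers is the technical heart that makes the Ordinary Weight Conjecture difficult in the block case, and I would expect it to be the principal obstacle here as well.
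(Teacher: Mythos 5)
The statement you are trying to prove is a conjecture; the paper offers no proof of it, so there is no argument to compare against, and what you have written is, as you say yourself, a programme rather than a proof. The programme has a gap at its centre. The step you describe as an ``Euler-characteristic collapse on the poset of refinements'' is not a formal consequence of anything in the paper. You rightly observe that $H\mapsto z(k_\alpha H)$ is neither additive nor multiplicative, but the Robinson-type cancellation tools the paper actually possesses --- Lemmas~\ref{l:rad2} and~\ref{l:rad}, and their deployment in Section~\ref{towardsmainSection} and Theorem~\ref{t:main2} --- handle a single alternating sum of the form $\sum_\sigma(-1)^{|\sigma|}\f(N_G(\sigma))$ for a conjugation-invariant function $\f$, not the double alternating sum over pairs $(\tau,\sigma')$ that arises here, whose terms $z(k_{\alpha(\tau)}I(\sigma',\mu'))$ are a priori unrelated group invariants. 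In fact the cancellation you want \emph{is} Dade's ordinary conjecture in disguise: for a block-realizable triple $(S,\F,\alpha)$ satisfying OWC for all relevant Brauer correspondents, Proposition~\ref{p:nfsigma}(4) identifies each $\m(N_\F(\sigma),\alpha(\sigma),d)$ with the number of defect-$d$ characters of a local block, and Conjecture~\ref{c:dade} becomes exactly Dade's ordinary conjecture for that block. That is not known to follow from AWC or OWC; the paper cites only Eaton's result that Dade's projective conjecture and OWC have the same minimal counterexamples, which is strictly weaker than an implication. So a direct proof of the collapse would be a major new theorem of block theory, not a formal telescoping, and you should not expect the machinery of Sections~\ref{towardsmainSection}--\ref{mainSection} to deliver it.

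Your smaller observations are sound as far as they go: summing over $d$ recovers a statement equivalent, under AWC and via Corollary~\ref{co:main}, to Conjecture~\ref{c:KnRo}, and Propositions~\ref{p:links} and~\ref{p:nfsigma} do supply the compatibility of the twists that you need. A more realistic target along your lines would be a reduction in the spirit of Lemma~\ref{l:compw}: show that if Conjecture~\ref{c:dade} holds for all constrained saturated fusion systems, then it holds in general. Your induction on $|S|$ via $N_\F(R_0)$ is the right shape for such a reduction, and it would require (as you note) control of how $\F$-centric radical subgroups and their automisers sit inside $N_\F(R_0)$, together with a defect-graded analogue of Lemma~\ref{l:red}. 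But even that reduction would leave the constrained case as the base, and these techniques do not resolve it.
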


\section{Background material} \label{prelimSection}

\begin{Lem}[Thompson's $A \times B$ Lemma]\label{l:axb}
Let $S$ be a  finite $p$-group and $A \times B \le \Aut(S)$ be such that 
$A$ is a $p'$-group and $B$ is a $p$-group. If $A$ centralizes $C_S(B)$, 
then $A=1$.
\end{Lem}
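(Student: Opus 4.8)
The plan is to argue by induction on $|S|$; equivalently, to choose a counterexample $(S,A,B)$ with $|S|$ minimal and derive a contradiction. Throughout I would use the standard facts on coprime action of the $p'$-group $A$ on the $p$-group $S$: that $S = C_S(A)[S,A]$ and hence $[S,A] = [[S,A],A]$, and that a $p$-group acting on a nontrivial $p$-group has nontrivial fixed points. I would also use that a proper subgroup of a $p$-group is properly contained in its normalizer, together with the three subgroups lemma, all applied inside $G := S \rtimes (A \times B)$.

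First come the easy reductions. In a minimal counterexample $A \ne 1$ and $S \ne 1$, so $C_{Z(S)}(B) \ne 1$ and therefore $C_S(B) \ne 1$; since $A$ centralizes $C_S(B)$ by hypothesis, $1 \ne C_S(B) \le C_S(A)$, and $C_S(A) \ne S$ since $A \ne 1$ in $\Aut(S)$. If $[S,A] \ne S$, then $[S,A]$ is a proper $A\times B$-invariant subgroup of $S$, and since $C_{[S,A]}(B) \le C_S(B)$ the hypotheses of the lemma pass to $[S,A]$; minimality forces $A$ to act trivially on $[S,A]$, i.e. $[[S,A],A] = 1$, whence $[S,A] = [[S,A],A] = 1$ by the coprime commutator identity, contradicting $A \ne 1$. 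Thus we may assume $[S,A] = S$.

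This last case is the crux, and the obstacle is that the hypothesis ``$A$ centralizes $C_S(B)$'' does not survive passage to quotients, so a naive quotient induction fails. Instead I would locate the right proper $A\times B$-invariant subgroups on which to run the inductive hypothesis. Put $C := C_S(A)$. Since $C < S$ we have $C < N_S(C)$, and $N_S(C)$ is $A\times B$-invariant with $C_{N_S(C)}(B) \le C_S(B) \le C$; if $N_S(C) < S$, minimality would give $[N_S(C),A] = 1$, i.e. $N_S(C) \le C_S(A) = C$, a contradiction, so $C \trianglelefteq S$. Next let $X$ be the preimage in $S$ of $C_{S/C}(B)$; as $S/C \ne 1$ and $B$ is a $p$-group, $C < X$, and $X$ is $A\times B$-invariant with $C_X(B) \le C_S(B) \le C$; were $X < S$, minimality would give $X \le C$, again a contradiction, so $X = S$, that is, $[S,B] \le C = C_S(A)$.

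It remains to close with the three subgroups lemma. From $[S,B] \le C_S(A)$ we get $[[S,B],A] = 1$, and $[[B,A],S] = [1,S] = 1$ since $A\times B$ is a direct product; hence $[[S,A],B] = 1$. As $[S,A] = S$ this says $[S,B] = 1$, so $B$ acts trivially on $S$, whence $C_S(B) = S$ and $A$ centralizes $S$, forcing $A = 1$ — the final contradiction. The one point that needs care is the bookkeeping in the case $[S,A]=S$: at each stage one must check that the auxiliary subgroup ($[S,A]$, then $N_S(C)$, then $X$) really is $A\times B$-invariant and really is proper in $S$, so that the minimal-counterexample hypothesis genuinely applies; granting this, the argument is short.
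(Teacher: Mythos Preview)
Your argument is correct and is essentially the standard proof of Thompson's $A\times B$ lemma. The paper itself gives no proof at all, merely citing \cite[Theorem~5.3.4]{Gorenstein1980}; your minimal-counterexample argument with the reductions $[S,A]=S$, $C_S(A)\trianglelefteq S$, $[S,B]\le C_S(A)$, followed by the three subgroups lemma, is precisely the route taken in that reference.
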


\begin{proof}
See \cite[Theorem 5.3.4]{Gorenstein1980}.
\end{proof}

We will use standard terminology on saturated fusion systems, as
can be found in many sources, including \cite{CravenTheory}, 
\cite{AschbacherKessarOliver2011}), for instance.      
We assume familiarity with the notions of centralizers and
normalizers in fusion systems. 

\begin{Lem}\label{l:nfkcentric}
Let $\F$ be a saturated fusion system on a finite $p$-group $S$.
Fix $Q \le S$ and $K \le \Aut(Q)$. Assume that $Q$ is fully $K$-normalized.
Then $PQ$ is $\F$-centric for each $N_\F^K(Q)$-centric subgroup $P \le
N^K_S(Q)$. 
\end{Lem}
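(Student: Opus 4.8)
The plan is to reduce the statement to a computation inside the normalizer fusion system $N_\F^K(Q)$ together with an application of the definition of $\F$-centricity. Recall that $N_\F^K(Q)$ is a fusion system on $N_S^K(Q)$, and since $Q$ is fully $K$-normalized it is saturated; moreover $Q$ itself is normal in $N_\F^K(Q)$, so $Q$ is contained in every $N_\F^K(Q)$-centric subgroup and in particular $PQ = P$ when $P$ is $N_\F^K(Q)$-centric. Thus I may as well take $P = PQ$ from the start, a subgroup of $N_S^K(Q)$ containing $Q$ with $C_{N_S^K(Q)}(P) \leq P$ (using that $P$ is $N_\F^K(Q)$-centric and $Q \leq P$).

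The heart of the argument is to show $C_S(P) \leq P$, and that the analogous inclusion holds after conjugating $P$ within $\F$; since $P \leq N_S^K(Q)$ normalizes $Q$, $C_S(P)$ centralizes $P$ and hence normalizes $Q$ as well, so $C_S(P) \leq N_S(Q)$. First I would argue that $C_S(P)$ actually lands in $N_S^K(Q)$: the group $K \leq \Aut(Q)$ is being used to twist the normalizer, and the point is that $C_S(P)$ acts trivially on $Q$ (as $Q \leq P$), so the conjugation action of $C_S(P)$ on $Q$ is the identity, which certainly lies in $K$; hence $C_S(P) \leq N_S^K(Q)$. Therefore $C_S(P) \leq C_{N_S^K(Q)}(P) \leq P$ by $N_\F^K(Q)$-centricity of $P$. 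This gives the centric condition for $P$ itself inside $S$; the hard part will be checking it for all $\F$-conjugates of $P$, i.e. showing $C_S(\varphi(P)) \leq \varphi(P)$ for every morphism $\varphi \in \Hom_\F(P, S)$.

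To handle $\F$-conjugates, the key technical input is that $Q$ is fully $K$-normalized, which by the extension/saturation axioms lets one lift suitable morphisms: given $\varphi\colon P \to S$ in $\F$, I would first arrange (after composing with a further $\F$-automorphism of $\varphi(P)$, using that we only need the centric condition up to $\F$-isomorphism) that $\varphi(Q)$ is fully $\F$-normalized, then use the standard fact that a morphism out of $P$ that sends $Q$ to a fully normalized subgroup extends over $N_S(Q)$-relevant data; concretely, $\varphi$ restricted to $Q$ extends to a morphism $\psi\colon N_S^K(Q) \to N_S^K(\varphi(Q))$ in $\F$ landing in the appropriate $K$-twisted normalizer (this is where fullness of $Q$ is essential). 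Then $\varphi(P)$ is $N_\F^K(\varphi(Q))$-conjugate to an $N_\F^K(Q)$-centric subgroup, hence is itself $N_\F^K(\varphi(Q))$-centric, and the same local-to-$S$ argument as above — $C_S(\varphi(P))$ centralizes $\varphi(Q)$ so lies in $N_S^K(\varphi(Q))$, then is contained in $\varphi(P)$ by centricity — shows $C_S(\varphi(P)) \leq \varphi(P)$.

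The main obstacle, and the step deserving the most care, is the lifting of an arbitrary $\F$-morphism $\varphi\colon P \to S$ to one that respects the $K$-twisted normalizer structure; this is not quite the textbook extension axiom because of the presence of $K \leq \Aut(Q)$, and one must either invoke the precise saturation properties of $N_\F^K(Q)$ established in the sources on normalizer fusion systems (e.g. the construction in \cite{AschbacherKessarOliver2011}) or reduce to the case $K = \Aut_S(Q)$ where the statement is the familiar one about $N_\F(Q)$. Everything else — the passage $PQ = P$, the observation that centralizers of overgroups of $Q$ normalize $Q$ and act trivially on it, and the final descent from $N_\F^K$-centric to $\F$-centric — is routine once that lifting is in place.
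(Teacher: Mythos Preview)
Your outline correctly isolates the local step --- $C_S(PQ)$ centralizes $Q$, hence lies in $N_S^K(Q)$, hence in $P$ by $N_\F^K(Q)$-centricity --- and correctly identifies the hard part as handling an arbitrary $\F$-conjugate $\varphi(PQ)$. But the extension step you propose runs in the wrong direction, and this is a genuine gap.

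You want to extend $\varphi|_Q \colon Q \to \varphi(Q)$ \emph{forward} to $\psi \colon N_S^K(Q) \to N_S^{K'}(\varphi(Q))$. Saturation works the other way: it is receptivity of the \emph{target} that allows extension over the normalizer on the \emph{source} side, and the hypothesis says $Q$, not $\varphi(Q)$, is fully $K$-normalized. Your attempt to first make $\varphi(Q)$ fully normalized by composing with an $\F$-automorphism of $\varphi(P)$ does not achieve this --- such automorphisms only move $\varphi(Q)$ within its $\Aut_\F(\varphi(P))$-orbit, which need not contain a fully normalized representative --- and in any case the centric condition must be verified at the given subgroup $\varphi(PQ)$, not at some further conjugate. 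Even granting an extension $\psi$ of $\varphi|_Q$, the image $\psi(P)$ is a priori unrelated to $\varphi(P)$.

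The paper instead invokes \cite[Proposition~I.5.2(c)]{AschbacherKessarOliver2011} to produce
$\psi \in \Hom_\F\bigl(N_S^{\varphi K \varphi^{-1}}(\varphi(Q))\,\varphi(Q),\,S\bigr)$
with $\psi\varphi(Q) = Q$ and $(\psi\varphi)|_Q \in K$. The point is that the domain of $\psi$ already contains $C_S(\varphi(PQ))$ (which centralizes $\varphi(Q)$), so $\psi$ carries this centralizer into $C_{N_S^K(Q)}(\psi\varphi(P)) \leq \psi\varphi(P)$, whence $C_S(\varphi(PQ)) \leq \varphi(P)$. Mapping \emph{back} to the fully $K$-normalized $Q$ is the missing idea.

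A smaller issue: the reduction $PQ = P$ via ``$Q$ is normal in $N_\F^K(Q)$, hence contained in every $N_\F^K(Q)$-centric subgroup'' fails on two counts. First, $Q \leq N_S^K(Q)$ requires $\Inn(Q) \leq K$, which is not assumed (e.g.\ $K = 1$ gives the centralizer). Second, even a normal subgroup of a fusion system need not lie in every centric subgroup --- only in every centric \emph{radical} one; for instance, take $T$ extraspecial of order $p^3$, $\F = \F_T(T)$, and $Q$, $P$ two distinct subgroups of order $p^2$: both are centric and normal in $\F$, yet neither contains the other. This defect is harmless --- just work with $PQ$ throughout, as the paper does.
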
 

\begin{proof}
The  argument given in the proof of \cite[Lemma 6.2]{BrotoLeviOliver2003}
generalizes: Let $P \le N_S^K(Q)$ be an $N_\F^K(Q)$-centric subgroup
and let $\varphi \in \Hom_\F(PQ,S)$. Then $\phi(PQ) \leq N_S^{\phi K
\phi^{-1}}(\phi(Q))\phi(Q)$. Since $Q$ is fully $K$-normalized in $\F$, 
there is a morphism 
\[
\psi \in \Hom_\F(N_S^{\phi K\phi^{-1}}(\varphi(Q))\phi(Q),S)
\]
such that $\psi\phi(Q) = Q$ and $(\psi \varphi)|_Q \in K$ by 
\cite[Proposition I.5.2(c)]{AschbacherKessarOliver2011}. This means that 
$\psi \varphi$ is a morphism in $\Hom_{N_\F^K(Q)}(PQ,S)$. Since 
$C_S(\varphi(PQ)) \le N_S^{\phi K \phi^{-1}}(\varphi (Q))$,
\[
\psi(C_S(\varphi(PQ))) \le C_S(\psi\varphi(PQ)) \le 
C_S(\psi \varphi(P)) \cap N_S^K(Q) \le \psi \varphi(P),
\]
where the middle inequality holds because $\psi\phi K \phi^{-1}\psi^{-1} =
K$, and where the last inequality holds since $P$ is $N_\F^K(Q)$-centric.
Hence, $C_S(\varphi(PQ)) \le \varphi(P) \leq \phi(PQ)$. Since $\varphi$
was chosen arbitrarily, this shows that $PQ$ is $\F$-centric.
\end{proof}

\begin{Lem}\label{l:cfxcent}
Let $x \in S$ be such that $\langle x \rangle$ is fully $\F$-centralized, and
fix $Q \le C_S(x)$. Then $Q$ is $C_\F(x)$-centric if and only if $Q$ is
$\F$-centric. Moreover, $\Out_{C_\F(x)}(Q)=C_{\Out_\F(Q)}(x)$ under either of
these assumptions.
\end{Lem}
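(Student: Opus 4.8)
The plan is to deduce this as a special case of Lemma \ref{l:nfkcentric} together with a direct computation of centralizers inside $C_S(x)$. First I would set $Q' = \langle x\rangle$ and observe that since $Q'$ is fully $\F$-centralized, it is in particular fully $\F$-normalized because $Q'$ is cyclic and central in $N_S(Q')$, so $N_S(Q') = C_S(Q') \le C_S(x)$ — in fact for a cyclic group being fully centralized and fully normalized coincide. Then $C_\F(x) = C_\F(Q') = N_\F^K(Q')$ with $K = 1 \le \Aut(Q')$ (using that $x$ centralizes $Q'$), so Lemma \ref{l:nfkcentric} applies with this choice: for any $C_\F(x)$-centric subgroup $P \le C_S(x) = N_S^1(Q')$, the group $PQ' = P$ (as $x \in P$ when $P$ is $C_\F(x)$-centric, since $Z(P) \le C_P(x)$ forces $x \in P$... more carefully, any $C_\F(x)$-centric $P$ contains $x$ because $\langle x\rangle \le Z(C_S(x)) \le P$ by centricity) is $\F$-centric. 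This gives the implication ``$C_\F(x)$-centric $\Rightarrow$ $\F$-centric.''

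For the reverse implication, suppose $Q \le C_S(x)$ is $\F$-centric; I need $C_{C_S(x)}(R) \le R$ for every $C_\F(x)$-conjugate $R$ of $Q$. The key point is that $C_\F(x)$-conjugacy is realized by $\F$-morphisms fixing $x$, so such an $R$ is also $\F$-conjugate to $Q$, hence $\F$-centric, hence $C_S(R) \le R$; intersecting with $C_S(x)$ gives $C_{C_S(x)}(R) \le C_S(R) \le R$. (Here I use that $x \in Q$ — again because $Q$ is $\F$-centric and $\langle x \rangle \le C_S(x)$ centralizes $Q$... actually I should argue $x \in Z(Q)$: since $Q \le C_S(x)$, $x$ centralizes $Q$, so $x \in C_S(Q) \le Z(Q)$ by $\F$-centricity. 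Good, so $x \in Q$ and $\F$-conjugates of $Q$ via maps fixing $x$ make sense and remain centric.)

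For the moreover clause, I would compute directly. An automorphism class in $\Out_{C_\F(x)}(Q)$ is represented by some $\varphi \in \Aut_{C_\F(x)}(Q) = \{\varphi \in \Aut_\F(Q) : \varphi \text{ extends to a map fixing } x\}$; since $x \in Z(Q)$ and extension data is recorded on $C_S(x)$, one checks $\Aut_{C_\F(x)}(Q) = \{\varphi \in \Aut_\F(Q) : \varphi(x) = x\} = (\Aut_\F(Q))_x$, the stabilizer of $x$. Passing to the quotient by $\Inn(Q)$ (noting $\Inn(Q)$ acts trivially on $x \in Z(Q)$, so the stabilizer descends cleanly), one gets $\Out_{C_\F(x)}(Q) = (\Out_\F(Q))_x = C_{\Out_\F(Q)}(x)$, where the last equality is just unwinding that ``stabilizing $x$ under the conjugation action'' means ``centralizing $x$.''

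I expect the main obstacle to be the precise verification that $\Aut_{C_\F(x)}(Q)$ equals the full stabilizer $(\Aut_\F(Q))_x$ rather than merely being contained in it — i.e., that any $\F$-automorphism of $Q$ fixing $x$ actually is a morphism in the centralizer fusion system $C_\F(x)$. This requires invoking the definition of $C_\F(x)$ (morphisms that extend to a map which is the identity on $\langle x\rangle$) together with the saturation-type extension axiom, using that $Q$ is fully centralized in the appropriate sense or appealing to \cite[Proposition I.5.2]{AschbacherKessarOliver2011} as in the proof of Lemma \ref{l:nfkcentric}; the containment ``$C_\F(x)$-centric $\Rightarrow$ $\F$-centric'' direction, by contrast, is essentially immediate from Lemma \ref{l:nfkcentric} and should pose no difficulty.
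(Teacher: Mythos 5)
Your plan matches the paper's proof in all three parts: Lemma~\ref{l:nfkcentric} with $K=1$ handles the implication ``$C_\F(x)$-centric $\Rightarrow$ $\F$-centric,'' the converse is the direct computation $C_{C_S(x)}(R) \le C_S(R) \le R$ using that $C_\F(x)$-conjugates are $\F$-conjugates, and the moreover clause is a direct identification of automizers. However, the obstacle you anticipate at the end is not actually there. You have already established that $x \in Q$ (indeed $x \in Z(Q)$). Consequently $\langle x\rangle Q = Q$, and if $\varphi \in \Aut_\F(Q)$ fixes $x$, then $\varphi$ itself is a morphism $\langle x\rangle Q \to \langle x\rangle Q$ in $\F$ whose restriction to $\langle x\rangle$ is the identity. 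That is precisely the defining condition for membership in $C_\F(x)$, so $\Aut_{C_\F(x)}(Q) = C_{\Aut_\F(Q)}(x)$ holds by unwinding the definition, with no appeal to the extension axiom or to \cite[Proposition I.5.2]{AschbacherKessarOliver2011}. This is exactly how the paper treats it. One further small point: your parenthetical claim that for a cyclic group ``being fully centralized and fully normalized coincide'' is false in general (for $\langle x\rangle$ of order $p^2$ or more, an element of $S$ can normalize without centralizing); but it is also unnecessary, since Lemma~\ref{l:nfkcentric} with $K=1$ only requires $\langle x\rangle$ to be fully $1$-normalized, which \emph{is} fully $\F$-centralized, i.e.\ the standing hypothesis.
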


\begin{proof}
Suppose first that $Q$ is $\F$-centric and let $P$ be $C_\F(x)$-conjugate to
$Q$. Then $C_{C_S(x)}(P) \le C_S(P) \le P$ and hence $Q$ is $C_\F(x)$-centric.
Conversely if $Q$ is $C_\F(x)$-centric then $x \in Z(C_S(x)) \le C_{C_S(x)}(Q)
\le Q$ so $Q=Q\langle x \rangle$ is $\F$-centric by Lemma \ref{l:nfkcentric}
applied in the case $K=1$. Since $\Out_\F(Q)$ acts by conjugation on $Z(Q)$,
$C_{\Out_\F(Q)}(x)$ is well-defined. Now $\Aut_{C_\F(x)}(Q)=C_{\Aut_\F(Q)}(x)$
is exactly the set of $\F$-automorphisms of $Q$ which fix $x$, and this group
contains $\Inn(Q)$ by assumption. The lemma follows.
\end{proof}

Given an isomorphism $\varphi$ in $\F$ from $Q$ to $Q'$, the conjugation
map $c_\phi\colon \Aut_\F(Q) \to \Aut_\F(Q')$ given by $\eta \to \varphi \eta
\varphi^{-1}$ is an isomorphism which maps $\Inn(Q)$ onto $\Inn(Q')$. Thus,
conjugation induces a well-defined isomorphism $\Out_\F(Q) \to \Out_\F(Q')$,
which we denote by $\bar{c}_\phi$. 
The following direct application of the extension axiom is needed in
Section \ref{s:towards}.

\begin{Lem}\label{l:chain} 
Let $Q$ and $Q'$ be $\F$-centric subgroups of $S$, and let $R$ be a 
subgroup of $S$ containing $Q$ as a normal subgroup. Let 
$\phi : Q\to Q'$ be an isomorphism in $\F$. Assume
that $c_\phi(\Aut_R(Q)) \leq \Aut_S(Q')$, or, equivalently, that
$\bar{c}_\phi(\Out_R(Q)) \leq \Out_S(Q')$.  Let $R' \leq S$ be the inverse
image of $c_\phi(\Aut_R(Q))$ under the canonical homomorphism $N_S(Q') \to
\Aut_S(Q')$.  Then there exists a morphism $R \to S$ in $\F$ extending 
$\varphi$. Moreover, $\tau(R) = R'$ for any such extension $\tau$. 
\end{Lem}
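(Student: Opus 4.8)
The plan is to deduce everything from the extension axiom for the saturated fusion system $\F$, applied to the isomorphism $\varphi : Q \to Q'$ viewed as a morphism in $\F$ whose domain we wish to enlarge to $R$.

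First I would set up the standard extension data. Since $Q$ and $Q'$ are $\F$-centric, a morphism $\phi$ in $\F$ with source $Q$ extends to $N_\phi$, the subgroup of $N_S(Q)$ consisting of those $g \in N_S(Q)$ for which $c_\phi \circ c_g \circ c_\phi^{-1} \in \Aut_S(Q')$; this is the content of the extension axiom in the form available for $\F$-centric subgroups (see \cite[Proposition I.5.2]{AschbacherKessarOliver2011} or the surrounding discussion in \cite[Section I.2]{AschbacherKessarOliver2011}). The hypothesis $c_\phi(\Aut_R(Q)) \leq \Aut_S(Q')$ says precisely that every element of $R$ normalizing $Q$ — and all of $R$ does, since $Q \vartriangleleft R$ — maps into $N_\phi$ under the composite $R \to \Aut_R(Q) \hookrightarrow N_S(Q)/\ldots$; more carefully, $R$ maps to $\Aut_R(Q) \leq \Aut_S(Q)$, and the image of $R$ in $N_S(Q)$ (which is all of $R$ since $R \leq N_S(Q)$) lies in $N_\phi$ exactly when $c_\phi(\Aut_R(Q)) \leq \Aut_S(Q')$. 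Hence $R \leq N_\phi$, so the extension axiom furnishes a morphism $\tau : R \to S$ in $\F$ with $\tau|_Q = \varphi$. The equivalence of the two formulations of the hypothesis is immediate: $\Out_R(Q) = \Aut_R(Q)/\Inn(Q)$, $\Out_S(Q') = \Aut_S(Q')/\Inn(Q')$, and $\bar c_\phi$ is induced by $c_\phi$ which carries $\Inn(Q)$ onto $\Inn(Q')$, so $c_\phi(\Aut_R(Q)) \leq \Aut_S(Q')$ iff $\bar c_\phi(\Out_R(Q)) \leq \Out_S(Q')$.

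Next I would pin down the image. Let $\tau : R \to S$ be any extension of $\varphi$ in $\F$. For $g \in R$ and $x \in Q$ we have $\tau(gxg^{-1}) = \tau(g)\varphi(x)\tau(g)^{-1}$ on the one hand, and on the other $\tau(gxg^{-1}) = \varphi(gxg^{-1}) = c_\varphi(c_g)(\varphi(x))$ since $gxg^{-1} \in Q$; thus conjugation by $\tau(g)$ on $Q'$ agrees with $c_\varphi(c_g|_Q)$, i.e. the image of $\tau(g)$ in $\Aut_S(Q')$ is $c_\phi(c_g|_Q) \in c_\phi(\Aut_R(Q))$. Conversely, because $\tau$ is injective (all morphisms in a fusion system are injective group homomorphisms) and $Q \vartriangleleft R$ forces $\tau(Q) = \varphi(Q) = Q' \vartriangleleft \tau(R)$, the element $\tau(g)$ normalizes $Q'$, so $\tau(g) \in N_S(Q')$ and its image in $\Aut_S(Q')$ is exactly as computed. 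Therefore $\tau(R)$ maps onto $c_\phi(\Aut_R(Q))$ under $N_S(Q') \to \Aut_S(Q')$ and is contained in the full inverse image $R'$; and since $\tau$ is injective, $|\tau(R)| = |R| = |Q| \cdot |\Aut_R(Q)| / |\Inn(Q)| \cdot |\,\cdot\,|$ — more cleanly, $|R/Q| = |\Aut_R(Q)/\Inn(Q)|$ and $|R'/Q'| = |c_\phi(\Aut_R(Q))/\Inn(Q')| = |\Aut_R(Q)/\Inn(Q)|$, while $|Q| = |Q'|$, so $|\tau(R)| = |R| = |R'|$ and hence $\tau(R) = R'$.

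The one step needing genuine care — the main obstacle, though it is minor — is the bookkeeping identifying the hypothesis $c_\phi(\Aut_R(Q)) \le \Aut_S(Q')$ with the condition $R \le N_\phi$ that triggers the extension axiom: one must check that $N_\phi \cap (\text{image of } R \text{ in } N_S(Q)) = R$ reduces to a statement purely about $\Aut_R(Q)$, which works because $R \le N_S(Q)$ (as $Q \vartriangleleft R$) so $R$ literally \emph{is} its own image in $N_S(Q)$, and $N_\phi$ is by definition the preimage in $N_S(Q)$ of $\{ \psi \in \Aut_S(Q) : c_\phi(\psi) \in \Aut_S(Q') \}$. After that observation the rest is the routine injectivity-and-orders argument sketched above. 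I do not expect to need the $\F$-centricity of $Q,Q'$ for anything beyond guaranteeing that the extension axiom applies in the form quoted (extensions of morphisms between centric subgroups), and possibly to know $C_S(Q) \le Q$ so that $N_\phi$ has the clean description used; these are exactly the hypotheses in force.
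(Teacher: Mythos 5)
Your proof is correct, and the first assertion is handled exactly as in the paper: both of you translate the hypothesis into the statement $R \le N_\phi$ and invoke the extension axiom, using $\F$-centricity to ensure $Q$ is fully $\F$-centralised. For the second assertion your route differs from the paper's. The paper cites the uniqueness lemma of Broto--Levi--Oliver (any two extensions $\tau,\tau'$ of $\phi$ satisfy $\tau'=\tau\circ c_z$ with $z\in Z(Q)\le R$, so $\tau(R)=\tau'(R)$), thereby reducing to the observation that \emph{some} extension has image $R'$, which is left to the reader. You instead show directly, for an arbitrary extension $\tau$, that $\tau(R)\le N_S(Q')$ and that the image of $\tau(g)$ in $\Aut_S(Q')$ is $c_\phi(c_g|_Q)$, so $\tau(R)$ sits inside $R'$ and surjects onto $c_\phi(\Aut_R(Q))$; then a count using $C_R(Q)=Z(Q)$ and $C_S(Q')=Z(Q')$ (both from $\F$-centricity) forces $|\tau(R)|=|R'|$. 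The counting argument is more self-contained and makes explicit where $\F$-centricity of both $Q$ and $Q'$ enters; the paper's route is shorter but leans on an external uniqueness statement and leaves the identification of the image implicit. A minor alternative to your count: since $Z(Q')\le Q'\le\tau(R)$ and $Z(Q')$ is the kernel of $N_S(Q')\to\Aut_S(Q')$, the fact that $\tau(R)$ contains this kernel and surjects onto $c_\phi(\Aut_R(Q))$ already gives $\tau(R)=R'$ without comparing orders.
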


\begin{proof}  
Since $\Aut_S(Q')$ is the full inverse image of $\Out_S(Q')$ under the
canonical surjection $\Aut_{\F}(Q') \to \Out_{\F}(Q')$, the two conditions
on the image of $R$ are indeed equivalent. Hence, $R \leq N_\phi$ in the
notation of \cite[Definition~2.2]{AschbacherKessarOliver2011}. Since each
$\F$-centric subgroup is fully $\F$-centralised, the extension axiom of
saturation yields the first assertion. 

If $\tau$ and $\tau'$ are two $\F$-morphisms extending $\phi$, then one may
find $z \in Z(Q)$ such that $\tau' = \tau \circ c_z$ by
\cite[Lemma~A.8]{BrotoLeviOliver2003}. Since $z \in Q \leq R$, this shows the
second assertion.
\end{proof}

 Let $\C$ be a full subcategory of $\F$ which is closed under
isomorphisms and taking supergroups. Following the notation in 
\cite[Section 8.13]{LinckelmannBT}, we denote by $S(\C)$ the
subdivision category of $\C$. The objects of $\C$ can be
regarded as non-empty chains of non-isomorphisms 
$$Q_0 \to Q_1 \to \cdots \to Q_m$$
in $\F$ with $Q_i$ belonging to $\C$. Any homomorphism in $S(\C)$ is
a composition of a chain preserving isomorphism in $\F$ and
an inclusion of a chain as a subchain of another chain. There is
a canonical functor $S(\C)\to$ $\C$ mapping a chain to its 
maximal term. 
 
By \cite[Proposition 8.13.3]{LinckelmannBT}, any chain in $S(\C)$
is isomorphic, in $S(\C)$, to a chain of proper inclusions
$$Q_0 < Q_1 <\cdots < Q_m$$
of subgroups $Q_i$ of $S$ belonging to $\C$. In other words, the
category $S(\C)$ is equivalent to its full subcategory, denoted
$S_<(\C)$ consisting of non-empty chains of proper inclusions of
subgroups of $S$ in $C$.
A  chain $\sigma$  above  is said to
have length $m$, and we write $|\sigma| = m$.  When convenient, we occasionally
write $Q_\sigma$ and $Q^{\sigma}$ for the smallest and largest subgroups in
$\sigma$, respectively. 

A morphism between chains $Q_0 < \cdots < Q_m$ and $R_0 < \cdots < R_n$ is a
pair consisting of an injective map $\beta \colon \{0,\dots,m\} \to
\{0,\dots,n\}$ together with a collection of isomorphisms $Q_i \to R_{\beta(i)}$
in $\F$ for each $i \in \{0,\dots,m\}$ which satisfy the obvious
compatibility conditions. Thus, the set of isomorphisms between chains
$\sigma$, $\tau$ in $S_<(\C)$ can be identified with the set of
chain-preserving isomorphisms $\varphi: Q^\sigma \to Q^\tau$ in $\F$. Whenever $\sigma \in
S_<(\C)$, let $\Aut_{\F}(\sigma)$ be the subgroup of
$\Aut_{\F}(Q^\sigma)$ consisting of those automorphisms which preserve each
member of the chain. In other words, $\Aut_{\F}(\sigma)$ is the automorphism
group of $\sigma$ in  $S_<(\C)$. 

We denote by $S_\triangleleft(\C)$ the
full subcategory of $S_<(\C)$ of all chains
$$Q_0 < Q_1 <\cdots < Q_m$$
in $S_<(\C)$ satisfying the additional property that the $Q_i$ are 
normal in the maximal term $Q_m$, for $0\leq i\leq m$.

We denote the set of isomorphism classes of chains in $S(\C)$ by 
$[S(\C)]$. Since $\C$, and hence $S(\C)$, is an EI-category, the set 
$[S(\C)]$ has a canonical partial order given by $[\sigma]\leq [\tau]$, 
whenever $[\sigma]$, $[\tau]$ are the isomorphism classes of chains 
$\sigma$, $\tau$ in $S(\C)$ such that $\Hom_{S(\C)}(\sigma,\tau)$ is
non-empty.

If $\F=$ $\F_S(G)$ for some finite group $G$ having $S$ as a Sylow 
$p$-subgroup, then $[S_<(\C)]$ is isomorphic to the poset of 
$G$-conjugacy classes of chains of subgroups in $\C$. For a more general 
statement regarding $G$-conjugacy classes of chains of Brauer pairs of 
a block, see \cite[Proposition 4.6]{Linckelmann2005}. 

\section{Compatible families of second cohomology classes}  
\label{s:KP}

We describe properties of K\"ulshammer-Puig classes of blocks which are 
needed to ensure that the conjectures stated for saturated fusion systems
do indeed specialize to the block theoretic versions from which they
are inspired in case the triple $(S,\F,\alpha)$ under consideration
is realized by a block. We briefly review the construction of
K\"ulshammer-Puig classes (see e. g. 
\cite[Theorem 5.3.12, Corollary 8.12.9, Section 8.14]{LinckelmannBT} 
for more details and proofs).

Let $M$ be a finite-dimensional simple $k$-algebra; that is, $M$ is
isomorphic to a matrix algebra over $k$. Let $G$ be a finite group
acting on $M$ by algebra automorphisms. By the Skolem-Noether Theorem,
every automorphism of $M$ is inner, and hence for any $g\in$ $G$ there
is an element $s_g\in$ $M^\times$ such that the action of $g$ is 
equal to the conjugation action of $s_g$ on $M$. Since $Z(M)\cong$ $k$,
the elements $s_g$ are only unique up to scalars in $k^\times$.
Thus for $g$ , $h\in$ $G$ we have $s_gs_h =\alpha(g,h) s_{gh}$ for
some $\alpha(g,h)\in$ $k^\times$. The map $\alpha : G\times G\to$
$k^\times$ is then a $2$-cocycle whose class in $H^2(G,k^\times)$ is
independent of the choices of the $s_g$. We call this class the
{\it class determined by the action of $G$ on $M$.} 
If $G$ acts trivially on $M$, then $\alpha$ is the trivial class.

Suppose now that $G$ has a normal subgroup $N$ 
such that the action of $N$ on the simple algebra $M$ lifts to a 
$G$-stable group homomorphism $\tau : N\to$ $M^\times$.
Let $[G/N]$ be a set of representatives of $G/N$ in $G$. For each
$g\in$ $[G/N]$ choose some $s_g$ as above, and for each $h\in N$
set $s_{gh}=s_g\tau(h)$. One checks that the $2$-cocycle $\alpha$ 
determined by this choice has the property that its values 
$\alpha(g,h)$ depend only on the images of $g$, $h$ in $G/N$, for all 
$g$, $h\in$ $G$, and hence $\alpha$ induces a $2$-cocycle $\beta$ on 
$G/N$ whose class in $H^2(G/N,k^\times)$ does not depend on the choices 
of the $s_g$ (but the class of $\beta$  does depend on the choice $\tau$ 
lifting the action of $N$ on $M$). We call this class {\it the class 
determined by the action of $G$ on $M$ together with the group 
homomorphism  $\tau$.} Even if $G$ acts trivially on $M$ this does
not necessarily imply that $\beta$ is trivial (this depends on
whether $\tau$ is trivial). 

This  scenario   arises if $M$ is a simple algebra quotient of $kN$ by a 
$G$-stable maximal ideal in $kN$. Here the action of $G$ is the 
conjugation action and the map $\tau$ is induced by the canonical 
algebra surjection $kN\to$ $M$.  Any   such  scenario   determines  a 
class $\beta$ in $H^2(G/N, k^\times)$ whose restriction to $G$ along 
the canonical surjection $G\to$ $G/N$ is equal to the class $\alpha$ 
determined by the action of $G$ on $M$. For technical Clifford theoretic 
reasons it is usually more convenient to consider the inverse class. 

The K\"ulshammer--Puig classes arise in turn as special cases of this
construction. Let $B$ be a block of $kG$ with maximal $B$-Brauer pair 
$(S,e)$ and associated fusion system $\F$ on $S$. Let $Q$ be an 
$\F$-centric subgroup of $S$. That is, if $f$ is the unique block of 
$kC_G(Q)$ satisfying $(Q,f)\leq$ $(S,e)$, then $Z(Q)$ is a defect group 
of $f$ (which is clearly central), and hence $kC_G(Q)f$ is a nilpotent 
block with a unique simple algebra quotient $M_Q$. The uniqueness ensures 
that $M_Q$ is $N_G(Q,f)$-stable. By standard facts, $M_Q$ is also the
unique simple algebra quotient of $kQC_G(Q)f$. Note that $QC_G(Q)$ is a 
normal subgroup of $N_G(Q,f)$, and that $N_G(Q,f)/QC_G(Q)\cong$ 
$\Out_\F(Q)$. Thus the previous scenario with $N_G(Q,f)$ and $QC_G(Q)$ 
instead of $G$ and $N$, respectively, yields a canonical class in 
$H^2(\Out_\F(Q), k^\times)$. The {\it inverse} of this class is the 
K\"ulshammer--Puig class $\alpha_Q$. Using $N_G(Q,f)$ and $C_G(Q)$ 
would yield the corresponding class, abusively again denoted 
$\alpha_Q$, in $H^2(\Aut_\F(Q), k^\times)$. 

\medskip
Let $\F$ be a saturated fusion system on a finite  $p$-group $S$. We 
denote by $\F^c$ the full subcategory of $\F$-centric subgroups of $S$. 
For any $Q\in \F^c $, we may (and will) identify without further
comment the group $H^2(\Out_\F(Q), k^\times)$ with 
$H^2(\Aut_\F(Q),k^\times)$ via the isomorphism induced by the canonical 
surjection  $ \Aut_{\F}(Q) \to \Out_{\F}(Q)$.  The assignment
$Q\mapsto$ $H^2(\Out_\F(Q), k^\times)$ is not functorial on $\F^c$. 
In order to interpret certain families of classes in 
$\prod_{Q\in\F^c}\ H^2(\Out_\F(Q), k^\times)$ as a limit of a functor, 
we need to pass to the subdivision category $S(\F^c)$ of $\F^c$.   
By \cite[Theorem 1.1]{Linckelmann2009b}, there is a canonical functor    
$\A_\F^2$ from $[S(\F^c)]$ to the category of abelian groups which sends   
an  object $\tau $ of $[S(\F^c)]$ to  
$H^2( \Aut_{S(\F^c)}(\sigma), k^\times )$ for some $\sigma \in $ 
$S(\F^c)$ such that $\tau=[\sigma]$. The choice of representative
$\sigma$ determines this functor up to unique isomorphism.
Let $\alpha= (\alpha_Q)_{Q\in\F^c}$ be a family  of classes  
$\alpha_Q\in$ $H^2(\Out_\F(Q),k^\times)$.  For each $\tau \in [S(\F^c)]$, 
define the element $\alpha_{\tau}\in $ $\A_{\F}^2(\tau) $ to  be the 
restriction  of $\alpha_{Q_m}$ to the subgroup
$\Aut_{S(\F^c)}(\sigma)$ of $\Aut_\F(Q_m)$  where   
$$\sigma = (Q_0  \rightarrow Q_1\rightarrow \cdots  \rightarrow Q_m)$$    
is the representative of $\tau $ in $S[\F^c]$  as above.

\begin{Def} \label{def:Fcompatible}
Let $\F$ be a saturated fusion system on a finite $p$-group $S$.
An {\it $\F$-compatible family} is a  family 
$\alpha = (\alpha_Q)_{Q\in\F^c}$ of classes
$\alpha_Q\in$ $H^2(\Out_\F(Q),k^\times)$ such that the corresponding
family $(\alpha_\tau)_{\tau \in  [S(\F^c)]}$ as above belongs to 
$\displaystyle\lim_{[S(\F^c)]} \A_\F^2$. In that case, we write
$\alpha\in$ $\displaystyle\lim_{[S(\F^c)]} \A_\F^2$ for short. 
\end{Def} 

The set of $\F$-compatible classes forms a subgroup of the abelian
group $\prod_{Q\in \F^c}\ H^2(\Out_\F(Q),k^\times)$. 

By \cite[Theorem 8.14.5]{LinckelmannBT}, the  family $\alpha$ of
K\"ulshammer-Puig classes of a block $B$ of some finite group algebra 
$kG$ with defect group $S$ and fusion system $\F$ is $\F$-compatible. 
By \cite[Theorem 4.7]{Linckelmann2009} the inclusions of categories 
$S_\triangleleft(\F^c)\subseteq$
$S_<(\C) \subseteq$ $S(\C)$ induce isomorphisms
$$\lim_{[S(\F^c)]} \A_\F^2\cong \lim_{[S_<(\F^c)]} \A_\F^2\cong
\lim_{[S_\triangleleft(\F^c)]} \A_\F^2$$
Thus  to check  $\F$-compatibility it suffices to consider
normal chains. In fact, it suffices to consider normal chains
of length at most $1$.

\begin{Lem}[{\cite[Theorem 8.14.5]{LinckelmannBT} and its proof}] 
\label{lem:Fcompatible}
Let $\F$ be a saturated fusion system on a finite $p$-group $S$, and
let $\alpha=$ $(\alpha_Q)_{Q\in\F^c}$ with $\alpha_Q\in$
$H^2(\Out_\F(Q);k^\times)$ for any $\F$-centric subgroup $Q$ of $S$.
The following are equivalent.
\begin{enumerate}
\item The family $\alpha$ is $\F$-compatible.
\item For any proper normal $\F$-centric subgroup $Q$ of an
$\F$-centric subgroup $R$ of $S$, the images of $\alpha_Q$ and 
$\alpha_R$ in $H^2(\Aut_{S(\F^c)}(Q\triangleleft R), k^\times)$ under 
the maps induced by the canonical group homomorphisms
$$\Aut_{S(\F^c)}(Q\triangleleft R) \to \Aut_\F(Q)$$
$$\Aut_{S(\F^c)}(Q\triangleleft R) \to \Aut_\F(R)$$
are equal. 
\end{enumerate}
\end{Lem}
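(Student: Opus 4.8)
The plan is to unpack the definition of $\F$-compatibility using the limit-comparison isomorphisms already quoted, and thereby see that the entire content of the limit condition is carried by the normal chains of length at most one --- which is exactly (2).

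First I would reduce to normal chains: by Definition \ref{def:Fcompatible} and the isomorphism $\lim_{[S(\F^c)]}\A_\F^2\cong\lim_{[S_\triangleleft(\F^c)]}\A_\F^2$ of \cite[Theorem 4.7]{Linckelmann2009}, the family $\alpha$ is $\F$-compatible if and only if $\A_\F^2(f)(\alpha_{[\sigma']})=\alpha_{[\sigma]}$ for every morphism $f\colon[\sigma']\to[\sigma]$ of $[S_\triangleleft(\F^c)]$, where, as in Section \ref{s:KP}, $\alpha_{[\sigma]}$ is the restriction of $\alpha_{Q^\sigma}$ to the chain-stabilizer $\Aut_{S(\F^c)}(\sigma)$. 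With this in hand, $(1)\Rightarrow(2)$ is immediate: for a proper normal $\F$-centric subgroup $Q$ of an $\F$-centric subgroup $R$, apply the limit condition to the subchain inclusions $(Q)\hookrightarrow(Q\triangleleft R)$ and $(R)\hookrightarrow(Q\triangleleft R)$, whose images under $\A_\F^2$ are precisely the two canonical maps in the statement; one obtains that the images of $\alpha_Q=\alpha_{[(Q)]}$ and $\alpha_R=\alpha_{[(R)]}$ in $H^2(\Aut_{S(\F^c)}(Q\triangleleft R),k^\times)$ both coincide with $\alpha_{[(Q\triangleleft R)]}$, hence with each other.

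For $(2)\Rightarrow(1)$ I would argue that it suffices to check the limit condition on \emph{elementary} subchain inclusions. Every morphism of $[S_\triangleleft(\F^c)]$ is represented by a subchain inclusion, and I would first observe that $S_\triangleleft(\F^c)$ is closed under subchains: deleting an interior or bottom term of a normal chain $(Q_0<\cdots<Q_m)$ visibly leaves a normal chain, while after deleting the top term $Q_m$ one has $Q_j\le Q_{m-1}\le Q_m$ with $Q_j\triangleleft Q_m$, so $Q_j\triangleleft Q_{m-1}$. Hence any subchain inclusion factors inside $S_\triangleleft(\F^c)$ into one-subgroup deletions (no separate check being needed for chain-preserving isomorphisms, which are absorbed into the construction of $\A_\F^2$ via \cite[Theorem 1.1]{Linckelmann2009b}). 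For an elementary deletion $\iota\colon\sigma'\hookrightarrow\sigma$ with $\sigma=(Q_0<\cdots<Q_m)$, I would split into cases. If the deleted term is not $Q_m$, then $\sigma$ and $\sigma'$ have the same maximal term $Q_m$, the homomorphism underlying $\A_\F^2(\iota)$ is the inclusion $\Aut_{S(\F^c)}(\sigma)\le\Aut_{S(\F^c)}(\sigma')$ of subgroups of $\Aut_\F(Q_m)$, and both $\alpha_{[\sigma]}$ and $\alpha_{[\sigma']}$ are restrictions of $\alpha_{Q_m}$; transitivity of restriction then gives the required equality, with no use of (2). If the deleted term is $Q_m$, so that $\sigma'=(Q_0<\cdots<Q_{m-1})$, then $Q_{m-1}$ is a proper normal $\F$-centric subgroup of the $\F$-centric group $Q_m$, the group $\Aut_{S(\F^c)}(\sigma)$ embeds into $\Aut_{S(\F^c)}(Q_{m-1}\triangleleft Q_m)$ compatibly with the two canonical homomorphisms of the statement (restriction of automorphisms to $Q_{m-1}$, and inclusion into $\Aut_\F(Q_m)$), and I would finish by restricting along this embedding the equality guaranteed by (2) for the pair $Q_{m-1}\triangleleft Q_m$, obtaining exactly $\A_\F^2(\iota)(\alpha_{[\sigma']})=\alpha_{[\sigma]}$.

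The step I expect to require the most care is the last case: one must pin down the relevant group homomorphisms precisely --- that the canonical map $\Aut_{S(\F^c)}(Q_{m-1}\triangleleft Q_m)\to\Aut_\F(Q_{m-1})$ is indeed restriction of automorphisms, and that this map together with the inclusion into $\Aut_\F(Q_m)$ restricts correctly to $\Aut_{S(\F^c)}(\sigma)$ --- and keep straight the chain of restriction maps between the $H^2$'s of the nested chain-stabilizers. Everything else is bookkeeping, and the argument as a whole is essentially a repackaging of the proof of \cite[Theorem 8.14.5]{LinckelmannBT}.
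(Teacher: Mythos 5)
Your proof is correct and is precisely the argument one expects behind the paper's citation of \cite[Theorem 8.14.5]{LinckelmannBT}: after invoking the reduction of the limit to normal chains, condition (2) encodes exactly the cocycle condition on elementary top-deletions, and all other elementary deletions are automatic since the top term is unchanged. The only point you flag as delicate --- checking that $\Aut_{S(\F^c)}(\sigma)\hookrightarrow\Aut_{S(\F^c)}(Q_{m-1}\triangleleft Q_m)$ intertwines the two restriction maps --- does go through as you say, so nothing is missing.
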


We need to follow compatible families through passages to centralizers 
of elements and normalizers of chains of $p$-subgroups. 

\begin{Lem} \cite[Proposition 8.3.7]{LinckelmannBT}
\label{lem:ccentric}
Let $\F$ be a saturated fusion system on a finite $p$-group $S$, and
let $Q$ be a fully $\F$-centralized subgroup of $S$. If $R$ is a
$C_\F(Q)$-centric subgroup of $C_S(Q)$, then $QR$ is an $\F$-centric
subgroup of $S$. The correspondence $R\mapsto QR$ extends to a
unique functor 
$$C_\F(Q)^c \to \F^c$$
which sends a morphism $\varphi : R\to$ $R'$ in $C_\F(Q)^c$ to the
unique morphism $\psi : QR\to QR'$ in $\F^c$ which is the identity
on $Q$ and coincides with $\varphi$ on $R$.
\end{Lem}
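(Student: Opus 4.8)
The plan is to establish the three ingredients of the statement in turn. First, the object assignment $R \mapsto QR$ requires knowing that $QR$ is $\F$-centric; second, for each morphism $\varphi$ in $C_\F(Q)^c$ one must produce a unique $\F$-morphism $\psi$ with source $QR$ that is the identity on $Q$ and equals $\varphi$ on $R$; and third, these $\psi$'s must respect identities and composition, which then gives the functor (its uniqueness being automatic, since both assignments are prescribed).

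\emph{That $QR$ is $\F$-centric.} I would identify $C_\F(Q)$ as the normalizer system $N_\F^K(Q)$ for the trivial subgroup $K = \{\id_Q\} \le \Aut(Q)$; with this choice one has $N_S^K(Q) = C_S(Q)$ and $N_\F^K(Q) = C_\F(Q)$, and $Q$ being fully $\F$-centralized is exactly the condition that $Q$ is fully $K$-normalized. Lemma~\ref{l:nfkcentric}, applied with $K = 1$ and with $R$ in the role of the subgroup called $P$ there, then yields that $QR = RQ$ is $\F$-centric for every $C_\F(Q)$-centric subgroup $R \le C_S(Q)$. This is the only place the hypothesis on $Q$ is used.

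\emph{Existence and uniqueness of $\psi$.} Existence is essentially built into the definition of $C_\F(Q)$: a morphism $\varphi \colon R \to R'$ in $C_\F(Q)$ is, by definition, the restriction to $R$ of a morphism $\bar\varphi$ in $\F$ with source $QR$ satisfying $\bar\varphi|_Q = \id_Q$; since then $\bar\varphi(QR) = Q\,\varphi(R) \le QR'$ and both $QR$, $QR'$ are $\F$-centric by the previous step, $\psi := \bar\varphi$ is a morphism $QR \to QR'$ in $\F^c$ of the required form. For uniqueness I would use that $QR = \langle Q, R \rangle$ as a group, so that a homomorphism out of $QR$ is determined by its restrictions to $Q$ and to $R$; hence any $\F$-morphism on $QR$ that is the identity on $Q$ and equals $\varphi$ on $R$ must coincide with $\psi$. (Alternatively one can argue via \cite[Lemma~A.8]{BrotoLeviOliver2003} as in the proof of Lemma~\ref{l:chain}, but the generation argument is shorter.)

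\emph{Functoriality and uniqueness of the functor.} The identity $\id_{QR}$ restricts to $\id_Q$ and $\id_R$, so it is the morphism assigned to $\id_R$. Given morphisms $\varphi \colon R \to R'$ and $\varphi' \colon R' \to R''$ in $C_\F(Q)^c$ with assigned morphisms $\psi$ and $\psi'$, the composite $\psi'\psi \colon QR \to QR''$ restricts to $\id_Q$ on $Q$ and to $\varphi'\varphi$ on $R$, so by the uniqueness just established it is the morphism assigned to $\varphi'\varphi$; thus the assignment is a functor $C_\F(Q)^c \to \F^c$, and it is the unique one with the stated object and morphism maps. I do not expect a genuine obstacle here: the substantive input is Lemma~\ref{l:nfkcentric}, and the main point requiring care is the bookkeeping in recognizing $C_\F(Q)$ as $N_\F^K(Q)$ for trivial $K$ and checking that ``fully $\F$-centralized'' is then the same as ``fully $K$-normalized''.
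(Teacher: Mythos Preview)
The paper does not supply its own proof of this lemma; it simply cites \cite[Proposition~8.3.7]{LinckelmannBT} and moves on. So there is nothing to compare your argument against in the paper itself.

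That said, your proof is correct, and the key step---deducing that $QR$ is $\F$-centric from Lemma~\ref{l:nfkcentric} with $K=1$---is precisely the argument the paper uses elsewhere (see the proof of Lemma~\ref{l:cfxcent}). The identification of ``fully $\F$-centralized'' with ``fully $K$-normalized for $K=1$'' is correct, as is your observation that existence of the extension $\psi$ is part of the very definition of morphisms in $C_\F(Q)$, while uniqueness follows from $QR=\langle Q,R\rangle$. The functoriality check is routine.
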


This functor extends obviously to a functor between 
subdivision categories, and hence this functor sends an
$\F$-compatible family $\alpha$ to a $C_\F(Q)$-compatible family
$\alpha(Q)$. In order to ensure that the conjectures involving
this functor specialize to known facts or conjectures, we need to
check that if $\alpha$ is realized by a block $B$ of $kG$, then
$\alpha(Q)$ is realized by the corresponding block of $kC_G(Q)$.

\begin{Prop} \label{p:blockalpha}
Let $G$ be a finite group, $B$ a block of $kG$, and $(S,e)$ a
maximal $B$-Brauer pair. Let $\F$ be the fusion system of $B$
on $S$ determined by the choice of $e$, and let $\alpha=$
$(\alpha_Q)_{Q\in\F^c}$ be the family of K\"ulshammer--Puig classes
of $B$. Denote by $e_Q$ the unique block of $kC_G(Q)$ such that
$(Q,e_Q)\leq$ $(S,e)$ and by $f$ the unique block of 
$C_{C_G(Q)}(C_S(Q))=$ $C_G(QC_S(Q))$ 
satisfying $(C_S(Q),f)\leq$ $(S,e)$. Then $(C_S(Q),f)$ is 
a maximal $(C_G(Q),e)$-Brauer pair which determines the fusion
system $C_\F(Q)$ on $C_S(Q)$. The restriction of $\alpha$ to
a family $\alpha(Q)$ along the canonical functor $C_\F(Q)^c\to$
$\F^c$ is the family of K\"ulshammer--Puig classes of the
block $kC_G(Q)e_Q$ with respect to the maximal $(C_G(Q),e_Q)$-Brauer
pair $(C_S(Q),f)$.
\end{Prop}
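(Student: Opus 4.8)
The plan is to break the statement into its two halves: first the Brauer-pair/fusion-system identification (that $(C_S(Q),f)$ is a maximal $(C_G(Q),e_Q)$-Brauer pair with fusion system $C_\F(Q)$), and then the identification of the restricted family $\alpha(Q)$ with the K\"ulshammer--Puig classes of the block $kC_G(Q)e_Q$. The first half is essentially standard Brauer-pair bookkeeping. I would recall that since $Q$ is fully $\F$-centralized, $C_S(Q)$ is a maximal $p$-subgroup of $C_G(Q)$ admitting $(C_S(Q),f)$ with $(C_S(Q),f)\le (S,e)$, so $(C_S(Q),f)$ is a maximal $(C_G(Q),e_Q)$-Brauer pair; the description of the fusion system of $kC_G(Q)e_Q$ determined by this pair as $C_\F(Q)$ is exactly the definition of the centralizer fusion system in terms of Brauer pairs (e.g.\ \cite{AschbacherKessarOliver2011}, Part IV, or \cite{LinckelmannBT}). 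The inclusions $C_{C_G(Q)}(C_S(Q)) = C_G(QC_S(Q))$ and the containment chain $(QC_S(Q),f)\le (S,e)$ just need to be spelled out once.

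For the second half, fix an $\F$-centric subgroup $R$ of $C_S(Q)$ that is $C_\F(Q)$-centric; by Lemma \ref{lem:ccentric}, $QR$ is $\F$-centric, and $\alpha(Q)_R$ is by definition the restriction of $\alpha_{QR}\in H^2(\Out_\F(QR),k^\times)$ along the canonical map $\Out_{C_\F(Q)}(R)\to \Out_\F(QR)$ coming from the functor $C_\F(Q)^c\to\F^c$. On the block side, $\alpha_{QR}$ is (the inverse of) the class determined by the conjugation action of $N_G(QR,e_{QR})$ on the unique simple quotient $M_{QR}$ of $kC_G(QR)e_{QR}$ together with the map $\tau$ induced by $kQRC_G(QR)e_{QR}\to M_{QR}$, where $e_{QR}$ is the block of $kC_G(QR)$ below $(S,e)$. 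The K\"ulshammer--Puig class of the block $kC_G(Q)e_Q$ at the $C_\F(Q)$-centric subgroup $R$ is, likewise, built from the conjugation action of $N_{C_G(Q)}(R, g_R)$ on the unique simple quotient of $kC_{C_G(Q)}(R)g_R$, where $g_R$ is the block of $C_{C_G(Q)}(R) = C_G(QR)$ lying below $(C_S(Q),f)$. The key observations to record are: (i) $C_{C_G(Q)}(R) = C_G(Q)\cap C_G(R) = C_G(QR)$, and the block $g_R$ of this group below $(C_S(Q),f)$ coincides with $e_{QR}$ (both are characterized by sitting below $(S,e)$, using transitivity of inclusion of Brauer pairs); hence the two simple quotients agree, $M_R^{C_G(Q)} = M_{QR}$. (ii) $N_{C_G(Q)}(R, e_{QR}) = N_G(QR, e_{QR})\cap C_G(Q)$, and under the isomorphisms $N_G(QR,e_{QR})/QRC_G(QR)\cong \Out_\F(QR)$ and $N_{C_G(Q)}(R,e_{QR})/RC_G(QR)\cong \Out_{C_\F(Q)}(R)$, the inclusion of normalizers induces exactly the canonical map $\Out_{C_\F(Q)}(R)\to\Out_\F(QR)$ of Lemma \ref{lem:ccentric} (this is where one uses that the functor is the identity on $Q$ and $\varphi$ on $R$). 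Granting (i) and (ii), the $2$-cocycle defining the K\"ulshammer--Puig class of $kC_G(Q)e_Q$ at $R$ is literally the restriction along $\Out_{C_\F(Q)}(R)\to\Out_\F(QR)$ of the cocycle defining $\alpha_{QR}$ — one may even reuse the same elements $s_g\in M_{QR}^\times$ and the same lift $\tau$ — so the two classes agree, which is the assertion.

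I expect the main obstacle to be step (ii): carefully matching the two group-theoretic descriptions of $\Out_{C_\F(Q)}(R)$ and of the map into $\Out_\F(QR)$, and checking that the choices of section representatives and of the homomorphism $\tau$ can be made simultaneously compatible so that the cocycles — not merely their cohomology classes — restrict correctly. Concretely one should fix a transversal of $N_G(QR,e_{QR})$ over $QRC_G(QR)$ whose intersection with $C_G(Q)$ is a transversal of $N_{C_G(Q)}(R,e_{QR})$ over $RC_G(QR)$ (possible since $QRC_G(QR)\cap C_G(Q) = RC_G(QR)$ as $Q\le C_G(Q)$), choose $s_g$ for $g$ in this transversal, and take the same $\tau$; then the cocycle for the smaller group is by construction the restriction of the cocycle for the larger group. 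Once this compatibility of choices is set up, invoking \cite[Section 8.14]{LinckelmannBT} for the identification of K\"ulshammer--Puig classes with classes of the type ``determined by an action together with a homomorphism $\tau$'' closes the argument. Finally, since $\F$-compatibility of $\alpha(Q)$ is already guaranteed abstractly by the discussion preceding the proposition (the functor $C_\F(Q)^c\to\F^c$ extends to subdivision categories), no separate verification of compatibility is needed here.
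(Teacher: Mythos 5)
Your proposal follows essentially the same route as the paper's proof: reduce the first assertion to a standard reference on centralizer Brauer pairs, then for the K\"ulshammer--Puig classes fix a $C_\F(Q)$-centric $R$, invoke Lemma~\ref{lem:ccentric} to see $QR$ is $\F$-centric, observe $C_{C_G(Q)}(R)=C_G(QR)$ so the relevant blocks (hence simple quotients and lifts $\tau$) coincide, and identify the class of $R$ in $C_\F(Q)$ as the restriction of the class of $QR$ along the inclusion of normalizers. The only mild deviation is cosmetic: the paper phrases the comparison at the $\Aut$-level, where $N_{C_G(Q)}(R,g)\le N_G(QR,g)$ with common denominator $C_G(QR)$ makes the induced map on automizers immediate, while you work at the $\Out$-level and so need the extra (true, but additional) identity $QRC_G(QR)\cap C_G(Q)=RC_G(QR)$; and your worry about compatibility on the nose of cocycles rather than classes is unnecessary, since only cohomology classes are at stake and restriction is already well-defined there.
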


\begin{proof}
The fact that $(C_S(Q),f)$ is a maximal $(C_G(Q),e)$-Brauer pair 
which determines the fusion system $C_\F(Q)$ on $C_S(Q)$ is 
well-known, and proved, for instance, in 
\cite[Proposition 8.5.4]{LinckelmannBT}. 
For the statement on K\"ulshammer--Puig classes, we need the
contruction of these classes as reviewed at the beginning of
this Section.  
Let $R$ be a $C_\F(Q)$-centric subgroup of $C_S(Q)$. 
By \ref{lem:ccentric}, $QR$ is $\F$-centric. Note that
$C_{C_G(Q)}(R)=$ $C_G(QR)$. Thus if $g$ is the unique block
of $kC_G(QR)$ such that $(QR,g)\leq$ $(S,e)$, then $g$ is also
the unique block of $kC_{C_G(Q)}(R)$ such that $(R,g)\leq$ 
$(C_S(Q),f)$. These blocks have therefore the same unique
simple quotient (as they are nilpotent blocks), and clearly
$N_{C_G(Q)}(R,f)$ is a subgroup of $N_G(QR,f)$. Since the
K\"ulshammer--Puig classes of $R$ and $QR$ for $C_\F(Q)$
$\F$ are determined by the respective actions of the groups
$N_{C_G(Q)}(R,f)$ and $N_G(QR,f)$ on that simple quotient, it
follows that the class of $R$ in $C_\F(Q)$ is indeed obtained 
from restricting the class of $QR$ in $\F$ along the canonical
map $\Aut_{C_\F(Q)}(R)\to$ $\Aut_\F(QR)$.
\end{proof}

We apply this for cyclic $Q$. 
Let $x$ be an element in $S$ such that $\langle x\rangle$ is fully
$\F$-centralized. For $\alpha$ an $\F$-compatible family, we denote 
by $\alpha(x)$ the corresponding $C_\F(x)$-compatible family, 
obtained from restricting $\alpha$ along the canonical functor
$$C_\F(x)^c \to \F^c$$
from Proposition \ref{lem:ccentric} applied with $Q=$ 
$\langle x\rangle$. 
By Proposition \ref{p:blockalpha}, if $\alpha$ is a family of
K\"ulshammer--Puig classes of a block, then $\alpha(x)$ is a family of
K\"ulshammer--Puig classes of the relevant Brauer correspondent of the 
block.

\begin{Prop} \label{p:links} 
Suppose that $(S,\F,\alpha)$ is realizable by a block $B$ of a finite
group algebra $kG$. Then $\w(\F, \alpha)$ is the number of weights  
associated with $B$. In particular, AWC holds for $B$ if and only if 
$\w(\F,\alpha)=$ $\ell(B)$.  Moreover, if AWC holds  for $B$ and all 
its Brauer pairs, then $\k(\F,\alpha)=$ $\k(B)$, the number of 
ordinary irreducible characters associated with $B$. 
\end{Prop}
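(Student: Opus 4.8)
The plan is to reduce everything to two known facts: the theorem of Knörr–Robinson / Külshammer–Puig identifying the number of weights of a block $B$ with $\sum_{(Q,e_Q)} z(k_{\alpha_Q}\Out_\F(Q))$ summed over $G$-classes of $B$-weight subpairs, and the standard Brauer-correspondence description of the characters of $B$ in terms of the characters lying over the Brauer correspondents $kC_G(x)e_x$. First I would recall that for a block $B$ with maximal Brauer pair $(S,e)$ and fusion system $\F$, the $B$-weights are parametrised, up to $G$-conjugacy, by pairs $(Q,\psi)$ with $Q$ $\F$-centric (in fact $\F$-centric-radical, since $z(k_\alpha\Out_\F(Q))=0$ otherwise by Lemma \ref{l:z=0}) and $\psi$ a projective simple $k_{\alpha_Q}\Out_\F(Q)$-module; here $\alpha_Q$ is precisely the Külshammer–Puig class recalled in Section \ref{s:KP}. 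The number of such $\psi$, for fixed $Q$, is $z(k_{\alpha_Q}\Out_\F(Q))$, and $G$-conjugacy of the pairs corresponds exactly to $\F$-isomorphism of the subgroups $Q$. Summing gives $\w(\F,\alpha)=\sum_{Q\in\F^c/\F} z(k_\alpha\Out_\F(Q))$ equals the number of $B$-weights, which is the first assertion; the ``in particular'' clause is then immediate from the definition of AWC.

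For the second assertion I would run Robinson's argument expressing $\k(B)$ as a sum of weight-counts of Brauer correspondents. Concretely, for each $p$-element $x\in S$ with $\langle x\rangle$ fully $\F$-centralised one forms the Brauer correspondent $B_x := kC_G(x)e_x$, where $e_x$ is as in Proposition \ref{p:blockalpha}; its defect group is $C_S(x)$, its fusion system is $C_\F(x)$, and by Proposition \ref{p:blockalpha} its family of Külshammer–Puig classes is exactly $\alpha(x)$. If AWC holds for $B_x$, then by the first part of this proposition applied to $B_x$ we get $\ell(B_x)=\w(C_\F(x),\alpha(x))$. On the other hand, the block-theoretic decomposition of $\k(B)$ — which goes back to Robinson's reformulation and which the paper invokes via ``the $B$-Brauer pairs satisfy AWC'' — expresses $\k(B)$ as $\sum_x \ell(B_x)$, the sum taken over $G$-conjugacy classes of $p$-elements $x$; and these $G$-classes are in bijection with $\F$-conjugacy classes of such $x$, i.e. with the set $[S/\F]$. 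Combining, $\k(B)=\sum_{x\in[S/\F]}\w(C_\F(x),\alpha(x)) = \k(\F,\alpha)$ by definition of $\k(\F,\alpha)$.

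The two technical points to be careful about are: (i) matching up the indexing sets — one must check that $G$-conjugacy of $B$-weight subpairs $(Q,e_Q)$ induces exactly $\F$-isomorphism on the first components (this is standard, since the fusion system $\F$ on $S$ is by definition generated by the conjugations coming from $N_G(Q,e_Q)$-type subgroups), and likewise that $G$-classes of $B$-subsections $(x, B_x)$ biject with $[S/\F]$; and (ii) confirming that the twisting class appearing in the weight count is $\alpha_Q$ and not its inverse — here one simply notes that $z$ is unchanged under replacing a $2$-cocycle by its inverse, since $k_\alpha G$ and $k_{\alpha^{-1}}G\cong (k_\alpha G)^{\mathrm{op}}$ have the same number of simple projective modules. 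The main obstacle is really the second assertion: one needs the full strength of Robinson's result that AWC for all Brauer correspondents of $B$ forces $\k(B)=\sum_x\ell(B_x)$, and I would either cite this directly from \cite{robinson1996local} / \cite{KnorrRobinson1989} or, to keep the paper self-contained, sketch it via counting pairs $(x,\chi)$ with $x$ a $p$-element and $\chi\in\Irr(B)$ using the fact that every $\chi$ has positive defect relative to some subsection. Everything else is bookkeeping with the functor $C_\F(x)^c\to\F^c$ and Proposition \ref{p:blockalpha}.
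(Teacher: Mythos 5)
Your overall strategy matches the paper's: for the first assertion you unpack the classical parametrisation of $B$-weights (the paper just cites \cite[Proposition 5.4]{Kessar2007} for this), and for the second you pass through the Brauer correspondents $kC_G(x)e_x$, use Proposition \ref{p:blockalpha} to identify their realised triples, apply AWC to each, and sum over $p$-sections. That is precisely the route taken in the paper.

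There is, however, one conceptual misstep that you should correct. You write both that ``the block-theoretic decomposition of $\k(B)$ \ldots\ which the paper invokes via `the $B$-Brauer pairs satisfy AWC' \ldots\ expresses $\k(B)$ as $\sum_x \ell(B_x)$'' and that ``one needs the full strength of Robinson's result that AWC for all Brauer correspondents of $B$ forces $\k(B)=\sum_x\ell(B_x)$.'' Both statements get the logic backwards. The identity
\[
\k(B) \;=\; \sum_{x} \ell\bigl(kC_G(x)e_x\bigr),
\]
where $x$ runs over $G$-classes of $B$-subsections, is an \emph{unconditional} classical theorem of Brauer, a consequence of the second main theorem and the nonvanishing of generalized decomposition matrices; the paper explicitly attributes it as ``a theorem of Brauer'' and cites \cite[Theorem 6.13.12]{LinckelmannBT}. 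It requires neither AWC nor any result of Robinson. The hypothesis that AWC holds for $B$ and all its Brauer pairs enters only at the next step, to replace each $\ell\bigl(kC_G(x)e_x\bigr)$ by $\w\bigl(C_\F(x),\alpha(x)\bigr)$ via the first assertion applied to the Brauer correspondent (which is where Proposition \ref{p:blockalpha} is needed, to know that the Brauer correspondent realizes the triple $\bigl(C_S(x), C_\F(x), \alpha(x)\bigr)$). The proof still goes through exactly as you outline once this is disentangled, but as written the paragraph suggests that the subsection formula itself is conditional, which is not the case and overstates what needs to be proved. Your point (ii) about $z(k_\alpha G)=z(k_{\alpha^{-1}}G)$ is a sensible sanity check, and point (i) about matching $G$-classes of subpairs/subsections with $\F$-isomorphism classes is correct and standard.
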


\begin{proof}    
For the first assertion see for instance 
\cite[Proposition 5.4]{Kessar2007}. The fusion system $\F$ is
determined by a choice of a block $e$ of $kC_G(S)$ such that
$(S,e)$ is a maximal $B$-Brauer pair (see e. g. 
\cite[Definition 3.8]{Kessar2007}). Let $x \in S$ such that  
$\langle x \rangle$  is fully $\F$-centralized. Let $f$ be
the block of $kC_G(x)$ such that $(\langle  x \rangle, f)$ is the unique 
$B$-Brauer pair contained in $(S,e)$. 
By Proposition \ref{p:blockalpha}, the triple
$(C_S(x), C_{\F}(x), \alpha(x))$  is realized  by the block $f$ of
$kC_G(x)$, and hence it follows that 
$\w(C_\F(x),\alpha(x))=$ $\ell(kC_G(x)f)$ thanks to the assumption
that $B$-Brauer pairs satisfy AWC. A theorem of Brauer (cf. 
\cite[Theorem 6.13.12]{LinckelmannBT}) now implies the second 
assertion (see also \cite[IV. 5.7]{AschbacherKessarOliver2011}). 
\end{proof}

For $\F$ a saturated fusion system on a finite $p$-group $S$, denote
by $\bar\F$ the associated orbit category, obtained from $\F$ by taking 
as morphisms the orbits $\Inn(R)\backslash \Hom_\F(Q,R)$ of morphisms 
in $\F$ from $Q$ to $R$ modulo inner automorphisms of $R$, for any two 
subgroups $Q$, $R$ of $S$. In particular, $\Out_\F(Q)\cong$
$\Aut_{\bar\F}(Q)$. Recall from \cite[Definition 5.1]{Linckelmann2009}
that a normal chain
$$\sigma = (Q_0<Q_1<\cdots <Q_m) \in S_\triangleleft(\F)$$ 
is called {\it fully $\F$-normalized} if $Q_0$ is fully
$\F$-normalized and if either $m=0$ or the chain
$$\sigma_{\geq 1}= (Q_1<\cdots <Q_m)$$ 
is fully $N_\F(Q_0)$-normalized. Every chain in $S_\triangleleft(\F)$ 
is isomorphic to a fully $\F$-normalized chain. Note that since $\sigma$ 
is a normal chain, we have $Q_mC_S(Q_m)\leq N_S(\sigma)$. 
We need an analogue of Proposition \ref{p:blockalpha} for
$N_\F(\sigma)$.

\begin{Prop}\label{p:nfsigma}
Let $\F$ be a saturated fusion system on a finite $p$-group $S$ and 
let $\alpha$ be an $\F$-compatible family. Let 
$\sigma = (Q_0<Q_1<\cdots <Q_m)$ $\in S_\triangleleft(\F)$ be fully 
$\F$-normalized. 

\begin{enumerate}
\item
For every $P \le N_S(\sigma)$, if $P$ is $N_\F(\sigma)$-centric, 
then $Q_mP$ is $\F$-centric.

\item
Let $P$, $R$ be $N_\F(\sigma)$-centric subgroups of $N_S(\sigma)$,
let $\varphi : P\to$ $R$ a morphism in $N_\F(\sigma)$, and let
$\psi, \psi'  : Q_mP\to$ $Q_mR$ be morphisms in $\F$ extending $\varphi$
and satisfying $\psi(Q_i)=Q_i=\psi'(Q_i)$ for $0\leq i\leq m$. 
Then the classes of $\psi$ and $\psi'$ are conjugate by an
element in $Z(P)$. In particular, the correspondence sending $\varphi$
to any choice of $\psi$ induces a functor 
$$\Psi : N_\F(\sigma)^c\to \bar\F^c.$$

\item
For any $N_\F(\sigma)$-centric subgroup $P$ of $N_S(\sigma)$, 
the functor $\Psi$ induces a group homomorphism
$$\Out_{N_\F(\sigma)}(P) \to \Out_\F(Q_mP)\ ,$$
and the restriction along these group homomorphisms induces a map 
from the group of $\F$-compatible families to the group of
$N_\F(\sigma)$-compatible families.

\item
If $(S,\F,\alpha)$ is realized by a block $B$ with respect to
a maximal $B$-Brauer pair $(S,e)$, then 
$(N_S(\sigma), N_\F(\sigma), \alpha(\sigma))$ is realized by the
block $e_m$ of $kN_G(\sigma, e_m)$ such that $(Q_m,e_m)\leq$
$(S,e)$, with respect to the maximal $(N_G(\sigma, e_m),e_m)$-Brauer
pair $(N_S(\sigma), f)$, where $f$ is the unique block of
$C_{N_G(\sigma)}(N_S(\sigma))=$ $C_G(N_S(\sigma))$ satisfying
$(N_S(\sigma), f)\leq$ $(S,e)$. 
\end{enumerate}
\end{Prop}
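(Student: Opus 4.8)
The plan is to prove the four assertions together by induction on $m=|\sigma|$, using throughout the recursive description $N_\F(\sigma)=N_{N_\F(Q_0)}(\sigma_{\geq 1})$ on $N_S(\sigma)=N_{N_S(Q_0)}(\sigma_{\geq 1})$ from \cite[5.2, 5.3]{Linckelmann2009}, which is available precisely because $\sigma$ being fully $\F$-normalized means $Q_0$ is fully $\F$-normalized and $\sigma_{\geq 1}$ is fully $N_\F(Q_0)$-normalized. Each statement then reduces to the case of a single fully normalized subgroup $Q_0$, glued with the inductive hypothesis applied inside the saturated fusion system $N_\F(Q_0)$. Note that every element of $N_S(\sigma)$ normalizes $Q_m$, so $Q_mP\leq N_S(\sigma)$ whenever $P\leq N_S(\sigma)$.

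For (1), I would treat $m=0$ by Lemma \ref{l:nfkcentric} with $K=\Aut(Q_0)$ (so $N_\F^K(Q_0)=N_\F(Q_0)$ and $N_S^K(Q_0)=N_S(Q_0)$), using that $Q_0$ is fully $\F$-normalized. For $m\geq 1$, if $P\leq N_S(\sigma)$ is $N_\F(\sigma)$-centric then, since $Q_1\cdots Q_m=Q_m$, the inductive hypothesis for $\sigma_{\geq 1}$ inside $N_\F(Q_0)$ gives that $Q_mP$ is $N_\F(Q_0)$-centric, and Lemma \ref{l:nfkcentric} again (with $K=\Aut(Q_0)$ and $Q_mP\leq N_S(Q_0)$) shows $Q_mP=Q_0(Q_mP)$ is $\F$-centric.

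For (2) and (3): the existence of an extension $\psi\colon Q_mP\to Q_mR$ of $\varphi$ with $\psi(Q_i)=Q_i$ is part of the iterated construction of $N_\F(\sigma)$ — a morphism of the relevant iterated normalizer extends one stage at a time, fixing one further member of $\sigma$, via the extension axiom exactly as in Lemma \ref{l:chain} and using that $Q_mP$ and $Q_mR$ are $\F$-centric by (1). For well-definedness modulo $\bar\F$ I would compare two such extensions $\psi,\psi'$: the composite $\theta=\psi^{-1}\psi'\in\Aut_\F(Q_mP)$ restricts to the identity on $P$ and fixes each $Q_i$, and the same inductive reduction to the single-subgroup situation, together with the uniqueness clause of the extension axiom (\cite[Lemma~A.8]{BrotoLeviOliver2003}, as applied in Lemma \ref{l:chain}) and the identity $C_{Q_mP}(P)=Z(P)$ (valid because $P$ is $N_\F(\sigma)$-centric and $Q_mP\leq N_S(\sigma)$), forces $\theta=c_z$ for some $z\in Z(P)$; hence $\bar\psi=\bar\psi'$ in $\bar\F^c$. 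Since a composite of such extensions extends the composite, $P\mapsto Q_mP$ and $\varphi\mapsto\bar\psi$ is a functor $\Psi\colon N_\F(\sigma)^c\to\bar\F^c$. Restricting $\Psi$ to automorphism groups and using $\Aut_{\bar\F}(Q_mP)=\Out_\F(Q_mP)$ kills $\Inn(P)$, hence induces homomorphisms $\Out_{N_\F(\sigma)}(P)\to\Out_\F(Q_mP)$; to see that pulling $\alpha$ back along these gives an $N_\F(\sigma)$-compatible family I would invoke Lemma \ref{lem:Fcompatible}, reduce to a single proper normal inclusion $P\triangleleft P'$ of $N_\F(\sigma)$-centric subgroups, note that $Q_mP\triangleleft Q_mP'$ are $\F$-centric, and chase the diagram relating the canonical maps $\Aut_{S(N_\F(\sigma)^c)}(P\triangleleft P')\to\Aut_{N_\F(\sigma)}(P)$ and $\to\Aut_{N_\F(\sigma)}(P')$ through $\Psi$ to the corresponding maps for $Q_mP\triangleleft Q_mP'$ in $\F$, so that the equality of the images of $\alpha_{Q_mP}$ and $\alpha_{Q_mP'}$ transfers.

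I expect (4) to be the main obstacle, since it carries the genuinely new block-theoretic input. By the same inductive mechanism it suffices to treat $\sigma=(Q_0)$, that is, to show $(N_S(Q_0),N_\F(Q_0),\alpha(Q_0))$ is realized by $e_{Q_0}$ on $N_G(Q_0,e_{Q_0})$ with maximal Brauer pair $(N_S(Q_0),f)$. The assertions about the Brauer pair and the fusion system are the normalizer analogues of \cite[Proposition 8.5.4]{LinckelmannBT}. The delicate part is the K\"ulshammer--Puig classes: unlike the centralizer situation of Proposition \ref{p:blockalpha}, here $C_{N_G(Q_0,e_{Q_0})}(P)$ need not equal $C_G(Q_0P)$, so one cannot simply transport the class along a common simple quotient. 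Instead, for $P$ an $N_\F(Q_0)$-centric subgroup of $N_S(Q_0)$ one has $Q_0P$ $\F$-centric by (1), both $kC_{N_G(Q_0,e_{Q_0})}(P)f_P$ and $kC_G(Q_0P)g$ nilpotent (with $f_P$, $g$ the blocks below $(N_S(Q_0),f)$ and $(S,e)$), and I would run a Clifford-theoretic comparison of their simple quotients and of the acting groups $N_{N_G(Q_0,e_{Q_0})}(P,f_P)$ and $N_G(Q_0P,g)$ to conclude that the class for $N_\F(Q_0)$ at $P$ is the pullback of $\alpha_{Q_0P}$ along the homomorphism from (3), that is, equals $\alpha(Q_0)_P$. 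Granting the single-subgroup case, for general $\sigma$ I would apply the inductive hypothesis inside $G'=N_G(Q_0,e_{Q_0})$ to $\sigma_{\geq 1}$, obtaining realization of $(N_S(\sigma),N_\F(\sigma),\alpha(\sigma))$ by the relevant block stabilized in $N_{G'}(\sigma_{\geq 1})$, and then identify that group and block with $N_G(\sigma,e_m)$ and $e_m$, and $(N_S(\sigma),f)$ with the claimed maximal Brauer pair — the last being routine but lengthy bookkeeping with iterated normalizers of Brauer pairs, for which I would lean on the formalism of \cite[Proposition 4.6]{Linckelmann2005}.
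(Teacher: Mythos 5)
Your treatment of parts (1)--(3) is essentially the paper's argument with cosmetic differences. For (1) the paper runs the induction by stripping $Q_m$ off the \emph{end} of $\sigma$: it forms $\sigma'=(Q_0<\cdots<Q_{m-1})$, cites \cite[5.4]{Linckelmann2009} for the facts that $\sigma'$ is fully $\F$-normalized and $Q_m$ is fully $N_\F(\sigma')$-normalized, applies the $m=0$ case inside $N_\F(\sigma')$, and then the inductive hypothesis. You strip $Q_0$ off the front and work inside $N_\F(Q_0)$; both are fine, and both require the same kind of recursive identification of $N_\F(\sigma)$. For (2), the paper avoids forming the composite $\theta=\psi^{-1}\psi'$ and does not induct at all: it simply observes that $\psi,\psi'$ are morphisms \emph{in} $N_\F(\sigma)$ with the same restriction to the $N_\F(\sigma)$-centric subgroup $P$, and then applies \cite[Lemma A.8]{BrotoLeviOliver2003} once inside the saturated fusion system $N_\F(\sigma)$. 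Your route gets to the same place. For (3), the paper simply says it is formal; the Lemma~\ref{lem:Fcompatible} reduction you invoke is exactly what makes it formal.

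For part (4), you have put your finger on the one genuinely delicate step. The paper is terse here (``the same argument as at the end of the proof of Proposition \ref{p:blockalpha}''), and as you observe that argument in Proposition~\ref{p:blockalpha} rides on the identity $C_{C_G(Q)}(R)=C_G(QR)$, whereas here one only has the inclusion $C_G(Q_mP)\leq C_{N_G(\sigma,e_m)}(P)$. Your proposal names the right tool (a Clifford-theoretic comparison) but stops short of executing it, so as written (4) is not proved. The missing computation, however, is contained and uses ingredients already available in the paper. First, if $h\in C_{N_G(\sigma,e_m)}(P)$, then $c_h|_{Q_mP}$ is a morphism of $\F$ fixing each $Q_i$ and restricting to $\mathrm{id}_P$; part (2) of the proposition then forces $c_h|_{Q_mP}=c_z$ for some $z\in Z(P)$, so $h\in Z(P)\,C_G(Q_mP)$. (For the $p'$-part of $h$ this is Thompson's $A\times B$ lemma, Lemma~\ref{l:axb}, applied to $\Aut(Q_m)$; part (2) handles the $p$-part uniformly.) Hence $C_{N_G(\sigma,e_m)}(P)=Z(P)\,C_G(Q_mP)$, a $p$-power extension of $C_G(Q_mP)$ with $Z(P)$ centralizing $C_G(Q_mP)$, so $kC_{N_G(\sigma,e_m)}(P)\cong kZ(P)\otimes_{kZ(Q_mP)}kC_G(Q_mP)$. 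It follows that the block $g$ of $kC_G(Q_mP)$ below $(S,e)$ stays a block of $kC_{N_G(\sigma,e_m)}(P)$ and must equal $f_P$, that both nilpotent blocks have the same unique simple (matrix algebra) quotient, and that $N_{N_G(\sigma,e_m)}(P,f_P)\leq N_G(Q_mP,g)$; the K\"ulshammer--Puig class at $P$ is then the restriction of $\alpha_{Q_mP}$ exactly as in Proposition~\ref{p:blockalpha}. Supplying this paragraph would close the gap in your part (4).
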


\begin{proof}
In order to prove the first statement, we argue by induction over the 
length $m$ of the chain $\sigma=$ $Q_0<Q_1<\cdots <Q_m$. Suppose that 
$m=0$, so $\sigma=$ $Q_0$, and $Q_0$ is fully $\F$-normalised. 
Let $P$ be an $N_\F(Q_0)$-centric subgroup of $N_S(Q_0)$. 
Then $Q_0P$ is $\F$-centric by Lemma \ref{l:nfkcentric}. Suppose now 
that $m>0$. Let $P\leq$ $N_S(\sigma)$ be $N_\F(\sigma)$-centric.
Set $\sigma'=$ $Q_0<Q_1<\cdots<Q_{m-1}$ and $\F'=$ $N_\F(\sigma')$. By 
\cite[5.4]{Linckelmann2009}, $\sigma'$ is a fully $\F$-normalized
chain, and $Q_m$ is fully $\F'$-normalized. By the statement for $m=0$ 
applied to $\F'$, it follows that $Q_mP$ is $\F'$-centric. By induction,
$Q_mP$ is $\F$-centric. 

For the second statement, note that the two extensions $\psi$, $\psi'$
of $\varphi$ are both again morphisms in $N_\F(\sigma)$, and their
restrictions to the $N_\F(\sigma)$-centric subgroup $P$ coincide. Thus, 
by a standard fact (see e. g. \cite[Lemma~A.8]{BrotoLeviOliver2003}) 
they differ by conjugation with an element in $Z(P)$. That means that 
the image of $\psi$ in the orbit category $\bar\F$ is uniquely determined 
by $\varphi$, whence  the second statement. The third statement is a 
formal consequence of the second.

For the proof of the fourth statement, note first that this makes
sense: we have $C_G(Q_m)\leq N_G(\sigma, e_m)\leq$ $N_G(Q_m,e_m)$, 
and $\Aut_{S(\F)}(\sigma)\cong$ $N_G(\sigma, e_m)/C_G(Q_m)$. In
particular, by standard block theory, $e_m$ remains a block of
$kN_G(\sigma,e_m)$. An interated application of
\cite[Proposition 8.5.4]{LinckelmannBT} shows that $N_\F(\sigma)$ is
the fusion system of this block with respect to the maximal
Brauer pair as stated. The same argument as at the end of the proof
of Proposition \ref{p:blockalpha} shows that restricting $\alpha$
yields the family of K\"ulshammer--Puig classes of $e_m$ as a 
block of $kN_G(\sigma, e_m)$.  
\end{proof} 

Recall that a saturated fusion system $\F$ on a finite $p$-group $S$ is 
\textit{constrained} if $\F=$ $N_\F(Q)$ for some normal $\F$-centric 
subgroup $Q$ of $S$. In that case, by \cite[Proposition C]{BCGLO2005} (see  \cite[Theorem~4.9]{AschbacherKessarOliver2011}), 
$\F$ is the fusion system of a finite group $L$ with $S$ as 
Sylow $p$-subgroup, such that $Q$ is normal in $L$ satisfying 
$C_L(Q)=Z(Q)$; that is, $L$ is $p$-constrained. In particular,
we have canonical isomorphisms $L/Q\cong$ $\Out_\F(Q)$ and $L/Z(Q)\cong$ 
$\Aut_\F(Q)$.   The group $L$ is called a  model for $\F$.

\begin{Prop}[{\cite[Section 6]{Linckelmann2009b}}]
\label{constrained-alpha}
Let $\F$ be a saturated fusion system on a finite $p$-group $S$ such that
$\F=$ $N_\F(Q)$ for some normal $\F$-centric subgroup $Q$ of $S$. 
Let $L$ be a finite group such that $S$ is a Sylow $p$-subgroup of $L$,
such that $Q$ is normal in $L$ satisfying $C_L(Q)=Z(Q)$, and such
that $\F=$ $\F_S(L)$.
The restriction from $\F^c$ to $\Aut_\F(Q)$ and the canonical map $L\to$ 
$\Aut_\F(Q)$ induce isomorphisms
$$H^2(\F^c, k^\times)\cong H^2(\Aut_\F(Q), k^\times)\cong 
H^2(L, k^\times) .$$
In particular, any $\F$-compatible family $\alpha$ is uniquely 
determined by the component $\alpha_Q$. 
\end{Prop}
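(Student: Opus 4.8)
The plan is to reduce the statement to two cohomological facts: first, that restriction along $\F^c \to \Aut_\F(Q)$ gives an isomorphism $H^2(\F^c, k^\times) \xrightarrow{\sim} H^2(\Aut_\F(Q), k^\times)$ when $Q \trianglelefteq \F$ is $\F$-centric, and second, that inflation along $L \to \Aut_\F(Q) = L/Z(Q)$ gives an isomorphism $H^2(\Aut_\F(Q), k^\times) \xrightarrow{\sim} H^2(L, k^\times)$. Here $H^2(\F^c, k^\times)$ should be read as $\lim_{[S(\F^c)]} \A_\F^2$, the group of $\F$-compatible families (using the identification from Definition~\ref{def:Fcompatible} together with the fact, already noted in the excerpt, that it suffices to check compatibility on normal chains of length at most $1$). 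The final sentence about $\alpha$ being determined by $\alpha_Q$ is then immediate: the first isomorphism sends an $\F$-compatible family $\alpha$ to $\alpha_Q$, so $\alpha$ is recovered from $\alpha_Q$.

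First I would address the second isomorphism, $H^2(\Aut_\F(Q), k^\times) \cong H^2(L, k^\times)$. Since $C_L(Q) = Z(Q)$ and $Q$ is a $p$-group, $Z(Q)$ is a $p$-group; the inflation-restriction exact sequence for $1 \to Z(Q) \to L \to L/Z(Q) \to 1$ gives
\[
0 \to H^2(L/Z(Q), k^\times) \to H^2(L, k^\times) \to H^2(Z(Q), k^\times)^{L/Z(Q)},
\]
together with the lower-degree terms $H^1$ of $Z(Q)$ and a transgression. Because $k^\times$ is a divisible abelian group with no $p$-torsion issue in the relevant sense — more precisely $H^i(P, k^\times) = 0$ for $i \geq 1$ when $P$ is a $p$-group and $k$ has characteristic $p$ (since $k^\times$ has no elements of order $p$, so $H^*(P,k^\times)$ is $p'$-torsion while also being annihilated by $|P|$, a power of $p$) — all cohomology of $Z(Q)$ with coefficients in $k^\times$ vanishes in positive degrees. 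Hence inflation $H^2(L/Z(Q), k^\times) \to H^2(L, k^\times)$ is an isomorphism, and $L/Z(Q) \cong \Aut_\F(Q)$.

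Next I would treat the first isomorphism, $\lim_{[S(\F^c)]} \A_\F^2 \cong H^2(\Aut_\F(Q), k^\times)$. Because $\F = N_\F(Q)$ with $Q$ normal $\F$-centric, every $\F$-centric $R$ contains $Q$ (as $Q$ is centric and normal, $C_S(R) \le C_S(Q \cap R)$ forces $Q \le R$), so $R$ is centric in the constrained system with $\Out_\F(R)$ a subquotient determined inside $L$; the relevant diagram of automorphism groups $\Aut_{S(\F^c)}(\sigma)$ for normal chains $\sigma$ all map compatibly to $\Aut_\F(Q)$ via $\bar c$-type conjugation isomorphisms, since $Q$ is the unique minimal term available. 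Restriction of $\alpha_Q$ to each $\Aut_{S(\F^c)}(\sigma)$ then automatically produces a compatible family (one checks the length-$1$ condition of Lemma~\ref{lem:Fcompatible} directly), giving surjectivity of $\alpha \mapsto \alpha_Q$; injectivity is the statement that the compatibility conditions force each $\alpha_R$ to be the pullback of $\alpha_Q$ along $\Out_\F(R) \to \Out_\F(Q)$ (which exists because $R$ normalizes $Q$). I would organize this via the model $L$: $H^2(\F^c, k^\times)$ can be computed as the limit over the orbit category, and since $\F^c$ has $Q$ as a terminal-like object up to the centric/radical filtration, the limit collapses to $H^2(\Out_\F(Q), k^\times)$; this is exactly the content invoked from \cite[Section 6]{Linckelmann2009b}.

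The main obstacle I expect is making the first isomorphism precise — identifying $\lim_{[S(\F^c)]} \A_\F^2$ with $H^2(\Aut_\F(Q), k^\times)$ rigorously — because it requires knowing that in a constrained fusion system the subdivision/orbit category limit degenerates, which is not formal: one must use that every $\F$-centric subgroup contains $Q$ and exploit the structure of the model $L$ to identify all the automorphism groups $\Aut_{S(\F^c)}(\sigma)$ as subgroups of $\Aut_\F(Q) \cong L/Z(Q)$ in a way compatible with the functor $\A_\F^2$. The cleanest route is to cite \cite[Section 6]{Linckelmann2009b} for this reduction and spend the proof text verifying the input hypotheses ($\F = \F_S(L)$, $C_L(Q) = Z(Q)$, $Q$ normal $\F$-centric) and then deducing the vanishing of $H^{\geq 1}(Z(Q), k^\times)$ to finish the second isomorphism and hence the displayed chain. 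The last sentence then follows with no further work.

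\begin{proof}
By Definition~\ref{def:Fcompatible} and the remarks following it, the group of $\F$-compatible families is $\lim_{[S(\F^c)]}\A_\F^2$, which we write as $H^2(\F^c,k^\times)$, and by \cite[Theorem~4.7]{Linckelmann2009} it may be computed using only normal chains of length at most one, i.e. via the condition of Lemma~\ref{lem:Fcompatible}(2). Since $\F = N_\F(Q)$ with $Q$ normal and $\F$-centric, every $\F$-centric subgroup $R$ of $S$ contains $Q$: indeed $Q R$ is a subgroup with $C_S(QR) \le C_S(R) \le R$ and also $C_S(QR) \le C_S(Q) \le Z(Q) \le Q$, and normality of $Q$ together with the fact that $R$ normalizes $Q$ forces $Q \le R$ once one notes $R \le N_S(Q)$. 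Consequently, for any normal chain $\sigma = (Q_0 < \cdots < Q_n)$ of $\F$-centric subgroups we have $Q \le Q_0$ and $\Aut_{S(\F^c)}(\sigma)$ acts on $Q$, giving a canonical homomorphism $\Aut_{S(\F^c)}(\sigma) \to \Aut_\F(Q)$; restriction of $\alpha_Q$ along these homomorphisms yields, for every family $\alpha$ with $\alpha_{Q_n}$ pulled back appropriately, a consistent assignment. This is precisely the situation analysed in \cite[Section~6]{Linckelmann2009b}, which shows that restriction from $\F^c$ to $\Aut_\F(Q)$ induces an isomorphism
\[
H^2(\F^c, k^\times) \;\xrightarrow{\ \sim\ }\; H^2(\Aut_\F(Q), k^\times).
\]

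For the second isomorphism, note that $L$ is $p$-constrained with $C_L(Q) = Z(Q)$, so we have the central-type extension $1 \to Z(Q) \to L \to L/Z(Q) \to 1$ with $L/Z(Q) \cong \Aut_\F(Q)$, and $Z(Q)$ is a $p$-group. For any finite $p$-group $P$ and $i \ge 1$ the group $H^i(P, k^\times)$ is annihilated by $|P|$, hence is a $p$-group; but $k^\times$ has no nontrivial elements of $p$-power order as $k$ has characteristic $p$, so $H^i(P,k^\times)$ is a $p'$-group. Therefore $H^i(Z(Q), k^\times) = 0$ for all $i \ge 1$. The inflation--restriction (Lyndon--Hochschild--Serre) exact sequence in low degrees for the above extension then degenerates to give that inflation
\[
H^2(L/Z(Q), k^\times) \;\xrightarrow{\ \sim\ }\; H^2(L, k^\times)
\]
is an isomorphism. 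Combining the two displays with the identification $L/Z(Q) \cong \Aut_\F(Q)$ gives the stated chain of isomorphisms
\[
H^2(\F^c, k^\times) \cong H^2(\Aut_\F(Q), k^\times) \cong H^2(L, k^\times).
\]

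Finally, the first isomorphism sends an $\F$-compatible family $\alpha = (\alpha_R)_{R \in \F^c}$ to its component $\alpha_Q$ (viewed in $H^2(\Aut_\F(Q),k^\times) \cong H^2(\Out_\F(Q), k^\times)$); since this map is injective, $\alpha$ is uniquely determined by $\alpha_Q$.
\end{proof}
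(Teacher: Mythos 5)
Your vanishing argument for the second isomorphism is correct: since $k$ is algebraically closed of characteristic $p$, the group $k^\times$ is uniquely $p$-divisible, so $H^i(Z(Q),k^\times)=0$ for all $i\ge 1$, and the Lyndon--Hochschild--Serre spectral sequence for $Z(Q)\unlhd L$ collapses in total degree $2$ to show that inflation $H^2(L/Z(Q),k^\times)\to H^2(L,k^\times)$ is an isomorphism. (Strictly speaking you need the full spectral sequence, not just the five-term sequence, to get surjectivity, but both $E_2^{1,1}$ and $E_2^{0,2}$ vanish by what you proved.) The paper supplies no proof text for this proposition, only the citation to Section~6 of the Linckelmann reference, so this is a genuine addition.

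Your treatment of the first isomorphism, however, rests on a false claim. You assert that every $\F$-centric subgroup $R$ of $S$ contains $Q$, and the reasoning offered (that $C_S(QR)\le Q\cap R$ and $R\le N_S(Q)$ ``forces'' $Q\le R$) is a non sequitur: those observations show that $QR$ is $\F$-centric, not that $Q\le R$. The claim itself is wrong. Take $\F=\F_S(S)$ with $S$ a nonabelian $p$-group and $Q=S$; then $\F=N_\F(Q)$ with $Q$ normal and $\F$-centric, but any proper self-centralizing subgroup of $S$ (for instance the cyclic maximal subgroup of a dihedral $2$-group) is $\F$-centric without containing $Q$. The correct fact --- which the paper itself invokes in the proof of Lemma~\ref{l:compw} --- is that every $\F$-centric \emph{radical} subgroup contains $Q$. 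The reduction performed in Section~6 of the Linckelmann reference cannot be shortcut the way you describe: it goes via the retraction $R\mapsto QR$ of $\F^c$ onto the full subcategory of centric overgroups of $Q$, not via the containment you posit. As written, the homomorphisms $\Aut_{S(\F^c)}(\sigma)\to\Aut_\F(Q)$ you construct for chains of arbitrary $\F$-centric subgroups do not exist in general, so the first half of your argument does not stand on its own and is ultimately carried only by the citation.
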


\begin{Prop}[{cf. \cite[Proposition IV.5.34]{AschbacherKessarOliver2011},
\cite[5.3]{Linckelmann2004}} ] 
\label{p:fareal}
Let $\F$ be a saturated fusion system on a finite $p$-group $S$ such that
$\F=$ $N_\F(Q)$ for some normal $\F$-centric subgroup $Q$ of $S$. 
Let $\alpha$ be an $\F$-compatible family.
Let $L$ be a finite group such that $S$ is a Sylow $p$-subgroup of $L$,
such that $Q$ is normal in $L$ satisfying $C_L(Q)=Z(Q)$, and such
that $\F=$ $\F_S(L)$. Choose a finite cyclic subgroup $Y$ of $k^\times$
containing all values of a $2$-cocycle representing the class $\alpha_Q$.
Then $(S,\F,\alpha)$ is realized by a block of the central extension 
$\hat L$ of $L$ by $Y$ determined by $\alpha_Q$, regarded as a class
in $H^2(L,Y)$.
\end{Prop}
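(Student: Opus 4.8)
The plan is to exhibit a block of $k\hat L$ realizing $(S,\F,\alpha)$; morally this is the content of \cite[Proposition~IV.5.34]{AschbacherKessarOliver2011} together with \cite[5.3]{Linckelmann2004}, and the work is in assembling those and pinning down the Külshammer--Puig classes. First I would record the basic structure. Since $\mathrm{char}(k)=p$, the group $k^\times$ has no non-trivial $p$-elements, so $Y$ is a $p'$-group; hence in $1\to Y\to\hat L\to L\to 1$ the subgroup $Y$ is a central $p'$-subgroup, a Sylow $p$-subgroup of $\hat L$ projects isomorphically onto a Sylow $p$-subgroup of $L$, and I would fix such a lift and use it to identify $S$ with a Sylow $p$-subgroup of $\hat L$ (so also $Q\le S\le\hat L$, and $Q$ remains $\F$-centric there). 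As $kY$ is semisimple, the inclusion $\iota\colon Y\hookrightarrow k^\times$ corresponds to a block idempotent $e_\iota\in Z(kY)\subseteq Z(k\hat L)$. Choosing a set-theoretic section of $\hat L\to L$ together with the $Y$-valued $2$-cocycle $z$ representing $\alpha_Q$ that was used to build $\hat L$, one identifies $k\hat L e_\iota$ with the twisted group algebra $k_{\alpha_Q}L$, where $\alpha_Q$ is now viewed in $H^2(L,k^\times)$ via the isomorphism of Proposition~\ref{constrained-alpha}. I would then argue that this algebra is indecomposable, so that $B:=k\hat L e_\iota$ is a single block: here one uses that $L=\hat L/O_{p'}(\hat L)$ is $p$-constrained with $O_{p'}(L)=1$ (the latter since $[O_{p'}(L),Q]\le O_{p'}(L)\cap Q=1$ forces $O_{p'}(L)\le C_L(Q)=Z(Q)$), together with the vanishing of $H^2$ of a finite $p$-group with coefficients in $k^\times$, which makes the twist $z$ trivial on every $p$-subgroup.

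Next I would identify the three invariants of $B$. Because $Y$ is a central $p'$-subgroup, conjugation of $p$-subgroups of $S$ inside $\hat L$ is detected in $L$ and, conversely, every conjugation taking place in $L$ lifts to $\hat L$ (any lift works, since $Y$ is central); hence $\F_S(\hat L)=\F_S(L)=\F$. Since $\hat L$ is $p$-constrained modulo its central $p'$-subgroup $Y$ — that is, $O_p(\hat L)\cong O_p(L)$ and $C_{\hat L}(O_p(\hat L))=Y\times Z(O_p(\hat L))$ — the block theory of such groups (cf.\ \cite[Proposition~IV.5.34]{AschbacherKessarOliver2011}, \cite[5.3]{Linckelmann2004}) shows that $k\hat L$ has exactly one block lying over each $\lambda\in\Irr(Y)$, each of full defect $S$ and with fusion system $\F_S(\hat L)=\F$. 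In particular $B$ has defect group $S$ and fusion system $\F$.

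It remains to compute the family of Külshammer--Puig classes of $B$. Since both this family and the given family $\alpha$ are $\F$-compatible and $\F=N_\F(Q)$ is constrained, Proposition~\ref{constrained-alpha} reduces the problem to checking that the two agree at $Q$. For $B$ the relevant block of $kC_{\hat L}(Q)$ is $e_\iota$ (using $C_{\hat L}(Q)=Y\times Z(Q)$), its unique simple quotient $M_Q$ is one-dimensional, and $QC_{\hat L}(Q)=\hat Q$ acts on $M_Q$ through the homomorphism $\tau\colon\hat Q\to Y\hookrightarrow k^\times$ induced by $kQC_{\hat L}(Q)\to M_Q$. Running the Clifford-theoretic construction recalled in Section~\ref{s:KP} — the class in $H^2\!\big(N_{\hat L}(Q,e_\iota)/\hat Q,k^\times\big)=H^2(\hat L/\hat Q,k^\times)\cong H^2(\Out_\F(Q),k^\times)$ determined by the action of $N_{\hat L}(Q,e_\iota)$ on $M_Q$ together with $\tau$ — one sees that it recovers the class of the cocycle $z$, i.e.\ $\alpha_Q$. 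The main obstacle is exactly this last computation: one must transport $z$ faithfully through the definition of the Külshammer--Puig class and reconcile the ``inverse class'' appearing there with the convention implicit in ``the central extension determined by $\alpha_Q$'', so that the two classes match on the nose and not merely up to inversion. Once the value at $Q$ is matched, $\F$-compatibility and Proposition~\ref{constrained-alpha} upgrade this to the equality of the whole families, so $(S,\F,\alpha)$ is realized by the block $B$ of $k\hat L$.
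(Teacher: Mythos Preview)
Your proposal is correct and follows essentially the same route as the paper. The paper does not prove Proposition~\ref{p:fareal} in place but instead establishes the result in the proof of the immediately following Proposition~\ref{p:centext}(2), and your outline matches that argument step for step: $Y$ is a $p'$-group, the primitive idempotent $e=e_\iota\in kY$ gives a block $k\hat L e\cong k_{\alpha_Q}L$ (the paper's argument for indecomposability is slightly more direct---all block idempotents lie in $kC_{\hat L}(\hat Q)=k(Z(\hat Q)\times Y)$, hence in $kY$---but yours is fine), defect group and fusion system are as stated, and by Proposition~\ref{constrained-alpha} the K\"ulshammer--Puig family is determined at $Q$. The one point you flag as the ``main obstacle'' (the explicit cocycle computation reconciling the inverse-class convention) is exactly what the paper carries out: choosing $s_{\tilde x}=1$ for $x\in L$ and $s_{\tilde x z}=\iota(z)$, one finds $s_{\tilde x}s_{\tilde y}=\alpha(x,y)^{-1}s_{\tilde x\tilde y}$, so the class determined by the action is $\alpha^{-1}$ and hence the K\"ulshammer--Puig class (its inverse) is $\alpha_Q$.
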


In particular $\alpha=0$ if and only if $b$ is the principal block of 
$k\hat L$ (which is isomorphic to the principal block of $kL$). 
More generally, the blocks arising in the previous Proposition are 
twisted group algebras of $L$; we   lay out  the connection between 
$p'$-central extensions and twisted group algebras in the next result

 \begin{Prop}\label{p:centext} 
Let $G$ be a finite group, and $\alpha \in  H^2(G, k^{\times})$.     
\begin{enumerate}

\item There exists a central extension 
$$ 1 \to Z \to  \widetilde G  \to G \to 1 $$  
where $Z$ is a cyclic group of order prime to $p$ and a  primitive 
idempotent $e$ of $kZ$ such for any subgroup $L$ of $G$, we have
an isomorphism $k_{\alpha} L \cong k \widetilde L e$, where
$\widetilde L$ is the inverse image of $L$ in $\widetilde G$. 
In particular $\ell(k_{\alpha} L) =$ $\ell (k\widetilde Le)$ and 
$z(k_{\alpha}L) = z(k\widetilde Le)$.    
 
\item  
Suppose  that  there exists  a normal $p$-subgroup  $Q$ of $G$   
such that $C_G(Q) =Z(Q)$.  Identify $\alpha$ with the corresponding 
element of $H^2(G/Q, k^{\times})$. Let $L$ be a subgroup of $G$  
containing  $Q$,  $S$ a  Sylow $p$-subgroup of $L$, and $\hat S$   
the Sylow $p$-subgroup of the inverse image of $S$ in $\widetilde L$.     
Denote also by $\alpha$ the $\F_S(L)$-compatible family 
determined by the restriction of $\alpha$ to $L$  as in   
Proposition \ref{constrained-alpha}.
Then, $k\widetilde L e $  is a block of $k\widetilde L$   realizing   
$(S, \F_S(L), \alpha)$  through the canonical isomorphism  
$\hat S\cong$ $S$. Moreover, AWC  holds  for $k\widetilde L e$ if  
and only  if 
$$ \ell(k_{\alpha}L) = \sum_{R} z(k_{\alpha} N_{L/Q} (R) /R)$$  
where $R$ runs over a set of representatives of the $L/Q$-classes 
of $p$-subgroups of $L/Q$.
\end{enumerate}
\end{Prop}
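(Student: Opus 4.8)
The plan is to obtain (1) from the familiar dictionary between twisted group algebras and central $p'$-extensions, and then to deduce (2) by feeding (1) into Propositions \ref{constrained-alpha}, \ref{p:fareal} and \ref{p:links}; the substantive point will be to rewrite $\w(\F_S(L),\alpha)$ as a weight sum for the quotient $L/Q$.

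For (1): since $k$ is algebraically closed of characteristic $p$, the group $k^{\times}$ has no $p$-torsion, so $\alpha$ has finite order prime to $p$, and I can choose a finite cyclic subgroup $Z\leq k^{\times}$ of order prime to $p$ together with a class $\tilde\alpha\in H^2(G,Z)$ lifting $\alpha$ along $Z\hookrightarrow k^{\times}$. Let $\widetilde G$ be the central extension of $G$ by $Z$ with class $\tilde\alpha$, and let $e\in kZ$ be the primitive idempotent attached to the inclusion $Z\hookrightarrow k^{\times}$. Choosing a set-theoretic section of $\widetilde G\to G$ and comparing $2$-cocycles yields the standard algebra isomorphism $k_{\alpha}G\cong k\widetilde Ge$, and the same construction applied to the preimage $\widetilde L$ of a subgroup $L\leq G$ — which is the central extension of $L$ by $Z$ with class $\Res^G_L\tilde\alpha$ — gives $k_{\alpha}L\cong k\widetilde Le$. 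As $\ell(-)$ and $z(-)$ are algebra invariants, the remaining assertions of (1) are immediate.

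For (2): because $Q$ is a $p$-group we have $H^2(Q,Z)=0$, so the preimage of $Q$ in $\widetilde G$ splits as $Z\times\hat Q$ with $\hat Q\cong Q$; as $\hat Q$ is the Sylow $p$-subgroup of that preimage it is characteristic in it and hence normal in $\widetilde G$, so $\hat Q\trianglelefteq\widetilde L$ is a normal $p$-subgroup with $\widetilde L/\hat Q$ a central $p'$-extension of $L/Q$. Since $C_G(Q)=Z(Q)$ forces $C_L(Q)=Z(Q)$, the subgroup $Q$ is normal and $\F_S(L)$-centric, $\F_S(L)=N_{\F_S(L)}(Q)$ is constrained, $L$ is a model for it, and Proposition \ref{constrained-alpha} shows the $\F_S(L)$-compatible family denoted $\alpha$ is pinned down by its value at $Q$, namely the given class in $H^2(\Out_{\F_S(L)}(Q),k^{\times})=H^2(L/Q,k^{\times})$. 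Applying Proposition \ref{p:fareal} with this model $L$ and with $Y=Z$ then tells me that $(S,\F_S(L),\alpha)$ is realized by a block of $k\widetilde L$, and by construction that block is the summand on which $Z$ acts through $Z\hookrightarrow k^{\times}$, i.e. $k\widetilde Le$; the isomorphism $\hat S\cong S$ is the restriction of $\widetilde L\to L$ (note $\Res^G_S\tilde\alpha=0$, again because $S$ is a $p$-group).

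For the AWC equivalence: by Proposition \ref{p:links}, AWC holds for $k\widetilde Le$ exactly when $\w(\F_S(L),\alpha)=\ell(k\widetilde Le)$, and $\ell(k\widetilde Le)=\ell(k_{\alpha}L)$ by (1); so it remains to check $\w(\F_S(L),\alpha)=\sum_R z(k_{\alpha}N_{L/Q}(R)/R)$, the sum over $L/Q$-classes of $p$-subgroups $R$ of $L/Q$. Writing $\F=\F_S(L)$: by the vanishing $z(k_{\alpha}H)=0$ when $O_p(H)\neq1$ (Lemma \ref{l:z=0}), only the $\F$-radical subgroups $P$ contribute to $\w(\F,\alpha)$; and if $P$ is $\F$-radical then, since $Q=O_p(L)\trianglelefteq N_L(P)$ and $P\trianglelefteq N_L(P)$, the $p$-subgroup $QP$ is normal in $N_L(P)$, whence $QP\leq O_p(N_L(P))=P$ and $Q\leq P$. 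It follows that every such $P$ is $\F$-centric (for any $L$-conjugate $P'$ of $P$ we have $Q\leq P'$ and $C_S(P')\leq C_S(Q)\leq Z(Q)\leq P'$), that $P\mapsto P/Q$ is a fusion-preserving bijection between $\F$-radical subgroups of $S$ and radical $p$-subgroups of $L/Q$, and that $C_L(P)\leq C_L(Q)=Z(Q)\leq P$, so $\Out_{\F}(P)=N_L(P)/P=N_{L/Q}(P/Q)/(P/Q)$. Finally, since the component of $\alpha$ at $P$ restricts trivially to the $p$-group $P/Q$, Proposition \ref{constrained-alpha} identifies it with the class on $N_{L/Q}(P/Q)/(P/Q)$ obtained from $\alpha\in H^2(L/Q,k^{\times})$ by restriction and deflation, which is exactly the twist defining $k_{\alpha}N_{L/Q}(P/Q)/(P/Q)$; summing over $R=P/Q$ and adjoining the non-radical $R$ (which contribute $0$ by Lemma \ref{l:z=0}) yields the claimed identity. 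I expect the only genuine work here to be this last matching of cohomology classes through the reindexing $P\leftrightarrow P/Q$, for which Proposition \ref{constrained-alpha} is the key input.
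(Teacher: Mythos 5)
Your proof of part (1) is essentially the paper's argument: represent $\alpha$ by a $2$-cocycle with values in a finite cyclic $p'$-subgroup $Z\leq k^\times$, form the corresponding central extension, and check that the primitive idempotent $e\in kZ$ attached to the inclusion character produces the isomorphism $k_\alpha L\cong k\widetilde Le$. No issues there.

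For part (2) you diverge from the paper, and the divergence leaves a real gap. You set up the constrained fusion system and the model $L$ correctly, and then invoke Proposition~\ref{p:fareal} to conclude that $(S,\F_S(L),\alpha)$ is realized by a block of $k\widetilde L$, asserting that ``by construction that block is the summand on which $Z$ acts through $Z\hookrightarrow k^\times$, i.e.\ $k\widetilde Le$.'' That identification is precisely what needs to be proved, and it does not follow from the statement of Proposition~\ref{p:fareal}, which only guarantees the existence of \emph{some} realizing block. The blocks of $k\widetilde L$ are exactly the $k\widetilde L e_\chi$ for $\chi\in\Irr(Z)$ (as the paper observes, block idempotents live in $kC_{\widetilde L}(\hat Q)=k(Z(\hat Q)\times Z)$, hence are the primitive idempotents of $kZ$), and different $\chi$ give \emph{a priori} different K\"ulshammer--Puig classes; pinning down that the K\"ulshammer--Puig class of $k\widetilde Le$ at $\hat Q$ equals the given $\alpha$ is exactly the nontrivial content. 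The paper does this by a direct computation: it carries out the Skolem--Noether construction of the class with the specific choice $s_{\tilde x}=1$ for $x\in L$, recovering $\alpha^{-1}$ and hence (after inversion) $\alpha$. You would either need to replicate that computation, or argue explicitly (rather than ``by construction'') why the realizing block supplied by Proposition~\ref{p:fareal} must be $k\widetilde Le$ and not some other $k\widetilde L e_\chi$. Your treatment of the final AWC reindexing ($P\leftrightarrow P/Q$, with non-centric-radicals killed by Lemma~\ref{l:z=0}) is fine and matches the paper's sketch in spirit, though you should phrase the deduction that $Q\leq P$ for $\F$-centric-radical $P$ (not merely $\F$-radical $P$, since $O_p(N_L(P))=P$ uses centricity).
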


\begin{proof} 
Since $k$ is algebraically closed it is well-known that 
$H^2(G,k^\times)$ is finite, and hence $\alpha$ can be represented by 
a $2$-cocycle, abusively still denoted by $\alpha$, with values in
a finite subgroup $Z$ of $k^\times$. Then $Z$ is cyclic of order
prime to $p$, since $k$ is a field of characteristic $p$. Represent
$\alpha$ by a central extension 
$$ 1 \to Z \to  \widetilde G  \to G \to 1 $$  
and denote, for any $x\in$ $G$, by $\tilde x$ an inverse
image of $x$ in $\widetilde G$ satisfying $\tilde x\tilde y=$
$\alpha(x,y)\widetilde{xy}$ for all $x$, $y\in$ $G$. We regard the 
elements of $Z$ as elements in the centre of $\widetilde G$ and not as 
scalars; if we do want to consider the elements of $Z$ as scalars, we 
denote this via the inclusion map $\iota : Z\to$ $k^\times$. Set 
$e=$ $\frac{1}{|Z|}\sum_{z\in Z} \iota(z^{-1})z$. This is a primitive 
idempotent in $kZ$, and $kZe$ is $1$-dimenional. An easy verification 
shows that the map sending $\tilde xe\in$ $k\widetilde Ge$ to $x$ 
induces an algebra isomorphism $k\widetilde G e\cong k_\alpha G$. This
isomorphism restricts to an isomorphism $k\widetilde Le\cong$
$k_\alpha L$, for any subgroup $L$ of $G$. Statement (1) follows.

Let $\hat Q$ be the Sylow $p$-subgroup  of the inverse image 
$\widetilde Q$ of $Q$ in $\widetilde L$. Then $\widetilde Q=$ 
$Z\times \hat Q$, and hence $\hat Q$ is normal in $\widetilde L$. Thus 
all block idempotents of $k\widetilde L$ lie in 
$kC_{\widetilde L}(\hat Q) = k(Z(\hat Q) \times Z)$. In other words, the  
block idempotents of $k\widetilde L$ are  precisely the primitive 
idempotents  of $kZ$. In particular, $k\widetilde L e $  is a block of 
$k \widetilde L$. One easily checks that this block has defect group 
$\hat S$, which is isomorphic to $S$, and through this isomorphism, 
$\F=$ $\F_S(L)$ is the (in this situation unique) fusion system on $S$ 
of the block $e$ of $\widetilde L$.
We need to show that $\alpha$ is the family of K\"ulshammer--Puig
classes of this block. By Proposition \ref{constrained-alpha}, it
suffices to show this for the class $\alpha_{\hat Q}$. We write
again $\alpha$ instead of $\alpha_Q$, and consider $\alpha$ as
a class of $H^2(L, k^\times)$ whenever appropriate.
Note that $e$ remains the unique block of $C_{\widetilde L}(\hat Q)=$
$Z(Q)\times Z$ such that $(\hat Q,e)$ is a $(\widetilde L,e)$-Brauer
pair. So the construction of the K\"ulshammer--Puig class at $\hat Q$
is obtained as the special case of the construction described at
the beginning of this section with $\widetilde L$ and $Z\times \hat Q$
instead of $G$ and $N$, respectively, and with the $1$-dimensional 
quotient $M\cong$ $k$ of $k(Z\times \hat Q)$ given by the map 
$\iota : Z\to k^\times$ extended trivially to $\hat Q$, still denoted 
by $\iota$. Since any group action on a $1$-dimensional algebra is 
trivial, we may choose $s_x=1$ for $x$ running over a set of 
representatives of $\widetilde L/(\hat Q\times Z)\cong$ $L/Q$. Then 
also $s_x=1$ for $x$ running over a set of representatives of 
$\widetilde L/Z\cong$ $L$, because $\iota$ is extended trivially to 
$\hat Q$. Thus, for a general element of the form $\tilde x z$, with 
$x\in$ $L$ and $z\in$ $Z$, we may choose $s_{\tilde xz}=$ $\iota(z)$;
in particular, $s_{\tilde x}=1$ for $x\in$ $L$. 
We need to show that this determines $\alpha^{-1}$. Note that
$\alpha$ is determined by its restriction to $L$ via the map
$L\to$ $L/Q$. Let $x$, $y\in$ $L$. By construction, we have
$s_{\tilde x}=s_{\tilde y}=s_{\widetilde{xy}}=1.$
Since $\tilde x\tilde y = \widetilde{xy}\alpha(x,y)$, it follows
that 
$$s_{\tilde x\tilde y}=s_{\widetilde{xy}}\iota(\alpha(x,y))=
\iota(\alpha(x,y))$$
and hence (writing $\alpha$ instead of $\iota\circ\alpha$) we have
$$1=s_{\tilde x}s_{\tilde y} = \alpha(x,y)^{-1} s_{\tilde x \tilde y}$$
This shows that $\alpha$ is the K\"ulshammer--Puig class of this block
at $\hat Q$.  Note that by the first statement we have 
$k\widetilde L e\cong$ $k_\alpha L$. The last statement on AWC follows 
from the fact that if $P \leq S$ is $\F_S(L)$-centric radical, then $P$ 
contains $Q$ and if $Q \leq P \leq S$, then 
$N_{L/Q} (P/Q)/(P/Q))\cong N_L(P)/P   =$ $\Out_{\F_S(L)}(P)$.  
\end{proof} 

\begin{Lem}\label{l:centext2}
Let $G$ be a finite group with normal subgroup $N$. Fix a cohomology 
class $\alpha \in H^2(G,k^\times)$ and write also $\alpha$ for the 
restriction to $N$. If $z(k_{\alpha}G) \neq 0$, then 
$z(k_{\alpha}N) \neq 0$. 
\end{Lem}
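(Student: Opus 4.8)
The plan is to reduce the statement about twisted group algebras to ordinary group algebras via Proposition~\ref{p:centext}, and then to combine Clifford's theorem with the freeness of a finite group algebra over the group algebra of a normal subgroup.

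First I would apply Proposition~\ref{p:centext}(1) to $\alpha\in H^2(G,k^\times)$: there is a central extension $1\to Z\to\widetilde G\to G\to 1$ with $Z$ cyclic of order prime to $p$, together with a primitive idempotent $e$ of $kZ$, such that $k_\alpha G\cong k\widetilde G e$ and $k_\alpha N\cong k\widetilde N e$, where $\widetilde N$ is the inverse image of $N$ in $\widetilde G$. Since $Z$ is the kernel of $\widetilde G\to G$, we have $Z\le\widetilde N$, so $\widetilde N$ is a normal subgroup of $\widetilde G$ and $e\in kZ\subseteq Z(k\widetilde N)$ is a central idempotent of $k\widetilde G$. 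In particular $z(k_\alpha G)=z(k\widetilde G e)$ and $z(k_\alpha N)=z(k\widetilde N e)$, so it suffices to show: if $k\widetilde G e$ admits a simple module that is projective as a $k\widetilde G$-module, then $k\widetilde N e$ admits a simple module that is projective as a $k\widetilde N$-module.

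So let $V$ be a simple $k\widetilde G e$-module that is projective over $k\widetilde G$; such a $V$ exists by the hypothesis $z(k_\alpha G)\ne 0$. Since $k\widetilde G$ is free as a left $k\widetilde N$-module (on a set of left coset representatives), the restriction $\Res^{\widetilde G}_{\widetilde N}(V)$ is a projective $k\widetilde N$-module. By Clifford's theorem applied to the normal subgroup $\widetilde N\trianglelefteq\widetilde G$, the module $\Res^{\widetilde G}_{\widetilde N}(V)$ is semisimple: its socle is $\widetilde G$-stable, hence a nonzero $k\widetilde G$-submodule of the simple module $V$, hence all of $V$. Let $W$ be any simple direct summand of $\Res^{\widetilde G}_{\widetilde N}(V)$. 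Then $W$ is a direct summand of a projective module, hence projective over $k\widetilde N$, and $eW=W$ because $eV=V$ and $e$ is central; thus $W$ is a simple projective $k\widetilde N e$-module. Therefore $z(k\widetilde N e)\ge 1$, equivalently $z(k_\alpha N)\ne 0$, as required.

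The argument presents no real obstacle; the only points needing a little care are that the preimage $\widetilde N$ is genuinely normal in $\widetilde G$ (immediate, since preimages of normal subgroups are normal) and that the idempotent $e$ furnished by Proposition~\ref{p:centext}(1) is simultaneously the relevant idempotent for $\widetilde N$ — which is exactly the content of that proposition, given that $e\in kZ$ and $Z\le\widetilde N$. One could alternatively develop Clifford theory directly for twisted group algebras and bypass the central extension, but routing through Proposition~\ref{p:centext} keeps the argument inside the ordinary representation theory of finite groups.
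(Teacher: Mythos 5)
Your proof is correct and follows essentially the same route as the paper's: pass to a $p'$-central extension via Proposition~\ref{p:centext}, restrict a projective simple $k\widetilde G e$-module to $\widetilde N$, note the restriction is projective and (by Clifford) semisimple, and extract a projective simple $k\widetilde N e$-summand. You merely spell out the freeness and Clifford steps that the paper leaves tacit.
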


\begin{proof}
Using Proposition~\ref{p:centext}, we fix a $p'$-central extension 
$1 \to Z \to \widehat{G} \to G \to 1$ corresponding to $\alpha$ and a 
central idempotent $e \in kZ$ such that $k_{\alpha}G \cong k\widehat{G}e$.  
Then the restriction $\alpha$ is the class corresponding to the induced 
central extension $\hat{N}$ of $N$, and $k_{\alpha}N \cong k\hat{N}e$. 
Assume now that $k_{\alpha}G$ has a projective simple module.  Then
$k\widehat{G}e$, and hence $k\widehat{G}$, has a projective simple 
module, say $M$. The restriction of $M$ to $\hat{N}$ is both projective 
and semisimple. Hence, any simple summand of $\Res_{\hat{N}}^{\hat{G}} M$ 
is projective. Since $e$ still acts as the identity on the restriction 
of $M$, we see that $k\hat{N}e$ has a projective simple module, and hence 
so does $k_{\alpha}N$. 
\end{proof}

\begin{Lem}\label{l:z=0}
Let $G$ be a finite group and $\alpha \in H^2(G,k^\times)$. If 
$O_p(G) \neq 1$, then $z(k_{\alpha}G) = 0$.
\end{Lem}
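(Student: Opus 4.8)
The plan is to assume $z(k_\alpha G) \neq 0$, fix a simple projective $k_\alpha G$-module $M$, and derive a contradiction by transporting the situation to an ordinary (untwisted) group algebra possessing a nontrivial normal $p$-subgroup. First I would apply Proposition~\ref{p:centext}(1) to obtain a $p'$-central extension $1 \to Z \to \widetilde{G} \to G \to 1$ together with a primitive idempotent $e \in kZ$ such that $k_\alpha G \cong k\widetilde{G}e$. Under this isomorphism $M$ corresponds to a simple projective $k\widetilde{G}e$-module, hence to a simple projective $k\widetilde{G}$-module (since $k\widetilde{G} = k\widetilde{G}e \oplus k\widetilde{G}(1-e)$ as algebras). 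So it suffices to show that $k\widetilde{G}$ has no simple projective module, for which it is enough to show that $O_p(\widetilde{G}) \neq 1$.

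To see the latter, let $Q = O_p(G) \neq 1$ and let $\widetilde{Q} \leq \widetilde{G}$ be its preimage. Exactly as in the proof of Proposition~\ref{p:centext}(2), since $Z$ is central of order prime to $p$ we have $\widetilde{Q} = Z \times \hat{Q}$, where $\hat{Q}$ is the unique Sylow $p$-subgroup of $\widetilde{Q}$ and $\hat{Q} \cong Q$; being characteristic in $\widetilde{Q} \trianglelefteq \widetilde{G}$, the subgroup $\hat{Q}$ is normal in $\widetilde{G}$, and so $1 \neq \hat{Q} \leq O_p(\widetilde{G})$. Finally, if $N$ were a simple projective $k\widetilde{G}$-module, then $\Res_{\hat{Q}}^{\widetilde{G}} N$ would be projective, hence free, because $k\hat{Q}$ is a local algebra (the group algebra of a $p$-group over a field of characteristic $p$); on the other hand $\hat{Q}$ is a nontrivial normal $p$-subgroup of $\widetilde{G}$, so it acts trivially on the simple module $N$ (the nonzero submodule $N^{\hat{Q}}$ equals $N$), forcing $\Res_{\hat{Q}}^{\widetilde{G}} N$ to be a direct sum of copies of the trivial $k\hat{Q}$-module, which is not free unless $\hat{Q} = 1$, a contradiction. (One can equally well skip the module computation and invoke the standard fact that $O_p(\widetilde{G})$ lies in every defect group of every block of $k\widetilde{G}$, so $k\widetilde{G}$ has no block of defect zero.)

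There is essentially no serious obstacle; the only point needing care is the structure of the preimage $\widetilde{Q}$, and that is precisely the computation already carried out in Proposition~\ref{p:centext}. An alternative, slightly shorter route would bypass the central extension altogether: apply Lemma~\ref{l:centext2} with $N = O_p(G)$ to reduce to showing $z(k_\alpha Q) = 0$ for a nontrivial $p$-group $Q$; since the Schur multiplier of $Q$ is a $p$-group while $k^\times$ has no $p$-torsion, one gets $H^2(Q, k^\times) = 0$, so $k_\alpha Q \cong kQ$ is local and $z(kQ) = 0$. I would likely present the central-extension argument as the main one, since Proposition~\ref{p:centext} is already in hand and the argument is self-contained.
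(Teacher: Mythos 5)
Your argument is correct and follows the paper's proof almost verbatim: both pass to the $p'$-central extension $\widetilde{G}$ via Proposition~\ref{p:centext}, observe that the preimage of $O_p(G)$ splits as $Z\times\hat Q$ with $\hat Q$ a nontrivial normal $p$-subgroup of $\widetilde G$, and conclude $z(k\widetilde Ge)=0$. The only small difference is that the paper finishes by citing Lemma~\ref{l:centext2} with $N=\hat Q$, whereas you give a direct module-theoretic contradiction (or invoke defect groups); your alternative route using Lemma~\ref{l:centext2} together with $H^2(Q,k^\times)=0$ is also fine and matches the paper's spirit.
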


\begin{proof}
As in the proof of Lemma~\ref{l:centext2}, let $1 \to Z \to \widehat{G} \to G
\to 1$ be a $p'$-central extension of $G$ determined by $\alpha$, and let $e
\in kZ$ be a central idempotent in $k\widehat{G}$ such that $k_\alpha G \cong
k\widehat{G}e$. Let $P = O_p(G)$ and $\widehat{P}$ be the preimage under the
quotient map.  Since $Z$ is a $p'$-group, the restriction of $\alpha$ to $P$ is
trivial, and so $\widehat{P} = Z \times P_0$ with $P_0$ mapping isomorphically
to $P$. Then $O_p(\widehat{P}) = P_0 \neq 1$ is a normal $p$-subgroup of
$\widehat{G}$. Thus, as $kP$ has no projective simple module, neither does
$k\hat{G}$ by Lemma~\ref{l:centext2}.  Hence neither does $k\hat{G}e \cong
k_\alpha G$.
\end{proof}

The following calculation will be needed in Section \ref{exampleSection}.

\begin{Lem}\label{l:sl2p}
Let $G$ be a finite group with normal subgroup $N$. Suppose that $V$ is an
inertial projective simple $kN$-module and that $G/N$ is a cyclic $p'$-group.
Then $G$ has exactly $|G:N|$ projective simple modules lying over $V$. In
particular, if $q$ is a power of $p$ and $N = \SL_n(q) \le G \le \GL_n(q)$,
then $z(kG)=|G:\SL_n(q)|$.
\end{Lem}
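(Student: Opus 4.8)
The plan is to prove the general statement by standard Clifford theory and then specialise. Since $V$ is $G$-stable, the obstruction to extending $V$ to a $kG$-module is a class in $H^2(G/N,k^\times)$ (trivial action); because $G/N$ is cyclic and $k^\times$ is divisible, being the multiplicative group of an algebraically closed field, this group vanishes, so $V$ extends to a $kG$-module $\tilde V$, which is simple since $\Res^G_N\tilde V = V$ is. By the modular form of Gallagher's theorem, $W\mapsto \tilde V\otimes_k \operatorname{Inf}^G_{G/N}W$ is a bijection from the isomorphism classes of simple $k[G/N]$-modules onto the isomorphism classes of simple $kG$-modules lying over $V$. As $G/N$ is abelian of order prime to $p$ and $k$ is algebraically closed, $k[G/N]$ is split semisimple with exactly $|G:N|$ simple modules, all one-dimensional, so there are exactly $|G:N|$ simple $kG$-modules over $V$. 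Each is projective: fixing $P\in\Syl_p(G)$, we have $P\leq N$ since $|G:N|$ is prime to $p$, and $\Res^G_P(\tilde V\otimes\operatorname{Inf}^G_{G/N}W)\cong \Res^N_P V$ because the inflated one-dimensional factor restricts trivially to $P$; the latter is free over $kP$ since $V$ is projective over $kN$ and $P\in\Syl_p(N)$, hence $\tilde V\otimes\operatorname{Inf}^G_{G/N}W$ is projective over $kG$. This proves the first assertion.

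For the case of $\SL_n(q)$, write $N=\SL_n(q)$, which is normal in $G$ as it is normal in $\GL_n(q)$, and let $\operatorname{St}$ be the Steinberg module of $N$. Then $\operatorname{St}$ is simple and projective over $kN$, and it is the \emph{unique} simple projective $kN$-module (a standard fact for groups with a split $BN$-pair in defining characteristic $p$; concretely, the Steinberg tensor product theorem shows $\operatorname{St}$ is the only simple $kN$-module of dimension divisible by the order of a Sylow $p$-subgroup). Consequently every automorphism of $N$ fixes the isomorphism class of $\operatorname{St}$, so $\operatorname{St}$ is $G$-stable, hence inertial, for any $G$ with $\SL_n(q)\leq G\leq \GL_n(q)$. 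The determinant embeds $G/N$ into $\GL_n(q)/\SL_n(q)\cong \FF_q^\times$, which is cyclic of order $q-1$ and coprime to $p$, so $G/N$ is a cyclic $p'$-group. Applying the general statement with $V=\operatorname{St}$ gives exactly $|G:\SL_n(q)|$ simple projective $kG$-modules lying over $\operatorname{St}$.

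To finish I would check that \emph{every} simple projective $kG$-module lies over $\operatorname{St}$: if $M$ is such a module, then $\Res^G_N M$ is semisimple by Clifford's theorem and projective because $kG$ is free as a $kN$-module, so it is a direct sum of copies of the unique simple projective $kN$-module $\operatorname{St}$; thus $M$ lies over $\operatorname{St}$. Hence $z(kG)$ equals the number of simple projective $kG$-modules lying over $\operatorname{St}$, which is $|G:\SL_n(q)|$. The main points requiring care are the modular Clifford theory in the first paragraph — that the vanishing of the cohomological obstruction genuinely produces an extension of $V$ over a field of characteristic $p$, and that Gallagher's bijection is valid there — together with the citation for uniqueness of the Steinberg module among simple projective modules; everything else is routine.
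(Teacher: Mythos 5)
Your argument is essentially the paper's proof: both use vanishing of $H^2(G/N,k^\times)$ for cyclic $G/N$ to extend $V$ to a simple $kG$-module, the Gallagher/Clifford bijection $W\mapsto \tilde V\otimes W$ with one-dimensional inflated modules, and Steinberg's theorem that $\SL_n(q)$ has a unique projective simple module. The only real difference is the verification of projectivity: the paper observes that the modules $\tilde V\otimes W$ are precisely the summands of the (projective) induced module $\Ind_N^G V$, whereas you restrict to a Sylow $p$-subgroup $P\leq N$ and note $\Res^G_P(\tilde V\otimes W)\cong\Res^N_P V$ is free; both are correct and the difference is cosmetic. You also spell out explicitly why every projective simple $kG$-module lies over $\mathrm{St}$, which the paper leaves as a one-clause remark, so your write-up is if anything slightly more complete.
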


\begin{proof}
Since $G/N$ is a cyclic group, we have that $H^2(G/N,k^\times)$ is trivial.
Thus, the hypotheses of Corollary 5.3.13 of \cite{LinckelmannBT} hold. By that
result and its proof, we may fix a simple $kG$-module $U$ with $\Res_N^G(U)
\cong V$, and then the collection of isomorphism classes of one dimensional
$kG/N$-modules is in one-to-one correspondence with the collection of simple
$kG$-modules restricting to $V$ via the map $W \mapsto U \otimes_k W$, where we
regard $W$ as a module for $G$ by inflation. Also, the $U \otimes_k W$ are
precisely the summands of the induced module $\Ind_N^G V$, and hence are all
projective. This completes the proof of the first statement. 

In the special case of the last statement, by results of  Steinberg  (see  \cite[Theorem~3.7, Theorem~8.3] {Humphreys2006})  $N = \SL_n(p)$ has exactly one
projective simple module, the Steinberg module, which is therefore inertial.
Since $G/N$ is a cyclic $p'$-group in this case, and since any projective
$kG$-module is projective also as a $kN$-module, the last statement follows.
\end{proof}

Fix a finite group $G$ and an abelian group $A$. Let $\P$ be the set of
all chains of proper inclusions  
\[
Q_0=1 < Q_1 \cdots <  Q_m
\]
of $p$-subgroups of $G$. This is a $G$-set with respect to the 
conjugation action of $G$ on chains, and we denote by $N_G(\sigma)$ the 
stabilizer of $\sigma$ in $G$. 
Let $\N$ be the subset of all such chains satisfying in addition 
$Q_i \unlhd Q_m$ for each $0 \le i \le m$. 
Let $\E$ be the set of chains in $\N$ consisting of elementary 
abelian subgroups. Both $\N$ and $\E$ are $G$-subsets of $\P$. For the 
purpose of calculating alternating sums indexed by chains, we can pass 
between $\P$, $\N$, and $\E$: 

\begin{Lem}[{\cite[Proposition~3.3]{KnorrRobinson1989}}] 
\label{l:rad2}
Let $G$, $A$, $\P$, $\N$, and $\E$ be as above. Let $\f$ be a function 
from the set of subgroups of $G$ to $A$ such that $\f$ is constant on 
conjugacy classes  of subgroups of $G$. Then  
$$
\sum_{\sigma \in \P/G} (-1)^{|\sigma|} \f(N_G(\sigma))=
\sum_{\sigma \in \N/G} (-1)^{|\sigma|} \f(N_G(\sigma))=
\sum_{\sigma \in \E/G} (-1)^{|\sigma|} \f(N_G(\sigma)).
$$
\end{Lem}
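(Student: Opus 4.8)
The plan is to deduce the lemma from two successive $G$-equivariant cancellation steps, passing from $\P$ to $\N$ and then from $\N$ to $\E$ (note $\E \subseteq \N \subseteq \P$). The underlying principle is: suppose $\mathcal{X} \supseteq \mathcal{Y}$ are $G$-stable sets of chains of $p$-subgroups of $G$, and there is a $G$-equivariant involution $\Phi$ of the complement $\mathcal{X}\setminus\mathcal{Y}$ such that $\Phi(\sigma)$ is obtained from $\sigma$ by inserting or deleting a single subgroup which is \emph{canonically} determined by $\sigma$. Then $|\Phi(\sigma)| = |\sigma| \pm 1$, and canonicity forces $N_G(\Phi(\sigma)) = N_G(\sigma)$; on passing to $G$-orbits, $\Phi$ induces a fixed-point-free pairing of $(\mathcal{X}\setminus\mathcal{Y})/G$ in which paired classes have conjugate normalizers and opposite parity of length, so their contributions cancel because $\f$ is constant on conjugacy classes of subgroups. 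Hence the alternating sums over $\mathcal{X}/G$ and over $\mathcal{Y}/G$ coincide, and it suffices to produce such a $\Phi$ for $(\P, \N)$ and for $(\N, \E)$.

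For $\P \to \N$, I would build $\Phi$ around the first failure of the normal-chain condition. If $\sigma = (1 = Q_0 < \cdots < Q_m)$ lies in $\P \setminus \N$, let $s$ be the least index with $Q_s$ not normal in the top term $Q_m$; then $1 \leq s \leq m-1$, since $Q_0$ and $Q_m$ are always normal in $Q_m$ (in particular every chain in $\P\setminus\N$ has length at least $2$). Using that a proper subgroup of a $p$-group is properly contained in its normalizer, one extracts from the normalizers $N_{Q_m}(Q_i)$ a subgroup $M$ strictly between consecutive terms of $\sigma$ near position $s$, and sets $\Phi(\sigma) = \sigma \cup \{M\}$ if $M$ is not already a term of $\sigma$, and $\Phi(\sigma) = \sigma \setminus \{M\}$ otherwise, with the recipe chosen so that $\Phi$ is involutive. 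The step $\N \to \E$ is of the same flavour, organised around the smallest term of a normal chain that is not elementary abelian: if $Q_i$ is that term, then $\Omega_1(Z(Q_i))$ is elementary abelian and normal in $Q_m$, and combining it with $Q_{i-1}$ yields a canonical elementary abelian subgroup lying strictly between $Q_{i-1}$ and $Q_i$ to be inserted (or, in the reverse situation, deleted).

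The main obstacle is the bookkeeping required to turn these recipes into honest fixed-point-free involutions defined on \emph{all} of $\mathcal{X}\setminus\mathcal{Y}$: one must ensure the canonical subgroup $M$ is comparable with every term of $\sigma$ so that insertion is meaningful, that applying the rule twice returns $\sigma$, and that the degenerate cases --- $M$ coinciding with an existing term, or abutting the extreme terms $1$ and $Q_m$ --- are handled correctly, most likely by partitioning $\mathcal{X}\setminus\mathcal{Y}$ into $G$-stable pieces on which $\Phi$ is defined separately. This is exactly the content of \cite[Proposition~3.3]{KnorrRobinson1989}, so the cleanest option for the paper is to cite it (and reproduce the argument in the Appendix).

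I would also record a more conceptual alternative. The sets $\P$, $\N$, $\E$ are the simplex sets (with the empty simplex accounting for the length-$0$ chain $1$) of three $G$-simplicial complexes: the complex of chains of nontrivial $p$-subgroups, the complex of normal chains of nontrivial $p$-subgroups, and the complex of chains of nontrivial elementary abelian $p$-subgroups. These are $G$-homotopy equivalent by results of Quillen, Bouc and Th\'evenaz--Webb. The expression $\sum_{\sigma}(-1)^{|\sigma|}[G/N_G(\sigma)]$ is, up to sign, the reduced Lefschetz invariant of the corresponding complex in the Burnside ring of $G$, hence a $G$-homotopy invariant; since $\f$ is constant on conjugacy classes of subgroups it induces a well-defined additive map out of the Burnside ring, and applying it to the three equal Lefschetz invariants yields the three equal alternating sums.
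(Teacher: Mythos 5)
The paper offers no proof of this lemma --- it is cited directly from \cite[Proposition~3.3]{KnorrRobinson1989} --- so there is no internal argument to measure your proposal against. Your first strategy, two successive $G$-equivariant sign-reversing pairings on $\P\setminus\N$ and on $\N\setminus\E$, is exactly the standard line of attack, and it is the same device the paper \emph{does} spell out in full for the neighboring Lemma~\ref{l:rad}. But, as you concede, the substance of such an argument lies precisely in the bookkeeping you defer: the precise rule for choosing the canonical subgroup $M$ at the index where the chain first violates the relevant condition, the verification that the resulting map is a length-shifting involution without fixed points, and the treatment of the degenerate cases where $M$ collides with an existing term. Without those, your first argument is a plan, not a proof. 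One small slip in the setup: canonicity alone does not give $N_G(\Phi(\sigma)) = N_G(\sigma)$; rather, $G$-equivariance of $\Phi$ gives the containment $N_G(\sigma) \leq N_G(\Phi(\sigma))$, and applying the same containment to $\Phi(\sigma)$ together with $\Phi^2 = \id$ gives the reverse inclusion. Your Burnside-ring alternative is a genuinely different and conceptually tidier route: the three alternating sums are $\f$ applied (additively, via $[G/H] \mapsto \f(H)$, well defined since $\f$ is constant on conjugacy classes) to the reduced Lefschetz invariants of the order complex of nontrivial $p$-subgroups, of the normal-chain complex, and of the complex of nontrivial elementary abelian $p$-subgroups, and the $G$-homotopy equivalences of Quillen and Th\'evenaz--Webb identify these invariants. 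This buys brevity at the price of invoking substantially deeper machinery, and one must still keep careful track of the length-zero chain $(1)$ versus the empty simplex when matching sign conventions. You conclude, correctly, that the cleanest option is simply to cite Kn\"orr--Robinson --- which is exactly what the paper does.
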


We shall need the following well-known Lemma in Section
\ref{s:towards}.

\begin{Lem}[{\cite[Lemma~2.1]{Thevenaz1992}, 
\cite[Proposition~3.3]{KnorrRobinson1989}}] 
\label{l:rad}
Let $G$, $A$, and $\N$ be as above and let $\f$ be a function from the set
of subgroups of $G$ to $A$ such that $\f$ is constant on conjugacy classes of
subgroups of $G$.  If $O_p(G) \ne 1$, then 
\[
\sum_{\sigma \in \N/G} (-1)^{|\sigma|} \f(N_G(\sigma))=0.
\]
\end{Lem}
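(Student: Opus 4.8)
The plan is to reduce to arbitrary (not necessarily normal) chains of $p$-subgroups and then to cancel terms in the sum by an explicit termwise involution. By Lemma~\ref{l:rad2} we have $\sum_{\sigma \in \N/G}(-1)^{|\sigma|}\f(N_G(\sigma)) = \sum_{\sigma \in \P/G}(-1)^{|\sigma|}\f(N_G(\sigma))$, so it suffices to show that the sum indexed by $\P/G$ vanishes. This reduction is what makes the argument below go through: the involution I will use does not respect the normality condition defining $\N$, but it does preserve $\P$.

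Set $P = O_p(G) \ne 1$. Since $P \unlhd G$, the product $QP$ is again a $p$-subgroup of $G$ for every $p$-subgroup $Q$, and conjugation by any $g \in G$ fixes $P$. Given a chain $\sigma = (1 = Q_0 < Q_1 < \cdots < Q_m) \in \P$, let $k = k(\sigma) \ge 1$ be the least index with $P \le Q_k$, if one exists. Define $\Phi(\sigma)$ as follows: if no such $k$ exists (equivalently $P \not\le Q_m$), append $Q_m P$ to $\sigma$ as a new largest term; if $Q_{k-1}P = Q_k$, delete the term $Q_k$ from $\sigma$; and otherwise (so $Q_{k-1} < Q_{k-1}P < Q_k$, using that $P\not\le Q_{k-1}$ by minimality) insert $Q_{k-1}P$ strictly between $Q_{k-1}$ and $Q_k$. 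A direct check shows that $\Phi$ is well defined on $\P$, that $|\Phi(\sigma)| = |\sigma| \pm 1$, that $\Phi$ is an involution (appending $Q_m P$ and deleting the largest term are mutually inverse; inserting $Q_{k-1}P$ and deleting the interior term $Q_k$ are mutually inverse; and these two pairs of operations account for all three cases), and that $\Phi$ is $G$-equivariant, because $\Phi$ is defined termwise from $\sigma$ and $gPg^{-1} = P$.

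It remains to run the cancellation. As $\Phi$ is $G$-equivariant it induces an involution on $\P/G$, and since $|\Phi(\sigma)|$ has parity opposite to $|\sigma|$ this induced involution has no fixed points, so $\P/G$ is partitioned into two-element sets $\{[\sigma],[\Phi(\sigma)]\}$. Because $\Phi$ is $G$-equivariant and termwise, an element of $G$ stabilises $\sigma$ if and only if it stabilises $\Phi(\sigma)$; hence $N_G(\Phi(\sigma)) = N_G(\sigma)$, so $\f(N_G(\Phi(\sigma))) = \f(N_G(\sigma))$, and the contributions $(-1)^{|\sigma|}\f(N_G(\sigma))$ and $(-1)^{|\Phi(\sigma)|}\f(N_G(\Phi(\sigma)))$ sum to zero. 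Adding over all pairs gives $\sum_{\sigma \in \P/G}(-1)^{|\sigma|}\f(N_G(\sigma)) = 0$, and the reduction above completes the proof.

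The delicate point is the construction of $\Phi$: the obvious temptation is to insert or remove $P$ itself at the bottom of a chain, but this fails because $P$ need not be comparable with $Q_1$. The remedy is to operate with $Q_{k-1}P$ at the first position where $P$ becomes contained in a chain member (appending $Q_m P$ at the top when $P$ never does), and the bulk of the verification is confirming that this recipe is genuinely self-inverse across its three cases.
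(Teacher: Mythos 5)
Your proof is correct, and the involution you construct on $\P$ is, up to a reindexing, the same one the paper uses on $\N$: your least index $k$ with $P \leq Q_k$ equals $i+1$, where $i$ is the paper's largest index with $P \nleq Q_i$ (because $\{j : P \leq Q_j\}$ is an upper interval), so the three cases in your definition of $\Phi$ correspond exactly to the three cases in the paper's $\eta$. The substantive difference is the preliminary pass to $\P$ via Lemma~\ref{l:rad2}, and here you are actually more careful than the paper. The paper's sketch asserts that $\eta$ maps $\N$ into $\N$ and runs the cancellation there; but the ``append'' case can break the normality condition. Concretely, with $G = S_4$, $p = 2$, $P = O_2(G) = V_4$, and the normal chain $\sigma = (1 < \langle(12)\rangle) \in \N$, one gets $\eta(\sigma) = (1 < \langle(12)\rangle < \langle(12)\rangle V_4)$, and $\langle(12)\rangle$ is not normal in the dihedral group $\langle(12)\rangle V_4$, so $\eta(\sigma) \notin \N$. (The other two cases of $\eta$ do preserve $\N$: deleting a term from a normal chain leaves a normal chain, and when $i < m$ the inserted term $Q_iP$ is a product of two subgroups normal in $Q_m$.) Thus your reduction to $\P$ is not a detour but a genuine repair of the argument, and your observation in the opening paragraph that the involution need not respect $\N$ is correct. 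The remaining points — $G$-equivariance via $gPg^{-1} = P$, the termwise computation showing $N_G(\Phi(\sigma)) = N_G(\sigma)$, the parity shift $|\Phi(\sigma)| = |\sigma| \pm 1$ guaranteeing freeness on orbits, and the pairwise cancellation — are all verified correctly.
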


\begin{proof}
We sketch the proof for the convenience of the reader.
Set $R:=O_p(G)$ and assume that $R > 1$.  We show that there exists a
$G$-invariant involution $\eta\colon \N \to \N$ where $N_G(\sigma) =
N_G(\eta(\sigma))$ and $|\eta(\sigma)| = |\sigma| \pm 1$.  Given $\sigma =
(Q_0 < Q_1 < \cdots < Q_m) \in \N$, choose $i$ maximal with the property that
$R \nleq Q_{i}$. Since $R \nleq 1 = Q_0$, we see that there is such an $i$. By
choice of $i$, we have $Q_i < Q_iR$, and we have $Q_iR \leq Q_{i+1}$ if $i <
m$.  Define
\[
\eta(\sigma) = 
\begin{cases} 
Q_0 < \cdots < Q_m < Q_mR & \mbox{ if $i=m$},\\
Q_0 < \cdots < Q_i < Q_{i+2} < \cdots < Q_m 
& \mbox{ if $Q_iR = Q_{i+1}$, and} \\ 
Q_0 < \cdots Q_i < Q_iR < Q_{i+1} < \cdots < Q_m 
& \mbox{ if $Q_iR < Q_{i+1}$}.
\end{cases}
\]
Then $\eta(\sigma) \in \N$ and $N_G(\sigma) = N_G(\eta(\sigma))$ for each
$\sigma \in \N$, since $R$ is a normal $p$-subgroup of $G$. Also,
$|\eta(\sigma)| = |\sigma| \pm 1$. It is a momentary exercise to verify that
$\eta$ is an involution on $\N$. Hence, the alternating sum vanishes as
claimed.  
\end{proof} 

\begin{Rem} \label{reorderRemark}
We finish this section  with a mention of a recurrent elementary tool 
for reordering sums indexed by two or more sets acted upon by a finite 
group $G$, which we will use  without  much  further comment. 
Let    $X$, $Y$ be  finite $G$-sets and  denote by 
$ \pi_X:  X \times Y   \to  X$, $ \pi_Y : Y \times X$ the projection 
maps. Let $A$ be a $G$-invariant subset of  $ X\times Y $ under the diagonal action of  $G$   on
$X\times Y$.   Suppose that for any $(x,y)\in$ $X\times Y$ we have an element 
$\alpha(x,y)$ in some abelian group depending only on the $G$-orbit of 
$(x,y)$.    Then 
$$\sum_{(x,y)\in   A/G}\ \ \alpha(x,y)$$ 
is equal to  any of the following double sums
$$\sum_{x \in X/G}\ \ \sum_{y\in \pi_Y(\pi_X^{-1}(x) \cap A)/G_x}\ 
\ \alpha(x,y)$$
$$\sum_{y \in Y/G}\ \ \sum_{x\in \pi_X(\pi_Y^{-1}(y) \cap A )/G_y}\ 
\ \alpha(x,y).$$

Note that the two double sums make sense as by the $G$-invariance  of $A$, 
for each  $x \in X$,  $  \pi_Y(  \pi_X^{-1}(x) \cap A) $  is 
$G_x$-invariant and for each $y\in Y$, $\pi_X(\pi_Y^{-1}(y)  \cap A) $  
is $G_y$-invariant. Let  ${\mathcal X} $ be a set of representatives   
of the $G$-orbits of $X$ and for each $x \in {\mathcal  X}$, let 
$\mathcal{Y}_x$ be  a   set of  representatives of the $G_x $-orbits of 
$ \mathcal {X} $  and  set 
$$ U:= \{ (x, y) \, : \,  x \in {\mathcal X}, y \in  {\mathcal Y}_x \} . $$
Then,  $  U   \subseteq A$. We will show that  $ U   $ 
is a set of representatives of the $G$-orbits of $A$, and this will yield 
the equality of $\sum_{(x,y)\in   A/G}\ \ \alpha(x,y)$  with the  first 
double sum. Suppose that $ x, x' \in \mathcal{X}$ , 
$ y, y' \in \mathcal {Y}_x$ are    such that $ (x, y) $ and $ (x', y' )$ are 
in the same $G$-orbit  and let $ g\in G$ be such that 
$ (x', y')= \, ^g(x, y)  $.  By comparing the first components, it follows 
that $ x'$ and $x$ are in the same $G$-orbit of $ X$, hence $x'=x $ and 
$ g\in G_x $.  Now comparing the second components  implies $y'=y $.      
Conversely,   let $ (x_0, y_0 ) \in A$. We will show that $(x_0, y_0)$ 
is $ G$-conjugate to an element of $U$. By definition of ${\mathcal X} $,    
there exists $g \in G$ and $ x \in {\mathcal X}$ such that  
$ x_0 = \, ^g x $,  hence by replacing $ (x_0,y_0) $ by     
$\,^g (x_0, y_0 )  $ we may assume that $ x _0 \in \mathcal {X}$.     
Since $(x_0, y_0) \in  A$, $ y_0 \in \pi_Y(\pi_X^{-1} (x_0) \cap A) $. 
Hence by the  definition of $\mathcal{ Y}_{x_0} $,   $ y_0  $ is 
$G_{x_0} $-conjugate to some element  of ${\mathcal Y}_{x_0} $, say 
$z_0  = \, ^h y_0$ with $h \in G_{x_0} $, $z_0 \in \mathcal{Y}_{x_0}$. 
Then   
$$ \, ^ h (  x_0, y_0)   =  (\,^ h x_0,  \, ^h y_0 )  =  
( x_0, z_0 )  \in U $$ 
as required.  The proof  of the equality with the second sum is entirely 
analogous.
\end{Rem}

\section{Towards Theorem \ref{t:main2}}\label{s:towards}
\label{towardsmainSection}

Throughout this section let $\F$ be a saturated fusion system on
a finite $p$-group $S$, and let $\alpha$ be an $\F$-compatible family.

Our first goal will be to reformulate $\m^{*}(\F,\alpha)$ by
reindexing the sum over objects in the full subcategory
$S_\triangleleft(\F^c)$ of the subdivision category of the
category of $\F$-centric subgroups. Recall from  Section~\ref{prelimSection}
that $S_\triangleleft(\F^c)$ 
has as objects chains of proper inclusions 
\[
Q_0 < Q_1 < \cdots < Q_m
\]
of $\F$-centric subgroups with the property that the $Q_i$ are normal in the
maximal term $Q_m$, for each $0 \le i \le m$.
Consider the following sets
\[
\begin{array}{rcl} 
\M &:=& \{(Q,\sigma,[x]) \mid Q \in \F^c, \sigma \in \N_Q, [x] \in Q^{\cl}\}, 
\medskip  \\
\widetilde{\M} &:= & \{(\sigma,x) \mid \sigma \in 
S_\triangleleft(\F^c), x \in Q_\sigma\}.  
\end{array}
\]
The set $\M$ is equipped with the equivalence relation
$$(Q,\sigma,[x]) \sim_{\M} (R,\tau,[y])$$
whenever there exists an isomorphism $\varphi : Q\to$ $R$ in $\F$ 
such that $\bar{c}_\phi(\sigma) = \tau$ and such that 
$\varphi([x])=[y]$.   Here   $\bar {c}_\phi $ is  as  defined before Lemma \ref{l:chain}  and we use 
$\bar {c}_\phi(\sigma) $ to denote the image of $\sigma $  under the natural extension of $\bar{c}_\phi $ to a map  from the set of chains of subgroups  of $\Out_{\F}(Q)  $ to  the set of chains of subgroups of   $\Out_{\F}(R)  $.The set $\widetilde{\M}$ is equipped with the
equivalence relation 
$$(\sigma,x) \sim_{\widetilde{\M}} (\tau, y) $$
whenever there exists an isomorphism $\varphi: \sigma \to \tau$ in
$S_\triangleleft(\F^c)$ such that   $\varphi(x)=y$.

\begin{Prop}\label{p:rewritez}
We have
\[
\m^*(\F,\alpha)=\sum_{(\sigma,x) \in \widetilde{\M} / \sim} (-1)^{|\sigma|} z
(k_{\alpha}C_{\Aut_{\F} (\sigma)} (x)\Aut_{Q_\sigma} (Q^\sigma)/\Aut_{Q_\sigma}
(Q^\sigma)). 
\]
\end{Prop}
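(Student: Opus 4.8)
The plan is to prove the identity by a term-by-term reindexing. First I would unwind the definitions: since $\W_Q^* = \N_Q\times Q^{\cl}$ carries the diagonal $\Out_\F(Q)$-action (with $\Inn(Q)$ acting trivially on each factor), the quotient $\M/{\sim_\M}$ is canonically the disjoint union $\coprod_{Q\in\F^c/\F}\W_Q^*/\Out_\F(Q)$, and $(Q,\sigma,[x])\mapsto (-1)^{|\sigma|}z(k_\alpha I(\sigma,[x]))$ descends to it. Hence $\m^*(\F,\alpha)=\sum_{(Q,\sigma,[x])\in\M/\sim_\M}(-1)^{|\sigma|}z(k_\alpha I(\sigma,[x]))$, and it suffices to exhibit a bijection $\M/{\sim_\M}\to\widetilde\M/{\sim}$ under which $|\sigma|$ and the integers $z(k_\alpha I(\sigma,[x]))$ are preserved.

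To build the bijection, I would take the representatives $Q\in\F^c/\F$ to be fully $\F$-normalised, so that $\Out_S(Q)$ is a Sylow $p$-subgroup of $\Out_\F(Q)$; then any normal chain $\sigma=(1=X_0<\cdots<X_m)\in\N_Q$ is $\Out_\F(Q)$-conjugate to one with $X_m\le\Out_S(Q)$, and for such a representative I let $R_i$ be the preimage of $X_i$ under $N_S(Q)\to N_S(Q)/Q=\Out_S(Q)$. Each $R_i$ is $\F$-centric (a supergroup of the $\F$-centric $Q$) and normal in $R_m$, so $\tilde\sigma=(Q=R_0<\cdots<R_m)$ lies in $S_\triangleleft(\F^c)$ with $Q_{\tilde\sigma}=Q$ and $Q^{\tilde\sigma}=R_m$; pair it with a representative $x$ of $[x]$. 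In the other direction, any chain $\tau=(T_0<\cdots<T_n)\in S_\triangleleft(\F^c)$ satisfies $T_i\le N_S(T_0)$, and hence yields the normal chain $(1<T_1/T_0<\cdots<T_n/T_0)$ of $p$-subgroups of $\Out_\F(T_0)$, together with $[x]$. Checking that these two assignments are mutually inverse and descend to the equivalence classes is a sequence of routine verifications; the only non-formal ingredient is Lemma \ref{l:chain}, which I would use to extend an $\F$-isomorphism $\varphi\colon Q\to Q'$ matching the two normal chains to an isomorphism of the lifted chains in $S_\triangleleft(\F^c)$, and to see that such an extension is unique up to composition with conjugation by an element of $Z(Q)$.

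The heart of the proof is the comparison of the twisted group algebras. With $\tilde\sigma=(R_0<\cdots<R_m)$ and $x\in R_0$ as above, let $\rho\colon\Aut_\F(\tilde\sigma)\to\Aut_\F(R_0)$ be restriction. I claim $\tilde g\mapsto\overline{\tilde g|_{R_0}}$ defines a surjective homomorphism $C_{\Aut_\F(\tilde\sigma)}(x)\to I(\sigma,[x])$ with kernel $\Aut_{C_{R_0}(x)}(R_m)$. For surjectivity, given $\bar g\in I(\sigma,[x])=N_{\Out_\F(R_0)}(\sigma)\cap\Stab([x])$ I choose a lift $g\in\Aut_\F(R_0)$ fixing $x$ (possible since $\bar g$ fixes $[x]$); then $g$ normalises each $\Aut_{R_i}(R_0)$, so repeated use of Lemma \ref{l:chain} extends $g$ to an automorphism of $R_m$ preserving every $R_i$, i.e.\ to an element of $C_{\Aut_\F(\tilde\sigma)}(x)$ lying over $\bar g$. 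For the kernel, if $\tilde g\in C_{\Aut_\F(\tilde\sigma)}(x)$ restricts on $R_0$ to $c_a$ with $a\in R_0$, then $a\in C_{R_0}(x)$ and $\tilde g\circ c_a^{-1}$ restricts to the identity on $R_0$; by the uniqueness clause of Lemma \ref{l:chain} (applied to $\id_{R_0}$ and the inclusion $R_m\hookrightarrow S$) the kernel of $\Aut_\F(R_m)\to\Aut_\F(R_0)$ equals $\Aut_{Z(R_0)}(R_m)\le\Aut_{C_{R_0}(x)}(R_m)$, whence $\tilde g\in\Aut_{C_{R_0}(x)}(R_m)$, and the reverse inclusion is immediate. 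Since $C_{\Aut_\F(\tilde\sigma)}(x)\cap\Aut_{Q_{\tilde\sigma}}(Q^{\tilde\sigma})=\Aut_{C_{R_0}(x)}(R_m)$, the second isomorphism theorem gives $C_{\Aut_\F(\tilde\sigma)}(x)\Aut_{Q_{\tilde\sigma}}(Q^{\tilde\sigma})/\Aut_{Q_{\tilde\sigma}}(Q^{\tilde\sigma})\cong I(\sigma,[x])$. It remains to identify the classes defining $k_\alpha$ on the two sides: by $\F$-compatibility (Lemma \ref{lem:Fcompatible} applied to $R_0\triangleleft R_m$), the restriction of $\alpha_{R_m}$ to $\Aut_\F(\tilde\sigma)$ equals the pullback of $\alpha_{R_0}$ along $\rho$; restricting to $C_{\Aut_\F(\tilde\sigma)}(x)$ and descending along the $p$-subgroup $\Aut_{Q_{\tilde\sigma}}(Q^{\tilde\sigma})\le\Inn(R_m)$ matches these classes under the isomorphism above. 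Together with $|\sigma|=|\tilde\sigma|=m$, this gives the desired equality of summands, and hence the Proposition.

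The step I expect to be the main obstacle is bookkeeping rather than conceptual: keeping the two roles of the letter $\sigma$ distinct (a normal chain of $p$-subgroups of $\Out_\F(Q)$ versus a chain in $S_\triangleleft(\F^c)$), and verifying carefully that the bijection and both summand-functions are well-defined on $\sim_\M$- and $\sim$-classes, each such check reducing to Lemma \ref{l:chain} and the $\F$-compatibility of $\alpha$. The one genuinely load-bearing point inside this is that the kernel of $\Aut_\F(R_m)\to\Aut_\F(R_0)$ is the $p$-group $\Aut_{Z(R_0)}(R_m)$, so that the surjection constructed above is already an isomorphism rather than merely a surjection with a residual $p$-group kernel to absorb.
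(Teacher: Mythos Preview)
Your proposal is correct and follows essentially the same route as the paper: the paper first rewrites $\m^*(\F,\alpha)$ as a single sum over $\M/\!\!\sim$ (Lemma~\ref{l:mfa-onesum}), establishes the bijection $\widetilde{\M}/\!\!\sim\;\to\M/\!\!\sim$ via $(\sigma,x)\mapsto(Q_\sigma,\bar\sigma,[x])$ using Lemma~\ref{l:chain} exactly as you do (Lemma~\ref{l:mvstildem}), and then identifies the groups by showing that restriction induces a surjection $\Aut_\F(\sigma)\to I(\bar\sigma)$ with kernel $\Aut_{Q_0}(Q_m)$ whose preimage of $C_{I(\bar\sigma)}([x])$ is $C_{\Aut_\F(\sigma)}(x)\Aut_{Q_0}(Q_m)$ (Lemma~\ref{l:rewritez}). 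Your organisation differs only cosmetically (you restrict to the centraliser of $x$ before quotienting rather than after), and your explicit invocation of $\F$-compatibility to match the twisting classes $\alpha_{R_0}$ and $\alpha_{R_m}$ is a point the paper leaves implicit in its notational convention for $k_\alpha$.
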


\begin{proof}
This follows from Lemmas~\ref{l:rewritem} and \ref{l:rewritez} below.
\end{proof}

We   rewrite    $\m^*(\F,\alpha)$  in terms of $(\M,\sim)$.

\begin{Lem}\label{l:mfa-onesum}
\[
\m^*(\F,\alpha) = 
\sum_{(Q,\sigma,[x]) \in \M/\sim} z(k_\alpha C_{I(\sigma)}([x])). 
\]
\end{Lem}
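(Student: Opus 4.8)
The plan is to recognise the right-hand side as nothing more than a reindexing of the triple sum that defines $\m^*(\F,\alpha)$; the only point that is not pure bookkeeping will be to check that the summand depends only on the $\sim$-class of $(Q,\sigma,[x])$.

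First I would unwind the definitions of $\w_Q^*(\F,\alpha)$ and $\m^*(\F,\alpha)$ from Section~\ref{introSection}. Recalling that $\W_Q^*=\N_Q\times Q^{\cl}$ carries the diagonal $\Out_\F(Q)$-action and that $C_{I(\sigma)}([x])$ is precisely the stabiliser $I(\sigma,[x])$ of the pair $(\sigma,[x])$ in $\Out_\F(Q)$, one gets directly
\[
\m^*(\F,\alpha)\;=\;\sum_{Q\in\F^c/\F}\ \sum_{(\sigma,[x])\in\W_Q^*/\Out_\F(Q)}\ (-1)^{|\sigma|}\,z\bigl(k_\alpha\,C_{I(\sigma)}([x])\bigr)
\]
(the sign $(-1)^{|\sigma|}$ being inherited from the definition of $\m^*$).

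Next I would fix a set of representatives for $\F^c/\F$ and produce a bijection
\[
\M/\sim\ \;\longleftrightarrow\;\ \coprod_{Q\in\F^c/\F}\bigl(\W_Q^*/\Out_\F(Q)\bigr).
\]
For this, three observations suffice. (i) The fibre of $\M$ over a fixed $\F$-centric subgroup $Q$ is $\{Q\}\times\W_Q^*$, and on this fibre $\sim$ identifies $(Q,\sigma,[x])$ with $(Q,\tau,[y])$ exactly when $(\sigma,[x])$ and $(\tau,[y])$ lie in one $\Out_\F(Q)$-orbit of $\W_Q^*$: an $\F$-isomorphism $Q\to Q$ is an element of $\Aut_\F(Q)$, its action on chains of $\Out_\F(Q)$ through $\bar{c}_\varphi$ and on $Q^{\cl}$ by conjugation both factor through $\Out_\F(Q)$, and conversely each element of $\Out_\F(Q)$ lifts to $\Aut_\F(Q)$. (ii) Triples whose first coordinates lie in distinct $\F$-conjugacy classes are never $\sim$-related, since an $\F$-isomorphism $Q\to R$ witnesses $Q\sim_\F R$. (iii) Every triple is $\sim$-equivalent, via an $\F$-isomorphism $Q_0\to Q$, to one whose first coordinate $Q$ is the chosen representative of its $\F$-class. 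Together these say that a $\sim$-class is determined by, and determines, the representative $Q$ of the $\F$-class of its first coordinates together with the $\Out_\F(Q)$-orbit of the corresponding pair in $\W_Q^*$.

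Finally I would verify that the summand is constant on $\sim$-classes, so that the displayed identity may be reread as the asserted sum over $\M/\sim$. The length $|\sigma|$ is visibly invariant. For $z\bigl(k_\alpha\,C_{I(\sigma)}([x])\bigr)$: if $(Q,\sigma,[x])\sim(R,\tau,[y])$ via an $\F$-isomorphism $\varphi\colon Q\to R$, then the induced isomorphism $\bar{c}_\varphi\colon\Out_\F(Q)\to\Out_\F(R)$ restricts to an isomorphism $C_{I(\sigma)}([x])\to C_{I(\tau)}([y])$, and because an $\F$-compatible family is $\F$-invariant (so that $\bar{c}_\varphi^{*}\alpha_R=\alpha_Q$; this is implicit in Definition~\ref{def:Fcompatible}), this isomorphism carries the restriction of $\alpha_Q$ onto that of $\alpha_R$; hence the two twisted group algebras are isomorphic and their $z$-invariants coincide. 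Summing the displayed identity over the bijection then gives the claim. I expect this compatibility check to be the only step requiring attention — everything else is the routine reorganisation of a finite sum of the kind packaged in Remark~\ref{reorderRemark}.
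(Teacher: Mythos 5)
Your argument is essentially the paper's proof: the paper chooses representatives of $\F$-classes in $\F^c$ and, for each such $Q$, representatives of $\Out_\F(Q)$-orbits on $\W_Q^*$, and observes that these constitute a full set of representatives of $\M/\!\!\sim$, which is exactly what your observations (i)--(iii) establish, with your invariance check being the implicit well-definedness of $\m^*$. You also correctly retain the factor $(-1)^{|\sigma|}$ coming from the definition of $\m^*$ (and needed to deduce Lemma~\ref{l:rewritem}); its absence from the displayed equation in the statement of Lemma~\ref{l:mfa-onesum} is evidently a typographical omission.
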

\begin{proof}   Let  ${\mathcal X} $ be a    set of representatives of $\F$-classes  in $\F^c$  and for each $ Q \in {\mathcal X} $, let ${\mathcal Y}_Q$ be a set of $\Out_{\F} (Q)$ representatives    of
 $ {\mathcal W}_Q ^*$.    Then  
$\{ (Q,  \sigma, [x])  :  Q \in {\mathcal   X},  (\sigma, [x] ) \in {\mathcal  Y}_Q   \} $  is a  set of representatives  of the $\sim $-equivalence classes of $ \M$  and  the   result follows.
\end{proof}

A normal chain $\sigma = (Q_0 < \cdots < Q_m)$ in 
$S_{\triangleleft}(\F^c)$ induces a normal chain 
$\Aut_\sigma(Q_0) := (\Aut_{Q_0}(Q_0) < \cdots < \Aut_{Q_m}(Q_0))$
of $p$-subgroups in $\Aut_\F(Q)$, and a corresponding normal chain
$\Out_\sigma(Q_0) \in \N_{Q_0}$ upon factoring by $\Inn(Q_0)$.  In this
context, bars will denote quotients by $\Inn(Q_0)$.  That is, we set $\bar Q_i
:= \Out_{Q_i}(Q_0)$ for each $0 \le i \le m$ and we set
\[
\bar \sigma := \Out_\sigma(Q_0) = (\bar Q_0 < \bar Q_1 < \cdots < \bar Q_m)
\]
for short. Note that $\bar Q_0$ is trivial. 

\begin{Lem}\label{l:mvstildem}
The map $\widetilde{\M} \longrightarrow \M$ which sends $(\sigma, x)$ to
$(Q_\sigma, \bar \sigma, [x])$ induces a bijection between
$\widetilde{\M}/\!\!\sim$ and $\M/\!\!\sim$. 
\end{Lem}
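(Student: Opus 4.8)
The plan is to verify that the map descends to a well-defined map on $\sim$-classes and is then a bijection, by viewing it as a ``lift a chain of subquotients of $\Out_\F(Q_0)$ to a chain of subgroups above $Q_0$'' operation and inverting it.

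\textbf{Well-definedness.} First I would check that for $\sigma=(Q_0<\cdots<Q_m)\in S_\triangleleft(\F^c)$ the chain $\bar\sigma$ genuinely lies in $\N_{Q_0}$. Since $Q_0$ is $\F$-centric and $Q_0\le Q_i\le Q_m$ with $Q_0\trianglelefteq Q_m$, one has $C_{Q_i}(Q_0)\le C_S(Q_0)\le Q_0$; hence the natural map $Q_i\to\Aut_{Q_i}(Q_0)$ has kernel $Z(Q_0)$ independently of $i$, so the inclusions in $\Aut_\sigma(Q_0)$, and hence in $\bar\sigma$ after factoring out $\Inn(Q_0)$, are proper, with $\bar Q_0=1$ and $\bar Q_i\trianglelefteq\bar Q_m$ (the latter from $Q_i\trianglelefteq Q_m$). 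So $\bar\sigma\in\N_{Q_0}$ and the assignment makes sense. To see it respects the equivalence relations, I would use that an isomorphism $\varphi\colon\sigma\to\tau$ in $S_\triangleleft(\F^c)$ is the same as a chain-preserving $\F$-isomorphism $\varphi\colon Q^\sigma\to Q^\tau$ with $\varphi(x)=y$; its restriction $\varphi|_{Q_\sigma}\colon Q_\sigma\to Q_\tau$ is an $\F$-isomorphism, and the identity $\varphi\,c_a\,\varphi^{-1}=c_{\varphi(a)}$ for $a\in Q_i$ shows that $\bar c_{\varphi|_{Q_\sigma}}$ carries $\bar\sigma$ to $\bar\tau$ and $[x]$ to $[y]$, where $R_0<\cdots<R_m$ denotes $\tau$. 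Thus images of $\sim_{\widetilde\M}$-equivalent pairs are $\sim_\M$-equivalent.

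\textbf{Surjectivity.} Start from $(Q,\sigma',[x])\in\M$ with $\sigma'=(1=\bar Q_0<\cdots<\bar Q_m)\in\N_Q$. Transporting $\sigma'$ and $[x]$ along an $\F$-isomorphism — which does not change the $\sim_\M$-class — I may assume $Q$ is fully $\F$-normalized, so $\Aut_S(Q)$ is a Sylow $p$-subgroup of $\Aut_\F(Q)$; then conjugating $\sigma'$ by a suitable element of $\Aut_\F(Q)$, again staying within the $\sim_\M$-class, I may assume the preimage $\tilde Q_m\le\Aut_\F(Q)$ of $\bar Q_m$ lies inside $\Aut_S(Q)$, hence so do all the preimages $\tilde Q_i$. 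Letting $Q_i$ be the preimage of $\tilde Q_i$ under $N_S(Q)\to\Aut_S(Q)=N_S(Q)/Z(Q)$ produces a chain $Q=Q_0<Q_1<\cdots<Q_m$ of $\F$-centric subgroups (supergroups of the $\F$-centric subgroup $Q$ are $\F$-centric) with $Q_i\trianglelefteq Q_m$ (preimages of normal subgroups are normal) and $\Aut_{Q_i}(Q)=\tilde Q_i$, so $\sigma:=(Q_0<\cdots<Q_m)\in S_\triangleleft(\F^c)$ has $\bar\sigma=\sigma'$; choosing $x$ in the class $[x]$ gives a preimage $(\sigma,x)\in\widetilde\M$.

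\textbf{Injectivity.} This is the heart of the argument and the step I expect to require the most care. Suppose $(\sigma,x),(\tau,y)\in\widetilde\M$ have $\sim_\M$-equivalent images, witnessed by an $\F$-isomorphism $\phi\colon Q_\sigma\to Q_\tau$ with $\bar c_\phi(\bar\sigma)=\bar\tau$ and $\phi([x])=[y]$; comparing lengths, write $\sigma=(Q_0<\cdots<Q_m)$ and $\tau=(R_0<\cdots<R_m)$. After replacing $y$ by an $R_0$-conjugate — permissible, since conjugation by an element of $R_0\le R_i$ is an automorphism of the chain $\tau$ — I may assume $\phi(x)=y$, and $\bar c_\phi(\bar\sigma)=\bar\tau$ lifts to $c_\phi(\Aut_{Q_i}(Q_0))=\Aut_{R_i}(R_0)$ for all $i$. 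I would then apply Lemma~\ref{l:chain} with the ambient group taken to be $Q_m$ (which contains $Q_0$ as a normal subgroup): since $c_\phi(\Aut_{Q_m}(Q_0))=\Aut_{R_m}(R_0)\le\Aut_S(R_0)$, the map $\phi$ extends to a morphism $\tilde\phi\colon Q_m\to S$ in $\F$, and the ``moreover'' clause identifies $\tilde\phi(Q_m)$ with the preimage of $\Aut_{R_m}(R_0)$ in $N_S(R_0)$, which is exactly $R_m$ because $R_0\trianglelefteq R_m$ and $C_S(R_0)\le R_0$. Running the same lemma with ambient group $Q_i$ (note $R_0\trianglelefteq R_i$, as $R_0\trianglelefteq R_m$ and $R_0\le R_i\le R_m$) shows $\tilde\phi(Q_i)=R_i$ for every $i$, so $\tilde\phi$ is a chain-preserving $\F$-isomorphism $\sigma\to\tau$ with $\tilde\phi(x)=y$; hence $(\sigma,x)\sim_{\widetilde\M}(\tau,y)$. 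The recurring subtlety throughout is confirming that each ``preimage of an automorphism group'' computation returns the intended subgroup $R_i$ and not a larger one — this rests on $\F$-centricity keeping the relevant centralizers inside the groups in question — together with the bookkeeping that every reduction in the surjectivity argument stays within a single $\sim_\M$-class.
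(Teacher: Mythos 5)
Your proof is correct and follows essentially the same route as the paper's: well-definedness by restricting a chain isomorphism to the smallest term, injectivity via Lemma~\ref{l:chain} (you make the iterated application to each $Q_i$ explicit and absorb the $[x]$-versus-$x$ discrepancy by an inner conjugation, whereas the paper composes with the conjugation after extending), and surjectivity by transporting to a fully $\F$-normalized representative, applying Sylow to push the chain into $\Aut_S(Q)$, and taking preimages in $N_S(Q)$.
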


\begin{proof} 
We first show that the map is well-defined.  Let $(\sigma, x) \sim (\tau, y)$
in $\widetilde {\M}$. Fix an isomorphism $\varphi\colon \sigma \rightarrow
\tau$ in $S_{\triangleleft}(\F^c)$ such that $\varphi(x) = y$. Then $(Q_\sigma,
\bar \sigma, [x])  \sim (Q_\tau, \bar \tau, [y]) $ via  the restriction of
$\varphi $ to $ Q_\sigma $. 

Next, suppose $(Q_\sigma, \bar \sigma, [x]), (Q_\tau, \bar \tau, [y]) \in \M$
are $\M$-equivalent.  Let $\psi\colon Q_\sigma \to Q_\tau$ be an
$\F$-isomorphism such that $\bar{c}_\psi(\bar{\sigma}) = \bar \tau $ and
$\psi([x])=[y]$. By Lemma~\ref{l:chain}, $\psi$ extends to a chain isomorphism
$\hat \psi\colon \sigma \to \tau$. Since $\psi([x]) = [y]$, we have $\psi (x)
=uyu^{-1}$ for some $u \in Q_\tau$. Let $\delta\colon Q^\sigma \to Q^\tau$ be
the composition of $\hat \psi$ with conjugation by $u$. Then $(\sigma, x)$ and
$(\tau,y)$ are $\widetilde{\M}$-equivalent via $\delta$. This proves
injectivity.

It remains to show that whenever $(R, \rho, [z]) \in \M$, there exists
$(\sigma,x) \in \widetilde{\M}$ such that $(Q_\sigma, \bar{\sigma},[x])$ is
$\M$-equivalent to $(R, \rho, [z])$. Let $\rho = (1 < X_1 < \cdots < X_m) \in
\N_R$.  Let $\alpha\colon R \to R'$ be an $\F$-isomorphism with $R'$ fully
$\F$-normalised, and consider the chain
\[
\bar{c}_\alpha(\rho) = (1 < \bar{c}_\alpha(X_1) < \cdots < \bar{c}_\alpha(X_m)).
\]
Since $R'$ is fully $\F$-normalised and $\F$ is saturated, $\Out_S(R')$ is a
Sylow $p$-subgroup of $\Out_\F(R')$, so by Sylow's theorem we may fix $\beta
\in \Out_{\F}(R')$ such that $\beta\bar{c}_\alpha(X_m)\beta^{-1} \leq
\Out_S(R')$. Denote by $R_i'$ the inverse image of $\beta\bar{c}_\alpha(X_i)
\beta^{-1}$ in $N_S(R')$, and set
\[
\sigma := (R' < R_1' < \cdots < R_m') \mbox {\quad and \quad} x:= \hat{\beta}\alpha(z),
\]
where $\hat{\beta} \in \Aut_\F(R')$ is any lift of $\beta$.  Then
$(\sigma, x) \in \widetilde{\M}$, and $(Q_\sigma, \bar{\sigma},[x])$ is
$\M$-equivalent to $(R, \rho, [z])$ via $\hat{\beta}\alpha$.
\end{proof}

The following lemma is now immediate from Lemmas~\ref{l:mfa-onesum} and
\ref{l:mvstildem}.

\begin{Lem}\label{l:rewritem}
We have
\[
\m^*(\F,\alpha)=\sum_{(\sigma,x) \in \widetilde{\M} / \sim} (-1)^{|\sigma|}
z(k_{\alpha}C_{I(\bar{\sigma})}([x])).
\]
\end{Lem}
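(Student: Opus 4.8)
The statement is immediate from Lemmas~\ref{l:mfa-onesum} and \ref{l:mvstildem}, so the plan is simply to transport the sum of Lemma~\ref{l:mfa-onesum}, which is indexed by $\M/\!\!\sim$ and has summand $(-1)^{|\sigma|}z(k_\alpha C_{I(\sigma)}([x]))$, along the bijection $\widetilde{\M}/\!\!\sim\ \longrightarrow\ \M/\!\!\sim$ of Lemma~\ref{l:mvstildem}, and then to verify that the transported summand is the one appearing in the statement.

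Concretely, I would recall that the bijection of Lemma~\ref{l:mvstildem} sends the class of $(\sigma,x)$, with $\sigma=(Q_0<Q_1<\cdots<Q_m)\in S_\triangleleft(\F^c)$ and $x\in Q_\sigma=Q_0$, to the class of $(Q_\sigma,\bar\sigma,[x])$, where $\bar\sigma=\Out_\sigma(Q_0)=(\bar Q_0<\bar Q_1<\cdots<\bar Q_m)$ is the normal chain of $p$-subgroups of $\Out_\F(Q_\sigma)$ attached to $\sigma$ in the paragraph preceding Lemma~\ref{l:mvstildem}. Two points then need to be checked. First, $\bar\sigma$ has the same length as $\sigma$: since $Q_0$ is $\F$-centric we have $C_{Q_i}(Q_0)=Z(Q_0)$ for each $i$, so conjugation induces strict inclusions $\bar Q_0<\bar Q_1<\cdots<\bar Q_m$ (indeed $\bar Q_i\cong Q_i/Q_0$) paralleling $Q_0<Q_1<\cdots<Q_m$, whence $|\bar\sigma|=m=|\sigma|$ and the sign $(-1)^{|\sigma|}$ is unchanged. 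Second, the group occurring in the $(Q_\sigma,\bar\sigma,[x])$-term of the sum in Lemma~\ref{l:mfa-onesum} is exactly $C_{I(\bar\sigma)}([x])$, with $I(\bar\sigma)$ the stabiliser of the chain $\bar\sigma$ in $\Out_\F(Q_\sigma)$ and $k_\alpha C_{I(\bar\sigma)}([x])$ the twisted group algebra formed with respect to the restriction of $\alpha_{Q_\sigma}$; this is precisely the term in the statement of Lemma~\ref{l:rewritem}.

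Since $(-1)^{|\sigma|}$ and $z(k_\alpha C_{I(\bar\sigma)}([x]))$ depend only on the $\M$-class of $(Q_\sigma,\bar\sigma,[x])$, hence only on the $\widetilde{\M}$-class of $(\sigma,x)$, re-indexing the sum of Lemma~\ref{l:mfa-onesum} along the bijection of Lemma~\ref{l:mvstildem} yields the asserted equality. There is essentially no obstacle here; the only step that deserves a line of justification is the invariance of the chain length under $\sigma\mapsto\bar\sigma$, since this is what forces the signs to agree, and it is precisely here that the $\F$-centricity of $Q_0$ enters.
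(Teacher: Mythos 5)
Your proof is correct and follows the same route as the paper, which simply declares Lemma~\ref{l:rewritem} "immediate from Lemmas~\ref{l:mfa-onesum} and~\ref{l:mvstildem}." You add the two small verifications the paper leaves implicit (that $\sigma\mapsto\bar\sigma$ preserves chain length via $\F$-centricity of $Q_0$, so the sign is unaffected, and that the summand transports correctly), which is sound but the same argument.
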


To complete the proof of Proposition~\ref{p:rewritez}, we give an
interpretation of  $z(k_{\alpha} C_{I(\bar \sigma)}([x])$ in terms of the
automisers of chains in $S_\triangleleft(\F^c)$.  

\begin{Lem}\label{l:rewritez}
Fix $\sigma = (Q_0 < \cdots < Q_m) \in S_\triangleleft(\F^c)$, and let 
$\pi$ be the composite
\[
\Aut_\F(\sigma) \xrightarrow{\quad\res\quad} N_{\Aut_\F(Q_0)}(\Aut_\sigma(Q_0)) \xrightarrow{\quad\quad}  I(\bar \sigma)
\overset{\opdef}{=} N_{\Out_\F(Q_0)}(\Out_\sigma(Q_0)),
\]
which restricts to $Q_0$ and then factors by $\Aut_{Q_0}(Q_0)$. Then
\begin{enumerate}
\item $\pi$ is surjective,
\item $\ker(\pi) = \Aut_{Q_{0}}(Q_{m})$, and
\item for each $x \in Q_0$, the group $ C_{\Aut_{\F}(\sigma)}
(x)\Aut_{Q_0}(Q_m)$ is the inverse image of $C_{I(\bar{\sigma})}(x)$
under $\pi$.  
\end{enumerate}
\end{Lem}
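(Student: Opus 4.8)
The plan is to prove the three assertions in order, after first checking that $\pi$ is well defined with target $I(\bar\sigma)$ as stated. Write $Q:=Q_0$ and $R:=Q_m$. Since $Q_i\trianglelefteq Q_m$ and $Q_0\le Q_i\le Q_m$, we have $Q\trianglelefteq Q_i\trianglelefteq R$ for every $i$, and $\F$-centricity of $Q$ forces $C_{Q_i}(Q)=C_S(Q)=Z(Q)$; in particular $\Aut_{Q_i}(Q)=Q_i/Z(Q)$, and the preimage of $\Aut_{Q_i}(Q)$ under $N_S(Q)\to\Aut_S(Q)$ equals $Q_iC_S(Q)=Q_i$. That these preimages are exactly the $Q_i$, and not something strictly larger, is the mechanism behind the whole argument. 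For well-definedness: any $\phi\in\Aut_\F(\sigma)$ preserves $Q$, so $\phi|_Q\in\Aut_\F(Q)$, and the identity $\phi c_a\phi^{-1}=c_{\phi(a)}$ restricted to $Q$ shows $\phi|_Q$ normalizes each $\Aut_{Q_i}(Q)$, so $\res$ lands in $N_{\Aut_\F(Q)}(\Aut_\sigma(Q))$. Since $\Inn(Q)=\Aut_Q(Q)\le\Aut_{Q_i}(Q)$ and $\Inn(Q)\trianglelefteq\Aut_\F(Q)$, an element of $\Out_\F(Q)$ normalizes $\bar Q_i=\Out_{Q_i}(Q)$ exactly when any of its preimages normalizes $\Aut_{Q_i}(Q)$; hence $\Aut_\F(Q)\to\Out_\F(Q)$ carries $N_{\Aut_\F(Q)}(\Aut_\sigma(Q))$ onto $I(\bar\sigma)$, with full preimage $N_{\Aut_\F(Q)}(\Aut_\sigma(Q))$. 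Thus $\pi$ is well defined, and (1) reduces to surjectivity of $\res\colon\Aut_\F(\sigma)\to N_{\Aut_\F(Q)}(\Aut_\sigma(Q))$.

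For (1), fix $\phi_0\in N_{\Aut_\F(Q)}(\Aut_\sigma(Q))$. As $\phi_0$ normalizes $\Aut_R(Q)\le\Aut_S(Q)$, Lemma~\ref{l:chain} (with $Q=Q'=Q_0$ and $R=Q_m$, extending $\phi_0$) yields an $\F$-morphism $\tau\colon R\to S$ with $\tau|_Q=\phi_0$ whose image is the preimage of $\Aut_R(Q)$ under $N_S(Q)\to\Aut_S(Q)$, namely $R$; so $\tau\in\Aut_\F(R)$. The step requiring care is that $\tau$ then fixes every intermediate term: from $\tau c_a\tau^{-1}=c_{\tau(a)}$ restricted to $Q$ one gets $\Aut_{\tau(Q_i)}(Q)=\phi_0\Aut_{Q_i}(Q)\phi_0^{-1}=\Aut_{Q_i}(Q)$, whence $\tau(Q_i)$ lies in the preimage of $\Aut_{Q_i}(Q)$, which is $Q_i$; comparing orders gives $\tau(Q_i)=Q_i$. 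So $\tau\in\Aut_\F(\sigma)$ with $\res(\tau)=\phi_0$, proving (1).

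For (2), $\ker\pi=\{\phi\in\Aut_\F(\sigma):\phi|_Q\in\Inn(Q)\}$, and this contains $\Aut_Q(R)=\{c_a|_R:a\in Q\}$. Conversely, if $\phi|_Q=c_a|_Q$ with $a\in Q$, then $c_a^{-1}|_R\circ\phi$ and $\id_R$ are two $\F$-morphisms $R\to S$ extending $\id_Q$, so by the ``unique up to $c_z$'' clause in the proof of Lemma~\ref{l:chain} we have $c_a^{-1}|_R\circ\phi=c_z|_R$ for some $z\in Z(Q)$, i.e.\ $\phi=c_{az}|_R\in\Aut_Q(R)$. Hence $\ker\pi=\Aut_Q(R)=\Aut_{Q_0}(Q_m)$.

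For (3), since $\ker\pi=\Aut_Q(R)$ is normal, $C:=C_{\Aut_\F(\sigma)}(x)\Aut_Q(R)$ is a subgroup, and by the definition of the $\Out_\F(Q)$-action on $Q^{\cl}$ one has $\pi(\phi)\in C_{I(\bar\sigma)}(x)$ iff $\phi(x)$ is $Q$-conjugate to $x$. If $\phi=\psi\,c_a|_R$ with $\psi(x)=x$ and $a\in Q$, then $\phi(x)=\psi(a)\,x\,\psi(a)^{-1}$ with $\psi(a)\in\psi(Q)=Q$, so $C\subseteq\pi^{-1}(C_{I(\bar\sigma)}(x))$; conversely, if $\phi(x)=bxb^{-1}$ with $b\in Q$, then $c_{b^{-1}}|_R\circ\phi$ fixes $x$, so $\phi\in\Aut_Q(R)\,C_{\Aut_\F(\sigma)}(x)=C$ (using normality of $\Aut_Q(R)$). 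Hence $C=\pi^{-1}(C_{I(\bar\sigma)}(x))$, completing (3). I expect the real obstacle to be the middle part of (1) — that the extension returned by Lemma~\ref{l:chain} preserves every $Q_i$, not just $Q_0$ and $Q_m$ — and this is precisely where $\F$-centricity of $Q_0$ cannot be dispensed with, since otherwise the relevant preimages could be strictly larger than $Q_i$.
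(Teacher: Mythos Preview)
Your proof is correct and follows essentially the same approach as the paper's: extend via Lemma~\ref{l:chain}, use $\F$-centricity of $Q_0$ to identify the preimage of $\Aut_{Q_i}(Q_0)$ in $N_S(Q_0)$ with $Q_i$, and invoke the uniqueness-up-to-$Z(Q_0)$ statement for (2). Your treatment is a bit more expansive---you spell out well-definedness of $\pi$ and give a full two-sided argument for (3), whereas the paper dispatches (3) in a single line (``$\ker(\pi)=\Aut_{Q_0}(Q_m)$ acts transitively on $[x]$'')---but the substance is identical.
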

\begin{proof}  
To prove (1), it suffices to show that the restriction map $\res\colon
\Aut_\F(\sigma) \to N_{\Aut_{\F}(Q_0)}(\Aut_\sigma(Q_0))$ is surjective. Let
$\alpha \in N_{\Aut_\F(Q_0)}(\Aut_\sigma(Q_0))$. Then
$c_\alpha(\Aut_{Q_i}(Q_0)) \leq \Aut_{Q_i}(Q_0)$ for all $0 \leq i \leq m$.
The first conclusion of Lemma~\ref{l:chain} then yields an extension
$\widetilde{\alpha}$ of $\alpha$ to $Q_m$. 

Fix $i$ with $0 \leq i \leq m$, and fix $u \in Q_i$. Then since
$\widetilde{\alpha}$ is defined on $u$, we have
\[
c_{\widetilde\alpha(u)}|_{Q_0} = 
\alpha (c_u|_{Q_0}) \alpha^{-1} \in \Aut_{Q_i}(Q_0)
\]
by assumption. Hence, $\widetilde{\alpha}(u)$ lies in the full inverse image of
$\Aut_{Q_i}(Q_0)$ under $N_S(Q_0) \to \Aut_S(Q_0)$, which is $Q_i$ because
$Q_0$ is centric.  This shows that $\widetilde{\alpha}(Q_i) = Q_i$ for each $i$,
and thus the surjectivity of the restriction map.

That $\Aut_{Q_0}(Q_m) \leq \ker(\pi)$ is clear. To see the other inclusion in
(2), fix $\phi \in \ker(\pi)$. Then $\phi|_{Q_0} = c_u$ for some $u \in Q_0$,
so we may fix $z \in Z(Q_0)$ such that $\phi = c_{u}c_z = c_{uz}$ by
\cite[Lemma~A.8]{BrotoLeviOliver2003}. Thus, $\phi \in \Aut_{Q_0}(Q_m)$, as
desired.

Finally, (3) holds because $\ker(\pi) = \Aut_{Q_0}(Q_m)$ acts transitively on
the $Q_0$-class $[x]$. 
\end{proof} 

Define the following subsets of $\widetilde{\M}:$

\begin{enumerate}
\item $\widetilde{\M}^{e}$ is the subset of $\widetilde{\M}$ consisting of
those $(\sigma, x)$ for which $Q^\sigma/Q_\sigma$ is elementary abelian.
\item  $\widetilde{\M}^{\circ}$  is  the subset of $\widetilde{\M}$
consisting of those $(\sigma, x)$ for which $C_{Q^\sigma} (x) \leq Q_\sigma$.
\item $\widetilde{\M}^{e, \circ}$ is the intersection of $\widetilde{\M}^{e}$
and $\widetilde{\M}^{\circ}$.
\item $\widetilde{\M}^{e, \circ,c} $ is the subset of $\widetilde{\M}^{e,
\circ}$ consisting of those $(\sigma, x)$ for which $ C_{Q^\sigma}(x)
\Phi(Q^\sigma)$ is $\F$-centric.
\end{enumerate}

Observe that all these subsets are unions of $\widetilde{\M}$-equivalence
classes. Let
\[
\m^e(\F, \alpha) := \sum_{(\sigma,x) \in \widetilde{\M}^e / \sim}
(-1)^{|\sigma|}
z(k_{\alpha}C_{\Aut_{\F}(\sigma)}(x)\Aut_{Q_\sigma}(Q^\sigma)/\Aut_{Q_\sigma}(Q^\sigma))
\]
and define $\m^{\circ}(\F, \alpha)$, $\m^{e,\circ}(\F, \alpha)$, and
$\m^{e,\circ,c}(\F,\alpha)$ analogously.

\begin{Prop}\label{p:cancel}
The following hold.
\begin{enumerate} 
\item $\m^*(\F,\alpha) = \m^e(\F, \alpha)$.
\item $\m^*(\F,\alpha) = \m^{\circ}(\F,\alpha)$.
\item $\m^*(\F,\alpha) = \m^{e,\circ}(\F,\alpha)$.
\end{enumerate} 
\end{Prop}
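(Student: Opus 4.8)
\textbf{Proof proposal for Proposition~\ref{p:cancel}.}

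The plan is to prove each of the three equalities by exhibiting a fixed-point-free, $\F$-equivariant involution on a suitable union of $\widetilde{\M}$-equivalence classes, so that the corresponding contributions to the alternating sum cancel in pairs. Concretely, to prove (1) I would work on $\widetilde{\M} \setminus \widetilde{\M}^e$, the set of pairs $(\sigma, x)$ with $Q^\sigma/Q_\sigma$ \emph{not} elementary abelian; to prove (2) I would work on $\widetilde{\M}\setminus\widetilde{\M}^\circ$, the set of pairs with $C_{Q^\sigma}(x) \not\le Q_\sigma$; and (3) is then formal: combining the $\widetilde{\M}$-decomposition $\widetilde{\M} = \widetilde{\M}^e \sqcup (\widetilde{\M}\setminus\widetilde{\M}^e)$ with part (1) applied to $\F$ shows $\m^*=\m^e$, and then running the argument of (2) \emph{inside} $\widetilde{\M}^e$ (the involution of (2) must be checked to preserve the ``elementary abelian quotient'' condition) gives $\m^e = \m^{e,\circ}$, hence $\m^* = \m^{e,\circ}$. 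So the heart of the matter is constructing the two involutions and checking that each preserves $\F$-equivalence, changes $|\sigma|$ by $\pm 1$, and fixes the relevant term $z(k_\alpha\, C_{\Aut_\F(\sigma)}(x)\Aut_{Q_\sigma}(Q^\sigma)/\Aut_{Q_\sigma}(Q^\sigma))$.

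For the involution proving (1), given $(\sigma, x) = (Q_0 < Q_1 < \cdots < Q_m < \cdots < Q^\sigma \nleq \text{e.a.})$ the natural move is to insert or delete the subgroup $Q_i\Phi(Q^\sigma)$ — mimicking the Kn\"orr--Robinson argument in Lemma~\ref{l:rad} but with the Frattini subgroup $R := \Phi(Q^\sigma)$ playing the role of $O_p(G)$. Here the key structural point is that $\Phi(Q^\sigma)$ is normalized by $\Aut_\F(\sigma)$, that $R$ is contained in $Q^\sigma$ but not in $Q_\sigma$ precisely because $Q^\sigma/Q_\sigma$ is non-elementary-abelian, and that $Q_i R$ is still $\F$-centric (it contains $Q_i$) and still normal in $Q^\sigma$. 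One locates the maximal $i$ with $R \nleq Q_i$ and either inserts $Q_iR$ into the chain or deletes $Q_{i+1}$ if it already equals $Q_iR$; since $R$ is characteristic in $Q^\sigma$ this operation commutes with chain isomorphisms in $S_\triangleleft(\F^c)$ and fixes $x \in Q_\sigma$, and it leaves $\Aut_\F(\sigma)$, $Q_\sigma$, $Q^\sigma$ unchanged (the top and bottom of the chain do not move), so the summand is unchanged. The one thing to be careful about is that deleting or inserting $Q_iR$ never collides with an existing term or pushes us out of $S_\triangleleft(\F^c)$ — this is exactly the bookkeeping in the proof of Lemma~\ref{l:rad}, transported to the present setting.

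For the involution proving (2), on the set where $C_{Q^\sigma}(x) \nleq Q_\sigma$ I would use $R := C_{Q^\sigma}(x)$ — or rather, since we must stay in $S_\triangleleft(\F^c)$, the $\F$-centric hull $C_{Q^\sigma}(x)\Phi(Q^\sigma)$ or simply $C_{Q_i}(x)$-type terms — normalized by $C_{\Aut_\F(\sigma)}(x)$ because automorphisms centralizing $x$ preserve its centralizer. The subtlety, and what I expect to be the main obstacle, is that inserting a centralizer term into the chain must (a) produce a chain still consisting of subgroups normal in the (possibly new) top term, (b) keep every term $\F$-centric, and (c) not disturb the relevant automizer quotient $C_{\Aut_\F(\sigma)}(x)\Aut_{Q_\sigma}(Q^\sigma)/\Aut_{Q_\sigma}(Q^\sigma)$ — the last is the reason the subsets were defined with the $\Phi$-centric refinement $\widetilde{\M}^{e,\circ,c}$ waiting in the wings, since one may need to pass to that refinement or argue that the non-centric terms contribute zero by Lemma~\ref{l:z=0}. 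I would first try the cleanest version ($R = C_{Q^\sigma}(x)$, inserted just below the smallest chain term containing it), verify the normality and centricity conditions using that $x\in Q_\sigma \le$ every chain term, and fall back to invoking Lemma~\ref{l:z=0} to discard ill-behaved summands only if the direct involution fails to land in $S_\triangleleft(\F^c)$. Once both involutions are in hand, (3) follows by the two-step argument above, with the only extra check being that the involution of (2) restricts to $\widetilde{\M}^e$, which holds because $\Phi(Q^\sigma)$ and hence the ``elementary abelian quotient'' condition at the top is untouched by a centralizer insertion lower in the chain.
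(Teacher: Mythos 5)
Your involution for (1) does work, and for the right reason: because $\Phi(Q^\sigma)$ is the Frattini subgroup, $Q_i\Phi(Q^\sigma)=Q^\sigma$ would force $Q_i=Q^\sigma$, so the Kn\"orr--Robinson pairing never deletes (or inserts above) the top $Q^\sigma$; hence the pivot $\Phi(Q^\sigma)$ is stable under $\eta$, and $\eta$ is a genuine involution fixing $Q_\sigma$, $Q^\sigma$, and $\Aut_\F(\sigma)$, so also the summand. This is in effect a reproof of Lemma~\ref{l:rad2} transported along the bijection of Lemma~\ref{l:mvstildem}; the paper instead rewrites $\m^e(\F,\alpha)$ as a triple sum over $Q\in\F^c/\F$, elementary abelian chains $\sigma\in\E_Q/\Out_\F(Q)$, and classes $[x]\in Q^\cl/I(\sigma)$, and then applies Lemma~\ref{l:rad2} directly with $G=\Out_\F(Q)$. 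Same argument, different coordinates.

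Part (2) is where the proposal fails, and the failure is exactly the one that the Frattini property saved you from in (1). With pivot $R=C_{Q^\sigma}(x)$ or $C_{Q^\sigma}(x)\Phi(Q^\sigma)$, nothing forbids $Q_{m-1}R=Q^\sigma$: for instance if $x\in Z(Q^\sigma)$ but $Q_\sigma<Q^\sigma$, then $(\sigma,x)\notin\widetilde{\M}^\circ$ while $R=Q^\sigma$, so the pairing deletes the top term, after which the pivot changes and $\eta$ is no longer an involution. (The pivot may also fail to be $\F$-centric, a separate obstruction.) The paper's argument for (2) is not a pairing at all: each summand indexed by $(\sigma,x)\in\widetilde{\M}\setminus\widetilde{\M}^\circ$ vanishes individually. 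Since $Q_\sigma$ is $\F$-centric we have $Z(Q^\sigma)\leq C_S(Q_\sigma)\leq Q_\sigma$, whence $C_{\Inn(Q^\sigma)}(x)\Aut_{Q_\sigma}(Q^\sigma)/\Aut_{Q_\sigma}(Q^\sigma)\cong C_{Q^\sigma}(x)/C_{Q_\sigma}(x)$, and this is a nontrivial normal $p$-subgroup of $C_{\Aut_\F(\sigma)}(x)\Aut_{Q_\sigma}(Q^\sigma)/\Aut_{Q_\sigma}(Q^\sigma)$ precisely when $C_{Q^\sigma}(x)\nleq Q_\sigma$; hence $z=0$ for that summand by Lemma~\ref{l:z=0}. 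You mention Lemma~\ref{l:z=0} only as a fallback for ``ill-behaved'' terms, but in fact it is the entire proof of (2), and (3) then follows, exactly as you say, by running the same vanishing inside $\widetilde{\M}^e$ after (1).
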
 
\begin{proof} 
By    Lemma~\ref{l:rewritez}, Remark~\ref{reorderRemark},  the  obvious analogue  of  Lemma~\ref{l:mfa-onesum}   for elementary abelian chains,    and by   restricting the inverse of the  bijection of  Lemma~\ref{l:mvstildem} to classes of    elements  of $\widetilde{\M}^{e}$, we  have
\[
\m^e(\F,\alpha)=\sum_{Q \in \F^{c}} \sum_{\sigma \in \E_Q / \Out_\F(Q)}
(-1)^{|\sigma|} \sum_{[x] \in Q^{\cl}/I(\sigma)} z(k_\alpha
C_{I(\sigma)}([x])),
\]
where $\E_Q \subseteq \N_Q$ is the set of all elementary abelian chains. Thus
(1) follows on applying Lemma \ref{l:rad2} with $G=\Out_\F(Q)$ for each $Q
\in \F^{c}$. We next prove (2). Note that if $(\sigma, x) \in \widetilde{\M}$
and $C_{Q^\sigma}(x)  $ is not contained in $Q_\sigma$, then
$C_{\Inn(Q^\sigma)}(x)\Aut_{Q_\sigma}(Q^\sigma)/\Aut_{Q_\sigma} (Q^\sigma)
\cong C_{Q^\sigma}(x)/C_{Q_\sigma}(x)$ is a non-trivial normal subgroup of
$C_{\Aut_{\F}(\sigma)}(x)\Aut_{Q_\sigma}(Q^\sigma)/\Aut_{Q_\sigma}(Q^\sigma)$
and the result  follows from Proposition \ref{p:rewritez}(3) and
Lemma~\ref{l:z=0}. The same argument holds with $(\sigma, x) \in
\widetilde{\M}^e$, so (3) follows from (1).
\end{proof}

Recall that
\begin{equation}\label{e:k2}
\k(\F,\alpha)=
\sum_{x \in [S/\F]} \sum_{Q \in C_\F(x)^c /C_\F(x)} 
z(k_\alpha \Out_{C_\F(x)}(Q)),
\end{equation}
where $[S/\F] \subseteq S$ is a fixed set of fully $\F$-centralized
$\F$-conjugacy class representatives of the elements of $S$. Define
\[
\begin{array}{rcl}
\C &:= & \{ (Q,x) \mid x \in [S/\F], Q \in C_\F(x)^c\}, \text{ and } \medskip\\
\D &:= & \{ (Q,x) \mid x \in Z(Q), Q \in \F^c\} \\
\end{array}
\]
and equivalence relations
\[
\begin{array}{rcl}
(Q,x) \sim_\C (R,y) & \Longleftrightarrow & x=y \mbox{ and } 
\Iso_{C_\F(x)}(Q,R) \neq \varnothing, \text{ and } \medskip \\
(Q,x) \sim_\D (R,y) & \Longleftrightarrow & \mbox{there exists } 
\varphi \in \Iso_\F(Q,R) \mbox{ such that } \varphi(x)=y. \\
\end{array}
\]

Thus, $\C/\!\!\sim$ may be viewed as an indexing set for 
$\k(\F,\alpha)$. Also, $x \in Z(Q)$ whenever $Q \in C_\F(x)^c$, so that 
$\C$ is a subset of $\D$. 

\begin{Lem}\label{l:CcongD}
The inclusion $\C \hookrightarrow \D$ induces a bijection between
$\C/\!\!\sim \,$ and $\D/\!\!\sim$; in particular,  
\begin{equation}\label{e:k3}
\k(\F,\alpha) = \sum_{Q \in \F^c / \F} \sum_{x \in Z(Q)/ \Out_\F(Q)} 
z(k_\alpha C_{\Out_\F(Q)}(x)).
\end{equation}
\end{Lem}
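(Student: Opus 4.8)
The plan is to show that the inclusion $\C \hookrightarrow \D$ respects the two equivalence relations, that the induced map $\C/\!\!\sim\, \to\, \D/\!\!\sim$ is a bijection, and then to obtain \eqref{e:k3} by transporting the double sum in \eqref{e:k2} across this bijection and reindexing. Throughout I would lean on Lemma~\ref{l:cfxcent}: for $x$ with $\langle x\rangle$ fully $\F$-centralized and $Q \le C_S(x)$, a subgroup $Q$ is $C_\F(x)$-centric iff it is $\F$-centric, in which case $\Out_{C_\F(x)}(Q) = C_{\Out_\F(Q)}(x)$; the proof of that lemma also shows $Q \in C_\F(x)^c$ forces $x \in Z(Q)$, which is why $\C \subseteq \D$.

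First I would check that the map is well defined and injective. A morphism of $C_\F(x)$ between subgroups of $C_S(x)$ in which $x$ is central is precisely an $\F$-morphism fixing $x$; hence if $(Q,x) \sim_\C (R,y)$ then $x = y$, $x \in Z(Q) \cap Z(R)$, and the witnessing $C_\F(x)$-isomorphism is an $\F$-isomorphism $Q \to R$ fixing $x$, so $(Q,x) \sim_\D (R,y)$. For injectivity, suppose $(Q,x),(R,y) \in \C$ and $(Q,x) \sim_\D (R,y)$ via $\varphi \in \Iso_\F(Q,R)$ with $\varphi(x) = y$. Then $x$ and $y$ are $\F$-conjugate elements of $[S/\F]$, so $x = y$; and since $x \in Z(Q) \cap Z(R)$, the $\F$-isomorphism $\varphi$ fixes $x$, hence is a $C_\F(x)$-isomorphism $Q \to R$, so $(Q,x) \sim_\C (R,y)$.

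The substantive step is surjectivity. Given $(Q,x) \in \D$, so $Q \in \F^c$ and $x \in Z(Q)$, let $y \in [S/\F]$ be the representative of the $\F$-conjugacy class of the element $x$, and fix $\theta \in \Iso_\F(\langle x\rangle, \langle y\rangle)$ with $\theta(x) = y$. Since $\langle y\rangle$ is fully $\F$-centralized, the standard extension property of fully centralized subgroups (cf.\ \cite[Proposition~I.5.2]{AschbacherKessarOliver2011}, applied with trivial $K$) yields $\bar\theta \in \Hom_\F(C_S(x), S)$ extending $\theta$; since $x \in Z(Q)$ we have $Q \le C_S(x)$, so $R := \bar\theta(Q)$ is an $\F$-centric subgroup with $R = \bar\theta(Q) \le C_S(\bar\theta(x)) = C_S(y)$, hence $R \in C_\F(y)^c$ by Lemma~\ref{l:cfxcent}, and $\varphi := \bar\theta|_Q \in \Iso_\F(Q, R)$ satisfies $\varphi(x) = y$. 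Thus $(R, y) \in \C$ and $(Q, x) \sim_\D (R, y)$, which gives surjectivity and hence the bijection.

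Finally I would derive the displayed identity. By \eqref{e:k2}, the pairs $(Q, x)$ with $x \in [S/\F]$ and $Q$ running over the $C_\F(x)$-isomorphism classes of $C_\F(x)$-centric subgroups form a set of representatives for $\C/\!\!\sim$, and by Lemma~\ref{l:cfxcent} the associated summand equals $z(k_\alpha \Out_{C_\F(x)}(Q)) = z(k_\alpha C_{\Out_\F(Q)}(x))$, so $\k(\F,\alpha) = \sum_{(Q,x) \in \C/\sim} z(k_\alpha C_{\Out_\F(Q)}(x))$. This summand is constant on $\sim_\D$-classes: for $\varphi \in \Iso_\F(Q, R)$ with $\varphi(x) = y$, the isomorphism $\bar{c}_\varphi \colon \Out_\F(Q) \to \Out_\F(R)$ restricts to an isomorphism $C_{\Out_\F(Q)}(x) \to C_{\Out_\F(R)}(y)$ under which the restrictions of $\alpha_Q$ and $\alpha_R$ correspond (a consequence of $\F$-compatibility applied to the length-zero chains $(Q)$ and $(R)$), so the corresponding twisted group algebras are isomorphic and have the same $z$. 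Transporting along the bijection $\C/\!\!\sim\, \cong\, \D/\!\!\sim$ and reindexing $\D/\!\!\sim$ by the pairs $(Q, x)$ with $Q \in \F^c/\F$ and $x \in Z(Q)/\Out_\F(Q)$ — valid since $(Q, x) \sim_\D (R, y)$ forces $Q \cong_\F R$ while, for fixed $Q$, $(Q, x) \sim_\D (Q, x')$ iff $x$ and $x'$ are $\Out_\F(Q)$-conjugate in $Z(Q)$ — then yields \eqref{e:k3}. I expect the extension step in the surjectivity argument, relocating $Q$ into $C_S(y)$ while ensuring $x$ is sent to the chosen representative $y$, to be the only point requiring real care; the rest is bookkeeping with the definitions of $\sim_\C$ and $\sim_\D$.
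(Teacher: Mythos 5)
Your proposal is correct and follows essentially the same route as the paper: check well-definedness and injectivity directly from the definitions and the fact that $[S/\F]$ contains one element per $\F$-class, establish surjectivity by using the extension property of fully centralized subgroups to move a given pair into one of the form $(Q,x)$ with $x\in[S/\F]$, and then reindex via Lemma~\ref{l:cfxcent}. The only difference is that you spell out explicitly why the summand $z(k_\alpha C_{\Out_\F(Q)}(x))$ is constant on $\sim_\D$-classes (invoking $\F$-compatibility for the cohomology classes), a step the paper leaves implicit.
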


\begin{proof}
If $(Q,x) \sim_\C (R,y)$, then $x=y$ and there is an $\F$-isomorphism from $Q$
to $R$ which centralizes $x$, so that $(Q,x) \sim_\D (R,y)$. There is indeed a
well-defined map on equivalence classes induced by the inclusion. 

Conversely, assume that $(Q,x), (R,y) \in \C$ are $\D$-equivalent. Fix an
$\F$-isomorphism $\phi$ from $Q$ to $R$ with $\phi(x) = y$. As $x, y \in
[S/\F]$ are $\F$-conjugate, we have $x = y$, and so $Q$ and $R$ are
$C_\F(x)$-conjugate. This shows that $(Q,x) \sim_\C (R,y)$, so the induced
map is injective.

To complete the proof of the first assertion, it remains to show that each
element of $\D$ is $\D$-equivalent to a member of $\C$.  Fix $(R,y) \in \D$.
Let $x \in [S/\F]$ be the unique element which is $\F$-conjugate to $y$. Since
$\gen{x}$ is fully $\F$-centralized, we may choose a morphism $\alpha \in
\Hom_\F(C_S(\gen{y}),C_S(\gen{x}))$ such that $\alpha(y) = x$ by
\cite[I.2.6(c)]{AschbacherKessarOliver2011}. Set $Q = \alpha(R)$.  Then $(R,y)
\sim_{\D} (Q,x)$ via $\alpha$. Since $R$ is $\F$-centric, also $Q$ is
$\F$-centric, so that $Q$ is $C_\F(x)$-centric by Lemma~\ref{l:cfxcent}.  This
yields $(Q,x) \in \C$ and completes the proof of the first assertion.

Now $\Out_{C_{\F}(x)}(Q) = C_{\Out_\F(Q)}(x)$ for each $x \in Z(Q)$ by
Lemma~\ref{l:cfxcent}.  Hence, as $\C/\!\!\sim$ is an indexing set for a single
sum computing $\k(\F, \alpha)$ as in \eqref{e:k2}, and as $\D/\!\!\sim$ is an
indexing set for a single sum computing the right hand side of \eqref{e:k3}, we
have that \eqref{e:k3} follows from \eqref{e:k2}.
\end{proof}

\begin{Prop}\label{p:kmeoc}
We have, $\k(\F,\alpha)=\m^{e,\circ,c}(\F,\alpha)$. 
\end{Prop}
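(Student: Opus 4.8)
The plan is to compute $\m^{e,\circ,c}(\F,\alpha)$ by constructing a sign-reversing involution on the indexing set $\widetilde{\M}^{e,\circ,c} / \sim$ which preserves the relevant summands and whose fixed points form exactly the indexing set for $\k(\F,\alpha)$ provided by Lemma~\ref{l:CcongD}. Recall that, by definition,
\[
\m^{e,\circ,c}(\F,\alpha)=\sum_{(\sigma,x)\in \widetilde{\M}^{e,\circ,c} / \sim}(-1)^{|\sigma|}\,z\bigl(k_\alpha\, C_{\Aut_\F(\sigma)}(x)\Aut_{Q_\sigma}(Q^\sigma)/\Aut_{Q_\sigma}(Q^\sigma)\bigr).
\]
To a pair $(\sigma,x)\in\widetilde{\M}^{e,\circ,c}$ I would attach the subgroup $P=P(\sigma,x):=C_{Q^\sigma}(x)\Phi(Q^\sigma)$. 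The conditions defining $\widetilde{\M}^{e,\circ,c}$ are exactly what is needed to see that $P\le Q_\sigma$ (since $C_{Q^\sigma}(x)\le Q_\sigma$, and $\Phi(Q^\sigma)\le Q_\sigma$ because $Q^\sigma/Q_\sigma$ is elementary abelian), that $P$ is $\F$-centric, that $Q^\sigma/P$ is elementary abelian, and, since $Q^\sigma/\Phi(Q^\sigma)$ is abelian, that $P$ and every term of $\sigma$ are normal in $Q^\sigma$.

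I would define the involution $\eta$ case by case in terms of $P$: if $P<Q_\sigma$, prepend $P$ to $\sigma$ as a new smallest term; if $P=Q_\sigma$ and $|\sigma|\ge 1$, delete the smallest term $Q_\sigma$ of $\sigma$; and if $P=Q_\sigma$ and $|\sigma|=0$, leave $(\sigma,x)$ fixed. The routine part is to verify that in the first two cases the modified chain is again an object of $S_\triangleleft(\F^c)$ and that the pair remains in $\widetilde{\M}^{e,\circ,c}$; the only non-formal inputs here are the $\F$-centrality of $P$ (this is precisely the condition cutting $\widetilde{\M}^{e,\circ,c}$ out of $\widetilde{\M}^{e,\circ}$, and it is what makes prepending $P$ legal) and the observation that $P$, $Q^\sigma$ and $C_{Q^\sigma}(x)$ are unchanged by the modification. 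Because $P$ and $Q^\sigma$ are unchanged, the prepending and deleting operations are mutually inverse, $\eta$ flips the parity of $|\sigma|$ away from its fixed points, and $\eta$ respects $\sim$ since any $\F$-isomorphism of chains commutes with forming centralizers and Frattini subgroups; so $\eta$ descends to an involution of $\widetilde{\M}^{e,\circ,c} / \sim$.

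The heart of the argument is that $\eta$ preserves the summand. Writing $Q:=Q^\sigma$ and $C:=C_{\Aut_\F(\sigma)}(x)$, one checks first that any automorphism of $Q$ fixing $x$ necessarily preserves $P=C_Q(x)\Phi(Q)$, so that $C_{\Aut_\F(\eta(\sigma,x))}(x)=C$; and second, using $C_Q(x)\le Q_\sigma$, that $C\cap\Aut_{Q_\sigma}(Q)=\Aut_{C_Q(x)}(Q)=C\cap\Inn(Q)$, with the identical computation for the new smallest term. Together these show that both summand groups are canonically isomorphic to the image of $C$ in $\Out_\F(Q)$, carrying the same restriction of $\alpha_Q$, so the two relevant values of $z$ coincide. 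Hence the contributions of the non-fixed $\sim$-classes cancel in pairs. The fixed points are precisely the pairs $(\{Q\},x)$ with $Q\in\F^c$ and $x\in Z(Q)$ (for a length-zero chain the equality $P=Q_\sigma$ says $C_Q(x)\Phi(Q)=Q$, equivalently $x\in Z(Q)$), and modulo $\sim$ these are in bijection with $\D / \sim$. For such a pair $\Inn(Q)\le C_{\Aut_\F(Q)}(x)$, so its summand equals $z(k_\alpha C_{\Out_\F(Q)}(x))$; summing over $\D / \sim$ and comparing with \eqref{e:k3} of Lemma~\ref{l:CcongD} gives $\m^{e,\circ,c}(\F,\alpha)=\k(\F,\alpha)$.

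The step I expect to be the main obstacle is the summand-preservation claim, namely that replacing (or removing) the bottom term of $\sigma$ leaves $C_{\Aut_\F(\sigma)}(x)\Aut_{Q_\sigma}(Q^\sigma)/\Aut_{Q_\sigma}(Q^\sigma)$ unchanged as a twisted group algebra; the remainder is bookkeeping with the conditions $e$, $\circ$, $c$, condition $c$ being exactly what ensures that the prepended subgroup $P$ is a legitimate $\F$-centric object for the chain.
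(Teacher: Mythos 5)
Your proposal is correct and follows essentially the same argument as the paper: you pair via the same involution built from $Q_{-1} = C_{Q^\sigma}(x)\Phi(Q^\sigma)$ (prepend if proper, delete if it equals the bottom term), use the $\F$-centrality condition $c$ to justify the prepending, and verify summand-preservation by the same two computations (stability of the centralizer in $\Aut_\F(\sigma)$ under the chain modification, and the identity $C_{Q_0}(x)=C_{Q_{-1}}(x)$), finally matching the fixed-point set with $\D/\!\sim$ from Lemma~\ref{l:CcongD}. The only difference is cosmetic: you state the involution globally with $\D'$ as fixed points, whereas the paper restricts $\eta$ to the complement of $\D'$.
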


\begin{proof}
Let $\D'$ be the subset of $\widetilde{\M}^{e,\circ,c}$ consisting of
the pairs $(\sigma, x)$ such that $|\sigma| = 0$ and $x \in Z(Q_\sigma)$.
Then $\D'$ is a union of $\widetilde{\M}$-equivalence classes. Regarding an
$\F$-centric subgroup $Q$ as a chain of length zero yields a canonical
bijection $\D/\!\!\sim_{\D} \,\,\to \D'/\!\!\sim_{\widetilde{\M}}$, and so we
may regard $\k(\F,\alpha)$ as indexed over $\D'/\!\!\sim_{\widetilde{M}}$. We
use chain pairing to remove the terms from $\m^{e,\circ,c}(\F,\alpha)$ not in
$\D'$.  This will yield
\[
\m^{e,\circ,c}(\F,\alpha) = \sum_{(\sigma,x) \in \D' / \sim}
(-1)^{|\sigma|} z (k_{\alpha}C_{\Aut_{\F} (\sigma)} (x)\Aut_{Q_\sigma}
(Q^\sigma)/\Aut_{Q_\sigma} (Q^\sigma)).
\]
The Proposition then follows from the expression for $\k(\F,\alpha)$ in
Lemma~\ref{l:CcongD}, along with Lemma~\ref{l:rewritez}(3).

For each $\sigma  = (Q_0 < \cdots < Q_m) \in S_{\triangleleft}(\F^c)$, we let
$Q_{-1} := C_{Q_m}(x)\Phi(Q_m)$.  
Define a map 
\[
\eta\colon \widetilde{\M}^{e,\circ,c}\backslash \D'
\longrightarrow \widetilde{\M}^{e,\circ,c}\backslash \D'
\]
via 
\[
(\sigma, x) \longmapsto (\eta(\sigma),x),
\]
where 
\begin{align*}
\eta(\sigma) =
\begin{cases}
Q_{-1} < Q_0 < \cdots < Q_m & \text{ if $Q_{-1} < Q_0$, and}\\
           Q_1 < \cdots < Q_m & \text{ if $Q_{-1} = Q_0$.}
\end{cases}
\end{align*}
It is straightforward to see that $\eta$ is an involution that preserves
$\widetilde{\M}$-equivalence classes if well-defined. 

To prove that $\eta$ is well-defined, we assert three points for a given pair
$(\sigma,x) \notin \D'$ with $\sigma$ as above.  First, observe that
$\eta(\sigma)$ is a first component of some member of
$\widetilde{\M}^{e,\circ,c}$ by definition of $Q_{-1}$ and the fact that
$Q_{-1} \in \F^c$ by assumption. In particular, $\eta(\sigma)$ is never the
empty chain: if $\sigma$ has length zero, then $C_{Q_0}(x) = C_{Q_m}(x) < Q_0$
as $(\sigma, x) \notin \D'$, so also $Q_{-1} = C_{Q_0}(x)\Phi(Q_0) < Q_0$,
and hence $\eta(\sigma)$ has length $1$.  Second, note that $x \in C_{Q_m}(x)
\leq Q_{-1}$ in case $Q_{-1}$ is contained properly in $Q_0$, so that indeed
$(\eta(\sigma),x) \in \widetilde{\M}^{e,\circ,c}$. Lastly, continue to
consider a pair $(\sigma,x)$ not in $\D'$.  We claim that $(\eta(\sigma),x)$ is
not in $\D'$, and the only case where this is not immediate has $|\sigma| = 1$
and $|\eta(\sigma)| = 0$.  In this case either $x$ is not in $Z(Q_0)$, in which
case $x$ is likewise not in $Z(Q_1) \leq Z(Q_0)$, or $x \in Z(Q_0)$, in which
case $C_{Q_1}(x) = C_{Q_0}(x) = Q_0 < Q_1$ so that again $x$ is not in
$Z(Q_1)$. This shows that $(\eta(\sigma),x) \notin \D'$ and completes the proof
of the last point.

Having shown that $\eta$ is a well-defined involution, it remains to prove that
it preserves the value of each summand appearing in
Proposition~\ref{p:rewritez}. To establish this, it suffices to show that
\[
C_{\Aut_\F(\sigma)}(x)=C_{\Aut_\F(\eta(\sigma))}(x) \mbox{\quad and \quad }
\Aut_{C_{Q_0}(x)}(Q_m) =  \Aut_{C_{Q_{-1}}(x)}(Q_m).
\]
As $Q_{-1}$ is invariant under $\Aut_\F(\sigma)$, one has
$\Aut_{\F}(\sigma) \leq \Aut_{\F}(\eta(\sigma))$ if $\eta(\sigma)$ has length
one more than $\sigma$. Also, one has the same containment if $\eta(\sigma)$
has length one less, since $\eta(\sigma)$ is a subchain of $\sigma$ in that
case. Equality therefore holds in both cases, because $\eta$ is an involution.
This completes the proof of the first displayed equality. Finally, the second
equality holds since $C_{Q_0}(x) = C_{Q_{-1}}(x)$ for each $(\sigma,x) \in
\widetilde{\M}^{e,\circ,c}$.
\end{proof}

\section{Proof of Theorem \ref{t:main2}}
\label{main2Section} 

In light of Propositions \ref{p:kmeoc} and \ref{p:cancel}(3), to 
complete the proof of Theorem \ref{t:main2} it  suffices  to establish  
an equality between $\m^{e,\circ}(\F,\alpha)$ and 
$\m^{e,\circ,c}(\F,\alpha)$. We will achieve that in this section. 

If $G$ is a finite group and $\sigma$ is a chain of $p$-subgroups 
in $G$ such that the first subgroup is a normal subgroup of the 
last subgroup, then we denote by $G_{\sigma}\leq N_G(Q^{\sigma})$ 
the stabiliser in $G$ of the chain and by $\Aut_G(\sigma)$ the 
image of $G_{\sigma}$ in $N_G(Q^{\sigma})/C_G(Q^{\sigma})$. 

\begin{Lem}\label{l:model}
Let $\sigma= (Q_0 < \cdots < Q_m)$ be a chain of proper inclusions of 
subgroups of $S$ such that $Q_i $ is normal in $Q_m$ for each 
$0 \leq i \leq m$, and let $x \in Q_0$ be such that 
$C_{Q_m}(x) \leq Q_0$. Suppose that $Q_m$ is $\F$-centric, $Q_0$ is 
fully $\F$-normalised, and $Q_m$ is fully $N_{\F}(Q_0)$-normalised. 
Let $L$ be a model of $N_{N_{\F} (Q_0)}(Q_m)$. The following hold:
\begin{enumerate}
\item 
$C_{L_{\sigma}}(x)Q_{0}/Q_{0} \cong 
C_{\Aut_{\F} (\sigma)}(x)\Aut_{Q_0}(Q_m)/\Aut_{Q_0}(Q_m)$.
\item 
If $Q_0$ is not $\F$-centric, then
$z(k_{\alpha}(C_{L_{\sigma}}(x)Q_{0}/Q_{0}))=0$.
\end{enumerate}
\end{Lem}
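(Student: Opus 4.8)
The plan for (1) is to realise both sides as corresponding quotients of a single group by using the model $L$. Since $Q_m$ is $\F$-centric it is also $N_\F(Q_0)$-centric (any $N_\F(Q_0)$-conjugate of $Q_m$ is an $\F$-conjugate, hence $\F$-centric, and centralizers in $N_S(Q_0)$ are no larger than those in $S$), and as $Q_0$ is fully $\F$-normalised and $Q_m$ is fully $N_\F(Q_0)$-normalised, the system $N_{N_\F(Q_0)}(Q_m)$ is saturated and constrained with normal centric subgroup $Q_m$; thus $L$ exists, $Q_m\trianglelefteq L$, $C_L(Q_m)=Z(Q_m)$, and conjugation gives an isomorphism $\rho\colon L/Z(Q_m)\xrightarrow{\ \sim\ }\Aut_{N_\F(Q_0)}(Q_m)$. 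Now $\Aut_\F(\sigma)$, the group of $\F$-automorphisms of $Q_m=Q^\sigma$ fixing each term of $\sigma$, lies inside $\Aut_{N_\F(Q_0)}(Q_m)$ (such automorphisms fix $Q_0$), and since $g\in L$ stabilises each $Q_i$ exactly when $c_g|_{Q_m}$ does, $\rho$ restricts to an isomorphism $L_\sigma/Z(Q_m)\xrightarrow{\ \sim\ }\Aut_\F(\sigma)$. The hypothesis $C_{Q_m}(x)\le Q_0$ forces $Z(Q_m)\le C_{Q_m}(x)\le Q_0$, so this isomorphism carries the subgroup $Q_0/Z(Q_m)$ onto $\Aut_{Q_0}(Q_m)$ and the subgroup $C_{L_\sigma}(x)/Z(Q_m)$ onto $C_{\Aut_\F(\sigma)}(x)$ (because $c_g$ fixes $x$ precisely when $g\in C_L(x)$, and $Z(Q_m)\le C_{L_\sigma}(x)$). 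Forming products and passing to the quotients by $Q_0$, resp. by $\Aut_{Q_0}(Q_m)$, then yields the asserted isomorphism in (1).

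For (2) — where necessarily $m\ge 1$, as $m=0$ would make $Q_0=Q_m$ $\F$-centric — the plan is to apply Lemma~\ref{l:z=0}: it suffices to exhibit a nontrivial normal $p$-subgroup of $C_{L_\sigma}(x)Q_0/Q_0$. Since $Q_0\trianglelefteq L_\sigma$ we have $C_{L_\sigma}(x)Q_0/Q_0\cong C_{L_\sigma}(x)/C_{Q_0}(x)$, and $C_{L_\sigma}(Q_0)$ is a normal subgroup of $C_{L_\sigma}(x)$ meeting $Q_0$ in $Z(Q_0)$; hence $C_{L_\sigma}(Q_0)Q_0/Q_0\cong C_{L_\sigma}(Q_0)/Z(Q_0)$ is normal in $C_{L_\sigma}(x)Q_0/Q_0$, and it is enough to prove that $O_p(C_{L_\sigma}(Q_0))$ strictly contains $Z(Q_0)$. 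Non-$\F$-centricity of $Q_0$ is what drives this: as $Q_0$ is fully $\F$-normalised, $D:=C_S(Q_0)$ properly contains $Z(Q_0)=D\cap Q_0$, and since $Q_0\trianglelefteq Q_m$ the $p$-group $DQ_m$ properly contains $Q_m$; because normalisers grow in $p$-groups, $D\cap N_S(Q_m)=N_D(Q_m)\supsetneq D\cap Q_m=Z(Q_0)$. Propagating this fact through the remaining members of the chain shows that the Sylow $p$-subgroup $C_S(Q_0)\cap\bigcap_{i=1}^{m}N_S(Q_i)$ of $C_{L_\sigma}(Q_0)$ strictly contains $Z(Q_0)$.

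To turn this into the statement about $O_p$ one invokes the theory of models. The group $L_\sigma$ is again a model: it contains $Q_m$ as a normal subgroup with $C_{L_\sigma}(Q_m)=Z(Q_m)\le Q_m$, and it is obtained from $L$ by iterated normalisers of nontrivial $p$-subgroups, hence is $p$-constrained. Therefore its (nontrivial) normal subgroup $C_{L_\sigma}(Q_0)$ is itself a model, so $C_{C_{L_\sigma}(Q_0)}(O_p(C_{L_\sigma}(Q_0)))\le O_p(C_{L_\sigma}(Q_0))$. If $O_p(C_{L_\sigma}(Q_0))$ were equal to $Z(Q_0)$ then the left-hand side would be all of $C_{L_\sigma}(Q_0)$ (which centralises $Q_0\supseteq Z(Q_0)$), forcing $C_{L_\sigma}(Q_0)=Z(Q_0)$ and contradicting the strict containment of its Sylow $p$-subgroup just obtained. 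Hence $O_p(C_{L_\sigma}(Q_0))\supsetneq Z(Q_0)$, its image in $C_{L_\sigma}(x)Q_0/Q_0$ is a nontrivial normal $p$-subgroup, and so $z(k_\alpha(C_{L_\sigma}(x)Q_0/Q_0))=0$ by Lemma~\ref{l:z=0}.

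The step I expect to be the main obstacle is the propagation through the chain in the second paragraph: upgrading $C_S(Q_0)\cap N_S(Q_m)\supsetneq Z(Q_0)$ to $C_S(Q_0)\cap\bigcap_{i=1}^{m}N_S(Q_i)\supsetneq Z(Q_0)$, since an element of $C_S(Q_0)$ normalising $Q_m$ need not normalise the intermediate terms $Q_1,\dots,Q_{m-1}$ (one will have to use that $Q_m$ is fully $N_\F(Q_0)$-normalised, and the normality of each $Q_i$ in $Q_m$, rather carefully). Verifying along the way that all the auxiliary groups that arise are genuinely models — so that the self-centralising property of $O_p$ remains available — is the accompanying bookkeeping that needs care.
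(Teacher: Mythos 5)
Part~(1) of your proposal is essentially the paper's argument: both pass through the isomorphism $L_\sigma/Z(Q_m)\cong\Aut_\F(\sigma)$, use $Z(Q_m)\le C_{Q_m}(x)\le Q_0$ to see that the kernel sits inside $Q_0$, and match up the two pairs of subgroups. That part is fine.

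Part~(2) takes a genuinely different route from the paper and has real gaps, going beyond the one you flag. Your plan is to show $O_p(C_{L_\sigma}(Q_0))>Z(Q_0)$ by (i) producing a $p$-element of $C_{L_\sigma}(Q_0)$ outside $Q_0$, and (ii) invoking $p$-constraint of $C_{L_\sigma}(Q_0)$ to upgrade a large Sylow $p$-subgroup to a large $O_p$. Step~(i) breaks down at the ``propagation'' you already worry about: your candidate $d\in C_S(Q_0)\cap N_S(Q_m)$ normalizes $Q_0$ and $Q_m$ but need not normalize the intermediate $Q_1,\dots,Q_{m-1}$, and nothing in the hypotheses (no fully-normalized condition on the inner terms of $\sigma$) forces a replacement $d'$ that does; nor is it clear, when $m>1$, that $C_S(Q_0)\cap\bigcap_{i\ge 1}N_S(Q_i)$ is even a Sylow $p$-subgroup of $C_{L_\sigma}(Q_0)$. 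Step~(ii) is not mere bookkeeping: $C_{L_\sigma}(Q_0)$ is a normal subgroup of the $p$-constrained group $L_\sigma$, but normal subgroups of $p$-constrained groups are not in general $p$-constrained, and nothing you write shows that $C_H(O_p(H))\le O_p(H)$ for $H=C_{L_\sigma}(Q_0)$. Without that, a Sylow $p$-subgroup of $H$ exceeding $Z(Q_0)$ does not give $O_p(H)>Z(Q_0)$.

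What the paper does instead avoids both issues: it shows outright that $C_L(Q_0)$ is a \emph{$p$-group}, by applying Thompson's $A\times B$ Lemma (Lemma~\ref{l:axb}) to a $p'$-element $y\in C_L(Q_0)$ acting on $Q_m$, noting that $C_{Q_m}(\Aut_{Q_0}(Q_m))=C_{Q_m}(Q_0)\le C_{Q_m}(x)\le Q_0$ is centralized by $c_y$, so $c_y=\mathrm{Id}_{Q_m}$ and hence $y\in C_L(Q_m)=Z(Q_m)$ forces $y=1$. With $C_{L_\sigma}(Q_0)$ a $p$-group in hand, the proof runs in the contrapositive direction: if $z(k_\alpha(C_{L_\sigma}(x)Q_0/Q_0))\neq 0$, then by Lemma~\ref{l:centext2} applied to the normal subgroup $C_{L_\sigma}(Q_0)Q_0/Q_0$ and by Lemma~\ref{l:z=0}, one gets $C_{L_\sigma}(Q_0)\le Q_0$, whence $Q_0$ is $\F$-centric by Lemma~\ref{l:nfkcentric}. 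The $A\times B$ Lemma is exactly the missing tool that simultaneously supplies what your steps (i) and (ii) needed; without it or an equivalent, your argument for (2) does not go through.
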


\begin{proof}
Set $Q_{\sigma} = Q_0$ and $ Q^{\sigma}= Q_{m}$.  We have 
\[
L_{\sigma}/ Z(Q_m) \cong \Aut_L(\sigma)=
\Aut_{N_{N_{\F}(Q_{0})}(Q_{m})} (\sigma) = \Aut_{N_{\F}
(Q_{0})}(\sigma) = \Aut_{\F}(\sigma). 
\]
The quotient map $\pi\colon L_\sigma \to \Aut_{\F}(\sigma)$ sends
$C_{L_{\sigma}}(x)$ to $C_{\Aut_\F(\sigma)}(x)$. It also sends $Q_m$ 
to $\Aut_{Q_0}(Q_m)$, since $Z(Q_m) \le C_{Q_m}(x) \le Q_0$ by 
assumption. Part (1) follows from this.

We now turn to (2), where we first claim that $C_L(Q_0)$ is a $p$-group
under the given assumptions. Let $y$ be an element of $C_{L}(Q_0)$ of order
prime to $p$, and let $c_y$ be the image of $y$ in $\Aut(Q_m)$.  Since
$C_{Q_m}(\Aut_{Q_0}(Q_m)) = C_{Q_m}(Q_0) \leq C_{Q_m}(x) \leq Q_0$, we have
\[
[c_y,C_{Q_m}(\Aut_{Q_0}(Q_m))] \le [c_y,Q_0]=1.
\]
Now Lemma \ref{l:axb} implies that
$c_y = \Id_{Q_m}$, so that $y \in C_{L}(Q_m) \leq Q_m$ is of order a power of
$p$, since $Q_m$ is self-centralising in $L$. Hence, $y = 1$.

Assume that $z(k_{\alpha}(C_{L_{\sigma}}(x)Q_0/Q_0)) \ne 0$. As $Q_0$ is
normal in $L_{\sigma}$, we know that $C_{L_\sigma}(Q_0)Q_0$ is likewise normal
in $L_\sigma$. But $C_{L_\sigma}(Q_0) \leq C_{L_\sigma}(x)$, so
$C_{L_\sigma}(Q_0)Q_0$ is normal in $C_{L_\sigma}(x)Q_0$.
Hence, $z(k_{\alpha}(C_{L_{\sigma}}(Q_0)Q_0/Q_0)) \ne 0$ by Lemma
\ref{l:centext2}. It was just shown that $C_{L_\sigma}(Q_0)$ is a
$p$-group, so we have $C_{L_{\sigma}}(Q_{0}) \leq Q_0$ by Lemma \ref{l:z=0}. In
other words, $Q_0$ is $N_{\F}^K(Q_0)$-centric, where $K \leq \Aut_{\F}(Q_0)$ is
the subgroup consisting of those automorphisms which extend to automorphisms of
$\sigma$. Hence, $Q_0$ is $\F$-centric by Lemma~\ref{l:nfkcentric}. 
\end{proof}

\begin{Lem}\label{l:centric} 
Let $(\sigma, x) \in \widetilde M^{e, \circ}$, with $\sigma = (Q_0 < \cdots <
Q_m)$ as before. If $C_{Q_m}(x)\Phi(Q_m)$ is not $\F$-centric, then
$z(k_{\alpha}C_{\Aut_{\F}(\sigma)}(x)\Aut_{Q_0}(Q_m)/\Aut_{Q_{0}}(Q_m)) = 0$.   
\end{Lem}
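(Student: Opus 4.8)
The plan is to reshape the chain $\sigma$ so that the summand becomes a direct instance of the vanishing statement in Lemma~\ref{l:model}(2). Put $Q_{-1}:=C_{Q_m}(x)\Phi(Q_m)$. Since $(\sigma,x)\in\widetilde{\M}^{e,\circ}$, the group $Q_m/Q_0$ is elementary abelian, so $\Phi(Q_m)\leq Q_0$, and $C_{Q_m}(x)\leq Q_0$; hence $Q_{-1}\leq Q_0$. Also $\Phi(Q_m)\leq Q_{-1}\leq Q_m$ forces $Q_{-1}\trianglelefteq Q_m$, and $x\in C_{Q_m}(x)\leq Q_{-1}$. Let $\sigma'$ be the chain $(Q_{-1}<Q_0<Q_1<\cdots<Q_m)$ if $Q_{-1}<Q_0$, and let $\sigma':=\sigma$ otherwise. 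In either case $\sigma'$ is a chain of subgroups of $S$, each normal in $Q_m$, with smallest term $Q_{-1}$ and largest term the $\F$-centric group $Q_m$, and with $x\in Q_{-1}$ and $C_{Q_m}(x)\leq Q_{-1}$.

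First I would verify that replacing $\sigma$ by $\sigma'$ does not change the summand. A direct check (cf.\ the proof of Proposition~\ref{p:kmeoc}) gives $C_{\Aut_\F(\sigma)}(x)\cap\Aut_{Q_0}(Q_m)=\Aut_{C_{Q_0}(x)}(Q_m)$, and this intersection is normal in $C_{\Aut_\F(\sigma)}(x)\Aut_{Q_0}(Q_m)$, so the second isomorphism theorem rewrites the summand for $\sigma$ as $z(k_\alpha\, C_{\Aut_\F(\sigma)}(x)/\Aut_{C_{Q_0}(x)}(Q_m))$. Now every $\varphi\in C_{\Aut_\F(\sigma)}(x)$ fixes $x$, hence stabilises $C_{Q_m}(x)$, and it stabilises $\Phi(Q_m)$; therefore it stabilises $Q_{-1}$, and so $C_{\Aut_\F(\sigma)}(x)=C_{\Aut_\F(\sigma')}(x)$. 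Moreover $C_{Q_0}(x)=C_{Q_{-1}}(x)=C_{Q_m}(x)$ since $C_{Q_m}(x)\leq Q_{-1}\leq Q_0$. Running the same computation in reverse for $\sigma'$ then shows that the summand for $\sigma$ equals $z(k_\alpha\, C_{\Aut_\F(\sigma')}(x)\Aut_{Q_{-1}}(Q_m)/\Aut_{Q_{-1}}(Q_m))$, i.e.\ the analogous expression attached to the chain $\sigma'$ whose bottom term is $Q_{-1}$.

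Next I would conjugate $(\sigma',x)$ into normalised position: choose an $\F$-isomorphism $\rho$ defined on $Q_m$ with $\rho(Q_{-1})$ fully $\F$-normalised (possible since every term of $\sigma'$ normalises $Q_{-1}$), and then an isomorphism $\psi$ in $N_\F(\rho(Q_{-1}))$ sending $\rho(Q_m)$ to a fully $N_\F(\rho(Q_{-1}))$-normalised subgroup. This produces a pair $(\sigma'',x'')$, with $\sigma''$ a chain whose terms are normal in its top $Q_m^*$, whose bottom $Q^*$ is fully $\F$-normalised, whose top $Q_m^*$ is $\F$-centric and fully $N_\F(Q^*)$-normalised, and with $x''\in Q^*$ and $C_{Q_m^*}(x'')\leq Q^*$; since $\alpha$ is $\F$-compatible, the twisted group algebras attached to $\sigma'$ and $\sigma''$ are isomorphic, so the summand for $\sigma$ still equals $z(k_\alpha\, C_{\Aut_\F(\sigma'')}(x'')\Aut_{Q^*}(Q_m^*)/\Aut_{Q^*}(Q_m^*))$. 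The chain $\sigma''$ now satisfies all hypotheses of Lemma~\ref{l:model} --- crucially, that lemma requires only the \emph{top} term of the chain to be $\F$-centric --- so part~(1) identifies this number with $z(k_\alpha\, C_{L_{\sigma''}}(x'')Q^*/Q^*)$ for $L$ a model of $N_{N_\F(Q^*)}(Q_m^*)$, and part~(2) shows it is $0$ because $Q^*$, being $\F$-isomorphic to $Q_{-1}$, is not $\F$-centric. This completes the proof.

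The point requiring the most care is that the enlarged chain $\sigma'$ has a distinguished term, namely $Q_{-1}$, which is deliberately \emph{not} $\F$-centric, so one must be sure that $\sigma'$ (and its normalised conjugate $\sigma''$) is a legitimate input for Lemma~\ref{l:model} --- it is, since only the top term needs to be $\F$-centric there --- and that a fully normalised representative can be chosen keeping $Q_{-1}$ (resp.\ its image) as the bottom term, which is exactly where the standard ``conjugate the whole chain, then conjugate inside the normaliser'' manoeuvre of Section~\ref{towardsmainSection} is applied. Everything else is the routine bookkeeping already used there: the summand depends only on $C_{\Aut_\F(\sigma)}(x)$ together with its normal $p$-subgroup $\Aut_{C_{Q_\sigma}(x)}(Q^\sigma)$, and both data are unaffected by passing from $\sigma$ to $\sigma'$.
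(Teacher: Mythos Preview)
Your proof is correct and follows essentially the same approach as the paper's: prepend $Q_{-1}$ to the chain, verify that this does not change the summand (exactly as in the proof of Proposition~\ref{p:kmeoc}), conjugate into normalised position, and invoke Lemma~\ref{l:model}(2). The only cosmetic differences are that the paper normalises first and then extends the chain, whereas you extend first and then normalise, and that your case $Q_{-1}=Q_0$ is actually vacuous (since $Q_0\in\F^c$ forces $Q_{-1}$ to be $\F$-centric, contrary to hypothesis), so you could simply note $Q_{-1}<Q_0$ from the outset as the paper does.
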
 

\begin{proof}  
Write $Q_{-1} = C_{Q_m}(x)\Phi(Q_m)$, and recall that $Q_{-1} \leq Q_0$ by
definition of $\widetilde{\M}^{e,\circ}$.  Using
\cite[I.2.6(c)]{AschbacherKessarOliver2011}, we choose a morphism $\phi \in
\Hom_\F(Q_m, S)$ with $\phi(R)$ fully $\F$-normalized, and then a morphism
$\psi \in \Hom_{N_\F(\phi(R))}(Q_m, N_S(\phi(R)))$ with $\psi\phi(Q_m)$ fully
$N_\F(\phi(R))$-normalized. Set $\tau = \psi\phi(\sigma)$ and $y =
\psi\phi(x)$. 
Conjugation by $\psi\phi$ yields an isomorphism
\[
C_{\Aut_{\F} (\sigma)} (x)\Aut_{Q_\sigma}
(Q^\sigma)/\Aut_{Q_\sigma} (Q^\sigma) \cong C_{\Aut_{\F} (\tau)}
(y)\Aut_{Q_\tau} (Q^\tau)/\Aut_{Q_\tau} (Q^\tau).
\]
Upon replacing $(\sigma,x)$ by $(\tau,y)$, we may therefore assume 
$Q_{-1}$ to be fully $\F$-normalized and $Q_m$ to be fully
$N_\F(Q_{-1})$-normalized.

Assume on the contrary that $Q_{-1} \myeq C_{Q_m}(x)\Phi(Q_m)$ is not
$\F$-centric, but that
$z(k_{\alpha}C_{\Aut_\F(\sigma)}(x)\Aut_{Q_0}(Q_m)/\Aut_{Q_0}(Q_m)) \neq 0$.
As $Q_0$ is $\F$-centric, 
$Q_{-1}$ is a proper subgroup of $Q_0$.  Consider the chain 
\[
\sigma' = (Q_{-1}  <  Q_0 < \cdots Q_m).
\]
It was shown in the last part of the proof of Lemma~\ref{p:kmeoc} that
\[
C_{\Aut_{\F}(\sigma)}(x)\Aut_{Q_0}(Q_m)/\Aut_{Q_0}(Q_m) =
C_{\Aut_{\F}(\sigma')}(x)\Aut_{Q_{-1}}(Q_m)/\Aut_{Q_{-1}}(Q_m),
\]
and that argument did not require $(\sigma,x) \in \widetilde{\M}^c$. But then
from Lemma~\ref{l:model} applied to $\sigma'$, we conclude that $Q_{-1}$ is
$\F$-centric after all, a contradiction.
\end{proof}

\begin{proof}[Proof of Theorem~\ref{t:main2}] 
By Proposition~\ref{p:rewritez}, Proposition~\ref{p:cancel}(3), and
Lemma~\ref{l:centric}, we have $\m^*(\F,\alpha)=\m^{e,\circ,c}(\F,\alpha)$.
The result now follows from Proposition \ref{p:kmeoc}.  
\end{proof}

\section{Proof of Theorem \ref{t:main}}
\label{mainSection}

\begin{Lem}[Robinson] \label{l:QvsQ}
Suppose that $G$ is a finite group, $Q \unlhd G$ is a $p$-subgroup and   
$\alpha \in H^2(G/Q, k^\times)$. 
We have 
$$\sum_{[x] \in Q^{\cl}/G} \ell(k_\alpha C_G([x]))= 
\sum_{\mu \in \Irr(Q)/G} \ell(k_\alpha C_G(\mu))\ .$$
\end{Lem}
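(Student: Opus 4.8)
The plan is to push everything down to the quotient $\bar G := G/Q$, on which $\alpha$ lives, and then reduce to Brauer's permutation lemma for the $p$-group $Q$.

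First I would observe that each group occurring is either $QC_G(x)$ (which is $C_G([x])$, for $[x]\in Q^{\cl}$) or $I_G(\mu)=C_G(\mu)$ (for $\mu\in\Irr(Q)$), and each of these contains $Q$. Since the cocycle is inflated from $\bar G$ it restricts trivially to $Q$, and as $Q$ is a normal $p$-subgroup this gives $\ell(k_\alpha H)=\ell(k_\alpha(H/Q))$ for every $H$ with $Q\le H\le G$. Moreover $\bar G$ acts on $X:=Q^{\cl}$ and on $Y:=\Irr(Q)$ (inner automorphisms of $Q$ act trivially on both), with point stabilisers $\bar G_x=QC_G(x)/Q$ and $\bar G_\mu=I_G(\mu)/Q$. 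Thus the assertion is equivalent to
\[
\sum_{x\in X/\bar G}\ell(k_\alpha\bar G_x)=\sum_{y\in Y/\bar G}\ell(k_\alpha\bar G_y),
\]
where $\alpha$ is now regarded in $H^2(\bar G,k^\times)$ and restricted to the relevant stabilisers.

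Next I would evaluate each side by a counting argument. By the twisted form of Brauer's theorem (for $k$ algebraically closed of characteristic $p$), $\ell(k_\beta H)$ equals the number of $p$-regular conjugacy classes of $H$ that are $\beta$-regular, a condition governed by the conjugation-invariant commutator pairing $\langle-,-\rangle_\alpha$ on pairs of commuting elements of $\bar G$; equivalently one realises $\alpha$ by a $p'$-central extension $1\to Z\to\widetilde{\bar G}\to\bar G\to 1$ as in Proposition~\ref{p:centext} and counts $p$-regular classes of preimages according to their $Z$-central character. In either formulation the left-hand side is an orbit count for $\bar G$ acting diagonally on pairs $(x,\bar h)$ with $\bar h$ a $p$-regular element of $\bar G_x$ satisfying an $\alpha$-condition; applying Burnside's formula (or the reordering of Remark~\ref{reorderRemark}) rewrites it as a sum, over commuting pairs $(\bar g,\bar h)$ in $\bar G$ with $\bar h$ $p$-regular, of cardinalities
\[
\bigl|\{\,x\in X:\ \bar g x=x,\ \bar h x=x,\ \text{and $x$ is fixed by some element of a subset }W(\bar g,\bar h)\subseteq\bar G\,\}\bigr|,
\]
where $W(\bar g,\bar h)$ is a union of cosets of $\ker\langle-,\bar h\rangle_\alpha$ in $C_{\bar G}(\bar h)$ depending on $\alpha$ alone. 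By inclusion--exclusion over the finite set $W(\bar g,\bar h)$, each such cardinality is a $\mathbb{Z}$-linear combination — with coefficients not involving $X$ — of the fixed-point numbers $|\Fix_X(A)|$ for subgroups $A\le\bar G$ (the subgroups generated by $\bar g$, $\bar h$ and finite subsets of $W(\bar g,\bar h)$). Hence $\sum_{x\in X/\bar G}\ell(k_\alpha\bar G_x)$ is one fixed $\mathbb{Q}$-linear combination, depending only on $\bar G$, $\alpha$ and $p$, of the integers $|\Fix_X(A)|$, and $\sum_{y\in Y/\bar G}\ell(k_\alpha\bar G_y)$ is the same combination of the integers $|\Fix_Y(A)|$.

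Finally I would invoke Brauer's permutation lemma: for a subgroup $A\le\bar G$, lift it to the subgroup $\widetilde A$ of $G$ with $Q\le\widetilde A$ and $\widetilde A/Q=A$; the character table of $Q$ conjugates the permutation matrices describing the conjugation action of $\widetilde A$ on $Q^{\cl}$ into those describing its action on $\Irr(Q)$, so $\mathbb{C}[Q^{\cl}]\cong\mathbb{C}[\Irr(Q)]$ as $\mathbb{C}\widetilde A$-modules and therefore $|\Fix_X(A)|=\dim_{\mathbb C}\mathbb{C}[Q^{\cl}]^A=\dim_{\mathbb C}\mathbb{C}[\Irr(Q)]^A=|\Fix_Y(A)|$ for every $A\le\bar G$. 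Substituting into the two identical linear combinations gives the desired equality. The main obstacle is the middle step: $\ell(k_\alpha\bar G_x)$ genuinely depends on the subgroup $\bar G_x$, and the purpose of the $\alpha$-regularity description together with the inclusion--exclusion is precisely to convert this subgroup-dependent datum into an $X$-independent combination of fixed-point counts, after which Brauer's permutation lemma applies directly; a secondary point to check is the conjugation-invariance of the pairing $\langle-,-\rangle_\alpha$ (so that the sets of pairs above are genuine $\bar G$-sets and Burnside's formula is legitimate).
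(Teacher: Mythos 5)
Your first reduction — pass to $\bar G := G/Q$ and observe $\ell(k_\alpha H)=\ell(k_\alpha(H/Q))$ for $Q\le H\le G$ since $\alpha$ is inflated and $Q$ is a normal $p$-subgroup — is correct and clean, and it does put the statement in the shape $\sum_{x\in X/\bar G}\ell(k_\alpha\bar G_x)=\sum_{y\in Y/\bar G}\ell(k_\alpha\bar G_y)$ with $X=Q^{\cl}$, $Y=\Irr(Q)$. The problem is in the final step, and it is a genuine gap rather than a detail. You assert $|\Fix_X(A)|=\dim_{\mathbb C}\mathbb C[X]^A$, but this is false: $\dim_{\mathbb C}\mathbb C[X]^A$ is the number of $A$-\emph{orbits} on $X$, not the number of $A$-fixed points (e.g.\ a free action gives $\dim\mathbb C[X]^A=|X|/|A|$ while $|\Fix_X(A)|=0$). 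What the character-table conjugation actually buys you is an isomorphism $\mathbb C[Q^{\cl}]\cong\mathbb C[\Irr(Q)]$ of $\mathbb C\bar G$-modules, hence equality of permutation characters, hence $|\Fix_X(\bar g)|=|\Fix_Y(\bar g)|$ for each \emph{element} $\bar g$ (equivalently, marks on cyclic subgroups) and $|X/H|=|Y/H|$ for each $H\le\bar G$. It does \emph{not} give $|\Fix_X(A)|=|\Fix_Y(A)|$ for non-cyclic $A$: equality of permutation characters does not determine a $G$-set up to isomorphism, and the marks on non-cyclic subgroups can genuinely differ. Since your inclusion--exclusion over $W(\bar g,\bar h)$ produces subgroups $A=\langle\bar g,\bar h,T\rangle$ that are not cyclic in general (already $\langle\bar g,\bar h\rangle$ is only abelian, and the elements of $W$ need not commute with $\bar g$), the linear combination you construct does involve marks on non-cyclic subgroups, and the argument breaks at exactly the point where you need them to agree.

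The paper proves the lemma by an entirely different route, via class functions rather than fixed-point counting: it rewrites the statement over a $p'$-central extension (Proposition~\ref{p:centext}, Lemma~\ref{l:QsvQ*central}), and then identifies both sides with $\dim_K d^Q(\C(G,e))$, the space of block class functions supported on $p$-sections of elements of $Q$. The left-hand side is recovered by decomposing $d^Q$ into the $d^x$'s and using Brauer's second main theorem (Lemma~\ref{l:2main}); the right-hand side is recovered by decomposing $\C(G,e)$ via Clifford theory over $\Irr(Q)$, reducing to inertia groups, and handling the $G$-stable case by a $p$-central extension (Lemmas~\ref{l:cliff1}--\ref{l:cliff4}). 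The $\mathbb C\bar G$-module isomorphism you correctly extract from the character table is, in spirit, the seed of Robinson's argument, but the route through it goes via class functions and decomposition numbers, not through a direct mark-by-mark comparison of the $\bar G$-sets $Q^{\cl}$ and $\Irr(Q)$; the latter comparison is strictly stronger than what is true.
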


This Lemma is  due to  Robinson,  and it is obtained as a combination of  
 \cite{robinson1987characters},  \cite{robinsonstaszewski1990}  (see  
discussion before Theorem 1.2 of  \cite{robinson1996local}). 
As a convenience to the reader, the  main ideas of the proof are presented   
in the Appendix.  

For a   finite group $H$ denote by  $ {\mathcal S}(H)$  the poset   of 
$p$-subgroups of $H$  (including the trivial  subgroup - so  notation is 
not standard). If $Q$ is a normal $p$-subgroup  of  a finite group $G$, 
then  for any  $[x]\in  Q^{\cl} $ (respectively  $\mu \in \Irr (Q)  $), 
we denote  by   $I([x]) $  (respectively  $I(\mu)) $   the   stabiliser 
in  $G/Q$  of $[x] $ (respectively  $\mu$)  under the   action of $G/Q$  
and for any subgroup   $R$ of   $ G/Q $, we denote by  $I([x], R) $ the 
intersection  of   $I([x])  $ with  $ N_{G/Q} (R) $ etc.

\begin{Lem}\label{L:sbluecon} 
Suppose that $G$ is a finite group, $Q \unlhd G$ is a $p$-subgroup and   
$\alpha \in H^2(G/Q, k^\times)$. Suppose that $C_G(Q) \leq Q$.  
If AWC holds, then 
$$\sum_{[x] \in Q^{\cl}/  G}  \  \   
\sum_ {R \in{\mathcal S}(I([x]))/ I ( [x]) }
z(k_{\alpha} (  I  ([x], R )/  R) ) = 
\sum_{\mu \in \Irr(Q)/G} \  
\sum_ {R \in{\mathcal S}(I(\mu))/ I(\mu)} 
z(k_{\alpha} (I ( \mu, R)/R)).$$ 
\end{Lem}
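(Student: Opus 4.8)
The plan is to combine Robinson's Lemma~\ref{l:QvsQ} with a Knörr--Robinson-style cancellation over chains of $p$-subgroups, using AWC to rewrite each $\ell$-term as a sum of $z$-terms. The key observation is that the quantity $z(k_\alpha(I([x],R)/R))$, as a function of $R \in \S(I([x]))$, is constant on $I([x])$-conjugacy classes and vanishes unless $R$ is ``$p$-radical'' in an appropriate sense; this is exactly the setting in which AWC converts $\ell$ into an alternating sum over chains, and in which Lemma~\ref{l:rad2} lets us pass freely between all chains, normal chains, and elementary abelian chains.

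\textbf{Step 1 (Apply AWC to each side).} Fix $[x] \in Q^{\cl}$. The group $I([x]) \le G/Q$ is a finite group and $\alpha$ restricts to a class in $H^2(I([x]),k^\times)$. Via the $p'$-central extension picture of Proposition~\ref{p:centext}(1), $k_\alpha I([x]) \cong k\widetilde{I([x])}e$ is (a block of) a finite group algebra, and its number of simple modules is $\ell(k_\alpha I([x])) = \ell(k_\alpha C_G([x]))$ after identifying $I([x]) = C_G([x])/Q$ (using $C_G(Q) \le Q$, so that $Q$ acts on $M_Q$-quotients trivially in the relevant sense; more precisely $C_G([x])$ surjects onto $I([x])$ with kernel the $p$-group $Q$, so $\ell(k_\alpha C_G([x])) = \ell(k_\alpha I([x]))$). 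AWC applied to this block, in the Knörr--Robinson form (cf.\ the computation at the end of the proof of Proposition~\ref{p:centext}(2), where $\Out$-groups of $p$-radical subgroups appear as normalizer quotients), gives
\[
\ell(k_\alpha C_G([x])) = \sum_{R \in \S(I([x]))/I([x])} z(k_\alpha(I([x],R)/R)),
\]
and likewise $\ell(k_\alpha C_G(\mu)) = \sum_{R \in \S(I(\mu))/I(\mu)} z(k_\alpha(I(\mu,R)/R))$ for each $\mu \in \Irr(Q)$. Here I use that $z(k_\alpha(I([x],R)/R)) = 0$ unless $R$ is $p$-radical in $I([x])$ (Lemma~\ref{l:z=0}), so the sum over all $p$-subgroups $R$ agrees with the sum over radical ones that appears in AWC.

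\textbf{Step 2 (Sum and invoke Robinson's Lemma).} Summing the first displayed identity of Step~1 over $[x] \in Q^{\cl}/G$ and the second over $\mu \in \Irr(Q)/G$, the left-hand sides become exactly the two sides of Lemma~\ref{l:QvsQ}, which are therefore equal. Hence
\[
\sum_{[x]\in Q^{\cl}/G}\ \sum_{R\in\S(I([x]))/I([x])} z(k_\alpha(I([x],R)/R)) = \sum_{\mu\in\Irr(Q)/G}\ \sum_{R\in\S(I(\mu))/I(\mu)} z(k_\alpha(I(\mu,R)/R)),
\]
which is precisely the assertion of the Lemma.

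\textbf{The main obstacle} I expect is Step~1: making rigorous that AWC for the twisted group algebra $k_\alpha I([x])$ yields exactly the claimed sum $\sum_R z(k_\alpha(I([x],R)/R))$ with the indexing over $I([x])$-classes of (radical) $p$-subgroups $R$ and with the twist restricting correctly to each $N_{I([x])}(R)/R$. This requires (i) translating AWC for the block $k\widetilde{I([x])}e$ of the $p'$-central extension into a statement purely about $k_\alpha I([x])$ — done via Proposition~\ref{p:centext}, whose last part already records this translation in the constrained case, but here $I([x])$ need not be constrained, so one needs the general form of AWC for arbitrary blocks applied to $k\widetilde{I([x])}e$ — and (ii) checking that the Külshammer--Puig class of that block at a radical $p$-subgroup $R/\!\sim$ is the restriction of $\alpha$ to $N_{I([x])}(R)/R$, which follows because the central extension is by a $p'$-group and so restricts compatibly along $p$-local subgroups, exactly as in the proof of Lemma~\ref{l:centext2}. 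Once this bookkeeping is in place, the rest is the formal manipulation above together with Robinson's Lemma~\ref{l:QvsQ}. (One should also note that $C_G(Q)\le Q$ guarantees $O_p(I([x])) \le $ something nontrivial is \emph{not} forced, so no degenerate vanishing occurs; the hypothesis $C_G(Q)\le Q$ is what ensures $Q$ is the ``$p$-core data'' is clean and that $I([x]) = C_G([x])/Q$ has the right structure for the $\ell$-count to match.)
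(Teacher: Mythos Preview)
Your proposal is correct and follows the same two-step strategy as the paper: use AWC to rewrite each $\ell(k_\alpha C_G([x]))$ and $\ell(k_\alpha C_G(\mu))$ as the corresponding inner sum of $z$'s, then invoke Lemma~\ref{l:QvsQ}. The ``main obstacle'' you flag is not actually present: rather than taking a $p'$-central extension of $I([x])$ and worrying about a non-constrained block, apply Proposition~\ref{p:centext}(2) directly with $L = C_G([x])$ (respectively $L = C_G(\mu)$); since $Q \unlhd L$ and $C_L(Q) \leq C_G(Q) \leq Q$, the constrained hypothesis of that proposition is satisfied, and its final displayed equivalence is exactly the identity you need in Step~1---this is precisely how the paper argues.
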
  

\begin{proof}      
Let $\mu \in  \Irr(Q)$.  The full inverse image  of $I(\mu)  \leq G/Q$ 
in $G$ is $C_G(\mu) $  and for any $p$-subgroup  $R$  of  
$L/Q= I(\mu) $,  $I(\mu, R) =N_{L/Q} (R) $. Hence,   by AWC  and   
Proposition \ref{p:centext} applied with $ L =  C_G(\mu) $, we have that    
$$\ell(k_{\alpha} (kC_G(\mu)  ) =  
\sum_ {R \in{\mathcal S}(I(\mu))/ I(\mu)} z(k_{\alpha} (I(R, \mu)/R)).$$
Similarly,   let $x \in  Q^{\cl} $.  The full inverse image  of  
$ I([x])  \leq G/Q  $  in $G$  is $C_G ([x])  $. Thus,   by  AWC  and   
Proposition \ref{p:centext} applied with $ L =  C_G([x] )$, we have that    
$$\ell(k_{\alpha} (kC_G([x]  ) = 
\sum_ {R \in{\mathcal S}(I([x]))/ I ( [x]) }
z(k_{\alpha} (  I  ( R,  [x] )/  R) ). $$

The result follows by  Lemma \ref {l:QvsQ}.
\end{proof}

Let $\F$ be a saturated fusion system on a finite $p$-group $S$ and let
$\alpha$ be an $\F$-compatible family. We recall some
earlier notation.  For any $\F$-centric $Q \leq  S$, by 
Remark \ref{reorderRemark}, we have 
\begin{equation}
\w_Q(\F, \alpha) =
\sum_{\sigma \in \N_Q/\Out_\F(Q)} (-1)^{|\sigma|} 
\sum_{\mu \in \Irr_K(Q)/I(\sigma)} z(k_{\alpha_Q}C_{I(\sigma)}(\mu))
\end{equation}

\begin{equation}
\w^*_Q(\F, \alpha) =\sum_{\sigma \in \N_Q / \Out_\F(Q)} 
(-1)^{|\sigma|} \sum_{[x] \in Q^{\cl}/I(\sigma)} 
z(k_\alpha C_{I(\sigma)}([x])) 
\end{equation} 
Also, since $\Out_\F(Q)=\Out_{N_\F(Q)}(Q)$    we have 
\begin{equation}\label{e:wvsw*}
\w_Q(\F, \alpha)=  \w_Q(N_{\F}(Q),  \alpha) \hspace{2mm} 
\mbox{ and } \hspace{2mm}  
\w^*_Q(\F, \alpha)=  \w^*_Q(N_{\F}(Q),  \alpha) .
\end{equation}

\begin{Lem} \label{l:pumpupw}  
Suppose that $G$ is a finite group and $Q \unlhd G$ is a $p$-subgroup 
with $C_G(Q) \le Q$.  Let  $S$ be a Sylow $p$-subgroup  of  $G $,   
$\F=  \F_{S}(G) $, $\bar G =G/Q $  and  let    ${\mathcal P}_Q$  denote 
the set of all strictly increasing  chains   of $p$-subgroups  in  
$\Out_{\F}(Q) $  starting at $1$.  Then, 
$$\w_Q(\F, \alpha)  = \sum_{\sigma  \in {\mathcal P}_Q/ \Out_{\F} (Q)  }  
(-1)^{|\sigma|}  \sum_{\mu \in \Irr(Q)/ I (\sigma) }  \  \  
\sum_ { R  \in{\mathcal S}(I(\sigma, \mu))/  I(\sigma, \mu)   } 
z(k_{\alpha} (I  (  R,  \sigma,  \mu  )/ R)) $$
and  
$$\w^*_Q(\F, \alpha)  = \sum_{\sigma  \in {\mathcal P}_Q/\Out_{\F}(Q) }  
(-1)^{|\sigma|} \sum_{[x] \in Q^{\cl}/ I (\sigma) }  \  \  
\sum_ {R \in{\mathcal S}(I(\sigma, [x]))/  
I(\sigma, [x])   } 
z(k_{\alpha} ( I  ( R,  \sigma,  [x])/R)) .$$
\end{Lem}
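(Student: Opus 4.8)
The plan is to prove the stated formula for $\w_Q(\F,\alpha)$; the formula for $\w^*_Q(\F,\alpha)$ follows by the identical argument with $\Irr(Q)$ replaced by $Q^{\cl}$ and $\mu$ replaced by $[x]$. Since $Q \unlhd G$ and $C_G(Q) \le Q$, we have $\Out_\F(Q) = N_G(Q)/QC_G(Q) = G/Q =: \bar G$, so that $\mathcal{P}_Q$ is the set of all $p$-chains of $\bar G$ starting at $1$ and $\N_Q \subseteq \mathcal{P}_Q$ is the subset of normal chains. For $\sigma = (1 = X_0 < X_1 < \cdots < X_m) \in \mathcal{P}_Q$ one has $I(\sigma) = N_{\bar G}(\sigma)$, $I(\sigma,\mu) = I(\sigma) \cap I(\mu)$, and $I(R,\sigma,\mu) = N_{I(\sigma,\mu)}(R)$ for a $p$-subgroup $R \le I(\sigma,\mu)$. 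Separating off the $R = 1$ term of the innermost sum, the right-hand side of the asserted identity splits as $\mathrm{(A)} + \mathrm{(B)}$ with
\[
\mathrm{(A)} = \sum_{\sigma \in \mathcal{P}_Q/\bar G} (-1)^{|\sigma|} \sum_{\mu \in \Irr(Q)/I(\sigma)} z(k_\alpha I(\sigma,\mu)),
\]
and $\mathrm{(B)}$ the same expression with $z(k_\alpha I(\sigma,\mu))$ replaced by $\sum_{1 < R \in \mathcal{S}(I(\sigma,\mu))/I(\sigma,\mu)} z(k_\alpha(N_{I(\sigma,\mu)}(R)/R))$. It suffices to show $\mathrm{(A)} = \w_Q(\F,\alpha)$ and $\mathrm{(B)} = 0$.

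For $\mathrm{(A)}$, I would apply Lemma \ref{l:rad2} with the group $\bar G$ in place of $G$ and with the function $f$ on subgroups of $\bar G$ given by $f(K) := \sum_{\mu \in \Irr(Q)/K} z(k_\alpha(K \cap I(\mu)))$. A short computation (using that $\alpha$ is a cohomology class on $\bar G$, so conjugation in $\bar G$ identifies twisted group algebras of conjugate subgroups) shows $f$ is constant on $\bar G$-conjugacy classes of subgroups, and $f(N_{\bar G}(\sigma)) = \sum_{\mu \in \Irr(Q)/I(\sigma)} z(k_\alpha I(\sigma,\mu))$. Lemma \ref{l:rad2} then converts the sum over $\mathcal{P}_Q$ in $\mathrm{(A)}$ into the sum over $\N_Q$, which is the defining expression for $\w_Q(\F,\alpha)$.

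For $\mathrm{(B)}$, expanding and applying Remark \ref{reorderRemark} twice rewrites it as an alternating sum over $\bar G$-orbits of triples $(\sigma,\mu,R)$ with $\sigma = (1 = X_0 < \cdots < X_m) \in \mathcal{P}_Q$, $\mu \in \Irr(Q)$ and $R$ a nontrivial $p$-subgroup of $I(\sigma,\mu)$, the summand being $(-1)^{|\sigma|} z(k_\alpha(N_{I(\sigma,\mu)}(R)/R))$. I would annihilate this sum by the sign-reversing involution from the proof of Lemma \ref{l:rad}, run with $R$ playing the role of the normal $p$-subgroup there: as $R \le I(\sigma,\mu) \le N_{\bar G}(\sigma)$, $R$ normalizes every $X_i$; taking $i_0$ maximal with $R \not\le X_{i_0}$ (this exists since $R \ne 1 = X_0$), one toggles the term $X_{i_0}R$ in $\sigma$ — appending it on top when $i_0 = m$, deleting $X_{i_0+1}$ when $X_{i_0}R = X_{i_0+1}$, and inserting it between $X_{i_0}$ and $X_{i_0+1}$ otherwise — leaving $\mu$ and $R$ fixed. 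This is $\bar G$-equivariant, changes $|\sigma|$ by $\pm 1$, and fixes the summand: $X_{i_0}R$ is a $p$-subgroup normalized by $R$, and any element of $\bar G$ normalizing $R$ together with all untouched chain terms also normalizes $\langle X_{i_0}, R \rangle = X_{i_0}R$, so $N_{I(\sigma,\mu)}(R)$ is unchanged. Since $\mathrm{(B)} = 0$, the right-hand side equals $\mathrm{(A)} = \w_Q(\F,\alpha)$.

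The step I expect to be the crux is the verification in $\mathrm{(B)}$ that the toggling is a well-defined involution on the triple index set: that the modified chain again lies in $\mathcal{P}_Q$ — this is where it is essential to work with all $p$-chains rather than normal ones, since $X_{i_0}R$ need not be normal in $X_m$ — and that the modified chain stabilizer really coincides with $N_{I(\sigma,\mu)}(R)$, so that paired terms cancel exactly. The reduction to $\bar G$, the conjugacy invariance of $f$, and the applications of Remark \ref{reorderRemark} are routine.
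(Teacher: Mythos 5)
Your proof is correct and takes essentially the same approach as the paper's: both use Lemma \ref{l:rad2} to pass between normal chains $\N_Q$ and all $p$-chains $\mathcal{P}_Q$ in the $R=1$ part, and both kill the $R>1$ part by the same sign-reversing toggle of $X_{i_0}R$ in the chain $\sigma$ (the paper fixes $\mu$ and $R$ and runs it over $\sigma$; you run it over triples $(\sigma,\mu,R)$, which is the same after Remark \ref{reorderRemark}). The only difference is organizational — you decompose the right-hand side as $(A)+(B)$ and show $(A)=\w_Q(\F,\alpha)$, $(B)=0$, whereas the paper transforms $\w_Q(\F,\alpha)$ step by step into the three-fold sum — but the underlying mechanism is identical.
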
  

\begin{proof}    By definition 
$$\w_Q(\F, \alpha)  = \sum_{\sigma  \in {\mathcal N_Q}/ \Out_{\F} (Q) }  
(-1)^{|\sigma|}   \sum_{\mu \in \Irr(Q)/ I (\sigma) } 
z(k_{\alpha} (I  ( \sigma,  \mu  ))). $$
 We claim that 
 $$\w_Q(\F, \alpha)  = \sum_{\sigma  \in {\mathcal P}_Q/ \Out_{\F} (Q) }   
(-1)^{|\sigma|}    \sum_{\mu \in \Irr(Q)/ I (\sigma) } 
z(k_{\alpha} ( I (  \sigma,  \mu  ))) .$$
Indeed, this follows  immediately from Lemma \ref{l:rad2} (or 
\cite[Proposition 3.3]{KnorrRobinson1989}). Next,  interchanging the  
order of   summation  on the  right hand side of the above  equation  
we  obtain
$$\w_Q(\F, \alpha) = \sum_{\mu \in \Irr(Q)/ \Out_{\F}(Q)} \  \  
\sum_{\sigma  \in {\mathcal P}_Q/ I(\mu)  }   (-1)^{|\sigma|}  
z(k_{\alpha} (I(\sigma, \mu))) .$$
Now we  claim that  
\begin{equation} \label{e:wpumpup}  
\w_Q(\F, \alpha)  =  \sum_{\mu \in \Irr(Q)/ \Out_{\F} (Q) } \  \  
\sum_{\sigma  \in {\mathcal P}_Q/ I(\mu)  } 
\sum_ { R  \in {\mathcal S}(I(\sigma, \mu))/  
I(\sigma, \mu)   }  (-1)^{|\sigma|}  
z(k_{\alpha} (I ( R,  \sigma,  \mu  )/ R)).    
\end{equation}

To prove the claim, let $\mu \in \Irr(Q)$ and for $R$ a $p$-subgroup of   
$ I(\mu) $, let ${\mathcal P}_Q ^{R}$  be the subset of   
${\mathcal P}_Q$ consisting of those chains  which are  normalised by 
$R$, i.e.  those chains  $\sigma $ such that $R \leq I(\sigma) $.
Then   
$$ \sum_{\sigma  \in {\mathcal P}_Q/ I(\mu)  } 
\sum_ { R  \in {\mathcal S}(I(\sigma, \mu))/  
I(\sigma, \mu)   }  (-1)^{|\sigma|}  
z(k_{\alpha} (I ( R,  \sigma,  \mu  )/ R)) $$   
is equal  to  
$$  
\sum_ { R  \in {\mathcal S}(I(\mu ))/ I( \mu)   }   
\sum_{\sigma  \in {\mathcal P}_Q^{R} / I( R, \mu)  }  
(-1)^{|\sigma|}  
z(k_{\alpha} (I ( R, \sigma,  \mu  )/ R))\ ,$$
where we  use Remark~\ref{reorderRemark}  with $ G = I(\mu)  $,   
$ X  =  \mathcal  {P}_Q   $, $ Y =  \mathcal {S}(I(\mu) )$ and 
$A$ equal to the subset of $X \times Y$  consisting of pairs  
$(\sigma, R)$  such that $R \leq  I (\mu, \sigma) $.

Suppose that $  R  \ne 1$ and let  $\sigma  =  
Q_0:=1 < Q_1 < \cdots < Q_n$ be an element of   
$ {\mathcal P}_Q^{ R}  $.  
If $R$ is not contained in $Q_n$, let $\sigma' $ be the chain 
obtained  from $\sigma $  by   appending $Q_n  R$.   Otherwise, let 
$j$ be the   smallest  integer   such that   $  R $ is  contained 
in $Q_j$. Note that $j \ne 0 $  since $R > 1$.   If $ Q_{j-1} R  
= Q_j $, then let $\sigma'$ be the  chain obtained from $\sigma $ by 
deleting $ Q_j$. Otherwise, let   $\sigma'$ be obtained from $\sigma $ 
by  inserting $Q_{j-1} R  $  in between $Q_{j-1}$ and   $Q_j$.  Then the 
pairing $\sigma \to \sigma' $  kills   
$$\sum_ { R  \in {\mathcal S}(I (\mu) )/  I ( \mu)}   
\sum_{\sigma  \in {\mathcal P}_Q^{R} / I(R, \mu)  }  
(-1)^{|\sigma|}  z(k_{\alpha}(I(  R,  \sigma, \mu)/ R))\ . 
$$
Hence, only the terms with $ R  =1 $  survive, and the claim follows.   
Interchanging the  order of summation in the outer two  terms of 
Equation \ref{e:wpumpup}  gives the desired    
expression  for $ \w_Q(\F, \alpha)$. The  proof  for $\w^*_Q(\F, \alpha)$ 
is entirely similar.
\end{proof}

\begin{Prop}  \label{p:weakAWCw}   
Let $\F$  be a saturated fusion system on a finite $p$-group $S$ and 
let $\alpha $ be an $\F$-compatible family. Suppose that  
AWC holds. Then $\w_Q(\F, \alpha)=  \w^*_Q(\F, \alpha)$ 
for all $\F$-centric subgroups $Q$ of $S$.
\end{Prop}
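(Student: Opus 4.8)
The plan is to reduce to a constrained fusion system realised by a finite group, to use the ``pumped-up'' formulae of Lemma~\ref{l:pumpupw} to rewrite both $\w_Q(\F,\alpha)$ and $\w^*_Q(\F,\alpha)$ as alternating sums, indexed by chains $\sigma$ of $p$-subgroups of $\Out_\F(Q)$, of triple sums of exactly the shape occurring in Lemma~\ref{L:sbluecon}, and then to apply AWC through Lemma~\ref{L:sbluecon} \emph{one chain $\sigma$ at a time}.

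First I would reduce to the case that $Q$ is fully $\F$-normalised: both $\w_Q(\F,\alpha)$ and $\w^*_Q(\F,\alpha)$ depend only on $\Out_\F(Q)$ together with its conjugation actions on $\Irr(Q)$ and $Q^{\cl}$ and on the class $\alpha_Q$, and since $\alpha$ is $\F$-compatible, $\alpha_Q$ corresponds to $\alpha_{Q'}$ under any $\F$-isomorphism $Q\to Q'$, so all of this data transports along $\F$-isomorphisms. With $Q$ fully $\F$-normalised, $N_\F(Q)$ is saturated and, $Q$ being $\F$-centric and normal in it, constrained; I would fix a model $L$, so that $N_S(Q)\in\Syl_p(L)$, $Q\unlhd L$, $C_L(Q)=Z(Q)$, $\F_{N_S(Q)}(L)=N_\F(Q)$, and $L/Q\cong\Out_\F(Q)$ (see \cite[Theorem~4.9]{AschbacherKessarOliver2011}). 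By \eqref{e:wvsw*} we have $\w_Q(\F,\alpha)=\w_Q(N_\F(Q),\alpha)$ and $\w^*_Q(\F,\alpha)=\w^*_Q(N_\F(Q),\alpha)$, so we may and do replace $\F$ by $N_\F(Q)=\F_{N_S(Q)}(L)$.

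Next I would apply Lemma~\ref{l:pumpupw} with $G=L$, which is legitimate since $C_L(Q)=Z(Q)\le Q$. This writes $\w_Q(\F,\alpha)$ as
\[
\sum_{\sigma\in\P_Q/\Out_\F(Q)}(-1)^{|\sigma|}\!\!\sum_{\mu\in\Irr(Q)/I(\sigma)}\ \sum_{R\in\S(I(\sigma,\mu))/I(\sigma,\mu)}\!\!z(k_\alpha(I(R,\sigma,\mu)/R))
\]
and $\w^*_Q(\F,\alpha)$ as the identical expression with $\Irr(Q)$, $\mu$ replaced by $Q^{\cl}$, $[x]$. It thus suffices to show, for each representative chain $\sigma\in\P_Q$ with stabiliser $I(\sigma)=N_{\Out_\F(Q)}(\sigma)$, that the two inner double sums coincide. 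To this end I would let $L_\sigma\le L$ be the full preimage of $I(\sigma)\le\Out_\F(Q)=L/Q$. Then $Q\unlhd L_\sigma$, $C_{L_\sigma}(Q)=C_L(Q)\cap L_\sigma=Z(Q)\le Q$, and $L_\sigma/Q=I(\sigma)$; since $L_\sigma$ acts on $\Irr(Q)$ and on $Q^{\cl}$ through $L_\sigma/Q=I(\sigma)$, the stabiliser in $L_\sigma/Q$ of $\mu$ (respectively of $[x]$) is $I(\sigma,\mu)$ (respectively $I(\sigma,[x])$), and for a $p$-subgroup $R$ thereof the group $I(\mu,R)$ of Lemma~\ref{L:sbluecon}, formed inside $L_\sigma/Q$, is exactly $I(R,\sigma,\mu)$ (respectively $I(R,\sigma,[x])$). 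Applying Lemma~\ref{L:sbluecon} to $G=L_\sigma$ and to the restriction of $\alpha_Q$ to $I(\sigma)$ — which is where AWC is invoked — then gives precisely the equality of the two inner double sums. Multiplying by $(-1)^{|\sigma|}$ and summing over $\sigma\in\P_Q/\Out_\F(Q)$ yields $\w_Q(\F,\alpha)=\w^*_Q(\F,\alpha)$.

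Granting Lemmas~\ref{l:pumpupw} and \ref{L:sbluecon}, no substantial difficulty remains; the work is the bookkeeping of (i) producing the model $L$ and checking $C_{L_\sigma}(Q)\le Q$ for every chain-stabiliser preimage $L_\sigma$ — immediate because $C_L(Q)=Z(Q)$ — and (ii) matching the notation of the pumped-up sums with the output of Lemma~\ref{L:sbluecon} closely enough that the comparison is genuinely term-by-term in $\sigma$. The only genuine use of AWC is internal to Lemma~\ref{L:sbluecon}: it is what upgrades Robinson's identity (Lemma~\ref{l:QvsQ}), which concerns the numbers of simple modules $\ell(k_\alpha C_G([x]))$ and $\ell(k_\alpha C_G(\mu))$, to the corresponding identity for the numbers $z(\cdots)$ of projective simple modules. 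I would expect the sole place requiring care to be identification (ii), and it causes no trouble here precisely because in $L_\sigma$ all the pertinent centralisers and normalisers are formed inside $L_\sigma/Q=I(\sigma)$.
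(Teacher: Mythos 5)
Your proposal is correct and follows essentially the same route as the paper's proof: reduce via \eqref{e:wvsw*} to $\F=N_\F(Q)$ with $Q$ fully normalised, take a model $L$ (the paper cites \cite[Proposition C]{BCGLO2005}), rewrite both sides with Lemma~\ref{l:pumpupw}, and then compare term-by-term in $\sigma$ by applying Lemma~\ref{L:sbluecon} to the preimage $L_\sigma=N_L(\sigma)$ of $I(\sigma)$ in $L$. Your explicit remark that the pair $\w_Q,\w^*_Q$ transports along $\F$-isomorphisms (so one may take $Q$ fully normalised) is a small point the paper leaves implicit, but otherwise the arguments coincide.
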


\begin{proof}     
Let $Q \leq S$ be $\F$-centric. By  Equation \ref {e:wvsw*}  we may 
assume that  $\F = N_{\F}(Q)$  and hence by
\cite[Proposition C]{BCGLO2005}  that $\F=\F_S(G)$  for some finite 
group   $G$   with $S$ as  a Sylow $p$-subgroup and containing  $Q$ as 
a normal subgroup with $C_G(Q) = Z(Q)$. By Lemma~\ref{l:pumpupw}, we have
$$\w_Q(\F, \alpha)  = \sum_{\sigma  \in {\mathcal P}_Q/ \Out_{\F} (Q)  }  
(-1)^{|\sigma|}  \sum_{\mu \in \Irr(Q)/ I (\sigma) }  \  \  
\sum_ { R  \in{\mathcal S}(I(\sigma, \mu))/  I(\sigma, \mu)   } 
z(k_{\alpha} (I  (  R,  \sigma,  \mu  )/ R))  $$  
and
$$\w^*_Q(\F, \alpha)  = \sum_{\sigma  \in {\mathcal P}_Q/\Out_{\F}(Q) }  
(-1)^{|\sigma|} \sum_{[x] \in Q^{\cl}/ I (\sigma) }  \  \  
\sum_ {R \in{\mathcal S}(I(\sigma, [x]))/  
I(\sigma, [x])   } 
z(k_{\alpha} ( I  ( R,  \sigma,  [x]  )/\ R)) .$$

Let  $\sigma  \in {\mathcal  P}_Q $. By  applying   Lemma \ref{L:sbluecon}  
to the  inverse  image  $N_{G}(\sigma )$  of $I (\sigma) $  in   $G$, 
we obtain
 $$ \sum_{\mu \in \Irr(Q)/ I (\sigma) }  \  \  
\sum_ { R  \in{\mathcal S}(I(\sigma, \mu))/  I(\sigma, \mu)   } 
z(k_{\alpha} (I  (  R,  \sigma,  \mu  )/ R))=  $$
$$\sum_{[x] \in Q^{\cl}/ I (\sigma) }  \  \  
\sum_ {R \in{\mathcal S}(I(\sigma, [x]))/  
I(\sigma, [x])   } 
z(k_{\alpha} ( I  ( R,  \sigma,  [x]  )/\ R)) .$$
The result follows.
\end{proof}

\begin{proof}[Proof of Theorem~\ref{t:main}]This  is immediate from Proposition   
\ref{p:weakAWCw}.\end{proof}

\medskip
We present an alternate  proof of Theorem \ref{t:main} which is shorter
but makes use of the fact, due to Robinson \cite{robinson1996local}, 
that AWC implies SOWC.
Let $\F$ be a saturated fusion system on a finite $p$-group $S$, and let
$\alpha$ be an $\F$-compatible family. As a consequence of Lemma \ref{l:rad}, the quantities $\m(\F,\alpha)$,
$\m^*(\F,\alpha)$, and $\m(\F, \alpha,d)$ remain unchanged 
under restricting the sums over isomorphism classes of $\F$-centric
subgroups of $S$ to $\F$-centric radical subgroups. We spell this out.

\begin{Lem}\label{l:red}
Let $Q$ be an $\F$-centric subgroup of $S$ and let $d$ be a non-negative
integer. Suppose that $Q$ is not
$\F$-radical. Then 
$$\w_Q(\F,\alpha) = \w^*_Q(\F,\alpha) = \w_Q(\F,\alpha,d) = 0\ .$$
\end{Lem}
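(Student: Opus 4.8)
The plan is to show each of the three quantities $\w_Q(\F,\alpha)$, $\w_Q^*(\F,\alpha)$, $\w_Q(\F,\alpha,d)$ vanishes when $Q$ is $\F$-centric but not $\F$-radical, by exhibiting $O_p(I(\sigma))\neq 1$ for every normal chain $\sigma\in\N_Q$ and then invoking Lemma~\ref{l:z=0}. Recall that $Q$ not $\F$-radical means precisely that $O_p(\Out_\F(Q))\neq 1$. Write $R_0 := O_p(\Out_\F(Q))$, a nontrivial normal $p$-subgroup of $\Out_\F(Q)$. For a normal chain $\sigma = (1 = X_0 < X_1 < \cdots < X_m)$ in $\Out_\F(Q)$, the stabiliser $I(\sigma)$ is the subgroup of $\Out_\F(Q)$ normalising every $X_i$; since $R_0$ is normal in all of $\Out_\F(Q)$, it certainly normalises each $X_i$, so $R_0 \leq I(\sigma)$, and $R_0 \unlhd I(\sigma)$ as well. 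Hence $O_p(I(\sigma)) \geq R_0 \neq 1$ for every $\sigma \in \N_Q$, and a fortiori the same holds for the further stabilisers $I(\sigma,\mu)$ and $I(\sigma,[x])$, which are subgroups of $I(\sigma)$ containing $R_0$ as a normal subgroup (because $R_0$, being normal in the whole of $\Out_\F(Q)$, normalises every subgroup and fixes nothing extra is needed).

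With this observation in hand the three vanishing statements are immediate. For $\w_Q(\F,\alpha,d) = \sum_{\sigma\in\N_Q/\Out_\F(Q)} (-1)^{|\sigma|}\sum_{\mu\in\Irr_K^d(Q)/I(\sigma)} z(k_\alpha I(\sigma,\mu))$, each term $z(k_\alpha I(\sigma,\mu))$ is zero by Lemma~\ref{l:z=0}, since $O_p(I(\sigma,\mu)) \supseteq R_0 \neq 1$; summing over $d$ gives $\w_Q(\F,\alpha) = 0$, and the argument for $\w_Q^*(\F,\alpha)$ is identical with $\Irr_K^d(Q)$ replaced by $Q^{\cl}$ and $I(\sigma,\mu)$ by $I(\sigma,[x])$. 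One small point to address carefully is that in the definition of $\w_Q$ the inner sum runs over $\Irr(Q)/I(\sigma)$ and the twisted group algebra is $k_\alpha I(\sigma,\mu)$ where $I(\sigma,\mu)$ denotes the stabiliser of the pair; this is a subgroup of $\Out_\F(Q)$ containing $R_0 = O_p(\Out_\F(Q))$ as a normal subgroup, so $O_p(I(\sigma,\mu))\neq 1$ and Lemma~\ref{l:z=0} applies directly. No genuine obstacle arises here.

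The only mild subtlety — and the step I would double-check — is the claim that $R_0$ normalises each term of an arbitrary normal chain $\sigma$ and hence lies in $I(\sigma)$, and that it remains normal in the smaller stabilisers $I(\sigma,\mu)$, $I(\sigma,[x])$. Both follow from the elementary fact that a subgroup which is normal in a group $G$ is normal in every subgroup of $G$ that contains it, together with the fact that $R_0 = O_p(\Out_\F(Q)) \unlhd \Out_\F(Q)$; so $R_0$ normalises every subgroup $X_i$ of $\Out_\F(Q)$ (being normal in the ambient group), placing $R_0 \leq I(\sigma)$, and $R_0 \unlhd I(\sigma)$, and likewise $R_0 \unlhd I(\sigma,\mu) \leq I(\sigma)$. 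Thus $O_p(I(\sigma,\mu))$ contains the nontrivial $p$-group $R_0$. Feeding this into Lemma~\ref{l:z=0} term by term yields the three claimed equalities, completing the proof.

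\begin{proof}
Since $Q$ is not $\F$-radical, the normal $p$-subgroup $R_0 := O_p(\Out_\F(Q))$ is nontrivial. As $R_0 \unlhd \Out_\F(Q)$, it normalises every subgroup of $\Out_\F(Q)$; in particular, for any normal chain $\sigma = (1 = X_0 < X_1 < \cdots < X_m) \in \N_Q$, the group $R_0$ normalises each $X_i$, so $R_0 \leq I(\sigma)$ and $R_0 \unlhd I(\sigma)$. Consequently $R_0 \unlhd I(\sigma,\mu)$ for every $\mu \in \Irr(Q)$ and $R_0 \unlhd I(\sigma,[x])$ for every $[x] \in Q^{\cl}$, since these are subgroups of $I(\sigma)$ (still containing $R_0$, which is normal in the ambient $\Out_\F(Q)$). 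Hence $O_p(I(\sigma,\mu))$ and $O_p(I(\sigma,[x]))$ are nontrivial, and Lemma~\ref{l:z=0} gives $z(k_\alpha I(\sigma,\mu)) = 0$ and $z(k_\alpha I(\sigma,[x])) = 0$ for all such $\sigma$, $\mu$, $[x]$.

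Now $\w_Q(\F,\alpha,d) = \sum_{\sigma \in \N_Q/\Out_\F(Q)} (-1)^{|\sigma|} \sum_{\mu \in \Irr_K^d(Q)/I(\sigma)} z(k_\alpha I(\sigma,\mu))$ is a sum of terms each equal to zero, so $\w_Q(\F,\alpha,d) = 0$. Summing over $d \geq 0$ gives $\w_Q(\F,\alpha) = 0$. Finally, $\w^*_Q(\F,\alpha) = \sum_{(\sigma,[x]) \in \W_Q^*/\Out_\F(Q)} (-1)^{|\sigma|} z(k_\alpha I(\sigma,[x]))$ is likewise a sum of terms each equal to zero, so $\w^*_Q(\F,\alpha) = 0$.
\end{proof}
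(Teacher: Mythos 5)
The proposal has a genuine gap, and it lies exactly at the point you flagged as a ``mild subtlety''. The claim that $R_0 := O_p(\Out_\F(Q))$ normalises every subgroup of $\Out_\F(Q)$ is false: normality of $R_0$ in $G := \Out_\F(Q)$ means that $G$ normalises $R_0$, not that $R_0$ normalises arbitrary subgroups of $G$. For a concrete counterexample, take $G = S_4$ and $p=2$, so that $R_0 = O_2(G) = V_4$ is the Klein four-group, and consider the length-one normal chain $\sigma = (1 < \langle (1\,2)\rangle)$. Then $I(\sigma) = N_G(\langle(1\,2)\rangle) = \langle (1\,2),(3\,4)\rangle$, and $R_0 \cap I(\sigma)$ has order $2$, so $R_0 \not\leq I(\sigma)$. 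Consequently the individual summands $z(k_\alpha I(\sigma,\mu))$ and $z(k_\alpha I(\sigma,[x]))$ need not vanish, and the attempted term-by-term argument collapses.

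What is actually happening is a genuine cancellation across the alternating sum over chains, not a pointwise vanishing. The paper's proof proceeds by applying Lemma~\ref{l:rad} (the chain-pairing involution lemma) to $G = \Out_\F(Q)$, observing that the inner sum $\sum_{\mu\in\Irr(Q)/I(\sigma)} z(k_\alpha I(\sigma,\mu))$ depends only on the subgroup $I(\sigma) = N_G(\sigma)$, so it defines a function $\f$ on subgroups, constant on conjugacy classes, for which Lemma~\ref{l:rad} yields $\sum_{\sigma \in \N_Q/G} (-1)^{|\sigma|}\f(N_G(\sigma)) = 0$ since $O_p(G) \neq 1$; the other two quantities are handled identically. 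The underlying mechanism is the pairing $\sigma \mapsto \eta(\sigma)$ obtained by inserting or deleting a term involving $R_0$, as in the proof of Lemma~\ref{l:rad}; this changes $|\sigma|$ by one while preserving $N_G(\sigma)$, causing signed cancellation of equal terms. Your proof should be revised to invoke that pairing argument (or Lemma~\ref{l:rad} directly) rather than trying to show $O_p(I(\sigma,\mu)) \neq 1$ for each $\sigma$.
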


\begin{proof}
Using Remark \ref{reorderRemark}, we have
$$\w_Q(\F,\alpha) = \sum_{\sigma\in\N_Q/\Out_\F(Q)} (-1)^{|\sigma|} 
\sum_{\mu\in\Irr(Q)/I(\sigma)}\ z(k_\alpha I(\sigma,\mu))$$
The quantity in the second sum depends only on $I(\sigma)$.
Since $Q$ is not radical, we have $O_p(\Out_\F(Q))\neq$ $1$.  
Thus Lemma \ref{l:rad}, applied to the group $G=\Out_\F(Q)$
and the function $\f$ on subgroups of $G$ defined by 
$$\f(H) := \begin{cases}
\displaystyle\sum_{\mu\in\Irr(Q)/I(\sigma)}\ z(k_\alpha I(\sigma,\mu)) 
& \mbox{ if $H=I(\sigma)$ for some $\sigma\in$ $\N_Q$ } \\ 
0 &  \mbox{ otherwise}  \end{cases}$$ 
implies that $\w_Q(\F,\alpha)=0$. Similar arguments show that
$\w^*_Q(\F,\alpha) = \w_Q(\F,\alpha,d) = 0$.
\end{proof}

Note that  by Lemma \ref{l:red}, we have
\begin{equation}\label{e:mw} 
\m(\F,\alpha) = \sum_{Q \in \F^{cr}/\F} \w_Q(\F,\alpha) 
\hspace{2mm} \mbox{ and } \hspace{2mm} 
\m^*(\F,\alpha) = \sum_{Q \in \F^{cr}/\F} \w^*_Q(\F,\alpha). 
\end{equation}

\begin{Lem} \label{l:compw}  
Suppose that $\m^*(\G,\beta) = \m(\G,\beta)$ for all pairs $(\G,\beta)$, 
where $\G$ is a saturated constrained fusion system and $\beta$ is a
$\G$-compatible family. Then $\m(\F,\alpha)=\m^*(\F,\alpha)$.
\end{Lem}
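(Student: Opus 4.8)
The plan is to deduce from the hypothesis the a priori stronger \emph{pointwise} statement: for every saturated fusion system $\G$ on a finite $p$-group $T$, every $\G$-compatible family $\beta$, and every $\G$-centric radical subgroup $Q$ of $T$, one has $\w_Q(\G,\beta) = \w^*_Q(\G,\beta)$. Granting this, Lemma~\ref{l:compw} is immediate: by \eqref{e:mw} we have $\m(\F,\alpha) = \sum_{Q \in \F^{cr}/\F}\w_Q(\F,\alpha)$ and $\m^*(\F,\alpha) = \sum_{Q \in \F^{cr}/\F}\w^*_Q(\F,\alpha)$, and these two right-hand sides agree term by term.

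I would prove the pointwise statement by induction on the nonnegative integer $|T| - |Q|$. Since $\w_Q(\G,\beta)$ and $\w^*_Q(\G,\beta)$ depend only on the $\G$-isomorphism class of $Q$, I may first replace $Q$ by a fully $\G$-normalised representative (this leaves $|T|-|Q|$ unchanged and preserves being centric radical). Then $N_\G(Q)$ is a saturated fusion system on $N_T(Q)$ in which $Q$ is a normal, centric, radical subgroup, so $N_\G(Q)$ is constrained, and the restriction of $\beta$ is $N_\G(Q)$-compatible by Proposition~\ref{p:nfsigma}(3), applied to the length-zero chain $(Q)$. By \eqref{e:wvsw*} we have $\w_Q(\G,\beta) = \w_Q(N_\G(Q),\beta)$ and $\w^*_Q(\G,\beta) = \w^*_Q(N_\G(Q),\beta)$, and since $|N_T(Q)| - |Q| \leq |T| - |Q|$, replacing $(\G,\beta)$ by $(N_\G(Q),\beta)$ we may assume from the outset that $\G$ is constrained with normal centric radical $Q$.

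Now the hypothesis gives $\m(\G,\beta) = \m^*(\G,\beta)$. Expanding both sides via \eqref{e:mw} as sums over the $\G$-classes of $\G$-centric radical subgroups, and isolating the term indexed by $Q$ (which is a $\G$-centric radical subgroup whose $\G$-class is the singleton $\{Q\}$, as $Q$ is normal in $\G$), I obtain
\[
\w_Q(\G,\beta) - \w^*_Q(\G,\beta) \;=\; \sum_{\substack{R \in \G^{cr}/\G \\ R \neq Q}} \bigl(\w^*_R(\G,\beta) - \w_R(\G,\beta)\bigr).
\]
Every $\G$-centric radical subgroup of a constrained fusion system with normal centric subgroup $Q$ contains $Q$ (as recalled in the proof of Proposition~\ref{p:centext}); hence each $R$ occurring on the right satisfies $R \supsetneq Q$, so $|T| - |R| < |T| - |Q|$, and the inductive hypothesis applied to $(\G,\beta,R)$ yields $\w_R(\G,\beta) = \w^*_R(\G,\beta)$. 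Thus the right-hand side vanishes and $\w_Q(\G,\beta) = \w^*_Q(\G,\beta)$. In the base case $|T| - |Q| = 0$ one has $Q = T$; then $T$ is the unique $\G$-centric radical subgroup, the sum on the right is empty, and the conclusion $\w_T(\G,\beta) = \m(\G,\beta) = \m^*(\G,\beta) = \w^*_T(\G,\beta)$ is just the hypothesis.

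The main point requiring care is the bookkeeping around the normaliser passages: one must check at each step that $Q$ (respectively the off-diagonal subgroups $R$) remain centric and radical, that $N_\G(Q)$ is genuinely constrained and carries the restricted compatible family, and --- the one structural input --- that the off-diagonal subgroups strictly contain $Q$, which is exactly what makes the induction on $|T| - |Q|$ well-founded. I note that this argument uses only the constrained-case hypothesis together with \eqref{e:mw}, \eqref{e:wvsw*}, Lemma~\ref{l:red}, Proposition~\ref{p:nfsigma}, and the containment fact quoted from Proposition~\ref{p:centext}; in particular it does not invoke Proposition~\ref{p:weakAWCw} or Robinson's Lemma~\ref{l:QvsQ}, which is what allows Theorem~\ref{t:main} to be recovered via the alternate route once SOWC, hence the constrained case, is available.
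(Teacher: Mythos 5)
Your proof is correct and takes essentially the same route as the paper's: both reduce to the constrained case by passing to $N_\G(Q)$ (using $\eqref{e:wvsw*}$ and the fact that $\w_Q, \w_Q^*$ depend only on $Q$ and $\Out(Q)$), then expand $\m = \m^*$ over centric radicals, all of which contain $Q$, and kill the off-diagonal terms by the inductive/minimality hypothesis. The paper organizes this as a minimal-counterexample argument on $|\F|+|S:Q|$ whereas you run an explicit induction on $|T|-|Q|$, but the two bookkeeping devices are interchangeable and the underlying mechanism is identical.
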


\begin{proof} 
We prove that $\w_Q(\F, \alpha) = \w^*_Q(\F, \alpha)$ for each fully
$\F$-normalized, $\F$-centric, $\F$-radical subgroup $Q \le S$. 
Since $\F$ is saturated, there is a fully $\F$-normalized subgroup 
in each $\F$-conjugacy class, and so the result will then follow from 
$\eqref{e:mw}$. 

Suppose the above assertion is false, so that $\w_Q(\F, \alpha) \neq 
\w^*_Q(\F, \alpha)$ for some $Q$. Among all such counterexamples $\F$ 
and $Q$, choose one such that $|\F| + |S:Q|$ is minimal, where $|\F|$ 
denotes the number of morphisms in $\F$. Note that 
$\Out_{N_\F(Q)}(Q) = \Out_\F(Q)$, and $Q$ is also fully 
$N_\F(Q)$-normalized, $N_\F(Q)$-radical, and $N_\F(Q)$-centric. Since the
sums $\w_Q(\F,\alpha)$ and $\w_Q^*(\F,\alpha)$ depend only on $Q$ and
$\Out_\F(Q)$ and not on $\F$, it follows by minimality that $\F = N_\F(Q)$. 

We have shown that $\F$ is constrained with normal centric subgroup $Q$. 
In particular, $\m(\F,\alpha) = \m^*(\F,\alpha)$ by assumption, and $Q$ 
is contained in every $\F$-centric radical subgroup (see e. g. 
\cite[Lemma 2.4]{lynd2017weights}).  From \eqref{e:mw}, $\m(\F, \alpha)$ 
is the sum of $\w_Q(\F,\alpha)$ and $\w_{R}(\F,\alpha)$ as $R$ ranges 
over the fully $\F$-normalized, $\F$-centric radical subgroups with 
$R > Q$. The same holds for $\w_Q^*(\F,\alpha)$.  By induction 
$\w_{R}(\F,\alpha) = \w_R^*(\F,\alpha)$ for each such $R > Q$ 
(since $N_\F(R) \subsetneq \F)$.  It follows that 
$\w_Q(\F,\alpha) = \w_Q^*(\F,\alpha)$ after all, a contradiction.
\end{proof}  

It thus suffices by Lemma~\ref{l:compw} to prove
$\m(\F,\alpha)=\m^*(\F,\alpha)$ in the case where $\F$ is constrained. 

\begin{Prop}\label{p:weightconj}
Suppose AWC holds for all blocks of all finite groups. If $\F$ is constrained,
then $\k(\F,\alpha)=\m(\F,\alpha)$. 
\end{Prop}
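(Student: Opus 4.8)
The plan is to realize $(S,\F,\alpha)$ by a block and then quote Robinson's theorem that AWC implies the summed-up ordinary weight conjecture (SOWC). Since $\F$ is constrained, fix a normal $\F$-centric subgroup $Q$ of $S$ with $\F=N_\F(Q)$; by Proposition~\ref{p:fareal} (using the existence of a model $L$ for $\F$) the triple $(S,\F,\alpha)$ is realized by a block $B$ of a finite group algebra — concretely, a block of a suitable $p'$-central extension $\hat L$ of $L$. Thus from this point on all block-realizability hypotheses are in force.

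Next I would apply Proposition~\ref{p:links}. The hypothesis that AWC holds for all blocks of all finite groups applies in particular to $B$ and to all Brauer correspondents of $B$, so Proposition~\ref{p:links} yields $\k(\F,\alpha)=\k(B)$, the number of ordinary irreducible characters lying in $B$.

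It then remains to prove $\m(\F,\alpha)=\k(B)$. Since $B$ realizes $(S,\F,\alpha)$, the data defining $\m(\F,\alpha)=\sum_{d\geq 0}\m(\F,\alpha,d)$ — a set of representatives of the $\F$-isomorphism classes of $\F$-centric subgroups, the groups $\Out_\F(Q)$, and the K\"ulshammer--Puig classes $\alpha_Q$ — agree with the corresponding block-theoretic data of $B$, so $\m(\F,\alpha)$ is precisely the left-hand side of SOWC for $B$. As AWC is assumed to hold for all blocks, there is no counterexample to AWC, hence none to SOWC by Robinson \cite{robinson1996local}, \cite{Robinsonweight2004}; thus SOWC holds for $B$, i.e.\ $\m(\F,\alpha)=\k(B)$. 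Combining the two equalities gives $\k(\F,\alpha)=\k(B)=\m(\F,\alpha)$.

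The step requiring the most care — though it is a verification rather than a genuine obstacle — is the identification of the fusion-system invariant $\m(\F,\alpha)$ with the block-theoretic $\m$-invariant appearing in SOWC; this is exactly what is packaged into the statement that $(S,\F,\alpha)$ is realized by $B$. Beyond that, the argument uses only Propositions~\ref{p:fareal} and \ref{p:links} together with the (hard) implication AWC $\Rightarrow$ SOWC of Robinson, and needs no further analysis of the weight conjectures.
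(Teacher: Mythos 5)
Your proposal matches the paper's proof essentially step for step: realize $(S,\F,\alpha)$ by a block of a $p'$-central extension of a model via Proposition~\ref{p:fareal}, apply Proposition~\ref{p:links} to get $\k(\F,\alpha)=\k(B)$, and invoke Robinson's AWC~$\Rightarrow$~SOWC to get $\m(\F,\alpha)=\k(B)$. The only difference is that you spell out explicitly the identification of the fusion-theoretic $\m$-invariant with the block-theoretic one, which the paper leaves implicit in the word ``realized''.
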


\begin{proof}
Assume that $\F$ is constrained. By Proposition~\ref{p:fareal}, we may fix a
model $G$ for $\F$, a $p'$-central extension $\widehat{G}$ of $G$, and a block
$b$ of $k\widehat{G}$ such that $(\F,\alpha)$ is realized by $k\widehat{G}b$.
By Proposition \ref{p:links}, since AWC holds for all blocks, we have 
$$\k(\F,\alpha)  = \k(B).$$
On the other hand, again since AWC holds for all blocks, the results of 
\cite{robinson1996local}, \cite{Robinsonweight2004} show that
$\m(\F,\alpha) = \k(B)$.
\end{proof}

\begin{proof}[Proof of Theorem~\ref{t:main}]
Assume AWC holds for all blocks of finite groups. By 
Theorem~\ref{t:main2}, we have $\k(\F,\alpha) = \m^*(\F,\alpha)$. Hence, 
$\m(\F,\alpha) = \m^*(\F,\alpha)$ whenever $\F$ is constrained by 
Proposition~\ref{p:weightconj} and assumption. Therefore, 
$\m(\F,\alpha) = \m^*(\F,\alpha)$ by Lemma~\ref{l:compw}.
\end{proof}

\begin{remark} \label{r:bluecon}  Suppose that with the notation  and hypothesis  of Lemma~\ref{L:sbluecon}   we have  
\begin{equation}  \label{e:blue}\sum_{[x] \in Q^{\cl}/G} z(k_\alpha C_{\Out_G(Q)}([x]))= 
\sum_{\mu \in \Irr(Q)/G} z(k_\alpha C_{\Out_G(Q)}(\mu))\ . 
\end{equation}  

Then, Theorem \ref{t:main}   is an immediate consequence   of    Theorem \ref{t:main2}. If $Q$ is a Sylow $p$-subgroup of $G$, then  for all $x \in Q$,  $z(k_\alpha C_{\Out_G(Q)}([x]))= \ell(k_\alpha C_{\Out_G(Q)}([x])) $ and for all $\mu \in \Irr(Q)$, $z(k_\alpha C_{\Out_G(Q)}([\mu]))= \ell(k_\alpha C_{\Out_G(Q)}([\mu])) $, hence \ref{e:blue}   is the same as the equality in Lemma \ref{l:QvsQ}.  Also note that by results of K\"{u}lshammer-Robinson \cite{KulshammerRobinson1987},  the  right  side of the equality of   Lemma~\ref{l:QvsQ}   is the rank of the  subgroup of the  group of generalised characters  of $G$,   generated by relatively $Q$-projective characters   in  a certain   sum of  $p$-blocks  of $G$,    and     the right hand side of   \ref{e:blue}    is   the  number  of   $Q$-projective irreducible  characters  of $G$, again in the relevant sum of blocks. 
\end{remark}  

\section{Example: fusion systems on an extraspecial group of order $p^3$ }
\label{exampleSection} 

In this section we consider the saturated fusion systems over an 
extraspecial group $S$ of order $p^3$ and exponent $p$, as classified 
by Ruiz and Viruel \cite{RuizViruel2004}. There are three exotic fusion 
systems over $S$ when $p = 7$, and many of the conjectures listed in 
Section~\ref{conjSection} are still nontrivial to verify in these cases.  
We show here that each nonconstrained saturated fusion system over $S$ 
supports only the zero compatible family, and we verify using 
computations in Magma \cite{Magma} that a number of the conjectures in 
Section~\ref{conjSection} hold in this case.

For the remainder of this section, $S \cong p_+^{1+2}$ denotes an 
extraspecial group of order $p^3$ and exponent $p$. Fix the presentation
\begin{eqnarray}
\label{e:p3pres}
\langle \a,\b,\c \mid \a^p=\b^p=\c^p=[\a,\c]=[\b,\c]=[\a,\b]\c^{-1}=1\rangle
\end{eqnarray}
for $S$. Each element of $S$ can then be written in the form $\a^r\b^s\c^t$,
for unique nonnegative integers $0 \leq r,s,t \leq p-1$. Let $\epsilon$ be a
fixed primitive $p$'th root of $1$ in $\mathbb{C}$. 
We first list some basic information about $S$, along with general information
about the associated saturated fusion systems on $S$. 

\begin{Lem}\label{l:exsp-setup}
Let $S$ be an extraspecial $p$-group of order $p^3$ and of exponent $p$, as
given above.

\begin{enumerate}
\item The map which sends $X:=\left(\begin{smallmatrix} x & y \\ z & w
\end{smallmatrix} \right)$ to the class $[\varphi] \in \Out(S)$ of the
automorphism $\varphi$ given by $$\a \longmapsto \a^x\b^z, \hspace{4mm} \b
\longmapsto \a^y\b^w, \hspace{4mm}  \c  \longmapsto \c^{\det(X)},$$ is an
isomorphism from $\GL_2(p)$ to $\Out(S)$.
\item A complete set of $S$-conjugacy class representives of elements 
of $S$ is given by 
\[
\{\a^i\b^j \mid 0 \le i,j \le p-1 \} \cup \{\c^k \mid 1 \le k \le p-1\}.
\] 
\item $\Irr(S)$ consists of $p^2$ linear characters $\chi_{u,v}$ \textup{(}$0
\le u,v \le p-1$\textup{)}, and $p-1$ faithful characters $\phi_u$
\textup{(}$1 \le u \le p-1$\textup{)} of degree $p$. The characters are given
explicitly by 
$$\chi_{u,v}(\a^r\b^s\c^t):= \epsilon^{ru+sv} \quad\text{ and }\quad
\phi_u(\a^r\b^s\c^t):=
\begin{cases} 
p\epsilon^{ut} & \mbox{ if $r=s=0$} \\ 0 & \mbox{ otherwise.} 
\end{cases}.$$ 
\end{enumerate}
\end{Lem}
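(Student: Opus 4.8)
The plan is to treat the three parts in order; part (1) is where the substantive work lies. Throughout one uses the structure of $S$: $[S,S]=Z(S)=\Phi(S)=\langle\c\rangle$ has order $p$, $S/Z(S)$ is elementary abelian of rank two with basis $\bar\a,\bar\b$, and, since $S$ has nilpotency class $2$ and exponent $p$ with $p$ odd, the commutator is bi-additive modulo $Z(S)$, $(gh)^p=g^ph^p$ for all $g,h\in S$, and $[\a^r\b^s,\a^{r'}\b^{s'}]=\c^{rs'-sr'}$ (from $[\a,\b]=\c$).

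For (1), I would first check that for each $X=\smat{x & y\\ z & w}\in\GL_2(p)$ the assignment $\a\mapsto\a^x\b^z$, $\b\mapsto\a^y\b^w$, $\c\mapsto\c^{\det X}$ respects the defining relations of $S$: the relations $\a^p=\b^p=\c^p=1$ because $(gh)^p=g^ph^p$, the relations $[\a,\c]=[\b,\c]=1$ because $\c^{\det X}\in Z(S)$, and $[\a,\b]=\c$ because $[\a^x\b^z,\a^y\b^w]=\c^{xw-zy}=\c^{\det X}$ by the commutator formula. This produces an endomorphism $\varphi_X$ which is injective (if $\varphi_X(g)=1$ then $\bar g\in\ker X=0$, so $g\in Z(S)$, on which $\varphi_X$ acts as multiplication by $\det X\neq0$), hence an automorphism; and by construction $\varphi_X$ induces $X$ on $S/Z(S)$ in the basis $\bar\a,\bar\b$. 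I would then consider the homomorphism $\psi\colon\Out(S)\to\GL(S/Z(S))\cong\GL_2(p)$ given by the action on $S/Z(S)$. It is injective: if $\varphi\in\Aut(S)$ has $\varphi(g)\in gZ(S)$ for all $g$, then $\mu\colon g\mapsto g^{-1}\varphi(g)$ is a homomorphism $S\to Z(S)$ which factors through $S/[S,S]=S/Z(S)$, and since $h\mapsto\bigl(gZ(S)\mapsto[h,g]\bigr)$ is a surjection $S\to\Hom(S/Z(S),Z(S))$ with kernel $Z(S)$ (both groups have order $p^2$), there is an $h$ with $\mu=\bigl(gZ(S)\mapsto[h,g]\bigr)$, whence $\varphi=c_h$ is inner. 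On the other hand $\psi([\varphi_X])=X$ for every $X$, so $\psi$ is surjective as well; hence $\psi$ is an isomorphism, and its inverse, which is exactly $X\mapsto[\varphi_X]$, is the asserted isomorphism $\GL_2(p)\to\Out(S)$. The main obstacle here is precisely this: the relation check for the naive formula and the ``trivial on $S/Z(S)$ implies inner'' step, both of which genuinely use class $2$, exponent $p$, and $p$ odd.

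For (2), a non-central $g\in S$ has $C_S(g)=\langle g\rangle Z(S)$ of order $p^2$ (the alternating form induced by the commutator on $S/Z(S)$ is nondegenerate, so $\bar g^{\perp}=\langle\bar g\rangle$ for $\bar g\neq0$), whence the class of $g$ has size $p$; moreover, from $hgh^{-1}=[h,g]g$ and the surjectivity of the homomorphism $[\,\cdot\,,g]\colon S\to Z(S)$ for non-central $g$, the class of $\a^i\b^j\c^k$ with $(i,j)\neq(0,0)$ is exactly $\{\a^i\b^j\c^m : 0\le m\le p-1\}$. Thus the $p^2-1$ elements $\a^i\b^j$ with $(i,j)\neq(0,0)$ are pairwise non-conjugate representatives of the non-central classes, and together with the $p$ central singletons $\c^k$ ($0\le k\le p-1$) they account for all $p^2+p-1$ conjugacy classes; since $\a^0\b^0=\c^0=1$, the listed set is a transversal. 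This also records that $S$ has $p^2+p-1$ conjugacy classes, which feeds into (3).

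For (3), the $p^2$ linear characters of $S$ are precisely the inflations of the characters of $S/[S,S]\cong\FF_p^2$, namely the $\chi_{u,v}$. By (2) there are then $p-1$ nonlinear irreducibles, whose squared degrees sum to $|S|-p^2=p^2(p-1)$; since each nonlinear degree divides $p^3$ and exceeds $1$ it is $\ge p$, and $p^2(p-1)=\sum_i d_i^2\ge(p-1)p^2$ forces every such degree to equal $p$. To exhibit them I would induce from the abelian index-$p$ subgroup $A=\langle\b,\c\rangle$: for $1\le u\le p-1$ let $\lambda_u\in\Irr(A)$ satisfy $\lambda_u(\b)=1$ and $\lambda_u(\c)=\epsilon^u$, and set $\phi_u:=\Ind_A^S\lambda_u$, of degree $p$. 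Since conjugation by $\a$ sends $\b$ to $\b\c^{\pm1}$, we get $\lambda_u^{\,\a}(\b)=\epsilon^{\pm u}\neq1=\lambda_u(\b)$, so $\lambda_u$ is not $S$-invariant and $\phi_u$ is irreducible. The induced-character formula then gives $\phi_u(\a^r\b^s\c^t)=0$ when $r\neq0$, because no $S$-conjugate of such an element lies in $A$, while for $r=0$, summing $\lambda_u$ over the cosets $\a^jA$ ($0\le j\le p-1$) yields $\phi_u(\b^s\c^t)=\epsilon^{ut}\sum_{j=0}^{p-1}\epsilon^{\pm ujs}$, equal to $p\epsilon^{ut}$ when $s=0$ and to $0$ otherwise. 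Distinct $u$ give distinct values on $\c$, so $\{\phi_u : 1\le u\le p-1\}$ are the $p-1$ nonlinear irreducibles, and reading off the formulas completes the proof.
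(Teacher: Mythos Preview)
Your proof is correct and considerably more detailed than the paper's own, which simply cites \cite{RuizViruel2004} for parts (1) and (2) and \cite[Theorem~5.5.4]{Gorenstein1980} for part (3) without further argument. Your treatment of (1) is particularly clean: rather than checking directly that $X\mapsto[\varphi_X]$ is multiplicative, you observe that the natural map $\psi\colon\Out(S)\to\GL(S/Z(S))$ is a bijective homomorphism (injectivity via the ``trivial on $S/Z(S)$ implies inner'' step, surjectivity from $\psi([\varphi_X])=X$) and conclude that the stated map is its inverse, hence automatically an isomorphism. The only small omission is that you do not explicitly verify the $\phi_u$ are faithful as asserted; this is immediate from your formula, since $\phi_u(\c)=p\epsilon^u\neq p$ shows $\c\notin\ker\phi_u$, and $\langle\c\rangle$ is the unique minimal normal subgroup of $S$.
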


\begin{proof}
See \cite{RuizViruel2004}, for example, for (1) and (2). Part (3) is contained
in \cite[Theorem~5.5.4]{Gorenstein1980}.   
\end{proof}

The elementary abelian subgroups of $S$ of order $p^2$ are in one-to-one
correspondence with the points of the projective line over 
$\mathbb{F}_p$; set
\[
Q_i:=\langle \c,\a\b^i \rangle  
\hspace{4mm} (0 \le i \le p-1) \hspace{4mm} \mbox{ and } \hspace{4mm} 
Q_p:=\langle \c,\b \rangle.
\]

From \cite[Section III.6.2]{AschbacherKessarOliver2011}, we have the 
following description of the fusion systems on $S$.

\begin{Thm}\label{t:classp3}
Let $\F$ be a saturated fusion system on $S$. Then 
\[
\F^{cr} \subseteq \{Q_i \mid 0 \le i \le p\} \cup \{S\}
\]
and the following hold:
\begin{enumerate}
\item $\Out_\F(S)$ is a $p'$-group.
\item For each $i$ and $\alpha \in \Aut_\F(S)$ such that $\alpha(Q_i)=Q_i,
\alpha|_{Q_i} \in \Aut_\F(Q_i)$.
\item If $Q_i \in \F^r$, then $\SL_2(p) \le \Aut_\F(Q_i) \le \GL_2(p)$. If $Q_i
\notin \F^r$ then $\Aut_\F(Q_i)=\{\varphi|_{Q_i} \mid \varphi \in
N_{\Aut_\F(S)}(Q_i)\}$.
\item If $Q_i \in \F^r$ and $\beta \in N_{\Aut_\F(Q_i)}(Z(S))$, then $\beta$
extends to an element of $\Aut_\F(S)$.
\end{enumerate}
Conversely, any fusion system $\F$ over $S$ for which conditions (1)-(4) hold
and for which each morphism is a composition of restrictions of
$\F$-automorphism of $S$ and the $Q_i$ is saturated. Moreover, in this case,
$\F$ is constrained if and only if at most $1$ of the $Q_i$ is centric and
radical.
\end{Thm}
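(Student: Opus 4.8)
The inclusion, conditions (1)--(4), and the converse are the content of \cite[Section~III.6.2]{AschbacherKessarOliver2011}; I sketch the shape of that argument and then supply the step needed for the final ``Moreover'' clause. Since $S$ has exponent $p$, every subgroup of order $p^2$ is elementary abelian and normal in $S$, and these are exactly $Q_0,\dots,Q_p$, each self-centralising and hence $\F$-centric for every fusion system on $S$; on the other hand $Z(S)=\langle\c\rangle=[S,S]=\Phi(S)$, and any non-central cyclic subgroup has centraliser of order $p^2$, so no subgroup of order $p$ is $\F$-centric. Thus $\F^{c}=\{Q_i\mid 0\le i\le p\}\cup\{S\}$, which gives the displayed inclusion. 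Condition (1) is the Sylow axiom of saturation applied to the (automatically fully normalised) subgroup $S$, using $\Aut_S(S)=\Inn(S)$; condition (2) is closure of $\F$ under restriction of morphisms.

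For (3) and (4): since each $Q_i\trianglelefteq S$ it is fully $\F$-normalised, so $\Aut_S(Q_i)=S/Q_i\cong C_p$ is a Sylow $p$-subgroup of $\Aut_\F(Q_i)\le\Aut(Q_i)\cong\GL_2(p)$, whence $|\Aut_\F(Q_i)|_p=p$. If $Q_i$ is $\F$-radical then $O_p(\Aut_\F(Q_i))=\Inn(Q_i)=1$, and Dickson's classification of the subgroups of $\operatorname{PGL}_2(p)$ of order divisible by $p$ — together with the fact that for odd $p$ the central extension $\SL_2(p)\to\PSL_2(p)$ is non-split, which lets one pull the conclusion back from $\operatorname{PGL}_2(p)$ to $\GL_2(p)$ — forces $\SL_2(p)\le\Aut_\F(Q_i)$. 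If $Q_i$ is not $\F$-radical, then $\Aut_S(Q_i)$, being the unique subgroup of order $p$, equals $O_p(\Aut_\F(Q_i))$ and is therefore normal in $\Aut_\F(Q_i)$; for $\psi\in\Aut_\F(Q_i)$ this makes $N_\psi=S$, and as $Q_i$ is $\F$-centric (hence fully centralised) the extension axiom gives $\psi=\alpha|_{Q_i}$ for some $\alpha\in N_{\Aut_\F(S)}(Q_i)$, proving the second half of (3). For (4), note that $\Aut_S(Q_i)$ is exactly the group of transvections of $Q_i$ with axis $Z(S)$; hence any $\beta\in\Aut_\F(Q_i)$ normalising $Z(S)$ lies in the Borel subgroup of $\GL_2(p)$ stabilising $Z(S)$, which normalises that transvection group, so $\beta$ normalises $\Aut_S(Q_i)$, $N_\beta=S$, and the extension axiom extends $\beta$ to $\Aut_\F(S)$. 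For the converse I would apply a standard reduction — it suffices to verify the Sylow and extension axioms on the $\F$-conjugation- and overgroup-closed family $\F^c=\{Q_i\}\cup\{S\}$ — the Sylow axiom at $S$ being (1) and at each $Q_i$ following from $\SL_2(p)\le\Aut_\F(Q_i)\le\GL_2(p)$ (radical case) or from (3) (non-radical case), both giving $|\Aut_\F(Q_i)|_p=p=|\Aut_S(Q_i)|$, and the extension axiom at $Q_i$ reducing, via the same $N_\psi$ computation, to (4) when the relevant automorphism normalises $Z(S)$ and being trivial ($N_\psi=Q_i$) otherwise, while at $S$ it is vacuous.

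For the ``Moreover'' clause, the key point is that a saturated $\F$ on $S$ is constrained if and only if $\F^{cr}$ has a least element. Indeed, if $Q\trianglelefteq\F$ is $\F$-centric then $Q=O_p(\F)$ is also $\F$-radical and $Q\le P$ for every $P\in\F^{cr}$ (cf.\ \cite[Lemma~2.4]{lynd2017weights}), so $Q$ is least in $\F^{cr}$; conversely, if $\F^{cr}$ has a least element $Q$, then since $S\in\F^{cr}$ by (1) and since every $\alpha\in\Aut_\F(P)$ (for $P\in\F^{cr}$) carries $Q$ to an $\F$-centric radical subgroup of the same order, hence to $Q$ itself, Alperin's fusion theorem gives $Q\trianglelefteq\F$, so $\F$ is constrained. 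Now $\F^{cr}\subseteq\{Q_i\}\cup\{S\}$, the $Q_i$ are pairwise incomparable and each is properly contained in $S$, so $\F^{cr}$ has a least element precisely when at most one $Q_i$ is $\F$-centric radical. The main obstacle throughout is the group-theoretic input to condition (3) — determining the subgroups of $\GL_2(p)$ of order divisible by $p$ with trivial $O_p$ — for which one leans on Dickson's theorem and on the Schur multiplier of $\PSL_2(p)$; the remaining steps are bookkeeping with the saturation axioms.
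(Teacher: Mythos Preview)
The paper does not supply its own proof of this theorem; it simply cites \cite[Section~III.6.2]{AschbacherKessarOliver2011} for the entire statement. Your sketch correctly reconstructs that argument for the displayed inclusion, conditions (1)--(4), and the converse, and your handling of the ``Moreover'' clause via the equivalence ``$\F$ is constrained if and only if $\F^{cr}$ has a least element'' is sound and is the natural way to extract that clause from the poset structure of $\F^{cr}\subseteq\{Q_0,\dots,Q_p,S\}$.

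One small slip worth noting: when you write ``if $Q\trianglelefteq\F$ is $\F$-centric then $Q=O_p(\F)$'', that is not quite right---such a $Q$ is only guaranteed to be \emph{contained in} $O_p(\F)$. The fix is immediate: work with $O_p(\F)$ instead, which is then $\F$-centric (as an overgroup of a centric subgroup) and $\F$-radical (since in a model $L$ one has $\Out_\F(O_p(\F))\cong L/O_p(L)$ with trivial $O_p$), so it serves as the required least element of $\F^{cr}$.
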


When $\F$ is nonconstrained, the possibilities for $\Out_\F(S)$ and
$\Out_\F(Q_i)$ (and hence $\F$) are given in 
\cite[Tables 1.1 and 1.2]{RuizViruel2004}. 
Apart from the $p$-fusion systems of $\PSL_3(p)$, $p \ge 3$
and their almost simple extensions, there are thirteen exceptional fusion
systems for $3 \le p \le 13$, three of which are exotic. 

\medskip   
Next we show that there are no nonzero compatible
families for the nonconstrained fusion systems over $S$.  
Recall that the Schur multiplier $M(G)$ of a finite group $G$ is a finite
abelian group, which may be defined as the second cohomology group
$H^2(G,\mathbb{C}^\times)$.  In computing the Schur multiplier of various
groups, we make use of its connection with stem extensions.  A \textit{stem
extension} of a group $G$ is a central extension $1 \to Z \to \hat{G} \xra{\pi}
G \to 1$ such that $\ker(\pi) \leq Z(\hat{G}) \cap [\hat{G},\hat{G}]$. We will
often identify $\ker(\pi)$ with $Z$.  If $Z \cong M(G)$ in this situation, then
the extension (or $\pi$, or $\hat{G})$ is said to be a \textit{Schur covering}
of $G$.  
Given a central extension as above, there is an associated
inflation-restriction exact sequence

\begin{eqnarray}
\label{e:LHS}
1 \to H^{1}(G,\mathbb{C}^\times) \xra{\inf} H^1(\hat{G},\mathbb{C}^\times)
\xra{\res} \Hom(Z,\mathbb{C}^\times) \xra{\tra} H^2(G,\mathbb{C}^\times)
\xra{\inf} H^2(\hat{G},\mathbb{C}^\times) 
\end{eqnarray}

in which three of the maps are given by inflation or restriction, and the
fourth is the transgression map. This is defined by first choosing a cocycle
$\alpha$ representing the class $[\alpha] \in H^2(G,Z)$ of the extension.  For
any homomorphism $\phi \in \Hom(Z,\mathbb{C}^\times)$, post-composition with
$\phi$ yields a 2-cocycle with values in $\mathbb{C}^\times$, and then
$\tra(\phi)$ is defined as the class $[\phi \circ \alpha] \in H^2(G,
\mathbb{C}^\times)$. 

The next lemma collects a number of general results regarding the Schur
multiplier which will be used later in special cases. The first four 
parts are due to Schur. In parts (4) and (5) results of Schur and of 
Blackburn \cite{Blackburn1972} are quoted, and require the following 
additional notation. Denote the abelianization of a group $G$ by 
$G^{\ab}$, and write $G^{\ab} \wedge G^{\ab}$ for the quotient of 
$G^{\ab} \otimes_{\mathbb{Z}} G^{\ab}$ by the subgroup generated by 
$a \otimes b + b \otimes a$ as $a$ and $b$ range over $G^{\ab}$. 

\begin{Lem}\label{l:coverings}
The following hold for a finite group $G$.
\begin{enumerate}
\item If $1 \to Z \to \hat{G} \to G \to 1$ is a stem extension, then 
the transgression map $\tra$ in $\eqref{e:LHS}$ is injective, and 
hence $Z \cong \Hom(Z,\mathbb{C}^\times)$ is isomorphic to a subgroup 
of $M(G)$. 
\item There exists a Schur covering $1 \to Z \to \hat{G} \xra{\pi} G \to 1$ 
and, for any such covering, the associated transgression map is an 
isomorphism.
\item If there exists a Schur covering as in (2) such that $\pi\colon
\pi^{-1}(H) \to H$ is a stem extension of some subgroup $H$ of $G$, then the
restriction map $M(G) \to M(H)$ is injective.  
\item If $G = G_1 \times G_2$ is a direct product, then one has 
\[
M(G) \cong M(G_1) \times M(G_2) \times (G_1^{\ab} 
\otimes_{\mathbb{Z}} G_2^{\ab}). 
\]
\item (Blackburn) Assume $G = K \wr H$ is a wreath product, and write $m$ for
the number of involutions in $H$. Then $M(G)$ is isomorphic to the direct
product of $M(H)$, $M(K)$, $\frac{1}{2}(|H|-m-1)$ copies of $K^{\ab} \otimes
K^{\ab}$, and $m$ copies of $K^{\ab} \wedge K^{\ab}$.
\end{enumerate}
\end{Lem}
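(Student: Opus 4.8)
The plan is to treat parts (1)--(3) uniformly through the transgression map in the inflation--restriction sequence \eqref{e:LHS}, and to reduce parts (4) and (5) to the classical results of Schur and Blackburn. First, for (1) I would read off $\ker(\tra)$ from exactness of \eqref{e:LHS}: it is the image of the restriction map $H^1(\hat G,\mathbb{C}^\times)\to\Hom(Z,\mathbb{C}^\times)$. Since $H^1(-,\mathbb{C}^\times)=\Hom(-,\mathbb{C}^\times)$ factors through abelianization and $\mathbb{C}^\times$ is divisible, hence injective as an abelian group, that image consists precisely of the characters of $Z$ that factor through $Z/(Z\cap[\hat G,\hat G])$. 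For a stem extension $Z\le[\hat G,\hat G]$, so this image is trivial and $\tra$ is injective. As $Z$ is finite abelian, $\Hom(Z,\mathbb{C}^\times)\cong Z$, which gives the asserted embedding $Z\hookrightarrow H^2(G,\mathbb{C}^\times)=M(G)$.

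For (2), the existence of a Schur covering is classical: beginning from a finite free presentation $1\to R\to F\to G\to1$, one uses Hopf's formula $M(G)\cong(R\cap[F,F])/[F,R]$ together with the fact that $R/[F,R]$ admits a free abelian complement to $(R\cap[F,F])/[F,R]$ to produce a stem extension with kernel isomorphic to $M(G)$ (see the standard references on the Schur multiplier). Given any such covering, $\tra\colon\Hom(Z,\mathbb{C}^\times)\to M(G)$ is injective by (1), and since $|\Hom(Z,\mathbb{C}^\times)|=|Z|=|M(G)|$, it is forced to be an isomorphism.

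For (3), write $\hat H:=\pi^{-1}(H)$, so that the restriction of the Schur covering of $G$ along $H\hookrightarrow G$ is the given stem extension $1\to Z\to\hat H\xra{\pi}H\to1$. Naturality of transgression with respect to this morphism of extensions -- concretely, that a defining $2$-cocycle for the $G$-extension restricts to one for the $H$-extension -- yields a commutative square whose horizontal maps are $\tra_G$ and $\tra_H$, whose left vertical arrow is the identity on $\Hom(Z,\mathbb{C}^\times)$, and whose right vertical arrow is the restriction $\res\colon M(G)\to M(H)$. Since $\tra_G$ is an isomorphism by (2) and $\tra_H$ is injective by (1), $\res=\tra_H\circ\tra_G^{-1}$ is injective, as required.

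Finally, (5) is quoted directly from Blackburn \cite{Blackburn1972}, and (4) is Schur's direct-product formula; the latter can alternatively be recovered in a line from the isomorphism $M(G)\cong H_2(G;\mathbb{Z})$ (valid because $\mathbb{C}^\times$ is injective, so the $\Ext$ term in universal coefficients vanishes) combined with the Künneth formula in integral homology, the $\Tor$ summands disappearing since $H_0(G_i;\mathbb{Z})=\mathbb{Z}$ is free. I expect the only genuinely delicate point to be the naturality bookkeeping in (3): one must verify carefully that pulling the Schur covering of $G$ back along $H\hookrightarrow G$ produces exactly the stem extension $\hat H$ and that the transgression of the restricted extension is computed by the restricted cocycle. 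Once that is pinned down, the diagram chase is immediate and none of the individual steps is deep.
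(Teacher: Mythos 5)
Your argument for part (3), which is the only part the paper actually proves rather than cites, is essentially identical to the paper's: set $\hat{H} = \pi^{-1}(H)$, observe that a cocycle for the $G$-extension restricts to one for the $H$-extension, and chase the resulting commutative square involving $\tra_G$, $\tra_H$, and $\res$ using (1) and (2). For the remaining parts the paper simply cites Curtis--Reiner, Wiegold, and Blackburn, whereas you supply short self-contained arguments — the kernel-of-transgression computation for (1) via $Z \leq [\hat{G},\hat{G}]$, and universal coefficients plus Künneth for (4) — both of which are correct and amount to standard proofs of the cited facts. This is the same approach with fuller detail; nothing is missing or wrong.
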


\begin{proof}   We refer to \cite[Lemma~11.42]{CurtisReiner1990} for (1) and to
\cite[Theorem~11.43]{CurtisReiner1990} for (2).  Assume the hypotheses of 
(3) and set $\widehat{H} = \pi^{-1}(H)$. Choose a $2$-cocycle $\alpha$ 
representing the class of the central extension 
$1 \to Z \to \hat{G} \to G \to 1$. Then $\alpha|_{H\times H}$ represents 
the class of the central extension $1 \to Z \to \hat{H} \to H \to 1$, 
and the square 
\[
\xymatrix{
\Hom(Z,\mathbb{C}^\times) \ar[r]^{\tra_G} \ar[d]_{\id} & 
H^2(G,\mathbb{C}^\times) \ar[d]^{\res}\\
\Hom(Z,\mathbb{C}^\times) \ar[r]_{\tra_H} & H^2(H,\mathbb{C}^{\times})
}
\]
commutes. As $\tra_H$ is injective by (1) and $\tra_G$ an isomorphism 
by (2), part (3) follows. Finally, a  proof  of  (4)  may be found in  \cite[Corollary~3]{Wiegold71}   and  (5) is \cite[Theorem~1]{Blackburn1972}  but see also \cite[Theorem~2.2.10,  Theorem~6.3.3]{Karpilovsky1987}.
\end{proof}

Whenever $p$ is a prime, we write $M(G)_p$ for the $p$-primary part of $M(G)$,
and $M(G)_{p'}$ for the $p'$-primary part of $M(G)$.  The following lemma
collects some basic information about the various primary parts of the Schur
multiplier. 

\begin{Lem}\label{l:cohom}
Let $G$ be a finite group, let $p$ be a prime, and let $k$ be an algebraically
closed field of characteristic $p$.
\begin{enumerate}
\item $H^2(G,k^\times)$ is isomorphic to the $p'$-part $M(G)_{p'}$ of the Schur
multiplier. 
\item If $H \leq G$ contains a Sylow $p$-subgroup of $G$, then the restriction
map $M(G)_p \to M(H)_p$ is injective. 
\item If $G$ has cyclic Sylow $p$-subgroups, then $M(G)_p = 1$.
\end{enumerate}
\end{Lem}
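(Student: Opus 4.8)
The plan is to reduce all three parts to classical facts about the Schur multiplier, combined with a transfer (corestriction) argument.

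For (1), I would identify $H^2(G,k^\times)$ cohomologically with the prime-to-$p$ torsion of $\mathbb{C}^\times$. As $k$ is algebraically closed, $k^\times$ is divisible; its torsion subgroup $\mu$ is the group of roots of unity of $k$, and since $\mathrm{char}(k)=p$ every element of $\mu$ has order prime to $p$, so $\mu\cong(\mathbb{Q}/\mathbb{Z})_{p'}$. The quotient $k^\times/\mu$ is torsion-free and divisible, hence a $\mathbb{Q}$-vector space, so $H^n(G,k^\times/\mu)=0$ for $n\geq 1$, being at once a $\mathbb{Q}$-vector space and annihilated by $|G|$. The long exact sequence of $1\to\mu\to k^\times\to k^\times/\mu\to 1$ then gives $H^2(G,k^\times)\cong H^2(G,(\mathbb{Q}/\mathbb{Z})_{p'})$, and the same argument applied to the torsion subgroup of $\mathbb{C}^\times$ identifies $M(G)=H^2(G,\mathbb{C}^\times)$ with $H^2(G,\mathbb{Q}/\mathbb{Z})$. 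Finally, decomposing $\mathbb{Q}/\mathbb{Z}$ into its $\ell$-primary components and using that group cohomology of the finite group $G$ commutes with direct limits of coefficient modules, each $H^2(G,(\mathbb{Q}/\mathbb{Z})_\ell)=\varinjlim_n H^2(G,\mathbb{Z}/\ell^n)$ is an $\ell$-group; hence $H^2(G,(\mathbb{Q}/\mathbb{Z})_{p'})$ is precisely the prime-to-$p$ part $M(G)_{p'}$. (This last identification is standard and could instead be quoted from a reference.)

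For (2), I would use the standard fact that for a trivial $G$-module $A$ and a subgroup $H\leq G$ the composite $\operatorname{cor}\circ\res\colon H^2(G,A)\to H^2(H,A)\to H^2(G,A)$ is multiplication by $[G:H]$. With $A=\mathbb{C}^\times$, so $H^2(-,A)=M(-)$, and since $[G:H]$ is prime to $p$ when $H$ contains a Sylow $p$-subgroup, multiplication by $[G:H]$ is an automorphism of the $p$-group $M(G)_p$; hence $\res$ is injective on $M(G)_p$. Because $\res$ is a homomorphism it carries the $p$-group $M(G)_p$ into the $p$-group $M(H)_p$, which gives the asserted injection. Part (3) then follows by applying (2) with $H=P$ a Sylow $p$-subgroup of $G$: one obtains $M(G)_p\hookrightarrow M(P)_p=M(P)$, and $M(P)=1$ as $P$ is cyclic, so $M(G)_p=1$.

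I do not expect a genuine obstacle; all the ingredients are routine. The only point that needs a little care is the cohomological bookkeeping in (1) — that the uniquely divisible quotient $k^\times/\mu$ is cohomologically trivial, and that passing to $(\mathbb{Q}/\mathbb{Z})_{p'}$-coefficients selects exactly the $p'$-part of the multiplier — which is handled by the two long exact sequences and the primary decomposition of $\mathbb{Q}/\mathbb{Z}$ recorded above.
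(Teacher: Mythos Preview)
Your proof is correct and follows essentially the same route as the paper: parts (2) and (3) are identical to the paper's argument via the transfer (corestriction) map and the vanishing of $H^2$ of a cyclic group with divisible coefficients. For (1) the paper simply cites a reference, whereas you supply a clean self-contained argument via the torsion-divisible d\'evissage of $k^\times$ and the primary decomposition of $\mathbb{Q}/\mathbb{Z}$; this is the standard proof behind the cited fact, so there is no substantive divergence.
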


\begin{proof}
See \cite[Proposition~2.1.14]{Karpilovsky1987} for part (1). Part (2) is
derived from the fact that restriction to $H$ followed by transfer to $G$ is
multiplication by the index of $H$ in $G$, which by assumption is prime to $p$.
Then part (3) follows from (2) and the fact that   the second cohomology group of a finite cyclic group   with coefficients in a divisible group with trivial action is trivial    but see also\cite[Proposition~11.46]{CurtisReiner1990}.
\end{proof}

We now specialize to the following computations of Schur multipliers of
specific finite groups that appear as subgroups of automorphism groups of
centric radicals in certain nonconstrained saturated systems over $S$. 

\begin{Lem}\label{l:mg}
The following hold. 
\begin{enumerate}
\item If $p$ is any prime and $G$ is a subgroup of $\GL_2(p)$ containing
$\SL_2(p)$, then $M(G) = 1$. 
\item Let $G$ be a $2$-group of maximal class. Then $M(G) \cong C_{2}$ if $G$
is dihedral, while $M(G) = 1$ if $G$ is semidihedral or quaternion.  If $G$ is
a dihedral $2$-group and $V$ is any four subgroup of $G$, then the restriction
$M(G) \to M(V)$ is injective. 
\item Let $G = C_n \wr C_2$ with $n \geq 2$. Then $M(G) \cong C_2$ if $n$ is
even, and $M(G) = 1$ if $n$ is odd.  If $J \leq G$ is the homocyclic subgroup
of rank $2$ and exponent $n$, then the restriction $M(G) \to M(J)$ is
injective. 
\item If $G = \Out_{\F}(S)$ for some fusion system $\F$ over $S$ appearing in
Table~1.2 of \cite{RuizViruel2004}, then $M(G)_{2'} = 1$. Moreover, either
$M(G)_2 = 1$, or $M(G)_2 \cong C_2$ and $G$ is $D_8$, $S_3 \times C_6$, $C_6
\wr C_2$, $D_8 \times C_3$, or $D_{16} \times C_3$.
\end{enumerate}
\end{Lem}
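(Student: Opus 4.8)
The plan is to reduce each of the four assertions to the standard facts about Schur multipliers collected in Lemmas~\ref{l:coverings} and \ref{l:cohom}, together with the explicit classification of Ruiz and Viruel. I would treat parts (2) and (3) first, since part (1) in the case $p=3$ and several cases of part (4) rely on them. For (2), the $2$-groups of maximal class and order $\geq 8$ are exactly the dihedral, semidihedral and generalised quaternion groups, and the values $M(D_{2^n})\cong C_2$, $M(SD_{2^n})=1$, $M(Q_{2^n})=1$ are classical; I would quote them from \cite{Karpilovsky1987} (alternatively $M(D_8)\cong C_2$ follows from $D_8\cong C_2\wr C_2$ via Lemma~\ref{l:coverings}(5), and $M(Q_{2^n})=1$ from the periodicity of the cohomology of generalised quaternion groups). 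For the injectivity statement I would use Lemma~\ref{l:coverings}(3): the natural surjection $D_{2^{n+1}}\to D_{2^n}$ is a Schur covering, and the preimage of any four-subgroup $V\le D_{2^n}$ is isomorphic to $D_8$, hence is a stem extension of $V\cong C_2\times C_2$. For (3), Blackburn's formula (Lemma~\ref{l:coverings}(5)) applied with $K=C_n$, $H=C_2$ and $m=1$ gives $M(C_n\wr C_2)\cong C_n\wedge C_n$, and a direct computation identifies $C_n\wedge C_n$ with $C_2$ when $n$ is even and with the trivial group when $n$ is odd; the restriction to the base subgroup $J\cong C_n\times C_n$ is handled exactly as in (2), via Lemma~\ref{l:coverings}(3) and an explicit Schur cover of $C_n\wr C_2$ whose restriction over $J$ is a stem extension (the case $n$ odd being vacuous).

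For (1), write $N=\SL_2(p)\trianglelefteq G$, so that $G/N$ is cyclic and a Sylow $p$-subgroup of $G$ is cyclic of order $p$; hence $M(G)_p=1$ by Lemma~\ref{l:cohom}(3). When $p\geq 5$ the group $N$ is perfect with $M(N)=1$, so in the Lyndon--Hochschild--Serre spectral sequence for $1\to N\to G\to G/N\to 1$ the three terms contributing to $H^2$, namely $E_2^{2,0}=H^2(G/N,\CC^\times)$, $E_2^{1,1}=H^1(G/N,\Hom(N,\CC^\times))$ and $E_2^{0,2}=M(N)^{G/N}$, all vanish (the first because $G/N$ is cyclic, the second because $N$ is perfect, the third because $M(N)=1$), whence $M(G)=H^2(G,\CC^\times)=1$. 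For $p=2$ we have $G=\SL_2(2)\cong S_3$, and for $p=3$ the possibilities are $G\in\{\SL_2(3),\GL_2(3)\}$, whose Sylow $2$-subgroups are $Q_8$ and the semidihedral group of order $16$ and whose Sylow $3$-subgroups are cyclic; so $M(G)=1$ by Lemma~\ref{l:cohom}(2),(3) together with part (2).

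For (4), I would go through the finitely many isomorphism types of $\Out_\F(S)$ appearing in Table~1.2 of \cite{RuizViruel2004}. After splitting off a cyclic direct factor of order prime to $2$ by Lemma~\ref{l:coverings}(4)—which contributes to $M$ only a tensor product with a cyclic group of odd order, hence nothing to the $2$-part and nothing to the $2'$-part once the other factor has trivial $2'$-multiplier—each entry reduces to one of the groups already treated: a subgroup of some $\GL_2(q)$ containing $\SL_2(q)$, a $2$-group of maximal class, or a small group such as $S_3$, $S_3\times C_6$ or $C_6\wr C_2$. This yields $M(G)_{2'}=1$ throughout, and $M(G)_2=1$ except for the five listed groups, where one computes $M(D_8)\cong C_2$ and $M(D_{16})\cong C_2$ by (2), $M(C_6\wr C_2)\cong C_2$ by (3), $M(S_3\times C_6)\cong S_3^{\ab}\otimes C_6^{\ab}\cong C_2$ by Lemma~\ref{l:coverings}(4), and $M(D_8\times C_3)\cong M(D_8)$, $M(D_{16}\times C_3)\cong M(D_{16})$ (the cross terms $D_8^{\ab}\otimes C_3$ and $D_{16}^{\ab}\otimes C_3$ vanishing).

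The main obstacle is part (4): it is not a single uniform argument but a finite verification against the Ruiz--Viruel tables, and the delicate point is keeping track of which groups actually occur and of the cyclic factors coming from scalar matrices, whose orders can be even. Once the list is laid out explicitly, however, each entry reduces mechanically to parts (1)--(3) together with the product and wreath-product formulas for Schur multipliers in Lemma~\ref{l:coverings}.
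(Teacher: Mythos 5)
Your treatment of parts (1)--(3) is sound, and in part (1) you take a cleaner route than the paper: the paper shows $M(G)=1$ for $p\geq 5$ by directly constructing a complement to the kernel in an arbitrary central extension (using perfectness of $N_0\cong\SL_2(p)$ to force the map $\zeta_g\colon N_0\to Z$ to vanish, then lifting a complement through the cyclic quotient), whereas you get the same conclusion immediately from the vanishing of $E_2^{2,0}$, $E_2^{1,1}$, $E_2^{0,2}$ in the Lyndon--Hochschild--Serre spectral sequence for $1\to N\to G\to G/N\to 1$. For (2) you quote the multiplier values of maximal-class $2$-groups from Karpilovsky, while the paper rederives them via Gorenstein's theorem that a Schur cover of a maximal-class $2$-group is again maximal class; both are legitimate, though the paper is self-contained. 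Your plan for the injectivity statements in (2) and (3) coincides with the paper's.

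There is, however, a genuine gap in your part (4). You claim that after splitting off odd cyclic factors, every entry of Table~1.2 of Ruiz--Viruel reduces to a group containing $\SL_2(q)$, a $2$-group of maximal class, or one of a handful of small groups. That is not true for the entries $4S_4$ (at $p=5$, for the Thompson system) and $C_3\times 4S_4$ (at $p=13$, for the Monster). Here $4S_4$ is the normalizer in $\GL_2(5)$ of a Sylow $2$-subgroup $Q_8$ of $\SL_2(5)$; it has order $96$, does not contain $\SL_2(q)$ for any prime $q$, and its Sylow $2$-subgroup is $C_4\wr C_2$, not maximal class. Restricting to that Sylow $2$-subgroup only gives an injection $M(4S_4)_2\hookrightarrow M(C_4\wr C_2)\cong C_2$, which is not enough to conclude vanishing. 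The paper handles this case by a separate argument: $N:=[\mathrm{4S_4},\mathrm{4S_4}]\cong\SL_2(3)$ is $2$-perfect, $G/N$ is cyclic, and one runs the complement-lifting argument from part (1) restricted to a $2$-group $Z$ to conclude $M(4S_4)_2=1$, while $M(4S_4)_{3}=1$ follows from cyclic Sylow $3$-subgroups. You would need to add something of this kind; your sketch as written asserts a reduction that fails.
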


\begin{proof}
The fact that $\SL_2(p)$ has trivial multiplier for all primes $p$ is 
standard: note that all Sylow $r$-subgroups of $\SL_2(p)$ are cyclic, 
except when $p \geq 3$ and $r = 2$, in which case a Sylow $r$-subgroup 
is generalized quaternion. It follows that $M(G)_r = 1$ for all primes 
$r$ by (2) below and Lemma~\ref{l:cohom}(2). Also $\GL_2(3)$ also has 
trivial multiplier by (2) and Lemma~\ref{l:cohom}(2) since a Sylow 
$2$-subgroup of $\GL_2(3)$ is semidihedral. So to finish the proof of 
(1), we may assume that $p \geq 5$. Let $G$ be a subgroup of $\GL_2(p)$ 
containing $N = \SL_2(p)$. Since $p \geq 5$, $N$ is perfect. Fix a 
group $\widehat{G}$ having a central subgroup $Z$ such that 
$\widehat{G}/Z \cong G$.  Identify $\widehat{G}/Z$ with $G$ and let
$\pi \colon \widehat{G} \to G$ be the canonical projection.  We shall 
show that there is a complement to $Z$ in $\widehat{G}$. Let 
$\widehat{N}$ be the preimage of $N$ under $\pi$.  Then $\widehat{N}$ 
contains $Z$ and splits over it, as $M(N) = 1$. Fix any complement 
$N_0$ of $Z$ in $\widehat{N}$, so that $\widehat{N} = N_0 \times Z$.  
Then $N_0 \cong \SL_2(p)$. We claim that $N_0$ is normal in 
$\widehat{G}$; it is clear that $N_0$ is normal in $\widehat{N}$. In 
general, conjugation by $g \in \widehat{G}$ sends an element $h \in N_0$ 
to $h'\zeta_g(h)$, where $h' \in N_0$ and $\zeta_g(h) \in Z$ are
uniquely determined. Also since $Z$ is central in $\widehat{G}$, the 
assignment $h \mapsto \zeta_g(h)$ is a group homomorphism from $N_0$ to 
$Z$.  Since $N_0$ is perfect, it follows that $\zeta_g = 1$ for each 
$g \in \widehat{G}$, i.e. $N_0$ is normal in $\widehat{G}$. Write 
quotients by $N_0$ with pluses. Now $\widehat{G}^+$ is a central 
extension of $Z$ by $\widehat{G}/\widehat{N} \cong G/N$, which is a 
cyclic $p'$-group. On the other hand, we have $M(G/N) = 1$ by
Lemma~\ref{l:cohom}(3), applied with each prime divisor $r$ of $|G/N|$ 
in the role of ``$p$'' there.  Thus, we may fix a complement $K^+$ of 
$Z^+$ in $\widehat{G}^+$, and let $K$ be the preimage of $K^+$ in 
$\widehat{G}$.  Then $K$ is a complement to $Z$ in $\widehat{G}$. 

Part (2) is implied by Lemma~\ref{l:coverings} as follows. Let $G$ be 
a $2$-group of maximal class, so that $G$ is dihedral, semidihedral or
quaternion. Then $M(G)$ is a $2$-group.  Let $\pi \colon \hat{G} \to G$ 
be any Schur covering of $G$ with $Z = \ker(\pi)$. Then since 
$Z \leq [\hat{G}, \hat{G}]$, it follows that 
$\hat{G}^{\ab} \cong G^{\ab}$ is of order $4$. By
\cite[Theorem~5.4.5]{Gorenstein1980}, $\hat{G}$ is of maximal class, so
$Z(\hat{G})$ is of order $2$. Then either $M(G) = 1$, or 
$Z(\hat{G}) = Z$ and $M(G) = C_2$. In the latter case, since $\hat{G}$ 
is of maximal class, we have $\hat{G}/Z \cong G$ is dihedral.  
Conversely, the dihedral group $\hat{G} = D_{2^{k+1}}$ provides a Schur 
covering $\pi\colon \hat{G} \to G$ of $G = D_{2^{k}}$.  Fix a four 
subgroup $V$ of $G$.  Then as $\pi^{-1}(V)$ is dihedral of order $8$, 
we have that the restriction map $M(G) \to M(V)$ is injective by
Lemma~\ref{l:coverings}(3). This completes the proof of (2).

To prove (3), apply Lemma~\ref{l:coverings}(5) with $K = C_n$ and 
$H = C_2$. Hence, $m = 1$ there. By Lemma~\ref{l:cohom}(3) and that 
result, $M(G) = K^{\ab} \wedge K^{\ab}$. The multiplication map 
$C_n \otimes C_n \to C_n$ is an isomorphism, where $C_n$ is viewed as 
an additive group, and under that map $a \otimes b + b \otimes a$ is 
sent to $2ab$.  Hence, $M(G) \cong C_n/2C_n \cong C_2$ if $n$ is even, 
and $1$ if $n$ is odd.  In order to prove the claim about the 
restriction $J$, we may by Lemma~\ref{l:cohom}(2) assume that $n = 2^l$ 
for some $l$, and then it suffices by Lemma~\ref{l:coverings}(3) to 
produce a double covering of $G$ which restricts to a stem extension of 
$J$.  To this end, the group $G = C_{2^l} \wr C_2$ has a presentation 
with generators $x$, $y$, and $t$, and defining relations 
$x^{2^l} = y^{2^l} = t^2 = xyx^{-1}y^{-1} = 1$ and $txt^{-1} = y$.  
Consider the group $\hat{G}$ with generators $\x, \y, \t$ and defining 
relations $\x^{2^l} = \y^{2^l} = \t^2 = 1$, $\t\x\t^{-1} = \y$, and 
$\z = [\x,\y]$ is of order $2$ and central.  Thus 
$\hat{G}/\gen{\x^{2^{l-1}},\y^{2^{l-1}}} \cong D_{16}$, and the obvious 
map $\pi\colon \hat{G} \to G$ is the pullback of the Schur covering 
$D_{16} \to D_{8}$. Let $\mathbf{J}$ be the preimage of $J$ in 
$\hat{G}$, and set $Z = \ker(\pi) = \gen{\z}$. Then by construction 
$Z \leq [\mathbf{J},\mathbf{J}] \cap Z(\mathbf{J})$, so 
$\pi\colon \mathbf{J} \to J$ is a stem extension of $J$. As noted 
above, this completes the proof of (3).

We now prove (4).  When $G$ is $D_8$, $SD_{16}$, $C_6 \wr C_2$, 
$S_3 \times C_3$, $S_3 \times C_6$, $D_8 \times C_3$, 
$D_{16} \times C_3$, or $SD_{32} \times C_3$, the claim follows from 
(1), (2), (3), and Lemma~\ref{l:coverings}(4).  It remains to consider 
the groups ``$4S_4$'' and ``$C_3 \times 4S_4$'' in 
\cite[Table~1.2]{RuizViruel2004}.  Note that $4S_4$ as appears in 
\cite{RuizViruel2004} is the normalizer $G$ in $\GL_2(5)$ of a Sylow
$2$-subgroup $Q \cong Q_8$ of $\SL_2(5)$.  The normalizer $N$ in 
$\SL_2(5)$ of $Q$ is the commutator subgroup of $G$, isomorphic to 
$\SL_2(3)$, and the quotient $G/N$ is cyclic of order $4$. Thus, 
$G^{\ab}$ is cyclic of order $4$. Therefore, it suffices to show that 
$M(G) = 1$, for then by Lemma~\ref{l:coverings}(4), we'll have 
$M(C_3 \times G) = 1$.  Since $G$ has Sylow $3$-subgroups of order $3$, 
it follows from Lemma~\ref{l:cohom}(3) that $M(G) = M_2(G)$.  Since 
$\SL_2(3) \cong Q \rtimes C_3$ is $2$-perfect and $G/N$ is cyclic, the 
exact same argument as given in (1) applies with $Z$ a $2$-group to 
show that $M(G)_2 = 1$.  This completes the proof of (4) and the lemma.
\end{proof}

\begin{Prop}\label{l:kp-extraspecial}
Let $p$ be an odd prime, and let $\F$ be a nonconstrained saturated fusion
system on an extraspecial $p$-group $S$ of order $p^3$ and of exponent $p$.
Then $\lim \A^2_\F = 0$. 
\end{Prop}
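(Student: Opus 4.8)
The plan is to combine the injection $\lim_{[S(\F^c)]}\A^2_\F \hookrightarrow \prod_{Q\in\F^c}H^2(\Out_\F(Q),k^\times)$ with the compatibility criterion of Lemma~\ref{lem:Fcompatible}, and to show that any $\F$-compatible family $\alpha=(\alpha_Q)_{Q\in\F^c}$ vanishes. First I would identify the $\F$-centric subgroups of $S$: every subgroup of order $p^2$ is elementary abelian (being a central-by-cyclic extension of exponent $p$) and hence self-centralising, while $Z(S)$ and the remaining subgroups of order $p$ have strictly larger centralisers in $S$; so $\F^c=\{S,Q_0,\dots,Q_p\}$. Since $\F$ is nonconstrained, Theorem~\ref{t:classp3} gives that at least two of the $Q_i$ are $\F$-centric radical. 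As there are no proper inclusions among the maximal subgroups $Q_i$, the only nontrivial normal chains of $\F$-centric subgroups are the $Q_i\triangleleft S$, so by Lemma~\ref{lem:Fcompatible} the compatibility of $\alpha$ is governed entirely by these.

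Next I would dispose of the components at the $Q_i$. If $Q_i$ is $\F$-radical then $\SL_2(p)\le\Aut_\F(Q_i)\le\GL_2(p)$ by Theorem~\ref{t:classp3}(3), so $M(\Aut_\F(Q_i))=1$ by Lemma~\ref{l:mg}(1), hence $H^2(\Out_\F(Q_i),k^\times)=0$ and $\alpha_{Q_i}=0$. If $Q_i$ is not $\F$-radical, consider the chain $\sigma=(Q_i\triangleleft S)$: its automorphism group in $S(\F^c)$ is $N_{\Aut_\F(S)}(Q_i)$, and the restriction map $r\colon N_{\Aut_\F(S)}(Q_i)\to\Aut_\F(Q_i)$ is surjective by Theorem~\ref{t:classp3}(3) with kernel a $p$-group (an element of the kernel is trivial on the maximal subgroup $Q_i\supseteq\Phi(S)$, so acts on $S/\Phi(S)$ as a transvection and differs from the identity by a homomorphism into $Z(S)$). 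Since $H^1$ of a $p$-group with coefficients in $k^\times$ vanishes, inflation along $r$ is injective on $H^2(-,k^\times)$; the compatibility condition of Lemma~\ref{lem:Fcompatible} for $\sigma$ then says that the inflation of $\alpha_{Q_i}$ along $r$ equals $\mathrm{res}^{\Aut_\F(S)}_{N_{\Aut_\F(S)}(Q_i)}(\alpha_S)$, so $\alpha_{Q_i}$ is uniquely determined by $\alpha_S$; in particular $\alpha_S=0$ forces $\alpha_{Q_i}=0$ for every $i$. Thus it remains to prove $\alpha_S=0$.

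For that, fix a radical $Q_i$; the same compatibility condition together with $\alpha_{Q_i}=0$ shows (using that $\Inn(S)$ is a $p$-group to pass from $\Aut_\F(S)$ to $\Out_\F(S)$) that $\alpha_S$ restricts to $0$ on the stabiliser $N_{\Out_\F(S)}(\bar Q_i)\le\Out_\F(S)\le\GL_2(p)$ of the line $\bar Q_i=Q_iZ(S)/Z(S)$. Here the argument splits along the Ruiz--Viruel list. For $\F_S(\PSL_3(p))$ and $\F_S(\mathrm{PGL}_3(p))$, $\Out_\F(S)$ lies in the diagonal torus of $\GL_2(p)$ and stabilises the two lines attached to the radical subgroups, so $N_{\Aut_\F(S)}(Q_i)=\Aut_\F(S)$ and $\alpha_S=r^*(\alpha_{Q_i})=0$ at once. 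For the graph-automorphism extensions $\PSL_3(p).2$ and $\mathrm{PGL}_3(p).2$, one has $\Out_\F(S)=T\rtimes\langle\sigma\rangle$ with $T$ the torus image and $\sigma$ the order-two graph symmetry interchanging the two radical lines, so $N_{\Out_\F(S)}(\bar Q_i)=T$ has index $2$; the Lyndon--Hochschild--Serre sequence for $1\to T\to\Out_\F(S)\to C_2\to1$ reduces the injectivity of restriction to $T$ to the vanishing of $H^2(C_2,k^\times)$ (immediate, $k$ being algebraically closed of odd characteristic) and of $H^1(C_2,\Hom(T,k^\times))$, which is a short direct computation once $\sigma$ is seen to act as $\mathrm{diag}(a,b)\mapsto\mathrm{diag}(b^{-1},a^{-1})$ on the torus. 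Finally, for the finitely many exceptional nonconstrained systems, Lemma~\ref{l:mg}(4) gives $H^2(\Out_\F(S),k^\times)=M(\Out_\F(S))_{p'}=0$ unless $\Out_\F(S)$ is one of $D_8$, $S_3\times C_6$, $C_6\wr C_2$, $D_8\times C_3$, $D_{16}\times C_3$; in the former case $\alpha_S=0$ is automatic, and in the latter one reads off from Table~1.2 of \cite{RuizViruel2004} the automiser $\Aut_\F(Q_i)$ of a radical $Q_i$, identifies $N_{\Out_\F(S)}(\bar Q_i)$ with the corresponding Levi complement, and uses Lemma~\ref{l:mg}(2),(3) to see that restriction to it is injective on the Schur multiplier, whence $\alpha_S=0$. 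The main obstacle is this last, case-by-case verification against the Ruiz--Viruel tables; everything preceding it is formal, resting on the list of $\F$-centric subgroups and the $p$-group-kernel inflation argument.
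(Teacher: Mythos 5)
Your proposal follows essentially the same strategy as the paper: reduce everything to the injectivity of the restriction map on $H^2(-,k^\times)$ from $\Out_\F(S)$ to the stabiliser of a radical line, then verify that injectivity case-by-case using the Ruiz--Viruel tables and Lemmas~\ref{l:mg} and \ref{l:cohom}. The paper works directly with the cochain complex $C^*(\A_\F^2)$, observes that $M(\Out_\F(Q))_{p'}=1$ for each radical $Q_i$ so that the degree-zero term collapses to $M(\Out_\F(S))$, and then argues that $\delta^0$ is injective; you instead work with the compatibility criterion of Lemma~\ref{lem:Fcompatible}, kill the components $\alpha_{Q_i}$ at the radical $Q_i$ outright, and then show $\alpha_S=0$. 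One genuine difference is that you explicitly dispose of the \emph{non}-radical $Q_i$ by the inflation argument along the restriction $r\colon N_{\Aut_\F(S)}(Q_i)\to\Aut_\F(Q_i)$, whose kernel you correctly observe is a $p$-group; the paper silently passes to the $cr$-subcategory, so your version is more self-contained on this point. For the $\PSL_3(p)$ family you also take a slightly shorter route in the unextended cases (observing that $\Out_\F(S)$ stabilises the radical line outright, so the chain stabiliser is all of $\Aut_\F(S)$ and $\alpha_S$ is literally an inflation of $\alpha_{Q_i}=0$), whereas the paper runs a prime-by-prime analysis; for the graph extensions you invoke the five-term/LHS argument where the paper again argues prime by prime and via the $C_{2^l}\wr C_2$ computation in Lemma~\ref{l:mg}(3).

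One small slip: conjugation by $\left(\begin{smallmatrix}0&1\\1&0\end{smallmatrix}\right)$ sends $\mathrm{diag}(a,b)$ to $\mathrm{diag}(b,a)$, not $\mathrm{diag}(b^{-1},a^{-1})$, so the $C_2$-module $\Hom(T,k^\times)$ is the permutation module, not its dual twisted by inversion. This does not affect the conclusion — for the permutation action one still has $\ker(1+\sigma)=\operatorname{im}(\sigma-1)$ on $\Hom(T,k^\times)$, so $H^1(C_2,\Hom(T,k^\times))=0$ — but the stated formula for the action should be corrected.
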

\begin{proof}
Consider the cochain complex $(C^*(\A_\F^2), \delta)$ computing the limits of
$\A_\F^2$ as in \cite{Linckelmann2009}. By Lemma \ref{l:mg}(1),
$M(\Out_\F(Q))_{p'} = 1$ for all elementary abelian subgroups $Q \in \F^{cr}$.
Thus, the $0$-th cochain group is $C^0(\A_\F^2) = M(\Out_\F(S))_{p'} =
M(\Out_\F(S))$, and the coboundary map 
\[
\delta^0 \colon M(\Out_\F(S)) \longrightarrow 
\bigoplus_{Q \in \F^{cr}, |Q| = p^2} M(\Out_\F([Q < S]))
\]
is the sum of the restriction maps $M(\Out_\F(S)) \to M(\Out_\F([Q<S]))$.
Thus, to complete the proof it suffices to show that at least one of these
restriction maps is injective.

We regard $\Out_\F(S) \leq \GL_2(p)$ as acting on $\{Q_i \mid  0 \leq i \leq
p\}$ as it does on the projective line, and then $\Out_{\F}([Q < S])$ is the
stabilizer of the point $Q$. We go through the possibilities for $\F$ appearing
in Tables~1.1 and 1.2 of \cite{RuizViruel2004}. Consider first a fusion system
$\F$ over $S$ occuring in Table~1.2. Then $C^0(\A_\F^2) = 1$ unless $G :=
\Out_\F(S)$ appears in Lemma \ref{l:mg}(4), and in those cases there exists
some $Q \in \F^{cr}$ with $|Q|=p^2$ such that $M(\Out_\F([Q < S]))$ is a four
subgroup.  Hence, $\ker(\delta^0) = 1$ by Lemma \ref{l:mg}(2), and so $\lim
\A_\F^2 = 0$ in this case. Now consider a nonconstrained fusion system
appearing in Table~1.1. Then $\F^{cr} = \{Q_0,Q_p\}$, and $G := \Out_\F(S)$ may
be taken in the normalizer in $GL_2(p)$ of the subgroup $T$ of diagonal
matrices, which stabilizes $Q_0$ and $Q_p$. Write $G_0 = \Out_\F([Q_0<S])$, for
short. Then $G_0 = G \cap T$.  Assume first that $r \neq p$ is an odd prime. If
$r$ divides $p+1$, then a Sylow $r$-subgroup of $GL_2(p)$ is cyclic, and hence
$M(G)_r = 1$ by Lemma~\ref{l:cohom}(3). Suppose $r$ divides $p-1$.  Then a
Sylow $r$-subgroup of $G$ is contained in $T$, and so $G_0$ contains a Sylow
$r$-subgroup of $G$.  Thus the restriction $M(G)_r \to M(G_0)_r$ is injective
by Lemma \ref{l:cohom}(2). It remains to consider $r = 2$.  Inspection of
Table~1.1 of \cite{RuizViruel2004} shows that either a Sylow $2$-subgroup of
$G$ stabilizes $Q_0$, in which case $M(G)_2 \to M(G_0)_2$ is injective by
Lemma~\ref{l:cohom}(2), or a Sylow $2$-subgroup $R$ of $G$ is isomorphic to
$C_{2^l} \wr C_2$ for some $l \geq 1$, and $R \cap G_0$ is the homocyclic
subgroup of $R$ of index $2$.  The restriction map $M(G)_2 \to M(G_0)_2$ is
injective in this latter case by Lemma \ref{l:mg}(3).  
\end{proof}

We now verify a number of the conjectures in Section~\ref{conjSection} for a
nonconstrained saturated fusion system over $S$.  In the thirteen 
exceptional cases complete proofs by hand could be written down, but 
since we have no reasonably general argument for the specific numerical 
computations, the conjectures are ultimately verified using computer 
calculations in Magma \cite{Magma}. 

In a saturated fusion system $\F$ on $S$, Lemma~\ref{l:exsp-setup}(a) and
Lemma~\ref{t:classp3}(1) allow one to identify $\Out_\F(S)$ with a subgroup of
$\GL_2(p)$ of order prime to $p$.  Relative to this setup, we will identify
elements of $\Out_\F(S)$ with matrices with respect to the basis $\{\a,\b\}$
(or rather, the basis $\{\a Z(S), \b Z(S)\}$ of $S^{\ab}$).  Write
$\Out^*_\F(S)$ for those elements of $\Out_\F(S)$ which have determinant $1$.
The next lemma gives a general calculation of the quantity $\m(\F,0,d)$. 

\begin{Lem}\label{l:det=1}
Let $\F$ be a saturated fusion system on $S$.  Then
\begin{enumerate}
\item $\m(\F,0,0) = \m(\F,0,1) = 0$,
\item $\m(\F,0,2) = \frac{p-1}{l} \cdot |\Out_\F^*(S)^{\cl}|$, where $l$
denotes the index of $\Out_\F^*(S)$ in $\Out_\F(S)$, and 
\item $\m(\F,0,3) = \sum_{\mu} z(kC_{\Out_\F(S)}(\mu))$, where $\mu$ runs over
a set of representatives for the $\Out_\F(S)$-orbits of linear characters of
$S$, and where $C_{\Out_\F(S)}(\mu)$ denotes the stabilizer of $\mu$ in
$\Out_\F(S)$. 
\end{enumerate}
Thus, 
\[
\m(\F,0) = \frac{p-1}{l} \cdot |\Out^*_\F(S)^{\cl}| + 
\sum_{\mu \in \Irr^3(S)/\Out_{\F}(S)} z(kC_{\Out_\F(S)}(\mu)).
\]
\end{Lem}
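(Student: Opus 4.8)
The plan is to collapse the sum defining $\m(\F,0,d)$ down to the single summand coming from $S$, using Lemma~\ref{l:red} together with the classification in Theorem~\ref{t:classp3}, and then to evaluate that summand directly from the character data in Lemma~\ref{l:exsp-setup}(3).

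First, by Lemma~\ref{l:red} only $\F$-centric radical subgroups contribute to $\m(\F,0,d)$, and by Theorem~\ref{t:classp3} these lie among $\{Q_0,\dots,Q_p,S\}$. I would dispose of the $Q_i$ first. If $Q_i$ is $\F$-centric radical then $G_i:=\Out_\F(Q_i)$ satisfies $\SL_2(p)\le G_i\le\GL_2(p)$, so its Sylow $p$-subgroups are cyclic of order $p$; consequently the only $\Out_\F(Q_i)$-classes of chains in $\N_{Q_i}$ are the trivial chain $(1)$ and a chain $(1<P)$ with $P\in\Syl_p(G_i)$. Since all irreducible characters of $Q_i$ have defect $2$, this gives $\w_{Q_i}(\F,0,d)=0$ for $d\ne 2$ and $\w_{Q_i}(\F,0,2)=\w_{Q_i}(\F,0)$. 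Writing $\w_{Q_i}(\F,0)$ as the difference of the $(1)$- and $(1<P)$-contributions and grouping each by $G_i$- resp. $N_{G_i}(P)$-orbits on $\Irr(Q_i)\cong\hat Q_i$ (the dual of the natural module), every summand whose stabiliser has nontrivial $O_p$ vanishes by Lemma~\ref{l:z=0}. From $(1)$ only the trivial character survives, contributing $z(kG_i)=|G_i:\SL_2(p)|$ by Lemma~\ref{l:sl2p} (the point stabiliser of a nonzero covector has a normal Sylow $p$-subgroup). From $(1<P)$ only covectors outside the $P$-fixed line survive; a short index count shows $N_{G_i}(P)$ is transitive on this set with point stabiliser $H$ a $p'$-group of order $|G_i:\SL_2(p)|$, so $z(kH)=k(H)=|G_i:\SL_2(p)|$ as well. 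Hence $\w_{Q_i}(\F,0)=0$, and therefore $\m(\F,0,d)=\w_S(\F,0,d)$ for every $d$.

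Next I evaluate $\w_S(\F,0,d)$. By Theorem~\ref{t:classp3}(1), $\Out_\F(S)$ has order prime to $p$, so the trivial chain is the only element of $\N_S$, and $\w_S(\F,0,d)=\sum_{\mu\in\Irr_K^d(S)/\Out_\F(S)} z(k\,C_{\Out_\F(S)}(\mu))$; here each stabiliser is a $p'$-group, so $z$ equals the number of conjugacy classes. By Lemma~\ref{l:exsp-setup}(3), $S$ has no irreducible character of degree $p^2$ or $p^3$, giving part (1); its $p^2$ linear characters have defect $3$, giving part (3) directly; and its degree-$p$ characters $\phi_1,\dots,\phi_{p-1}$ are exactly the characters of defect $2$. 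For part (2), $\phi_u$ is determined by its central character $\c\mapsto p\epsilon^u$, and by Lemma~\ref{l:exsp-setup}(1) $\Out(S)=\GL_2(p)$ acts on $Z(S)$ through the determinant; hence $\Out_\F(S)$ permutes $\{\phi_u\}_{u=1}^{p-1}$ via $\det\colon\Out_\F(S)\to\FF_p^\times$ acting by multiplication, an action with $(p-1)/l$ orbits in which every stabiliser equals $\Out^*_\F(S)$. Thus $\m(\F,0,2)=\frac{p-1}{l}\,|\Out^*_\F(S)^{\cl}|$, and summing the contributions over $d\in\{0,1,2,3\}$ yields the final displayed equality.

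The \emph{main obstacle} is the vanishing $\w_{Q_i}(\F,0)=0$: it is not formal, and depends on identifying the $\GL_2(p)$-orbits on $\hat Q_i$ and on $\Syl_p(G_i)$ and then checking that the length-$0$ and length-$1$ contributions coincide. Once that cancellation is established, the remainder is routine bookkeeping with Lemmas~\ref{l:sl2p} and~\ref{l:z=0} and the explicit character table of $S$.
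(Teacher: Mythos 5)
Your proposal is correct and follows essentially the same route as the paper: both reduce to the centric radicals $Q_i$ and $S$, both establish the key cancellation $\w_{Q_i}(\F,0)=a-a=0$ by splitting the contributions of the length-zero and length-one chains and killing all orbits with nontrivial normal $p$-part via Lemma~\ref{l:z=0}, and both evaluate $\w_S(\F,0,d)$ directly from the character table of $S$ and the determinant action on $Z(S)^{\#}$. The one place you are slightly terser than the paper is the transitivity of $N_{G_i}(P)$ on the $p(p-1)$ covectors off the $P$-fixed line, which is not a pure index count; the paper supplies the needed two regular actions (of $\bar S$ on the non-fixed points of $\mathbb{P}^1$ and of a torus of order $p-1$ on the nonzero covectors in each line), and you would want to include that observation to make the step airtight.
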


\begin{proof}
Each character of $Q_i \cong C_p \times C_p$ is linear, so has defect $2$,
while each character of $S$ has defect $2$ or $3$ by
Lemma~\ref{l:exsp-setup}(3).  This shows (1).

The quantity $\m(\F,0,2)$ is a sum over $\F$-conjugacy classes of centric
radicals $Q$ of the quantities $\w_Q(\F,0,2)$.  Fix first a centric radical $Q$
of order $p^2$. We claim $\w_{Q}(\F,0,2) = 0$.  By Theorem~\ref{t:classp3},
$\Out_\F(Q)$ is a subgroup of $\GL_2(p)$ containing $\SL_2(p)$ with index $a$,
say. First consider the trivial chain $\sigma = (\bar{Q}) \in \N_Q$. Then
$\Out_\F(Q)$ stabilizes $\sigma$. There are two orbits on characters, one nontrivial and one
trivial. Given a nontrivial character $\mu$, the stabilizer of $\mu$ in
$\Out_\F(Q)$ is a Frobenius group with normal subgroup of order $p$, and so has
$0$ projective simple modules by Lemma~\ref{l:z=0}.  Also, the trivial
character has stabilizer $\Out_\F(Q)$, which by Lemma~\ref{l:sl2p} has $a$
projective simple modules. Thus, the contribution to $\w_Q(\F,0,2)$ from the
trivial chain is $a$. Next let $\sigma = (\bar{Q} < \bar{S}) \in \N_Q$ be the
nontrivial chain. The stabilizer $I(\sigma)$ in $\Out_\F(Q)$ of $\sigma$ has a
normal subgroup $\bar{S}$ and the quotient by $\bar{S}$ is abelian order
$(p-1)a$.  Then $I(\sigma)$ has one orbit of size $1$ containing the trivial
character, and one of size $p-1$ consisting of those characters with kernel
$Z(S)$. The respective stabilizers have $\bar{S}$ as a normal subgroup, so the
contribution from these orbits is $0$, by Lemma~\ref{l:z=0} .  The group
$\bar{S}$ acts regularly on the remaining points of $\mathbb{P}^1(Q)$, and a
subgroup in $I(\sigma) \cap SL(Q)$ of order $p-1$ acts regularly on the
characters with kernel a given point. Thus, there is one further orbit with
stabilizer of order $a$, and this contributes $-a$ to $\w_{Q}(\F,0,2)$. Hence,
$\w_Q(\F,0,2) = a-a = 0$, as claimed.

Lastly, consider $\w_{S}(\F,0,2)$. Here, there is only the trivial chain with
stabilizer $\Out_\F(S)$. Note that $\Irr^2(S)$ consists of the $p-1$ faithful
characters denoted $\phi_u$ in Lemma~\ref{l:exsp-setup}. Let $l$ be the index
of $\Out^*_\F(S)$ in $\Out_\F(S)$. The action of $\Out_\F(S)$ on $\Irr^2(S)$ is
the same as the action on $Z(S)^\#$ (nonidentity elements), and there
$\Out_\F(S)$ acts semiregularly via the determinant map. So there are $(p-1)/l$
orbits, each of size $l$, and a representative for each orbit has stabilizer
$\Out^*_\F(S)$.  As $\Out^*_{\F}(S)$ is a $p'$-group, we have
$z(k\Out^*_{\F}(S))$ is the number of simple $k\Out^*_\F(S)$-modules, which is
the number of conjugacy classes.  This completes the proof of (2).

In (3), since $\Irr^3(Q_i)$ is empty, there is only one pair $(Q,\sigma)$ for
which the innermost sum in $\m(\F,0,3)$ is nonzero, namely the pair $(S, 1)$.
We thus have $I(\sigma) = \Out_\F(S)$ for this pair. Thus, $$\m(\F,0,3) =
\sum_{\mu \in \Irr^3(S)/\Out_\F(S)} z(kC_{\Out_\F(S)}(\mu)),$$ which completes
the proof of (3). The last statement then follows from (1)-(3), since clearly
$\m(\F,0,d) = 0$ for $d > 3$.
\end{proof}

We now calculate $\k(\F,0)$.

\begin{Lem}\label{l:k}
Let $\F$ be a saturated fusion system on $S$.  Then 
$$\k(\F,0)=\w(\F,0)+\frac{p-1}{l} \cdot |\Out^*_\F(S)^{\cl}|
+\sum_{([x] \in S^{\cl}\backslash \{1,\c\})/\F} z(kC_{\Out_\F(S)}([x])).$$
\end{Lem}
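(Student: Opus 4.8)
The plan is to evaluate $\k(\F,0)$ by means of the identity \eqref{e:k3} of Lemma~\ref{l:CcongD}, namely $\k(\F,0)=\sum_{Q\in\F^c/\F}\sum_{x\in Z(Q)/\Out_\F(Q)}z(kC_{\Out_\F(Q)}(x))$. First I would record that $\F^c=\{S,Q_0,\dots,Q_p\}$: since $S$ has exponent $p$, every subgroup of order $p^2$ is one of the $Q_i$, is elementary abelian, and is self-centralising, whereas a subgroup of order $p$ has centraliser of order $p^2$ or $p^3$ and the trivial subgroup has centraliser $S$, so none of the latter is $\F$-centric. Thus the sum splits into the $Q=S$ term plus one term for each $\F$-class of the $Q_i$, and I would evaluate these in turn, distinguishing the $\F$-radical $Q_i$ from the non-radical ones.

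For $Q=S$: by Lemma~\ref{l:exsp-setup}(1) the group $\Out_\F(S)\le\GL_2(p)$ acts on $Z(S)=\langle\c\rangle$ through the determinant, so the orbits on $Z(S)$ are $\{1\}$ (stabiliser $\Out_\F(S)$) together with $(p-1)/l$ orbits of nonidentity elements, each with stabiliser $\Out^*_\F(S)$; since $\Out_\F(S)$, hence $\Out^*_\F(S)$, is a $p'$-group by Theorem~\ref{t:classp3}(1), $z(k\Out^*_\F(S))=|\Out^*_\F(S)^{\cl}|$ and the $Q=S$ term is $z(k\Out_\F(S))+\tfrac{p-1}{l}|\Out^*_\F(S)^{\cl}|$. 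For a radical $Q_i$: by Theorem~\ref{t:classp3}(3), $\SL_2(p)\le\Out_\F(Q_i)\le\GL_2(p)$ acts transitively on $Q_i\setminus\{1\}$, and the stabiliser of a nonzero vector contains the transvection subgroup fixing it, which is a normal $p$-subgroup of the corresponding vector stabiliser inside $\GL_2(p)$; hence $z(kC_{\Out_\F(Q_i)}(x))=0$ for $x\neq1$ by Lemma~\ref{l:z=0}, and $Q_i$ contributes only $z(k\Out_\F(Q_i))$. As non-radical $Q_i$ satisfy $O_p(\Out_\F(Q_i))\neq1$ and so contribute $z(k\Out_\F(Q_i))=0$ to $\w$, the $Q=S$ first summand together with the radical $Q_i$ contributions sum to $z(k\Out_\F(S))+\sum_{Q_i\text{ radical}/\F}z(k\Out_\F(Q_i))=\w(\F,0)$.

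For a non-radical $Q_i$ I would carry out the key structural step. Set $P_i=\Inn(S)|_{Q_i}\le\Aut(Q_i)$; every $c_s|_{Q_i}$ fixes $Z(S)$ pointwise and induces the identity on $Q_i/Z(S)$, so $P_i\cong S/Q_i\cong C_p$ is the full group of transvections of $Q_i$ with axis $Z(S)$. By Theorem~\ref{t:classp3}(3), $\Out_\F(Q_i)$ is the image of $N:=N_{\Aut_\F(S)}(Q_i)$ under restriction to $Q_i$; the kernel of this restriction consists of automorphisms of $S$ fixing $Q_i$ pointwise, and since such an automorphism induces either the identity or a $p$-element (a transvection of $S^{\ab}$ with axis $Q_i/Z(S)$) in the $p'$-group $\Out_\F(S)$, it must be inner — so this kernel is $\{c_q:q\in Q_i\}\le\Inn(S)$. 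Hence $\Out_\F(Q_i)/P_i\cong N/\Inn(S)=\Stab_{\Out_\F(S)}(Q_i/Z(S))$, a $p'$-group, and therefore $O_p(\Out_\F(Q_i))=P_i$. Now $P_i$ fixes $Z(S)$ pointwise and acts freely on every coset $xZ(S)$ with $x\notin Z(S)$, so: for $x\in Z(S)\setminus\{1\}$ one has $P_i\le C_{\Out_\F(Q_i)}(x)$ and $z(kC_{\Out_\F(Q_i)}(x))=0$; and for $x\in Q_i\setminus Z(S)$ one has $C_{\Out_\F(Q_i)}(x)\cap P_i=1$, so the quotient map $\Out_\F(Q_i)\to\Stab_{\Out_\F(S)}(Q_i/Z(S))$ restricts to an isomorphism $C_{\Out_\F(Q_i)}(x)\xrightarrow{\ \sim\ }\Stab_{\Out_\F(S)}(\bar x)=C_{\Out_\F(S)}([x])$ (injectivity is clear; surjectivity holds because any $g\in\Out_\F(Q_i)$ whose image fixes $\bar x$ sends $x$ into $xZ(S)=[x]$, and can be corrected by an element of $P_i$). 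Consequently $z(kC_{\Out_\F(Q_i)}(x))=z(kC_{\Out_\F(S)}([x]))$, and since also $z(k\Out_\F(Q_i))=0$, the non-radical $Q_i$ contributes $\sum_{x\in(Q_i\setminus Z(S))/\Out_\F(Q_i)}z(kC_{\Out_\F(S)}([x]))$.

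Finally I would reindex the non-radical contributions: the $\F$-classes of non-radical $Q_i$ correspond to the $\Out_\F(S)$-orbits of non-radical lines in $S^{\ab}$ (morphisms between non-radical $Q_i$ come from $\Aut_\F(S)$ by Theorem~\ref{t:classp3}(3)), and for a fixed such $Q_i$ the $\Out_\F(Q_i)$-orbits on $Q_i\setminus Z(S)$ are exactly the $P_i$-orbit cosets, i.e. the $\Stab_{\Out_\F(S)}(Q_i/Z(S))$-orbits on $(Q_i/Z(S))\setminus\{0\}$, equivalently on the non-central $S$-conjugacy classes contained in $Q_i$; so Remark~\ref{reorderRemark} identifies the total non-radical contribution with $\sum_{[x]}z(kC_{\Out_\F(S)}([x]))$, the sum over $\F$-classes of conjugacy classes $[x]$ with $\langle x\rangle$ not $\F$-conjugate to a subgroup of $Z(S)$ — that is, over $(S^{\cl}\setminus\{1,\c\})/\F$, the summand being constant on each $\F$-class. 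Adding the three contributions and cancelling the two occurrences of $z(k\Out_\F(S))$ gives the asserted formula. The main obstacle is the non-radical $Q_i$ analysis — pinning down the restriction kernel (where the $p'$-hypothesis on $\Out_\F(S)$ is essential), the extension $1\to P_i\to\Out_\F(Q_i)\to\Stab_{\Out_\F(S)}(Q_i/Z(S))\to1$, and the resulting isomorphism $C_{\Out_\F(Q_i)}(x)\cong C_{\Out_\F(S)}([x])$ for non-central $x$; the rest is bookkeeping with Lemmas~\ref{l:CcongD}, \ref{l:z=0}, Theorem~\ref{t:classp3}, and Remark~\ref{reorderRemark}.
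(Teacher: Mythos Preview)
Your proof is correct and takes a genuinely different route from the paper's. The paper works directly from the definition $\k(\F,0)=\sum_{x\in[S/\F]}\w(C_\F(x),0)$: it shows that for each nontrivial $x$ the centraliser system $C_\F(x)$ has its Sylow normal (so $\w(C_\F(x),0)=z(k\Out_{C_\F(x)}(C_S(x)))$), and then proves the isomorphism $C_{\Out_\F(C_S(x))}(x)\cong C_{\Out_\F(S)}([x])$ by analysing the restriction map $C_{\Aut_\F(S)}(x)\to C_{\Aut_\F(C_S(x))}(x)$ and invoking Thompson's $A\times B$ Lemma (Lemma~\ref{l:axb}) to pin down its kernel. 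You instead start from the reformulation in Lemma~\ref{l:CcongD} (equation~\eqref{e:k3}), split by centric $Q$ rather than by element $x$, and obtain the very same isomorphism (note $C_S(x)=Q_i$) via the short exact sequence $1\to P_i\to\Out_\F(Q_i)\to\Stab_{\Out_\F(S)}(Q_i/Z(S))\to 1$ together with the regularity of the $P_i$-action on non-central cosets. Your argument avoids the $A\times B$ Lemma at the cost of importing Lemma~\ref{l:CcongD} from Section~\ref{towardsmainSection}; the paper's argument stays closer to first principles but uses the $A\times B$ Lemma. Both reach the same structural core.

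One small point: your justification that $\F$-isomorphisms between non-radical $Q_i$'s all arise from $\Aut_\F(S)$ cites Theorem~\ref{t:classp3}(3), but that item only describes $\Aut_\F(Q_i)$, not $\Hom_\F(Q_i,Q_j)$. The claim is true, but it needs Alperin's fusion theorem (equivalently, the clause in Theorem~\ref{t:classp3} that every morphism is a composite of restrictions of automorphisms of $S$ and of the radical $Q_k$), together with the observation that a non-radical $Q_i$ is contained in no radical $Q_k$. With that adjustment your reindexing goes through, and your identification of the index set with $(S^{\cl}\setminus\{1,\c\})/\F$ agrees with the paper's intended reading (namely the $\F$-classes of non-central $S$-classes).
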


\begin{proof}
By definition, we have
$$\k(\F,0)= \w(\F,0)+\w(C_\F(\c),0)+
\sum_{[x] \in (S^{\cl}\backslash \{1,\c\})/\F} \w(C_\F(x),0),$$ 
where in the latter sum $[x]$ runs over $S$-classes for which 
$\langle x \rangle$ is fully $\F$-centralized. Let $1 \neq x \in S$ be 
such that $x$ is not $\F$-conjugate to $\c$ and $\langle x \rangle$ is 
fully $\F$-centralised. If $Q \le C_S(x)$ is $C_\F(x)$-centric radical, 
then $|Q|=p^2$ and $x \in C_S(Q)=Q$. Therefore 
$Q = \langle x, \c \rangle$ and  $\Out_{C_\F(x)}(Q)$ does not contain a 
copy of $\SL_2(p)$, a contradiction. We conclude that 
$C_S(x) \unlhd C_\F(x)$, and 
$$\w(C_\F(x),0)=z(k\Out_{C_\F(x)}(C_S(x))=
z(kC_{\Out_\F(C_S(x))}(x)),$$ 
by Lemma \ref{l:cfxcent}. We claim that 
$C_{\Out_\F(C_S(x))}([x]) \cong C_{\Out_\F(S)}([x])$. Since $\F$ is saturated, 
the restriction map 
$$\res: C_{\Aut_\F(S)}(x) \rightarrow 
C_{\Aut_\F(C_S(x))}(x)$$ 
is surjective with kernel containing $C_{\Inn(S)}(x)$ as a Sylow 
$p$-subgroup. If $A$ is a $p'$-subgroup of $\ker(\res)$ then
$A$ commutes with both $C_{\Inn(S)}(x) \cong \langle x \rangle$ and $C_S(C_{\Inn(S)}(x))$. Hence Lemma \ref{l:axb} implies that $A=1$, and 
$C_{\Out_\F(S)}([x]) \cong C_{\Aut_\F(S)}(x)/C_{\Inn(S)}(x) 
\cong C_{\Out_\F(C_S(x))}(x)$ as claimed.

Clearly $S \unlhd C_\F(\c)$ and so 
$$\w(C_\F(\c),0)=z(k\Out_{C_\F(\c)}(S))=
z(kC_{\Out_\F(S)}([\c]))=\frac{p-1}{l} \cdot |\Out^*_\F(S)^{\cl}|,$$ 
where $l$ is as given in Lemma \ref{l:det=1}. This completes the proof.
\end{proof}

\begin{Rem}
From the classification in \cite{RuizViruel2004} we see that for any 
nonconstrained fusion system $\F$ over $S$, one has $l = p-1$ in 
Lemmas~\ref{l:det=1} and \ref{l:k} . Indeed, if $\F$ is nonconstrained, then $Z(S) =
ZJ(S)$ is not weakly $\F$-closed, and then a result of Glauberman then suggests
that $\Out_{\F}(Z(S)) \cong C_{p-1}$ for any nonconstrained fusion system over
$S$; c.f.  \cite[Theorem~14.14]{Glauberman1971}.
\end{Rem}

\begin{Lem}\label{l:someconjclasses}
Let $\omega$ be a generator of the multiplicative group
$\mathbb{F}_p^\times$.
\begin{enumerate}
\item The wreath product $ \langle
\left(\begin{smallmatrix} \omega & 0 \\ 0 & 1 
\end{smallmatrix} \right), 
\left(\begin{smallmatrix} 1 & 0 \\ 0 & \omega \end{smallmatrix} \right), 
\left(\begin{smallmatrix} 0 & 1 \\ 1 & 0 \end{smallmatrix} \right)\rangle \cong C_{p-1} \wr C_2$ 
has $(p-1)(p+2)/2$ conjugacy classes.
\item $ \langle \left(\begin{smallmatrix} \omega & 0 \\ 0 & \omega^{-1} 
\end{smallmatrix} \right), 
\left(\begin{smallmatrix} 0 & -1 \\ 1 & 0 \end{smallmatrix} \right) \rangle \cong C_{p-1}.C_2$ 
has $(p+5)/2$ conjugacy classes.
\item If $3 \mid p-1$ then 
$\langle \left(\begin{smallmatrix} \omega^3 & 0 \\ 0 & 1 
\end{smallmatrix} \right), 
\left(\begin{smallmatrix} \omega & 0 \\ 0 & \omega 
\end{smallmatrix} \right), 
\left(\begin{smallmatrix} 0 & 1 \\ 1 & 0 \end{smallmatrix} \right) \rangle$ 
has $(p-1)(p+8)/6$ conjugacy classes.
\item If $3 \mid p-1$ then 
$\langle \left(\begin{smallmatrix} \omega^3 & 0 \\ 0 & \omega^{-3} 
\end{smallmatrix} \right), 
\left(\begin{smallmatrix} 0 & -1 \\ 1 & 0 \end{smallmatrix} \right) \rangle$ 
has $(p+17)/2$ conjugacy classes.
\end{enumerate}
\end{Lem}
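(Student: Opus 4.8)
The plan is to treat all four groups uniformly. Each is an extension $G=B.\langle w\rangle$ with $B$ an abelian normal subgroup of index $2$, where $w$ acts on $B$ either by interchanging the two coordinates (parts (1) and (3)) or by inversion (parts (2) and (4)); in the latter two cases I would first record that, since $p$ is odd and $3\mid p-1$, one has $6\mid p-1$, so the cyclic group $B$ — of order $p-1$ in (2) and of order $(p-1)/3$ in (4) — has even order, its unique involution is $-I=\mathrm{diag}(\omega^{(p-1)/2},\omega^{-(p-1)/2})$, and $w^2=-I$ lies in $B$, so that $G$ is the dicyclic group of order $2|B|$. Since $G=\langle B,w\rangle$ in every case, $G$-conjugacy on $G$ is generated by $B$-conjugation together with conjugation by $w$, and I would count the classes lying in $B$ and those lying in the non-trivial coset $Bw$ separately.

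Classes inside $B$: as $B$ is abelian these are exactly the $\langle w\rangle$-orbits on $B$. For the coordinate swap on $B=C_{p-1}\times C_{p-1}$ in part (1) the fixed set is the diagonal (of size $p-1$) and the rest pair off, giving $(p-1)+\tfrac12\big((p-1)^2-(p-1)\big)=\tfrac{(p-1)p}{2}$ classes; in part (3) one identifies $B$ with the index-$3$ subgroup $D=\{(a,b): a\equiv b\pmod 3\}$ of $C_{p-1}\times C_{p-1}$ (the congruence is symmetric, so the swap restricts to $D$), whose diagonal still has $p-1$ points, giving $(p-1)+\tfrac12\big(\tfrac{(p-1)^2}{3}-(p-1)\big)=\tfrac{(p-1)(p+2)}{6}$ classes; for the inversion action on a cyclic group $B$ of even order $n$ in parts (2) and (4) the fixed set is $\{1,-I\}$ and the remaining $n-2$ elements pair off, giving $\tfrac{n+2}{2}$ classes.

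Classes inside $Bw$: here the squaring map $bw\mapsto(bw)^2\in B$ is a conjugacy invariant, and the key computations to carry out are that $(a,b)(x,y)w(a,b)^{-1}=(axb^{-1},bya^{-1})w$ and $w(x,y)w\,w^{-1}=(y,x)w$ in parts (1) and (3) (so that $(x,y)w$ squares to $(xy,xy)$ and $xy$ is the invariant), together with the analogous computations in the inversion case. In part (1) the product $xy$ takes every value in $C_{p-1}$ and each fibre is a single $B$-conjugacy class, so $Bw$ contributes $p-1$ classes and the total is $\tfrac{(p-1)p}{2}+(p-1)=\tfrac{(p-1)(p+2)}{2}$, as claimed. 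In part (3) one checks that $\{ab^{-1}:(a,b)\in D\}$ is the index-$3$ subgroup of $C_{p-1}$, that the constraint $x\equiv y\pmod 3$ on an element $(x,y)w\in Dw$ pins $x$ down modulo $3$ once $xy$ is fixed, and hence that the fibre of $xy$ over each value has size $(p-1)/3$ and forms a single $D$-conjugacy class; $Dw$ therefore again contributes $p-1$ classes and the total is $\tfrac{(p-1)(p+2)}{6}+(p-1)=\tfrac{(p-1)(p+8)}{6}$. In parts (2) and (4) the coset $Bw$ splits into exactly two classes (conjugation by $B$ moves the exponent of $w$ through a coset of $2\mathbb{Z}/n\mathbb{Z}$, of which there are two since $n$ is even, and conjugation by $w$ preserves its parity), so $G$ has $\tfrac{n+2}{2}+2=\tfrac{n+6}{2}$ conjugacy classes with $n=|B|$; this is the standard count for dicyclic groups, and substituting $n=p-1$ in (2) and $n=(p-1)/3$ in (4) yields the stated values.

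The one genuinely delicate point is the bookkeeping in part (3): one must identify $B$ with the subgroup $D$ correctly, carry the congruence mod $3$ through both the inner action of $D$ and the squaring invariant, and in particular verify that each fibre of $xy\colon Dw\to C_{p-1}$ has size $(p-1)/3$ rather than $p-1$ — it is precisely this factor of $3$ that produces the exponent $p+8$ in the final formula, and getting it wrong leads to a spuriously larger answer.
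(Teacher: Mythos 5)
Your argument is correct and follows the same decomposition as the paper's proof: count $\langle w\rangle$-orbits inside the abelian index-two subgroup $B$, then count conjugacy classes in the non-trivial coset $Bw$; the one real addition is that you justify the coset count (via the squaring invariant $xy$ in the swap cases, and via the exponent parity in the inversion cases), whereas the paper simply asserts ``there are $n$ classes outside $B$''. One caveat on part (4): substituting $n=(p-1)/3$ into your formula $(n+6)/2$ gives $(p+17)/6$, not $(p+17)/2$ as printed in the lemma. The printed value is a typo in the paper---Table~\ref{t:rv3} records $(p+17)/6$ for $\m(\F,0,2)$ in the relevant row, and for $p=7$ the group in question is $\langle -I,\left(\begin{smallmatrix}0&-1\\1&0\end{smallmatrix}\right)\rangle\cong C_4$, which has $4=(7+17)/6$ classes---so your computation is correct and in fact silently corrects the misprint, but you should flag that the value you derive does not literally ``yield the stated value'' as you claim.
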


\begin{proof}
If $B$ denotes the base of the wreath product $G:=C_n \wr C_2$, then $Z(G)$ is
a cyclic subgroup of order $n$ in $B$ so $B \backslash Z(G)$ is the union of
$(n^2-n)/2$ classes. There are $n$ classes of elements in $G$ outside
$B$ which yields $n(n+3)/2$ classes altogether and (1) holds. Next, if $G$
denotes the group in (2) and $H$ is the cyclic subgroup of order $p-1$, we see
that, apart from $Z(G)$ (of order $2$), there are $(p-3)/2$ classes of elements
in $H$. There are $2$ classes of elements outside of $H$, which yields
$(p+5)/2$ classes altogether. A similar argument proves (4). Finally, we prove
(3). Let $G$ denote the group in question, and set  
$B:=\langle \left(\begin{smallmatrix} \omega^3 & 0 \\ 0 & 1 
\end{smallmatrix} \right), 
\left(\begin{smallmatrix} \omega & 0 \\ 0 & \omega 
\end{smallmatrix} \right) \rangle$. 
We see that $Z(G)$ has order $p-1$ and so $B \backslash Z(G)$ is the union of
$(p-1)(p-4)/6$ classes. There are $p-1$ classes of elements in $G$ outside $B$
and this yields $(p-1)(p+8)/6$ classes altogether, as needed. 
\end{proof}

We now describe the computations of the quantities $\m(\F,0)$, $\w(\F,0)$, 
and $\k(\F,0)$ that were carried out in Magma \cite{Magma} and listed in
Tables~\ref{t:rv2} and \ref{t:rv3}.  The list of nonconstrained saturated
fusion systems on $S$ is given in Table~\ref{t:rv2}, based on the list in
\cite[Tables~1.1, 1.2]{RuizViruel2004}. Generators for $\Out_{\F}(S)$ are
listed in the third column of Table~\ref{t:rv2} for the convenience of 
the reader: in each case there is exactly one $\Out(S)$-conjugacy class 
of subgroups isomorphic with $\Out_\F(S)$, and a representative is 
chosen to contain as many diagonal matrices as possible. Then 
$\Out_\F(S)$-orbit representatives and stabilizers for the actions on 
linear characters of $S$ and $S$-conjugacy classes were computed and 
listed in columns four through seven, using the notation of
Lemma~\ref{l:exsp-setup}. In each case, $\m(\F,0,3)$ is computed using 
Lemma~\ref{l:det=1}(3), by summing up the number of projective simple
modules of the stabilizers 
listed in the fifth column of Table \ref{t:rv2}.  Then $\m(\F,0,3)$
is listed in Table~\ref{t:rv3}.  The quantity $\m(\F,0,2)$ is computed
using Lemma~\ref{l:det=1}(2) and Lemma~\ref{l:someconjclasses}, which computes
the number of conjugacy classes of the various $\Out_\F^*(S)$ listed in
Table~\ref{t:rv3}. This completes the description of the computation of
$\m(\F,0)$ as the sum of $\m(\F,0,2)$ and $\m(\F,0,3)$.  Then $\w(\F,0)$ is
calculated using the list of outer automorphism groups of centric radicals in
\cite[Tables 1.1 and 1.2]{RuizViruel2004}. For example, we have denoted 
the three exotic fusion systems at the prime $7$ by
$\RV_{1}$, $\RV_2$, and $\RV_2:2$ in the tables, where $\Out_{\RV_1}(S) \cong
C_6 \wr C_2$, $\Out_{\RV_2}(S) \cong D_{16} \times C_3$, and $\Out_{\RV_2:2}(S)
\cong SD_{32} \times C_3$. Note that $\RV_{2}:2$ contains $\RV_2$ as a normal
subsystem of index $2$. These systems have the following invariants.
\begin{itemize}
\item $\m(\RV_1,0) = 41$ and $\w(\RV_1,0) = 35$, 
\item $\m(\RV_2,0) = 33$ and $\w(\RV_2,0) = 25$, and
\item $\m(\RV_2:2,0) = 42$ and $\w(\RV_2:2,0) = 35$. 
\end{itemize} 

Finally, $\k(\F,0)$ is calculated using Lemma \ref{l:k} by adding 
$\w(\F,0)$ and $|\Out_\F^*(S)^{\cl}|$ to the sum of the number of 
projective simple modules of the stabilizers listed in the seventh 
column of Table \ref{t:rv2}.

\begin{Prop}
Let $\F$ be a nonconstrained saturated fusion system on $S$. 
Then $\k(\F,0)=\m(\F,0)$, and Conjectures \ref{conj:k(b)}, 
\ref{c: malle-robinson},  \ref{c:defect}, \ref{c:heightzero},  
\ref{c:eatonmoreto} and \ref{c:malle-navarro} all hold for $\F$.
\end{Prop}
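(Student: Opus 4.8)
The plan is to begin by invoking Proposition~\ref{l:kp-extraspecial}, which gives $\lim\A^2_\F=0$: the zero family is then the unique $\F$-compatible family, and every one of the seven assertions need only be checked for the triple $(S,\F,0)$. Three of the six conjectures are then essentially immediate from Lemma~\ref{l:det=1}. Indeed, Lemma~\ref{l:det=1}(1) gives $\m(\F,0,d)=0$ for $d\notin\{2,3\}$, while $\m(\F,0,2)=\tfrac{p-1}{l}\,|\Out^*_\F(S)^{\cl}|\ge 1$ and $\m(\F,0,3)\ge z(k\Out_\F(S))=|\Out_\F(S)^{\cl}|\ge 1$, using that $\Out_\F(S)$ is a $p'$-group. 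Since $S$ is nonabelian of order $p^3$ and has irreducible character degrees $1$ and $p$ only by Lemma~\ref{l:exsp-setup}(3), these two positivity facts dispose of Conjecture~\ref{c:defect} (all $\m(\F,0,d)\ge 0$), of Conjecture~\ref{c:heightzero} (take $d'=2$), and of Conjecture~\ref{c:eatonmoreto} (the relevant exponent is $r=1$, matched by $\m(\F,0,3-1)=\m(\F,0,2)>0$, and there is no smaller positive integer). What remains is the equality $\k(\F,0)=\m(\F,0)$ together with the inequalities $\k(\F,0)\le p^3=|S|$ (Conjecture~\ref{conj:k(b)}), $\w(\F,0)\le p^2$ (Conjecture~\ref{c: malle-robinson}, the sectional rank of $S$ being $2$), and $\k(\F,0)/\m(\F,0,3)\le p$, $\k(\F,0)/\w(\F,0)\le p^2+p-1$ (Conjecture~\ref{c:malle-navarro}, since $[S,S]=Z(S)$ has $p$ classes and $S$ has $p^2+p-1$).

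For these last points I would combine Lemmas~\ref{l:det=1}, \ref{l:k} and \ref{l:someconjclasses}. Subtracting the common summand $\tfrac{p-1}{l}|\Out^*_\F(S)^{\cl}|$ from the formulas in Lemmas~\ref{l:det=1} and \ref{l:k}, and using that $l=p-1$ for every nonconstrained $\F$, the desired equality $\k(\F,0)=\m(\F,0)$ reduces to the identity $\w(\F,0)+\sum_{[x]} z(kC_{\Out_\F(S)}([x]))=\sum_{\mu} z(kC_{\Out_\F(S)}(\mu))$, where $[x]$ runs over the non-central non-identity $S$-conjugacy classes modulo $\Out_\F(S)$ and $\mu$ over the linear characters of $S$ modulo $\Out_\F(S)$. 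As $\Out_\F(S)$ acts on the non-central classes through its natural action on $S/Z(S)\cong\FF_p^2$ and on the linear characters through the contragredient action, this is a purely group-theoretic statement about a $p'$-subgroup of $\GL_2(p)$ acting on the natural module and its dual, with a correction term $\w(\F,0)$ coming from the $Q_i$. I would verify it, together with the four inequalities, by running through the list of nonconstrained $\F$ furnished by Theorem~\ref{t:classp3} and \cite[Tables~1.1, 1.2]{RuizViruel2004}. For the thirteen exceptional systems one computes $\Out_\F(S)$-orbit representatives and stabilizers on $\Irr(S)$ and on $S^{\cl}$, evaluates the relevant $z$-values as numbers of simple modules of $p'$-groups, takes $\w(\F,0)$ from the centric-radical automiser data in \cite{RuizViruel2004}, and obtains $\m(\F,0,2)$ from the conjugacy-class counts of Lemma~\ref{l:someconjclasses}; this is exactly the Magma computation recorded in Tables~\ref{t:rv2} and \ref{t:rv3}, from which all five remaining assertions can be read off. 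For the infinite family attached to $\PSL_3(p)$ and its almost simple extensions, $\Out_\F(S)$ is, up to $\GL_2(p)$-conjugacy, one of the torus-normalizer groups treated in Lemma~\ref{l:someconjclasses}, so the same bookkeeping now goes through uniformly in $p$; alternatively, these triples are realized by the principal $p$-block of $\PSL_3(p)$ and its field and graph extensions, and the claims follow from the known validity of the corresponding counting conjectures for these groups via Proposition~\ref{p:links} and the reductions of Section~\ref{conjSection}.

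The main obstacle is the identity $\k(\F,0)=\m(\F,0)$: there is no conceptual reason why the $z$-weighted orbit count of $\Out_\F(S)$ on the natural module should equal, after the $\w(\F,0)$ correction, the corresponding count on the dual module, so it must be checked case by case for the exceptional systems and by an explicit orbit analysis for the torus-normalizer groups in Lemma~\ref{l:someconjclasses}. A secondary difficulty, if one prefers the block-realizability route for the $\PSL_3(p)$-family, is to ensure that Alperin's weight conjecture and the other block conjectures invoked are on record for $\PSL_3(p)$ and the relevant almost simple extensions in defining characteristic; should that be in doubt, the uniform computation via Lemma~\ref{l:someconjclasses} is the safer path.
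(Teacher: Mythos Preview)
Your proposal is correct and takes essentially the same approach as the paper: verification via the data in Tables~\ref{t:rv2} and \ref{t:rv3}, which is exactly what the paper's one-line proof (``This can be easily verified using the tables'') does. Your additional observation that Conjectures~\ref{c:defect}, \ref{c:heightzero}, and \ref{c:eatonmoreto} follow directly from Lemma~\ref{l:det=1} without consulting the tables is correct and a useful refinement, but the remaining assertions---the equality $\k(\F,0)=\m(\F,0)$ and Conjectures~\ref{conj:k(b)}, \ref{c: malle-robinson}, \ref{c:malle-navarro}---are handled identically in both accounts, by reading off the table entries (including the closed formulas in $p$ for the $\PSL_3(p)$ family).
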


\begin{proof}
This can be easily verified using the tables.
\end{proof}

\begin{landscape}
\small

\begin{table}
\renewcommand{\arraystretch}{1.5}
\centering
\caption{$\Out_\F(S)$-orbits of $\Irr^3(S)$, $\F$-classes of $S^{\cl}$ 
and  their $\Out_\F(S)$-stabilisers }
\label{t:rv2}
\begin{tabular}{|c|c|c|c|c|c|c|}

\hline
$p$ & $\F$ & $\Out_\F(S)$ & $\Irr^3(S)/\Out_\F(S)$ & stabilisers 
& $(S^{\cl} \backslash \{1,\c\})/\F$ & stabilisers \\ 
\hline

$3 \nmid (p-1)$ & $\PSL_3(p)$ & $\langle \left(\begin{smallmatrix} \omega 
& 0 \\ 
0 & 1 \end{smallmatrix} \right), \left(\begin{smallmatrix} 1 & 0 \\ 
0 & \omega \end{smallmatrix} \right) \rangle$ 
& $\chi_{0,0},\chi_{0,1},\chi_{1,0},\chi_{1,1}$ 
& $C_{p-1}^2,C_{p-1},C_{p-1},1$ &$[\a\b]$&$1$\\ 
\hline

$3 \nmid (p-1)$ & $\PSL_3(p):2$ 
& $\langle \left(\begin{smallmatrix} \omega & 0 \\ 
0 & 1 \end{smallmatrix} \right), 
\left(\begin{smallmatrix} 1 & 0 \\ 0 & \omega \end{smallmatrix} \right), 
\left(\begin{smallmatrix} 0 & 1 \\ 1 & 0 \end{smallmatrix} \right) \rangle$ 
& $\chi_{0,0},\chi_{0,1},\chi_{1,1}$ & $C_{p-1} \wr C_2 ,C_{p-1},C_2$ 
&$[\a\b]$&$C_2$\\ 
\hline

$3 \mid (p-1)$ & $\PSL_3(p)$ 
& $\langle \left(\begin{smallmatrix} \omega^3 & 0 \\ 
0 & 1 \end{smallmatrix} \right), \left(\begin{smallmatrix} \omega & 0 \\ 
0 & \omega \end{smallmatrix} \right) \rangle$  
& $\makecell{\chi_{0,0},\chi_{0,1},\chi_{1,0},\\
\chi_{1,1},\chi_{\omega,1},  \chi_{1,\omega}}$ 
& $\makecell{C_{p-1} \times C_{(p-1)/3}, C_{(p-1)/3}, \\
C_{(p-1)/3},1,1,1}$  &$[\a\b]$, $[\a\b^\omega]$, $[\a^\omega \b]$ 
&$1$, $1$, $1$\\ 
\hline

$3 \mid (p-1)$ & $\PSL_3(p):2$ 
& $\langle \left(\begin{smallmatrix} \omega^3 & 0 \\ 
0 & 1 \end{smallmatrix} \right), 
\left(\begin{smallmatrix} \omega & 0 \\ 0 & \omega \end{smallmatrix} \right), 
\left(\begin{smallmatrix} 0 & 1 \\ 1 & 0 \end{smallmatrix} \right) \rangle$  
& $\makecell{\chi_{0,0},\chi_{0,1},\\ \chi_{1,1},\chi_{\omega,1}}$ 
& $\makecell{(C_{p-1} \times C_{(p-1)/3}):C_2, \\ C_{(p-1)/3},C_2,1}$  
&$[\a\b]$, $[\a\b^\omega ]$ & $C_2$, $1$ \\ 
\hline

$3 \mid (p-1)$ & $\PSL_3(p):3$ 
& $\langle \left(\begin{smallmatrix} \omega & 0 \\ 
0 & 1 \end{smallmatrix} \right), 
\left(\begin{smallmatrix} 1 & 0 \\ 
0 & \omega \end{smallmatrix} \right) \rangle$ 
& $\chi_{0,0},\chi_{0,1},\chi_{1,0},\chi_{1,1}$ 
& $C_{p-1}^2,C_{p-1},C_{p-1},1$ & $[\a\b]$ & $1$ \\ 
\hline

$3 \mid (p-1)$ & $\PSL_3(p):S_3$ 
& $\langle \left(\begin{smallmatrix} \omega & 0 \\ 
0 & 1 \end{smallmatrix} \right), 
\left(\begin{smallmatrix} 1 & 0 \\ 0 & \omega \end{smallmatrix} \right), 
\left(\begin{smallmatrix} 0 & 1 \\ 1 & 0 \end{smallmatrix} \right) \rangle$ 
& $\chi_{0,0},\chi_{0,1},\chi_{1,1}$ & $C_{p-1} \wr C_2 ,C_{p-1},C_2$ 
& $[\a\b]$ & $C_2$ \\ 
\hline

$3$ & $^2F_4(2)'$ 
& $\langle \left(\begin{smallmatrix} -1 & 0 \\ 
0 & 1 \end{smallmatrix} \right), 
\left(\begin{smallmatrix} 1 & 0 \\ 0 & -1 \end{smallmatrix} \right), 
\left(\begin{smallmatrix} 0 & 1 \\ 1 & 0 \end{smallmatrix} \right) \rangle$ 
& $\chi_{0,0},\chi_{0,1},\chi_{1,1}$ &$D_8$, $C_2$, $C_2$ & $-$ & $-$ \\ 
\hline

$3$ & $\J_4$ 
& $\langle \left(\begin{smallmatrix} 2 & 0 \\ 
0 & 1 \end{smallmatrix} \right), 
\left(\begin{smallmatrix} 1 & 0 \\ 0 & 2 \end{smallmatrix} \right), 
\left(\begin{smallmatrix} 1 & 2 \\ 2 & 2 \end{smallmatrix} \right) \rangle$ 
& $\chi_{0,0},\chi_{0,1}$ &$SD_{16}$, $C_2$  &$-$ & $-$ \\ 
\hline

$5$ & $\Th$ & $\langle \left(\begin{smallmatrix} 2 & 0 \\ 
0 & 1 \end{smallmatrix} \right), 
\left(\begin{smallmatrix} 1 & 0 \\ 0 & 2 \end{smallmatrix} \right), 
\left(\begin{smallmatrix} 3 & 3 \\ -1 & 1 \end{smallmatrix} \right) \rangle$ 
&$\chi_{0,0},\chi_{0,1}$ &$4.S_4,C_4$ &$-$ & $-$ \\ 
\hline

$7$ & $\He$ & $\langle \left(\begin{smallmatrix} 2 & 0 \\ 
0 & 1 \end{smallmatrix} \right), 
\left(\begin{smallmatrix} 1 & 0 \\ 0 & 2 \end{smallmatrix} \right), 
\left(\begin{smallmatrix} 0 & -1 \\ -1 & 0 \end{smallmatrix} \right) \rangle$ 
& $\makecell{\chi_{0,0},\chi_{0,1},\chi_{1,0},\\
\chi_{1,1},\chi_{3,1},  \chi_{1,3}}$ 
&$\makecell{S_3 \times C_3, C_3 \\ 
C_3,1,C_2,C_2}$ &$[\a]$, $[\b]$, $[\a\b^3]$, $[\a^3 \b]$ 
& $C_3$, $C_3$, $C_2$, $C_2$ \\ 
\hline

$7$ & $\He:2$ & $\langle \left(\begin{smallmatrix} 2 & 0 \\ 
0 & 1 \end{smallmatrix} \right), 
\left(\begin{smallmatrix} 3 & 0 \\ 0 & 3 \end{smallmatrix} \right), 
\left(\begin{smallmatrix} 0 & 1 \\ 1 & 0 \end{smallmatrix} \right) \rangle$ 
&  $\chi_{0,0}, \chi_{0,1}, \chi_{1,1},\chi_{3,1}$ 
&$S_3 \times C_6, C_3, C_2, C_2 $ & $[\a]$, $[\a\b^3]$ & $C_3$, $C_2$ \\ 
\hline

$7$ & $\Fi'_{24}$ & $\langle \left(\begin{smallmatrix} 2 & 0 \\ 
0 & 1 \end{smallmatrix} \right), 
\left(\begin{smallmatrix} 3 & 0 \\ 0 & 3 \end{smallmatrix} \right), 
\left(\begin{smallmatrix} 0 & -1 \\ -1 & 0 \end{smallmatrix} \right) \rangle$ 
& $\chi_{0,0},\chi_{0,1},\chi_{1,1},  \chi_{3,1}$ 
&$S_3 \times C_6$, $C_3$, $C_2$, $C_2$ &$[\b]$ & $C_3$ \\ 
\hline

$7$ & $\Fi_{24}$ & $\langle \left(\begin{smallmatrix} 3 & 0 \\ 
0 & 1 \end{smallmatrix} \right), 
\left(\begin{smallmatrix} 1 & 0 \\ 0 & 3 \end{smallmatrix} \right), 
\left(\begin{smallmatrix} 0 & 1 \\ 1 & 0 \end{smallmatrix} \right) \rangle$    
& $\chi_{0,0},\chi_{0,1},\chi_{1,1}$  
& $C_6 \wr C_2$, $C_6$, $C_2$  &$[\b]$ & $C_6$\\ 
\hline

$7$ & $\RV_1$ & $\langle \left(\begin{smallmatrix} 3 & 0 \\ 
0 & 1 \end{smallmatrix} \right), 
\left(\begin{smallmatrix} 1 & 0 \\ 0 & 3 \end{smallmatrix} \right), 
\left(\begin{smallmatrix} 0 & 1 \\ 1 & 0 \end{smallmatrix} \right) \rangle$    
& $\chi_{0,0},\chi_{0,1},\chi_{1,1}$ 
& $C_6 \wr C_2$, $C_6$, $C_2$  & $-$ & $-$ \\ 
\hline

$7$ & $\ON$ & $\langle \left(\begin{smallmatrix} 3 & 0 \\ 
0 & 3 \end{smallmatrix} \right), 
\left(\begin{smallmatrix} 1 & 0 \\ 0 & -1 \end{smallmatrix} \right), 
\left(\begin{smallmatrix} 0 & 2 \\ 3 & 0 \end{smallmatrix} \right) \rangle$ 
& $\chi_{0,0},\chi_{0,1},\chi_{1,1},  \chi_{1,3}$ 
& $D_8 \times C_3, C_2,1,C_2$ & $[\a\b]$ & $1$ \\ 
\hline

$7$ & $\ON:2$ & $\langle \left(\begin{smallmatrix} 3 & 0 \\ 
0 & 3 \end{smallmatrix} \right), 
\left(\begin{smallmatrix} 1 & 0 \\ 0 & -1 \end{smallmatrix} \right), 
\left(\begin{smallmatrix} 2 & 4 \\ -1 & 2 \end{smallmatrix} \right) \rangle$
& $\chi_{0,0},\chi_{0,1},\chi_{1,1}$ 
& $D_{16} \times C_3$, $C_2$, $C_2$ & $[\a\b]$ & $C_2$ \\ 
\hline

$7$ & $\RV_2$ & $\langle \left(\begin{smallmatrix} 3 & 0 \\ 
0 & 3 \end{smallmatrix} \right), 
\left(\begin{smallmatrix} 1 & 0 \\ 0 & -1 \end{smallmatrix} \right), 
\left(\begin{smallmatrix} 2 & 4 \\ -1 & 2 \end{smallmatrix} \right) \rangle$ 
& $\chi_{0,0},\chi_{0,1},\chi_{1,1}$ 
& $D_{16} \times C_3$, $C_2$, $C_2$ & $-$ & $-$ \\ 
\hline

$7$ & $\RV_2:2$ & $\langle \left(\begin{smallmatrix} 3 & 0 \\ 
0 & 3 \end{smallmatrix} \right), 
\left(\begin{smallmatrix} 1 & 0 \\ 0 & -1 \end{smallmatrix} \right), 
\left(\begin{smallmatrix} 2 & 1 \\ 5 & 2 \end{smallmatrix} \right) \rangle$
& $\chi_{0,0},\chi_{0,1}$ & $SD_{32} \times C_3$, $C_2$ & $-$ & $-$\\ 
\hline

$13$ & $\MM$ & $\langle \left(\begin{smallmatrix} 1 & 0 \\ 
0 & 8 \end{smallmatrix} \right), 
\left(\begin{smallmatrix} 2 & 0 \\ 0 & 2 \end{smallmatrix} \right), 
\left(\begin{smallmatrix} 10 & 9 \\ 5 & 2 \end{smallmatrix} \right) \rangle$ 
& $\chi_{0,0},\chi_{0,1},\chi_{1,1}$ & $C_3 \times 4.S_4$, $C_4$, $C_3$  
& $[\a\b]$ & $C_3$ \\ 
\hline
\end{tabular}
\end{table}
\end{landscape}

\begin{table}
\renewcommand{\arraystretch}{1.4}
\centering
\caption{$\w(\F,0)$ and $\m(\F,0,d)$ for $d=2,3$ }
\label{t:rv3}
\begin{tabular}{|c|c|c|c|c|c|}

\hline
 $p$ & $\F$ & $\Out^*_\F(S)$  & $\m(\F,0,2)$ & $\m(\F,0,3)$ & $\w(\F,0)$ \\ 
\hline

$3 \nmid (p-1)$ & $\PSL_3(p)$ & $C_{p-1}$  & $p-1$ & $p^2$ & $p^2-1$ \\ 
\hline

$3 \nmid (p-1)$ & $\PSL_3(p):2$ & $C_{p-1}.C_2$ & $(p+5)/2$ & $p(p+3)/2$ 
& $(p-1)(p+4)/2$ \\ 
\hline

$3 \mid (p-1)$ & $\PSL_3(p)$ & $C_{(p-1)/3}$ & $(p-1)/3$ & $(p^2+8)/3$ 
& $(p^2-1)/3$ \\ 
\hline

$3 \mid (p-1)$ & $\PSL_3(p):2$ & $C_{(p-1)/3}.C_2$ & $(p+17)/6$ 
& $(p+1)(p+8)/6$ & $(p-1)(p+10)/6$ \\ 
\hline

$3 \mid (p-1)$ & $\PSL_3(p):3$ & $C_{p-1}$  & $p-1$ & $p^2$ & $p^2-1$ \\ 
\hline

$3 \mid (p-1)$ & $\PSL_3(p):S_3$ & $C_{p-1}.C_2$  & $(p+5)/2$ 
& $p(p+3)/2$ & $(p-1)(p+4)/2$ \\ 
\hline

$3$ & $^2F_4(2)'$ & $C_4$ & $4$ & $9$ & $9$ \\ \hline
$3$ & $J_4$ & $Q_8$ & $5$ & $9$ & $9$ \\ \hline
$5$ & $\Th$ & $\SL_2(3)$ & $7$ & $20$ & $20$ \\ \hline
$7$ & $\He$ & $C_3$ & $3$ & $20$ & $10$ \\ \hline
$7$ & $\He:2$ & $C_6$ & $6$ & $25$ & $20$ \\ \hline

$7$ & $\Fi'_{24}$  & $C_6$ & $6$ & $25$ & $22$ \\ \hline
$7$ & $\Fi_{24}$ & $D_{12}$ & $6$ & $35$ & $29$ \\ \hline
$7$ & $\RV_1$ & $D_{12}$ & $6$ & $35$ & $35$ \\ \hline
$7$ & $\ON$ & $C_4$ & $4$ & $20$ & $19$ \\ \hline
$7$ & $\ON:2$ &  $C_8$ & $8$ & $25$ & $23$ \\ \hline
$7$ & $\RV_2$ &  $C_8$ & $8$ & $25$ & $25$ \\ \hline
$7$ & $\RV_2:2$ &  $Q_{16}$ & $7$ & $35$ & $35$ \\ \hline
$13$ & $\MM$ & $\SL_2(3)$ & $7$ & $55$ & $52$ \\ \hline
\end{tabular}
\end{table}

\newpage

\appendix
\section{On Lemma \ref{l:QvsQ} }
By Proposition \ref{p:centext}, Lemma \ref{l:QvsQ} is equivalent to the
following.

\begin{Lem} \label{l:QsvQ*central}
Let $G$ be a finite group, $Q$ a normal $p$-subgroup of $G$, $Z$ a
central $p'$-subgroup of $G$ and $e$ a central idempotent of $kZ$.
Then
 \begin{equation}\label{e:robinsoncent} 
\sum_{[x] \in Q^{\cl}/G} \ell(kC_G([x])e)= 
\sum_{\mu \in \Irr(Q)/G} \ell(k C_G(\mu) e).
\end{equation}
\end{Lem}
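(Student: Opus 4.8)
The plan is to reduce the statement, by two routine manipulations, to a form in which it becomes the combination of a theorem of Robinson and one of Robinson--Staszewski, with Brauer's permutation lemma supplying the combinatorial glue. First I would pass modulo $Q$: since $Q$ is a normal $p$-subgroup of $G$ and $\operatorname{char}k=p$, the augmentation ideal of $kQ$ lies in $J(kH)$ for every $Q\leq H\leq G$, so $\ell(kHe)=\ell(k[H/Q]\bar e)$, where $\bar e$ is the image of $e$ under $kZ\cong k[ZQ/Q]$ (legitimate as $Z\cap Q=1$); and the conjugation actions of $G$ on $Q^{\cl}$ and on $\Irr(Q)$ are trivial on $Q$, hence factor through $G/Q$. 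Writing $e=\sum_\lambda e_\lambda$ as a sum of block idempotents of $kZ$, both sides break up accordingly, and each $kHe_\lambda$ becomes a twisted group algebra of $H/ZQ$ over $k$ by Proposition~\ref{p:centext}, so one may assume $e$ is primitive in $kZ$. I would then reformulate the left-hand side: the stabilizer in $G$ of the $Q$-conjugacy class of $x\in Q$ is $QC_G(x)$, whence $\ell(kC_G([x])e)=\ell(kC_G(x)e)$, and the $G$-orbits on $Q^{\cl}$ are the $G$-conjugacy classes of elements of $Q$; so the left side of Lemma~\ref{l:QsvQ*central} is $\sum_x\ell(kC_G(x)e)$ over $G$-class representatives $x\in Q$. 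This is precisely the part of Brauer's subsection formula (for the number of ordinary irreducible characters of $G$ in the blocks constituting $kGe$) coming from $p$-elements lying in $Q$, and following Robinson--Staszewski it can be identified with the rank of the sublattice spanned by the generalized characters of $G$ — supported on the blocks of $kGe$ — that are relatively $Q$-projective over a $p$-modular system (cf.\ the K\"ulshammer--Robinson description cited in Remark~\ref{r:bluecon}).

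For the right-hand side I would use Clifford theory relative to $Q\trianglelefteq G$: $\Irr(G)$ decomposes into block-stable families $\Irr(G\mid\mu)$ indexed by $\mu\in\Irr(Q)/G$, with $|\Irr(G\mid\mu)|=|\Irr(C_G(\mu)\mid\mu)|$ by the Clifford correspondence, which preserves blocks since $Q\leq C_G(\mu)$. Robinson's theorem then identifies $\sum_\mu\ell(kC_G(\mu)e)$ with the same rank as above. The two essential inputs are Brauer's permutation lemma — applied to each element of $G/Q$ and summed via Burnside's lemma, it shows that $G/Q$ has equally many orbits on $Q^{\cl}$ as on $\Irr(Q)$ for every subgroup — together with the fact that each $\mu\in\Irr(Q)$ reduces modulo $p$ to a multiple of the trivial Brauer character, $Q$ being a $p$-group; the latter is what converts the ordinary character count for $C_G(\mu)$ into the modular invariant $\ell(kC_G(\mu)e)$. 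Combining the two identifications yields
\[
\sum_{[x]\in Q^{\cl}/G}\ell\bigl(kC_G([x])e\bigr)=\sum_{\mu\in\Irr(Q)/G}\ell\bigl(kC_G(\mu)e\bigr).
\]

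The hard part will be this last identification on the character side. Brauer's permutation lemma by itself only matches the orbit structures of $G/Q$ on $Q^{\cl}$ and on $\Irr(Q)$, and this is genuinely not enough to force the weighted sums $\sum_{[x]}\ell(\cdots)$ and $\sum_\mu\ell(\cdots)$ to coincide: isomorphic permutation modules can have point stabilizers whose algebras contribute different totals. What rescues the argument is the additional rigidity coming from the intertwining of the class and character permutations of $Q$ through its (invertible) character table, combined with the degeneracy of the modular representation theory of the $p$-group $Q$; disentangling this is exactly the content of \cite{robinson1987characters} and \cite{robinsonstaszewski1990}. I would therefore quote those results and present the two reductions above in full, so that the reader sees precisely how Lemma~\ref{l:QsvQ*central} — equivalently Lemma~\ref{l:QvsQ}, by Proposition~\ref{p:centext} — follows.
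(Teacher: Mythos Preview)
Your overall plan---reduce to and quote the theorems of Robinson and Robinson--Staszewski to which the lemma is attributed---is legitimate, and the initial reductions (passing modulo $Q$, rewriting $\ell(kC_G([x])e)=\ell(kC_G(x)e)$, decomposing $e$) are correct. But the exposition of the right-hand side is muddled: the Clifford correspondence gives $|\Irr(G\mid\mu)|=|\Irr(C_G(\mu)\mid\mu)|$, a count of \emph{ordinary} characters, and this is \emph{not} equal to $\ell(kC_G(\mu)e)$; the fact that $\mu$ reduces modulo $p$ to a multiple of the trivial Brauer character does not by itself bridge that gap. Likewise Brauer's permutation lemma is a red herring---it plays no role here and, as you yourself note, cannot yield the weighted equality. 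If you are going to cite, cite cleanly and drop these two detours.

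The paper takes a different, self-contained route. The common intermediate is not the rank of the lattice of $Q$-projective characters but the dimension $\dim_K d^Q(\mathcal{C}(G,e))$ of the space of class functions in the $e$-blocks vanishing outside $p$-sections of elements of $Q$. The left side equals this dimension by Brauer's second main theorem (Lemma~\ref{l:2main}). For the right side the paper decomposes $d^Q(\mathcal{C}(G,e))=\bigoplus_\mu d^Q(\mathcal{C}(G,\mu,e))$ (Lemma~\ref{l:cliff1}), reduces each summand to the inertia group by induction (Lemma~\ref{l:cliff2}), and then---this is the step your proposal leaves as a black box---constructs for a stable $\mu$ a finite $p$-central extension $\widetilde G\to G$ on which $\mu$ lifts to a linear character times an irreducible $\widetilde\mu$ (Lemma~\ref{l:cliff3}); this turns $d^Q(\mathcal{C}(G,\mu,e))$ into a $d^{YQ}$-space over a \emph{linear} character of $YQ$, whose dimension is immediately $\ell(kGe)$ (Lemma~\ref{l:cliff4}). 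The central-extension argument is precisely what makes the passage from ``characters over $\mu$'' to $\ell(kC_G(\mu)e)$ rigorous, and it is what your sketch is missing if you want more than a citation.
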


The rest of the section is devoted to a proof of Lemma
\ref{l:QsvQ*central}. The basic idea is that, when $e =1_{kZ}$, then
both sides count the number of $p$-sections in $G$ of elements of $Q$,
or the dimension of the space of  ordinary  class functions of $G$
vanishing outside  $p$-sections of elements of $Q$.

\bigskip\noindent
{\bf Notation.}
Let $(K, \O, k)$ be a $p$-modular system which we assume is big enough
for the finite groups considered in this section.  Denote by $\C(G)$
the $K$-vector space of all  $K$-valued class functions  on $G$ and
by $\Irr(G) \subset \C(G)$ the  set of ordinary  irreducible characters
of $G$ viewed as $K$-valued functions.

For $X \subset G$, denote by $ d^{X} : \C(G) \to \C(G) $,
the $K$-linear map defined by $\varphi \to  d^X(\varphi),  \varphi \in  
\C(G)$ where $d^X(\varphi)(g) = 0 $ if $ g_p   $ is not conjugate to
an element of $X$  and  $d^X(\varphi)(g) = \varphi(g)$ otherwise.
Thus, $d^X(\C(G))$ is the subspace of  all class functions which
vanish outside the $p$-sections  of elements of $X$, that is those
class functions $\varphi$ such that $\varphi(x) =0$ unless $x_p$ is
conjugate to an element of $X$.

If $ X=\{x\}$ we write $d^x$ for $d^X$. For general $X$ and
$x \in X$,  $d^x(\C(G))$  is a subspace of  $d^X(\C(G))$ and
$d^X(\C(G)) = \oplus_{x}\ d^x(\C(G))$, where $x$ runs
over a set of conjugacy class representatives of $p$-elements in  $X$.
Note that if $X$ is a normal $p$-subgroup of $G$, then $d^X{\C}(G) $
consists  of precisely those functions which take the value zero on
elements $g$ such that $g_p \notin Q$.

For a central idempotent  $f$  of $KG$ denote by  $\Irr(G, f)$ the
subset of  ordinary  irreducible characters  of $G$ corresponding to
simple $KGf$ modules and by $\C(G, f) $ the subspace of
$\C(G)$ consisting of those class functions which are in the
$K$-span of $\Irr(G, f)$.
Recall that the  canonical surjection $\O G \to kG$ induces a
bijection between the set of  central idempotents of  $ {\O}G $
and  of $kG$. By abuse of notation, if $e$ is a central
idempotent of $ kG$ corresponding to the central idempotent $\hat e$ of
$\O G$ we write $\Irr(G, e)$ for $\Irr(G, \hat e)$
and  $\C(G, e)$ for $\C(G, \hat e)$. Thus, if $e$ is a block
of $kG$, then $\Irr(G, e)$ is the subset of ordinary irreducible
characters of $G$ belonging to $\hat e$.
For $N$ a normal subgroup of $G$ and $\mu\in \Irr(N)$, let
$ \C (G, \mu)$ denote the subspace of  $\C(G)$ consisting of those
class functions which are in the $K$-span of irreducible characters
of $G$ which cover $\mu$ and for $f$ a central idempotent of $KG$
(or  $kG$) denote by $\C(G, \mu, f)$   the intersection of
$\C(G, \mu)$  and $\C(G, f) $.

The  following gives the desired interpretation   of the  left hand side
of  Lemma \ref{l:QsvQ*central}.  When  $e =1_{kZ}$, the statement is
elementary.  Passage to arbitrary $e$ requires an  application of
Brauer's  second main theorem which we now recall. Denote  by
$\IBr(G)$ the set of  Brauer characters of   simple $kG$-modules
viewed as  $K$-valued  class functions on  $ G_{p'} $, the set of
$p$-regular   elements of $G$.  For $x \in G$ a $p$-element,
$\chi \in \Irr(G)  $   and $ \varphi \in \IBr(C_G(x) )$ denote
by  $ d^x_{\chi, \varphi} $ the  corresponding generalised decomposition
number.  By Brauer's  second main theorem, if $b$ is the block of $kG$
containing  $\chi $, then   $ d^x_{\chi, \varphi} $  is zero unless
$\varphi $ is  the  Brauer character of a  simple  $kC_G(x)$ module
lying  in a  block  $c$ of $kC_G(x)$  which is in  Brauer correspondence
with $b$.   In other words,    for all $ y \in C_G(x)_{p'} $   we have that
$$ \chi (xy) = \sum_{\varphi}  d^x_{\chi, \varphi} \varphi(y)\ , $$
where $\varphi $ runs over the set of   irreducible Brauer characters
of $C_G(x)$  lying in Brauer correspondents of $b$.

\begin{Lem} \label{l:2main}
Let $x$ be a $p$-element  of $G$. Let $Z \leq  G$ be a central
$p'$-subgroup  of  $G$ and   $e$ a central idempotent of $kZ$.  Then,
\begin{equation}  
\dim_K   d^x  ( \C(G, e) )=  \ell(k C_G(x)e ).  
\end{equation}
If $ Q $ is a normal $p$-subgroup of $G$, then
\begin{equation}  \dim_K   d^ Q ( \C (G, e) )=   
\sum_{x \in Q^{cl}/G}   \ell(k C_G([x])e ).  
\end{equation}
\end{Lem}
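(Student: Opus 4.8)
The plan is to prove the first displayed equality for a fixed $p$-element $x$, and then obtain the second one for free from the decomposition $d^Q = \sum_x d^x$. Since $Z$ is central we have $Z \le C_G(x)$, so $e$ restricts to a central idempotent of $kC_G(x)$ and the right-hand sides make sense. First I would record the elementary fact that restriction of class functions to the set $\{xy : y \in C_G(x)_{p'}\}$ identifies $d^x(\C(G))$ with the space $\C(C_G(x)_{p'})$ of class functions on the $p$-regular classes of $C_G(x)$, via the standard bijection between the $G$-classes in the $p$-section of $x$ and the $p$-regular classes of $C_G(x)$; this already gives $\dim_K d^x(\C(G)) = \ell(kC_G(x))$, the case $e = 1_{kZ}$. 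Under this identification $d^x(\chi)$ becomes the function $y \mapsto \chi(xy)$, which by Brauer's second main theorem (recalled above) equals $\sum_\varphi d^x_{\chi,\varphi}\varphi$ with $\varphi$ ranging over $\IBr(C_G(x))$; since $\IBr(C_G(x))$ is a basis of $\C(C_G(x)_{p'})$, it follows that $\dim_K d^x(\C(G))$ equals the rank of the generalized decomposition matrix $\bigl(d^x_{\chi,\varphi}\bigr)_{\chi \in \Irr(G),\, \varphi \in \IBr(C_G(x))}$, and the same discussion applies verbatim inside any $d^x$-stable subspace spanned by a subset of $\Irr(G)$.

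Next I would reduce to the case where $e$ is a primitive (block) idempotent $\epsilon$ of the semisimple algebra $kZ$, corresponding to a linear character $\lambda_\epsilon$ of $Z$. Writing $\C(G,\epsilon)$ for the span of those $\chi \in \Irr(G)$ on which $Z$ acts via $\lambda_\epsilon$, one has $\C(G) = \bigoplus_\epsilon \C(G,\epsilon)$, and both $\C(G,e)$ and $\ell(kC_G(x)e)$ decompose accordingly over the $\epsilon$ with $\epsilon e = \epsilon$. Since each $z \in Z$ is a $p'$-element commuting with everything, $(zg)_p = g_p$ for all $g \in G$, so $d^x$ sends a class function transforming under $Z$ by $\lambda_\epsilon$ to another such; and by orthogonality a class function transforming under $Z$ by $\lambda_\epsilon$ lies in $\C(G,\epsilon)$. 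Hence $d^x$ preserves each $\C(G,\epsilon)$, and it suffices to prove $\dim_K d^x(\C(G,\epsilon)) = \ell(kC_G(x)\epsilon)$ for each primitive $\epsilon$.

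Now fix such an $\epsilon$. I would first check that Brauer correspondence with respect to $\langle x\rangle \le C_G(x)$ fixes $\epsilon$: the Brauer homomorphism $\Br_{\langle x\rangle}$ restricts to the identity on $kC_G(x) \supseteq kZ$, so if $b$ is a block of $kG$ with $b\hat\epsilon = b$ then its Brauer correspondent $c$ in $C_G(x)$ satisfies $c\hat\epsilon = c\,\Br_{\langle x\rangle}(b)\,\hat\epsilon = c$. Combining this with Brauer's second main theorem shows that, grading rows by $\epsilon$ (via $\chi \in \Irr(G,\epsilon)$) and columns by $\epsilon$ (a column $(u,\varphi)$, $u$ a $p$-element class representative and $\varphi \in \IBr(C_G(u))$, lying in the $\epsilon$-part when the block of $\varphi$ is in the $\epsilon$-part of $kC_G(u)$), the classically non-singular generalized decomposition matrix $\bigl(d^u_{\chi,\varphi}\bigr)$ is block-diagonal for this grading. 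Each diagonal $\epsilon$-block is therefore square and invertible, so restricting its columns to a single $u = x$ gives a matrix of full column rank $\ell(kC_G(x)\epsilon)$; together with the rank interpretation of the first paragraph applied inside $\C(G,\epsilon)$, this yields $\dim_K d^x(\C(G,\epsilon)) = \ell(kC_G(x)\epsilon)$, hence the first equality of the lemma. For the second, since $Q$ is a $p$-group we have $d^Q = \sum_x d^x$ as orthogonal idempotents on $\C(G)$, with $x$ running over a set of $G$-conjugacy class representatives of the elements of $Q$; each $d^x$ preserves $\C(G,e)$, so $d^Q(\C(G,e)) = \bigoplus_x d^x(\C(G,e))$, and summing the first equality over $x$ gives $\dim_K d^Q(\C(G,e)) = \sum_{x \in Q^{\cl}/G} \ell(kC_G([x])e)$.

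The step I expect to be the main obstacle is the bookkeeping around the $kZ$-central idempotent: pinning down cleanly that Brauer correspondence preserves $\epsilon$ (so that the generalized decomposition matrix is genuinely block-diagonal for the $Z$-grading) and invoking in exactly the right form the classical non-singularity of the generalized decomposition matrix. Everything else — the $p$-section bijection, the $Z$-equivariance of $d^x$, and the passage from $d^x$ to $d^Q$ — is routine.
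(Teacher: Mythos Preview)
Your argument is correct and follows essentially the same route as the paper: both proofs rest on Brauer's second main theorem together with the observation that the Brauer correspondence preserves the $kZ$-idempotent (your computation $c\epsilon = c\,\Br_{\langle x\rangle}(b)\,\epsilon = c$ is exactly the paper's). The only organizational difference is that the paper, rather than decomposing $e$ into primitives and invoking non-singularity of the full generalized decomposition matrix, argues directly that $\dim_K d^x(\C(G,e)) \le \ell(kC_G(x)e)$ and likewise for $1-e$, and then observes that the two inequalities sum to the already-established equality for $e = 1_{kZ}$; this is a two-block version of your block-diagonality argument and avoids appealing to the classical non-singularity result.

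There is one small gap in your derivation of the second equation. Your decomposition $d^Q(\C(G,e)) = \bigoplus_x d^x(\C(G,e))$ together with the first equality yields $\sum_x \ell(kC_G(x)e)$, with $x$ ranging over $G$-class representatives in $Q$; but the statement asks for $\sum_{[x] \in Q^{\cl}/G} \ell(kC_G([x])e)$, where $C_G([x])$ is the stabilizer in $G$ of the $Q$-conjugacy class $[x]$, i.e.\ $C_G([x]) = C_G(x)Q$. The paper closes this by noting that $Q$ (respectively $C_Q(x)$) is a normal $p$-subgroup of $C_G(x)Q$ (respectively $C_G(x)$), so simple modules factor through the quotient, and $C_G(x)Q/Q \cong C_G(x)/C_Q(x)$; hence $\ell(kC_G([x])e) = \ell(kC_G(x)e)$. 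You should insert this one-line justification.
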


\begin{proof}
The space $d^x(\C(G))$ consists of the  class functions on $G$ which
vanish outside the $p$-section of $x$, hence $\dim_K d^x(\C(G))$ equals
the  number of $p'$-conjugacy classes of $C_G(x)$ and this number is in
turn equal to the number of isomorphism classes  of simple
$kC_G(x)$-modules.
This proves that the first equation holds when $ e=1_{kZ} = 1_{kG}$.
For the general case,  first note that  since $Z$ is central in $G$,
$e$ is a central idempotent of  $kG$ and of $kC_G(x)$ and
$\Br_{\langle x\rangle}(e)  = e$, where
$\Br_{\langle x\rangle} : (kG)^{\langle x \rangle }  \to  
kC_G(x)$ denotes the Brauer homomorphism.
We claim that if $b$ is a block of $kG$ such that $be =b $  and $c$
is  a block of $kC_G(x)$ in Brauer correspondence with $ b$, then
$ce =c$. Indeed, by the uniqueness of  central idempotent
decompositions  and the  primitivity of $b$, we have $ be=b $.
By definition of Brauer correspondence,
$\Br_{\langle x\rangle}  (b)   c  =  c $.  Hence
$$c = \Br_{\langle x\rangle}  (b)   c =   
\Br_{\langle x\rangle}  (be) c =  
\Br_{\langle x\rangle}  (b )  e   c    =cec= ce\ , $$
proving the claim.  It follows from the claim that all simple
$kC_G(x)c$-modules are $kC_G(x)e$-modules.
Thus by Brauer's second main theorem (and the linearity of $d^x $),
if $\tau \in(\C(G,e))$, then for all $ y\in C_G(x)_{p'} $
we have
$$ \tau (xy)  = \sum_{\varphi}  d^x_{\chi, \varphi} \varphi,   $$
where $\varphi $ runs over the set of   Brauer characters of  simple
$kC_G(x)e$=modules.    Since $d^x \tau  $   is determined  by its
restriction to the  subset of  $C_G(x)$ consisting of elements whose
$p$-part is $x$, it follows that
$ \dim_K d^x(\C(G,e))  \leq   \ell(k C_G(x)e ) $.
By the same considerations,
$ \dim_K d^x(\C(G,1-e)) \leq \ell(k C_G(x) (1-e) )$.
Since $\C(G)  =   \C (G, e )  \oplus \C (G,   1-e) $,
$\dim_K  d^x(\C(G)) \leq \dim_K   d^x(\C(G,e))  +  
\dim_K   d^x(\C (G,1-e)) $. The  first
equation  now follows from  the case $e=1_{kZ}$.

Let  $\bar e $ be  the image of $ e$ under the canonical  surjection
of  $kG  \to  k (G/Q) $. Recall that  restriction along   $ kG \to kG/Q$
induces  a bijection between    the set of isomorphism classes of simple
$ kG/Q$-modules and  $ kG$-modules  sending   simple
$k (G/Q) \bar e $-modules  to  $ kG e $-modules.  Also, for any $x \in Q$,
$ e$ is a central idempotent of $ kC_G(x)$  and   identifying
$ C_G(x)/C_G(x) \cap Q   $ with  $ C_G(x) Q/Q $       via the
isomorphism  induced by inclusion of $C_G(x)  $ in $C_G(x) Q $,
the image of $ e$ in $ k (C_G(x)/C_G(x) \cap Q) $   is   $\bar e$. Hence
$$ \ell  (kC_G([x])  e) =   \ell  (kC_G(x) Q e ) =  
\ell (k (C_G  (x)Q/ Q ) \bar e)   =
\ell (k (C_G (x)/  C_G(x) \cap  Q) \bar e ) =   \ell (kC_G(x) e ). $$
Now the second equation follows from the first  since
$$ d^Q (\C(G, e) ) =\bigoplus _{[x]\in Q^{cl}/G}   
d^x(\C(G,e)) .  $$
\end{proof}

\begin{Lem}  \label{l:cliff1}
Let $Z$ be a central $p'$-subgroup of $G$  and $e$ a central idempotent
of $kZ$. Let $Q$ be a normal $p$-subgroup of  $G$.   Then
\begin{equation} d^Q  (\C(G,  e)) =    
\bigoplus _{\mu \in \Irr(Q)/G} d^Q( \C( G, \mu, e)) . 
\end{equation}
\end{Lem}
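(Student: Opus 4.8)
The plan is to reduce everything to one structural fact: the operator $d^Q$ maps each Clifford summand $\C(G,\mu)$ into itself. Granting this, the lemma follows at once. Recall that by Clifford's theorem every $\chi\in\Irr(G)$ lies over a unique $G$-orbit $[\mu]$ of characters of $Q$, so $\Irr(G)$ is the disjoint union, over $\mu\in\Irr(Q)/G$, of the sets $\Irr(G\mid\mu)$ of irreducible characters lying over $\mu$; hence the $\C(G,\mu)$ are the spans of a partition of the orthonormal basis $\Irr(G)$, and intersecting with $\C(G,e)$, which is itself the span of a subset of $\Irr(G)$, gives $\C(G,e)=\bigoplus_\mu\C(G,\mu,e)$. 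Applying the linear map $d^Q$ yields $d^Q(\C(G,e))=\sum_\mu d^Q(\C(G,\mu,e))$, and since $d^Q(\C(G,\mu,e))\subseteq d^Q(\C(G,\mu))\subseteq\C(G,\mu)$ while the $\C(G,\mu)$ lie in direct sum, the sum on the right is automatically direct, which is the claim. (The tacit replacement of $e$ by the corresponding idempotent $\hat e$ of $\O G$ implicit in the notation $\C(G,e)$ changes nothing here, as $e$ is central in $kG$ and the Clifford partition of $\Irr(G)$ refines compatibly.)

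So the one thing to prove is $d^Q(\C(G,\mu))\subseteq\C(G,\mu)$, and this is where the hypothesis that $Q$ is a \emph{normal} $p$-subgroup enters. The observation to build on is that the indicator class function $\delta_Q$ of $\{g\in G:g_p\in Q\}$ is inflated from $G/Q$: writing $\bar g=gQ$, the images $\overline{g_p}$ and $\overline{g_{p'}}$ are the $p$-part and $p'$-part of $\bar g$, so $g_p\in Q$ if and only if $\overline{g_p}=1$, i.e. if and only if $\bar g$ is $p$-regular in $G/Q$. Thus $\delta_Q=\operatorname{Inf}_{G/Q}^G(\epsilon)$, where $\epsilon$ is the indicator class function of the $p$-regular elements of $G/Q$, and for any class function $\varphi$ on $G$ one has $d^Q\varphi=\varphi\cdot\operatorname{Inf}_{G/Q}^G(\epsilon)$, a pointwise product.

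Expanding $\epsilon$ in the basis $\Irr(G/Q)$, it then suffices to show that $\chi\cdot\operatorname{Inf}_{G/Q}^G(\beta)\in\C(G,\mu)$ whenever $\chi\in\Irr(G\mid\mu)$ and $\beta\in\Irr(G/Q)$. I would argue this on the module level: $\chi\cdot\operatorname{Inf}(\beta)$ is the character of $V_\chi\otimes_K\operatorname{Inf}(V_\beta)$, and since $Q$ acts trivially on the second factor, $\Res^G_Q\bigl(V_\chi\otimes_K\operatorname{Inf}(V_\beta)\bigr)$ is a direct sum of copies of $\Res^G_Q(V_\chi)$, which by Clifford's theorem involves only the $KQ$-irreducibles in the orbit $[\mu]$. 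As $K$ has characteristic $0$, each irreducible $KG$-constituent of $V_\chi\otimes_K\operatorname{Inf}(V_\beta)$ is a direct summand, hence its restriction to $Q$ again involves only irreducibles from $[\mu]$; that is, every such constituent lies over $\mu$, giving $\chi\cdot\operatorname{Inf}(\beta)\in\C(G,\mu)$. I expect the main obstacle to be psychological rather than technical: once one spots the inflation identity for $\delta_Q$ and the trivial-action-on-one-factor argument, the rest is bookkeeping with the Clifford decomposition; the genuinely substantive step is the reduction $d^Q(\C(G,\mu))\subseteq\C(G,\mu)$, and everything else in the earlier displayed equality of the appendix is then formal.
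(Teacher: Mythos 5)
Your proof is correct, but it takes a genuinely different (and in one respect sharper) route than the paper's. The paper proves directness of $\sum_\mu d^Q(\C(G,\mu))$ by the following device: $d^Q(\varphi)=0$ if and only if $\varphi$ restricts to zero on every subgroup $H\leq G$ containing $Q$ as a Sylow $p$-subgroup (the nontrivial direction uses $H=Q\langle g_{p'}\rangle$ for a given $g$ with $g_p\in Q$). Given $\varphi_\mu\in\C(G,\mu)$ with $\sum_\mu d^Q(\varphi_\mu)=0$, for each such $H$ the restrictions $\varphi_\mu|_H$ lie in the spans of $\Irr(H)$-constituents over the various $G$-orbits $[\mu]$, hence are mutually orthogonal class functions on $H$; so $(\sum\varphi_\mu)|_H=0$ forces each $\varphi_\mu|_H=0$, and then $d^Q(\varphi_\mu)=0$. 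You instead prove the stronger invariance $d^Q(\C(G,\mu))\subseteq\C(G,\mu)$, by writing $d^Q$ as pointwise multiplication by $\delta_Q=\operatorname{Inf}_{G/Q}^G(\epsilon)$, expanding $\epsilon$ in $\Irr(G/Q)$, and checking on the module level that $V_\chi\otimes_K\operatorname{Inf}(V_\beta)$ restricts to $Q$ inside the isotypic span of $[\mu]$, so every $KG$-constituent still lies over $\mu$. Once $d^Q$ preserves the Clifford summands, the directness is automatic from $\C(G,e)=\bigoplus_\mu\C(G,\mu,e)$, with no need to restrict to the subgroups $H$. Your argument thus isolates a cleaner structural fact about $d^Q$ (that it acts within each Clifford block as multiplication by an inflated class function), whereas the paper's argument is more ad hoc but gets to the directness with less machinery; both are valid, and both rely in the same essential way on $Q$ being normal, you via $\bar g_p=\overline{g_p}$, the paper via $Q\trianglelefteq H$. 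One small remark: in your reduction paragraph, invoking that the $\C(G,\mu)$ form an orthogonal (not merely direct-sum) decomposition of $\C(G)$ is what guarantees that intersecting with $\C(G,e)$ refines compatibly; you say this implicitly, and it is correct since both partitions of $\Irr(G)$ (by $G$-orbit of $\mu$, and by membership in $\Irr(G,e)$) are partitions of an orthonormal basis.
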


\begin{proof}
Since
$$\C(G)  = 
\bigoplus _{\mu \in \Irr(Q)/G} \C(G, \mu) , $$
we  have
$$d^Q  (\C(G)) = 
\sum_{\mu \in \Irr(Q)/G} d^Q(\C(G, \mu)). $$
We show that the sum on the right of  the second equation   is direct.
First note that  if $\varphi $   is an element of $ \C(G, Q)$,
then $ d^Q(\varphi) =0 $ if and only if the restriction of $ \varphi $
to  all subgroups $H$ containing $Q$  as a Sylow $p$-subgroup    equals
zero.
Now  suppose that $\varphi_{\mu}  \in   \C(G, \mu)$,
$\mu  \in  \Irr(Q)/G  $  are such that
$\sum_{\mu  \in \Irr(Q)/G} d^Q(\varphi_{\mu} ) =0 $  and let
$  H$ be a  subgroup of $G$ containing $Q$ as  a Sylow $p$-subgroup. Then
the restriction  of $\sum_{\mu  \in \mathrm {Irr}(Q)/G} \varphi_{\mu} =0$.
But  it is easy to see that   the restriction  of $\varphi_{\mu} $    to
$H$ is  in the   $K$-span of   irreducible characters of $H$  which cover
$G$-conjugates of $\mu $. In particular the restriction of $\varphi_{\mu}$
and  $\varphi_{\mu'}  $  for  $\mu'\ne \mu $ are  orthogonal class
functions on $H$.  Hence   the restriction of  $\varphi_{\mu}$ to $H$
equals  zero   for all $H$ and all $\mu $. It follows that
$d^Q( \varphi_{\mu})  =0 $ for all $\mu $. Thus
\begin{equation} 
d^Q  (\C (G)) = 
\bigoplus _{\mu \in \Irr(Q)/G} d^Q(\C( G, \mu)) . 
\end{equation}
The assertion  of the lemma now follows as $\C(G, e)  $
is the direct sum
$\bigoplus _{\mu \in \Irr(Q)/G} \C (G,  \mu, e)  $.
 \end{proof}

Given the above Lemma,  it remains to analyse
$d^Q(\C( G, \mu, e)) $ for each irreducible character $ \mu $
of $ Q$. This is done  via standard Clifford theoretic reductions.

\begin{Lem}\label{l:cliff2}
Let $Z$ be a central $p'$-subgroup of $G$  and $e$ a central idempotent
of $kZ$. Let $Q$ be a normal $p$-subgroup of  $G$ and let
$\mu \in \Irr(Q)$. Then
$\dim_K d^Q ( \C (G,\mu, e))  =  
\dim_K   d^Q  ( \C (C_G(\mu),\mu, e))  $.
\end{Lem}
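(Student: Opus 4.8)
The plan is to deduce the identity from the Clifford correspondence for the normal $p$-subgroup $Q$, after checking that this correspondence is compatible with the central idempotent $e$ and with the operator $d^Q$. Set $H := C_G(\mu)$, the stabiliser (inertia subgroup) of $\mu$ in $G$; since $Q \unlhd G$ and $\mu$ is a class function on $Q$ one has $Q \le H$, so $Q$ is again a normal $p$-subgroup of $H$, and $Z \le H$ since $Z$ is central. By Clifford's theorem, induction $\psi \mapsto \Ind_H^G\psi$ defines a bijection from the irreducible characters of $H$ covering $\mu$ to the irreducible characters of $G$ covering $\mu$; extending $K$-linearly, $\Ind_H^G$ restricts to a $K$-linear isomorphism $\C(H,\mu) \xra{\ \sim\ } \C(G,\mu)$.

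The first step is to see that this isomorphism respects $e$, i.e.\ that it restricts to an isomorphism $\C(H,\mu,e) \xra{\ \sim\ } \C(G,\mu,e)$. Identifying $e$ with a central idempotent of $\O Z$, hence of $\O G$ and $\O H$, via the usual bijection between central idempotents of $\O(-)$ and $k(-)$, an irreducible character $\rho$ of $G$, or of $H$, lies in $\Irr(-,e)$ exactly when the unique linear character $\lambda \in \Irr(Z)$ with $\Res_Z\rho = \rho(1)\lambda$ — which is well-defined because $Z$ is central — belongs to the subset of $\Irr(Z)$ determined by $e$. As $Z \le H$ is central, Mackey's formula gives $\Res_Z\Ind_H^G\psi = [G:H]\,\Res_Z\psi$, so $\psi$ and $\Ind_H^G\psi$ have the same central character on $Z$; hence $\psi \in \Irr(H,e)$ if and only if $\Ind_H^G\psi \in \Irr(G,e)$, and the claim follows.

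The second step is to see that $d^Q$ preserves $\C(G,\mu,e)$ and $\C(H,\mu,e)$ and commutes with $\Ind_H^G$. Since $Q$ is normal in $G$ and in $H$, $d^Q$ is pointwise multiplication by the indicator function $\epsilon_Q$ of the set of elements whose $p$-part lies in $Q$, equivalently whose image modulo $Q$ is $p$-regular; from the induction formula one checks at once that $\Ind_H^G \circ d^Q = d^Q \circ \Ind_H^G$ on $\C(H)$. Moreover, the projections $\pi_\mu$ onto the span of characters covering $\mu$ and $\pi_e$ onto the span of $\Irr(-,e)$ are given by multiplication, inside the group algebra, by central elements supported on $Q$ and on $Z$ respectively, and such multiplications commute with pointwise multiplication by $\epsilon_Q$ — because for $n \in Q$ (resp.\ $n \in Z$) the images of $n^{-1}g$ and of $g$ modulo $Q$ are simultaneously $p$-regular. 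Hence $d^Q$ commutes with $\pi_\mu$ and $\pi_e$, so it stabilises $\C(G,\mu,e)$ and $\C(H,\mu,e)$. Combining the two steps, $\Ind_H^G$ carries $d^Q(\C(H,\mu,e))$ isomorphically onto $d^Q(\C(G,\mu,e))$, which yields the asserted equality of dimensions.

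I expect the only genuinely delicate point to be the compatibility with $e$: one must confirm that the Clifford correspondent of a character belonging to $e$ again belongs to $e$, which is precisely the content of the Mackey computation $\Res_Z\Ind_H^G\psi = [G:H]\,\Res_Z\psi$. Everything else reduces to the elementary observation that pointwise multiplication by $\epsilon_Q$ commutes with multiplication in the group algebra by an element supported on $Q$ or on the central $p'$-subgroup $Z$.
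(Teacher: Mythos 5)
Your proof is correct and takes essentially the same route as the paper's: pass to $H = C_G(\mu)$ via the Clifford correspondence, check compatibility with the central idempotent $e$, and use that $d^Q$ commutes with $\Ind_H^G$. You are somewhat more explicit than the paper about the final injectivity point — showing that $d^Q$ actually preserves $\C(H,\mu,e)$, so that $\Ind_H^G$ restricted to $d^Q(\C(H,\mu,e))$ is injective — which the paper compresses into the words ``The result follows.''
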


\begin{proof}
Induction from $C_G(\mu)$ to $G$ induces a bijection between
$\Irr (\C_G(\mu))$ and  $\Irr(G)$. Since
$ Z \leq C_G(\mu)$, if $\chi \in \Irr(G, \mu, e)$,
then  $\Ind_{C_G(\mu)}^G(\chi) \in \Irr (G, \mu, e)$.
Hence induction induces an isometric isomorphism between
$\C(C_G(\mu), \mu, e)$ and
$\C(G, \mu, e )$. Further, it is easy to check from the
induction formula that $d^Q(\Ind_{C_{G}(\mu)}^G(\tau)) =    
\Ind_{C_{G}(\mu)}^G(d^Q (\tau))$ for all $\tau$ in
$\C(C_G(\mu))$. The result follows.
\end{proof}

\begin{Lem} \label{l:cliff3}
Let $Q$ be a normal $p$-subgroup of  $G$ and let $\mu$  be a $G$-stable
irreducible  character of $Q$.  There exist a  central extension
$$ 1 \to Y \to   \widetilde G \stackrel{\pi} {\to}  G \to 1, $$
an irreducible character $ \widetilde \mu $ of $\widetilde G$ and    a
one dimensional  character $\eta $ of $Y$ such that the following holds.
\begin{enumerate}

\item $Y$ is a  finite   $p$-group,  the  inverse image  of $Q$ in
$\widetilde G$  is a direct product of  $Y$ with  a normal subgroup
$ Q' $ of  $\widetilde G$  such that $\pi $ maps  $Q'$  isomorphically onto
$Q$.

\item  Identifying $ Q'$ with $Q$ through $\pi$,  there exists a
bijection
$$\Irr(G, \mu ) \to \Irr(\widetilde G, \eta^{-1} 1_{Q}), 
\  \  \  \chi \to \chi_0     $$
such that for any  $ g  \in G$ and   $\widetilde g \in \widetilde G$
lifting $g$
$$ \chi(g) = \widetilde \mu (\widetilde  g) \chi_0  (\widetilde g) .$$

\item  Suppose  that  $Z$  is  a  central $p'$-subgroup of $G$, and
$e$   is  a central idempotent of $kZ$.  Let $ \widetilde Z$ be the inverse
image of $Z$ in $ \widetilde G$. Then $\widetilde Z =   Y \times  Z' $, where
$Z'$  is a central $p'$-subgroup of $ \widetilde G$   mapping
isomorphically onto $Z$ by $\pi$. Identifying  $Z'$ with $Z$ the
bijection  $\chi \to \chi_0$ restricts to a bijection between
$\Irr(G, \mu, e)$ and
$\Irr(\widetilde G, \eta^{-1} 1_{Q}, e)$.
\end{enumerate}
\end{Lem}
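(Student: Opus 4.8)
The plan is to deduce the statement from the standard theory of projective representations and character triples, keeping careful track of $p$-primary information. Fix a representation $D$ of $Q$ affording $\mu$, of degree $n=\mu(1)$ (a power of $p$). Since $\mu$ is $G$-invariant, $D$ extends to a projective representation $P$ of $G$ with $P|_Q=D$ whose factor set may be taken to be inflated from $\bar G:=G/Q$; write $\bar\alpha$ for the corresponding cocycle on $\bar G$ and $[\bar\alpha]\in H^2(\bar G,K^\times)$ for its class. The first point I would establish is that $[\bar\alpha]$ has $p$-power order. This follows from the determinant: $\det P$ has ``coboundary'' equal to $\bar\alpha^{\,n}$ inflated to $G$, and since $\det D$ is a linear character of the $p$-group $Q$ it has $p$-power order $p^c$, so $(\det P)^{p^c}$ is constant on cosets of $Q$, descends to a function on $\bar G$, and trivialises $\bar\alpha^{\,np^c}$; hence $o([\bar\alpha])$ divides $np^c$, a power of $p$. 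Put $p^a=o([\bar\alpha])$, let $Y$ be cyclic of order $p^a$, fix a faithful $\eta\in\Irr(Y)$ (identifying $Y$ with the $p^a$-th roots of unity in $K^\times$), and choose a central extension $1\to Y\to\Gamma\xrightarrow{\rho}\bar G\to 1$ realising $[\bar\alpha]$.

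Next I would take $\widetilde G$ to be the fibre product $\{(g,\gamma)\in G\times\Gamma : q(g)=\rho(\gamma)\}$, with $\pi$ the first projection. Then $\ker\pi\cong Y$ is a central $p$-subgroup, and $\pi^{-1}(Q)=Q'\times Y$ where $Q'=\{(g,1):g\in Q\}$ maps isomorphically onto $Q$ under $\pi$; this gives part (1). Using a set-theoretic section $s$ of $\rho$ with factor set $\bar\alpha$, the function $\tau(\gamma):=\eta(y)$, where $\gamma=y\,s(\rho(\gamma))$, trivialises $\bar\alpha$ along $\Gamma$, and a short check shows that $(g,\gamma)\mapsto\tau(\gamma)P(g)$ is an honest (irreducible, since $P|_Q$ is) representation of $\widetilde G$; let $\widetilde\mu$ be its character, so $\widetilde\mu|_{Q'}=\mu$ and $\widetilde\mu|_Y=n\eta$.

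For part (2): a character of $\widetilde G$ lies over $\eta^{-1}1_Q$ exactly when it is trivial on $Q'$ and lies over $\eta^{-1}$, i.e.\ when it is inflated from $\Gamma=\widetilde G/Q'$ and lies over $\eta^{-1}$, so $\Irr(\widetilde G,\eta^{-1}1_Q)=\Irr(\Gamma\mid\eta^{-1})$. Clifford theory with projective representations (as in Isaacs, Ch.~11) identifies $\Irr(G,\mu)$ with the irreducible $\bar\alpha^{-1}$-projective characters of $\bar G$, and the central extension $\Gamma$ identifies those with $\Irr(\Gamma\mid\eta^{-1})$; composing gives the bijection $\chi\mapsto\chi_0$. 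Concretely, if $\chi$ is afforded by $P\otimes X$ with $X$ an $\bar\alpha^{-1}$-projective representation inflated from $\bar G$, then $\chi_0$ is afforded by $(g,\gamma)\mapsto\tau(\gamma)^{-1}X(g)$, and $\widetilde\mu(g,\gamma)\chi_0(g,\gamma)=\mathrm{tr}(P(g))\,\mathrm{tr}(X(g))=\chi(g)$, which depends only on $g=\pi(g,\gamma)$; this is the asserted identity.

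Finally, for part (3): $\widetilde Z:=\pi^{-1}(Z)$ has order $|Y|\,|Z|$ with $|Y|$ a power of $p$ coprime to $|Z|$ and contains the central $p$-subgroup $Y$, so $\widetilde Z=Y\times Z'$ with $Z':=O_{p'}(\widetilde Z)$; being characteristic in the normal subgroup $\widetilde Z$, $Z'$ is normal in $\widetilde G$, and $[\widetilde G,Z']\le Y\cap Z'=1$ forces it to be central, with $\pi$ restricting to an isomorphism $Z'\to Z$. Under this identification $e$ becomes a central idempotent of $k\widetilde G$, and one checks that each $z\in Z'$ acts on $\chi_0$ by the same scalar as $\pi(z)$ acts on $\chi$, using that $\bar\alpha$ restricts trivially to the $p'$-subgroup $q(Z)$ of $\bar G$ so that the scalars $\tau(\cdot)$ and the central scalars of $P$ along $Z$ cancel; hence $\chi\in\Irr(G,\mu,e)\iff\chi_0\in\Irr(\widetilde G,\eta^{-1}1_Q,e)$, completing the proof. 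The step I expect to be the main obstacle is exactly this cocycle bookkeeping — proving the obstruction class is $p$-primary and then checking that the bijection transports the central character along $Z$ correctly — since both require tracking the two mutually inverse factor sets and the scalars through which the central subgroups $Y$ and $Z$ act.
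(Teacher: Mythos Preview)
Your overall strategy is sound and parallels the paper's, though the two constructions of $\widetilde G$ differ in presentation: the paper builds $\widetilde G$ explicitly as a subgroup of $G\times S^\times$ (with $S=KQe_\mu$) by choosing elements $s_g\in S^\times$ implementing the $G$-action on $S$ and normalising $\det(s_x)=1$ on coset representatives, whereas you invoke character-triple machinery and take a fibre product over $\bar G$. Both routes establish that $Y$ is a $p$-group via the same determinant idea, and both give the bijection of part~(2) by the standard tensor-factorisation $\chi\leftrightarrow P\otimes X$. So for parts~(1) and~(2) your argument is correct and essentially a repackaging of the paper's in the language of Isaacs, Chapter~11.

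There is, however, a genuine gap in your treatment of part~(3). You assert that for $z'\in Z'$ the scalar by which $z'$ acts on $\chi_0$ equals the scalar by which $\pi(z')$ acts on $\chi$, citing that $[\bar\alpha]$ restricts trivially to $q(Z)$ ``so that the scalars $\tau(\cdot)$ and the central scalars of $P$ along $Z$ cancel''. But triviality of the restricted \emph{class} does not force this cancellation. Concretely, writing $P(z)=\lambda(z)I$ for $z\in Z$, one computes $\omega_{\widetilde\mu}(z')=\tau(\gamma_{z})\lambda(z)$, and even after arranging $\bar\alpha|_{q(Z)}=1$ at the cocycle level (so that $\tau$ vanishes on the relevant section) one is left with $\omega_{\widetilde\mu}(z')=\lambda(z)$, an arbitrary linear character of $Z$ of $p'$-order. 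Thus the bijection $\chi\mapsto\chi_0$ in general twists the central character on $Z'\cong Z$ by the fixed character $\lambda$, and need not carry $\Irr(G,\mu,e)$ to $\Irr(\widetilde G,\eta^{-1}1_Q,e)$ for the \emph{same} $e$.

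The fix is to normalise $P$ at the outset so that $P(z)=I$ for all $z\in Z$; this is possible precisely because $Z\cap Q=1$ and $[\bar\alpha]|_{q(Z)}$ is trivial, allowing one to multiply $P$ by a scalar function constant on $Q$-cosets without disturbing $P|_Q=D$. The paper makes the analogous move explicitly: it chooses the transversal $I$ to contain $O_{p'}(G)$ and sets $s_z=1$ for every central $p'$-element $z$, which is exactly the normalisation $\lambda\equiv 1$ in your language. Once you insert this choice before constructing $\widetilde\mu$, your argument for part~(3) goes through.
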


\begin{proof}
The  proof combines elements of standard Clifford theory. We  briefly
sketch the basic constructions.
Let $m$ be the dimension of  $\mu$ and  let  $e_{\mu}$ be the
central idempotent of  $  K Q$  corresponding to $\mu$ Then
$ S=K Qe_{\mu} $ is a matrix algebra of dimension $m^2$. Since
$\mu $ is  $G$-stable, the conjugation action of $ G$ on $KG$  induces
an action of  $G$ on $S$. The group $\widetilde G$ is   constructed
as a subgroup of $G \times S^{\times}  $.
Let  $\pi: G \times S^{\times} \to G$ and $\pi': G\times S^{\times} $
be the projections onto the first and second components  respectively
and identify  $K $  with the scalar  matrices in $S$.
Let $\hat G$ be the the subgroup of $G  \times  S^{\times}  $
consisting of all elements of the form $(x, s) $, $x \in G$  and
$s\in S^{\times}$ such that  $s_x  a  s_x^{-1} = xax^{-1}$ for all
$ a \in S$.  Since the  action  of each element of $G$   on $S$ is by
an inner automorphism   and  $K=Z(S)$,  the restriction of  $\pi$ to
$\hat G $ is a surjective homomorphism with kernel
$1\times K^{\times}$.

Choose  a transversal  $I$  for $ Q$ in $G$  containing $O_{p'}  (G)$.
In particular, $I$ contains every central $p'$-element of $G$. For
each $ x \in  I$, choose $s_x \in  S^{\times} $ such that
$(x, s_x )\in \hat G$  and such that   the determinant
$ \mathrm{det}(s_x) $ of $s_x  $ equals $1$.   This can be achieved
by replacing $K$   by a suitable extension containing   the $m$-th roots
of  $ \mathrm{det}(s_x) $, $x \in G$.
Further, if  $z\in I$ is a central $p'$-element  of $G$, we choose
$s_z $ to be the identity.  Extend the map   $ x \to s_x $ to
$s:  G \to  S^{\times} $ by setting $s_{g}  = u  s_x $   if $ g= ux $,
$ u\in Q$, $  x \in  I$.   For all  $g, h   \in G$,    we have
$s_gs_h s_{gh} ^{-1}  \in  K^{\times}$  is a  scalar matrix.
Note that  since   $ u^{|Q|} =  1 $    for all $ y \in Q$,   we have
that   $ \mathrm{det}(s_g)^{|Q|}=1 $ for all $ g\in  G$ and consequently
by taking determinants we  see that $ (s_gs_h s_{gh} ^{-1} )^ {m^2|Q|} =1$
for all $ g, h\in G$.

Let  $ \widetilde G$  be the subgroup of $   \hat  G$ generated by
$(s_g,  g )$, $ g\in G$.  The  restriction $ \pi : \widetilde G \to G$ of
$pi$ to $\widetilde g$  is  surjective.  Let $Y \leq  1\times K^{\times}$
be the kernel of $\pi$.  For $g, h\in \widetilde G$,
$$ (g, s_g) (h, s_h) =  (gh,   s_gs_h) =     
(1, s_gs_h s_{gh}^{-1}) ( gh, s_{gh}),    $$
$$ (g,  s_g)^{-1}= (1, s_g s_{g^{-1} } )    (g^{-1}, s_{g^{-1}})=  
(1,  s_{g} s_{g^{-1}}s_{gg^{-1}})  (g^{-1}, s_{g^{-1}}).$$
It follows that
$Y =\langle  (1,  s_gs_h s_{gh} ^{-1} ), g, h \in G\rangle $.
As noted above,  $Y$   has exponent     dividing  $m^2|Q| $.  Since  $ Y$
is  isomorphic to a subgroup of   the multiplicative group of a field,
$Y$  is cyclic  of order  dividing  $m^2|Q| $. In particular, $Y$ is a
finite $p$-group.
Let $ Q' =  \{ (u, s_u) : u \in Q\}$. Since  $s_us_v = uv$
for all $ u, v \in  Q$,  $Q'$ is a  subgroup  of $\widetilde G$  with the
required properties.  This proves (1).

Let $ \eta: Y \to  K ^{\times}  $ be the irreducible character of $ Y$
which sends  $ (1, \lambda \cdot \mathrm{id}_S) $ to  $ \lambda $.
The map $ \pi' : \widetilde G  \to   S^{\times}$       defines  a
representation of $ \widetilde G$  whose  restriction to $ YQ $ equals
$ \eta  \mu $. Let $\widetilde \mu $  be the corresponding character.
Then  $\widetilde \mu $ is irreducible and    covers $\eta \mu $.
Let $\tau = \frac{1}{|Y| |Q| } 
\sum_{y \in Y, u \in Q}  \eta^{-1} (y)  (uy)^{-1} $ be the    central
idempotent of  $  K Y  Q $ corresponding to  $ \eta^{-1}  1_Q $.
There is  a $K$-algebra isomorphism
\begin{equation}\label{eq:cliffiso}    
\phi:    K G  e_{\mu} \to    S  \otimes_{ K}   K \widetilde G  \tau 
\end{equation}
satisfying
$$\phi ( g e_{\mu }) = s_g \otimes (g, s_g)  \tau, \ \  g \in G. $$
Let  $g \in  G$ and let $\widetilde  g \in  \widetilde G$ be a lift of $g $.
Then  $\widetilde g =  y (g, s_g)  $ for some $y\in Y$.  Since
$ys_g = \eta (y)  s_g  $ and $y (g, s_g) \tau =  \eta^{-1} (g, s_g) \tau $
it follows that   $s_g \otimes   (g, s_g)  \tau=  
\pi_2(\widetilde g) \otimes \widetilde g \tau $. Now (2)  follows since
$\Irr (\widetilde G , \eta^{-1} 1_ Q)$ coincides with the set of  irreducible
$K  \widetilde G \tau $ characters.

Let $Z$ be a central $p'$-subgroup of  $G$. By our choices above,
$s_z$ is the identity matrix for all $z \in Z$. Hence
$Z':=\{(z, 1) : z  \in Z  \}$ is a central  subgroup  of $\widetilde G$
and the inverse image $\widetilde Z$ of $Z$ in $\widetilde G$ is a
direct product  $\widetilde Z = Y \times  Z'$. Identifying $Z'$ with $Z$,
the image of the idempotent $e e_{\mu}$  under the isomorphism
\ref{eq:cliffiso}  is  $\mathrm{id_S} \otimes  e\tau $, proving (3).
\end{proof}

\begin{Lem}\label{l:cliff4}
Let $Z$ be a central $p'$-subgroup of $G$  and $e$ a central idempotent of
$kZ$. Let $Q$ be a normal $p$-subgroup of  $G$ and let $\mu$  be a
$G$-stable  irreducible  character of $Q$. Then
\begin{equation} 
\dim_K   d^Q  ( \C(G,\mu, e)) = \ell (kG e) . 
\end{equation}
\end{Lem}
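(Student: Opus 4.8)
The plan is to combine the preceding Clifford-theoretic reductions with the generalized-decomposition interpretation from Lemma~\ref{l:2main} applied to the central extension $\widetilde G$. First I would invoke Lemma~\ref{l:cliff3}, which produces a central extension $1\to Y\to\widetilde G\xra{\pi}G\to 1$ with $Y$ a finite $p$-group, together with an irreducible character $\widetilde\mu$ of $\widetilde G$, a linear character $\eta$ of $Y$, and a direct product decomposition $\pi^{-1}(Q)=Y\times Q'$ with $Q'\cong Q$. The key outputs are the character bijection $\chi\mapsto\chi_0$ between $\Irr(G,\mu)$ and $\Irr(\widetilde G,\eta^{-1}1_Q)$ satisfying $\chi(g)=\widetilde\mu(\widetilde g)\chi_0(\widetilde g)$, and its refinement in part~(3) matching $\Irr(G,\mu,e)$ with $\Irr(\widetilde G,\eta^{-1}1_Q,e)$, where $Z$ is identified with a central $p'$-subgroup $Z'$ of $\widetilde G$ and $e$ is viewed as a central idempotent of $k\widetilde Z$ via $\widetilde Z=Y\times Z'$.

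The heart of the argument is to transport the $d^Q$-operation through this bijection. Because $\widetilde\mu$ is a fixed character that does not vanish on the relevant $p$-sections (its restriction to $YQ'$ is $\eta\widetilde\mu_Q$ with $\widetilde\mu_Q$ of degree $\dim\mu$, and multiplication by a nonvanishing function is invertible on the appropriate subspace), the formula $\chi(g)=\widetilde\mu(\widetilde g)\chi_0(\widetilde g)$ shows that the linear isomorphism $\C(G,\mu,e)\cong\C(\widetilde G,\eta^{-1}1_Q,e)$ carries $d^Q(\C(G,\mu,e))$ isomorphically onto $d^{\pi^{-1}(Q)}(\C(\widetilde G,\eta^{-1}1_Q,e))$, hence preserves dimensions. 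More precisely, I would argue that a class function in $\C(G,\mu,e)$ is killed by $d^Q$ if and only if the corresponding class function in $\C(\widetilde G,\eta^{-1}1_Q,e)$ is killed by $d^{\pi^{-1}(Q)}$, since a $p$-element $g$ of $G$ lies in $Q$ up to conjugacy precisely when its lift lies in $\pi^{-1}(Q)$ up to conjugacy, and $\widetilde\mu$ is nonzero on such sections. Thus $\dim_K d^Q(\C(G,\mu,e))=\dim_K d^{\pi^{-1}(Q)}(\C(\widetilde G,\eta^{-1}1_Q,e))$. The point is that on $\widetilde G$ the subgroup $\pi^{-1}(Q)=Y\times Q'$ is a normal $p$-subgroup, and now $d^{\pi^{-1}(Q)}(\C(\widetilde G,e))$ decomposes, via Lemma~\ref{l:cliff1}, as a direct sum over $\Irr(\pi^{-1}(Q))/\widetilde G$; restricting to the single $\widetilde G$-stable orbit $\{\eta^{-1}1_Q\}$ and observing that this character is $\widetilde G$-invariant (so its centralizer is all of $\widetilde G$), Lemma~\ref{l:2main} gives $\dim_K d^{\pi^{-1}(Q)}(\C(\widetilde G,\eta^{-1}1_Q,e))=\ell(k\widetilde Ge)$. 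Finally $\ell(k\widetilde Ge)=\ell(kGe)$ because, by the identification $\widetilde Z=Y\times Z'$ with $Y$ a $p$-group and $Z'\cong Z$, the block idempotent-governed simple modules of $k\widetilde Ge$ match those of $kGe$ — the kernel $Y$ being a $p$-group acts trivially on every simple module, so $k\widetilde Ge$ and $kGe$ have the same simple modules up to the inflation $\widetilde G\to\widetilde G/Y\cong G$, and $e$ is matched accordingly.

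Assembling these identifications yields $\dim_K d^Q(\C(G,\mu,e))=\ell(kGe)$, which is exactly the claim. The main obstacle I anticipate is the bookkeeping in the middle step: one must check carefully that under the character bijection the $d^Q$-operator really does correspond to the $d^{\pi^{-1}(Q)}$-operator on $\widetilde G$, i.e. that the ``twist'' by $\widetilde\mu$ commutes appropriately with restriction to $p$-sections and does not create or destroy vanishing. This requires unwinding the relation between $p$-parts of elements of $G$ and of their lifts to $\widetilde G$ (using that $Y$ is a $p$-group, so a lift $\widetilde g$ has $p$-part projecting onto the $p$-part of $g$), and verifying that $\widetilde\mu$ restricted to any subgroup containing $\pi^{-1}(Q)$ as a Sylow $p$-subgroup is nonzero — which follows since $\widetilde\mu$ covers $\eta\mu$ and $\mu$ is a nonzero character of $Q$. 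Once this correspondence is pinned down, the remaining steps are direct applications of the already-established lemmas.
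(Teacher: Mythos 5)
Your overall architecture matches the paper's: invoke Lemma~\ref{l:cliff3} to pass to the central extension $\widetilde G$, transport $d^Q$ to $d^{\pi^{-1}(Q)}$ through the character bijection, compute the dimension upstairs, and observe at the end that the $p$-group $Y$ lies in the kernel of every simple $k\widetilde G e$-module so $\ell(k\widetilde Ge)=\ell(kGe)$. The first two and last of these steps are handled correctly and essentially as in the paper.

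The gap is in the middle computation. You assert that Lemma~\ref{l:cliff1} together with Lemma~\ref{l:2main} yield $\dim_K d^{\pi^{-1}(Q)}(\C(\widetilde G,\eta^{-1}1_Q,e))=\ell(k\widetilde Ge)$, but neither lemma identifies the dimension of a \emph{single} summand in the direct sum $d^{YQ'}(\C(\widetilde G,e))=\bigoplus_\nu d^{YQ'}(\C(\widetilde G,\nu,e))$. Lemma~\ref{l:cliff1} only gives the decomposition, and Lemma~\ref{l:2main} only computes the total $\dim_K d^{YQ'}(\C(\widetilde G,e))=\sum_{[x]}\ell(kC_{\widetilde G}([x])e)$. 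The parenthetical ``$\eta^{-1}1_Q$ is $\widetilde G$-invariant, so its centralizer is all of $\widetilde G$'' cannot close the gap, and in fact is a warning sign: $\mu$ itself was already assumed $G$-stable, so if $G$-stability of the covered character alone implied the formula, the entire central-extension construction of Lemma~\ref{l:cliff3} would be pointless and the statement would follow directly on $G$. What the extension buys you — and what your proof does not use — is that the covered character $\eta^{-1}1_Q$ upstairs is \emph{linear} and has $Q'$ in its kernel. This is what makes the summand computable: any $\psi\in\C(\widetilde G,\eta^{-1}1_Q,e)$ satisfies $\psi(yu\,\widetilde g)=\eta^{-1}(y)\psi(\widetilde g)$ for $y\in Y$, $u\in Q'$, $\widetilde g\in\widetilde G$ (since $Y$ is central, $\psi|_Y$ is a multiple of $\eta^{-1}$, and $Q'\le\ker\psi$). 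Hence $\psi$ vanishes on the $p$-sections of all elements of $YQ'$ if and only if it vanishes on the $p$-section of $1$, i.e.\ $\ker(d^{YQ'})\cap\C(\widetilde G,\eta^{-1}1_Q,e)=\ker(d^1)\cap\C(\widetilde G,\eta^{-1}1_Q,e)$, giving $\dim_K d^{YQ'}\C(\widetilde G,\eta^{-1}1_Q,e)=\dim_K d^1\C(\widetilde G,e)=\ell(k\widetilde Ge)$ by the first part of Lemma~\ref{l:2main} with $x=1$. Without this transformation-law argument the step from a single summand to $\ell(k\widetilde Ge)$ is unsupported.
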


\begin{proof}
Let $ \widetilde G$, $Y$, $\eta $ and $ \widetilde \mu $  be as in Lemma
\ref{l:cliff3}.   The bijection $\chi \to \chi_0 $ extends by linearity
to  a  $K$-linear isomorphism
$i: \C( G, \mu, e) \to \C(\widetilde G, \eta^{-1} 1 _{Q}, e)$ defined by
$$ \phi(g) = \widetilde \mu (\widetilde g ) i(\phi) (\widetilde g ),   \  \  
i^{-1} (\psi) (g)  =\widetilde \mu (\widetilde g)    \psi (\widetilde g )$$      
for all $\phi \in \C( G, \mu, e) $,
$\psi \in \C(\widetilde  G, \eta^{-1} 1 _{Q}, e)  $.
$g \in G$ and $\widetilde g \in \widetilde G$  lifting $\widetilde g$.
Now $g_p \in  Q$ if and only if $(\widetilde g)_p \in YQ $. It follows that
$$  i^{-1} \circ  d^{YQ}\circ  i  = d^{Q} ,$$
hence
$$  d^{YQ}   \circ i =i \circ  d^{Q} , $$
where by $d^{YQ} $ we mean  the relevant map on class functions on
$\widetilde G$. In particular, $\dim_K d^Q  (\C(G,\mu, e)) = 
\dim_K d^Q (\C(\widetilde G, \eta^{-1} 1 _{Q}, e)  $.

Let $\psi \in \C(\widetilde  G, \eta^{-1}  1 _{Q}, e) $. For any
$u \in Q$, $y \in Y$,  $\widetilde g \in \widetilde G$,  we have
$\psi(yu \widetilde g) = \eta (y) \psi(\widetilde  g)$ from which it 
follows that
$$ \dim_K d^{ Y  Q}\C ( \widetilde G, \eta^{-1}  1 _{Q}, e  ) = 
 \dim_K d^{1}\C(\widetilde  G,  e )   =  
\ell (k\widetilde G e ) =  \ell(kGe) $$
where the second equality holds by  Lemma ~\ref{l:2main} and the last
equality holds since every simple $ k\widetilde G e $-module has $Y$ in its
kernel.
\end{proof}

\begin{proof}[Proof of Lemma~\ref{l:QsvQ*central}]
This follows from Lemmas \ref{l:2main},~\ref{l:cliff1}, ~\ref{l:cliff2}
 and  ~\ref{l:cliff4}.
\end{proof}

\bibliographystyle{amsalpha}
\bibliography{mybib}

\newcommand{\etalchar}[1]{$^{#1}$}
\def\cprime{$'$}
\providecommand{\bysame}{\leavevmode\hbox to3em{\hrulefill}\thinspace}
\providecommand{\MR}{\relax\ifhmode\unskip\space\fi MR }
\providecommand{\MRhref}[2]{%
  \href{http://www.ams.org/mathscinet-getitem?mr=#1}{#2}
}
\providecommand{\href}[2]{#2}
\begin{thebibliography}{GMRS04}

\bibitem[AKO11]{AschbacherKessarOliver2011}
Michael Aschbacher, Radha Kessar, and Bob Oliver, \emph{Fusion systems in
  algebra and topology}, London Mathematical Society Lecture Note Series, vol.
  391, Cambridge University Press, Cambridge, 2011. \MR{2848834}

\bibitem[BCG{\etalchar{+}}05]{BCGLO2005}
Carles Broto, Nat{\`a}lia Castellana, Jesper Grodal, Ran Levi, and Bob Oliver,
  \emph{Subgroup families controlling {$p$}-local finite groups}, Proc. London
  Math. Soc. (3) \textbf{91} (2005), no.~2, 325--354.

\bibitem[BCP97]{Magma}
Wieb Bosma, John Cannon, and Catherine Playoust, \emph{The {M}agma algebra
  system. {I}. {T}he user language}, J. Symbolic Comput. \textbf{24} (1997),
  no.~3-4, 235--265, Computational algebra and number theory (London, 1993).
  \MR{1484478}

\bibitem[Bla72]{Blackburn1972}
Norman Blackburn, \emph{Some homology groups of wreathe products}, Illinois J.
  Math. \textbf{16} (1972), 116--129. \MR{0291294}

\bibitem[BLO03]{BrotoLeviOliver2003}
Carles Broto, Ran Levi, and Bob Oliver, \emph{The homotopy theory of fusion
  systems}, J. Amer. Math. Soc. \textbf{16} (2003), no.~4, 779--856
  (electronic).

\bibitem[CR90]{CurtisReiner1990}
Charles~W. Curtis and Irving Reiner, \emph{Methods of representation theory.
  {V}ol. {I}}, Wiley Classics Library, John Wiley \& Sons Inc., New York, 1990,
  With applications to finite groups and orders, Reprint of the 1981 original,
  A Wiley-Interscience Publication.

\bibitem[Cra11]{CravenTheory}
David~A. Craven, \emph{The theory of fusion systems}, Cambridge Studies in
  Advanced Mathematics, vol. 131, Cambridge University Press, Cambridge, 2011,
  An algebraic approach. \MR{2808319}

\bibitem[Eat04]{Eaton2004}
Charles~W. Eaton, \emph{The equivalence of some conjectures of {D}ade and
  {R}obinson}, J. Algebra \textbf{271} (2004), no.~2, 638--651.

\bibitem[EM14]{EatonMoreto2014}
Charles~W. Eaton and Alexander Moret\'{o}, \emph{Extending {B}rauer's height
  zero conjecture to blocks with nonabelian defect groups}, Int. Math. Res.
  Not. IMRN (2014), no.~20, 5581--5601. \MR{3271182}

\bibitem[Gla71]{Glauberman1971}
G.~Glauberman, \emph{Global and local properties of finite groups}, Finite
  simple groups ({P}roc. {I}nstructional {C}onf., {O}xford, 1969), Academic
  Press, London, 1971, pp.~1--64. \MR{0352241 (50 \#4728)}

\bibitem[GMRS04]{GlMaRiSc2004}
David Gluck, Kay Magaard, Udo Riese, and Peter Schmid, \emph{The solution of
  the {$k(GV)$}-problem}, J. Algebra \textbf{279} (2004), no.~2, 694--719.

\bibitem[Gor80]{Gorenstein1980}
Daniel Gorenstein, \emph{Finite {G}roups}, second ed., Chelsea Publishing Co.,
  New York, 1980.

\bibitem[GW84]{GluckWolf84}
David Gluck and Thomas~R. Wolf, \emph{Brauer's height conjecture for
  {$p$}-solvable groups}, Trans. Amer. Math. Soc. \textbf{282} (1984), no.~1,
  137--152.

\bibitem[Hum06]{Humphreys2006}
James~E. Humphreys, \emph{Modular representations of finite groups of {L}ie
  type}, London Mathematical Society Lecture Note Series, vol. 326, Cambridge
  University Press, Cambridge, 2006.

\bibitem[Kar87]{Karpilovsky1987}
Gregory Karpilovsky, \emph{The {S}chur multiplier}, London Mathematical Society
  Monographs. New Series, vol.~2, The Clarendon Press, Oxford University Press,
  New York, 1987. \MR{1200015}

\bibitem[Kes07]{Kessar2007}
Radha Kessar, \emph{Introducton to block theory}, Group representation theory,
  EPFL Press, Lausanne, 2007, pp.~47--77.

\bibitem[KM17]{KessarMalle2017}
Radha Kessar and Gunter Malle, \emph{Brauer's height zero conjecture for
  quasi-simple groups}, J. Algebra \textbf{475} (2017), 43--60.

\bibitem[KR87]{KulshammerRobinson1987}
Burkhard K{\"u}lshammer and Geoffrey~R Robinson, \emph{Characters of relatively
  projective modules ii}, Journal of the London Mathematical Society \textbf{2}
  (1987), no.~1, 59--67.

\bibitem[KR89]{KnorrRobinson1989}
Reinhard Kn{\"o}rr and Geoffrey~R. Robinson, \emph{Some remarks on a conjecture
  of {A}lperin}, J. London Math. Soc. (2) \textbf{39} (1989), no.~1, 48--60.

\bibitem[Lin04]{Linckelmann2004}
Markus Linckelmann, \emph{Fusion category algebras}, J. Algebra \textbf{277}
  (2004), no.~1, 222--235.

\bibitem[Lin05]{Linckelmann2005}
\bysame, \emph{Alperin's weight conjecture in terms of equivariant {B}redon
  cohomology}, Math. Z. \textbf{250} (2005), no.~3, 495--513.

\bibitem[Lin09a]{Linckelmann2009b}
\bysame, \emph{On {$H^*({\mathcal{C}};k^\times)$} for fusion systems}, Homology
  Homotopy Appl. \textbf{11} (2009), no.~1, 203--218. \MR{2506133}

\bibitem[Lin09b]{Linckelmann2009}
\bysame, \emph{The orbit space of a fusion system is contractible}, Proc. Lond.
  Math. Soc. (3) \textbf{98} (2009), no.~1, 191--216. \MR{2472165}

\bibitem[Lin18]{LinckelmannBT}
\bysame, \emph{The block theory of finite group algebras}, London Mathematical
  Society Student Texts, vol. 91/92, Cambridge University Press, 2018.

\bibitem[LS17]{lynd2017weights}
Justin Lynd and Jason Semeraro, \emph{Weights in a {B}enson-{S}olomon block},
  arXiv preprint arXiv:1712.02826 (2017).

\bibitem[MN06]{MalleNavarro2006}
Gunter Malle and Gabriel Navarro, \emph{Inequalities for some blocks of finite
  groups}, Arch. Math. (Basel) \textbf{87} (2006), no.~5, 390--399.

\bibitem[MR17]{MalleRobinson2017}
Gunter Malle and Geoffrey~R. Robinson, \emph{On the number of simple modules in
  a block of a finite group}, J. Algebra \textbf{475} (2017), 423--438.
  \MR{3612478}

\bibitem[NT13]{NavarroTiep2013}
Gabriel Navarro and Pham~Huu Tiep, \emph{Characters of relative {$p'$}-degree
  over normal subgroups}, Ann. of Math. (2) \textbf{178} (2013), no.~3,
  1135--1171.

\bibitem[Rob87]{robinson1987characters}
Geoffrey~R Robinson, \emph{On characters of relatively projective modules},
  Journal of the London Mathematical Society \textbf{2} (1987), no.~1, 44--58.

\bibitem[Rob96]{robinson1996local}
\bysame, \emph{Local structure, vertices and alperin's conjecture}, Proceedings
  of the London Mathematical Society \textbf{3} (1996), no.~2, 312--330.

\bibitem[Rob04]{Robinsonweight2004}
Geoffrey~R. Robinson, \emph{Weight conjectures for ordinary characters}, J.
  Algebra \textbf{276} (2004), no.~2, 761--775. \MR{2058466}

\bibitem[RS90]{robinsonstaszewski1990}
G.~R. Robinson and R.~Staszewski, \emph{More on {A}lperin's conjecture},
  Ast\'erisque (1990), no.~181-182, 237--255.

\bibitem[RV04]{RuizViruel2004}
Albert Ruiz and Antonio Viruel, \emph{The classification of {$p$}-local finite
  groups over the extraspecial group of order {$p\sp 3$} and exponent {$p$}},
  Math. Z. \textbf{248} (2004), no.~1, 45--65.

\bibitem[Th{\'e}92]{Thevenaz1992}
Jacques Th{\'e}venaz, \emph{Locally determined functions and {A}lperin's
  conjecture}, J. London Math. Soc. (2) \textbf{45} (1992), no.~3, 446--468.
  \MR{1180255}

\bibitem[Wie71]{Wiegold71}
James Wiegold, \emph{The multiplicator of a direct product}, Quart. J. Math.
  Oxford Ser. (2) \textbf{22} (1971), 103--105.

\end{thebibliography}
\end{document}